\documentclass[11pt,reqno]{amsart}
\usepackage{indentfirst}
\usepackage{graphics,amsmath,amssymb,amsthm,mathrsfs,bm}
\usepackage{enumitem}
\usepackage{amsfonts}
\usepackage[leqno]{amsmath}
\usepackage{amsmath}
\usepackage{amssymb}
\usepackage{multicol}
\usepackage{stmaryrd}
\usepackage{setspace}
\usepackage{bbm}
\usepackage{cite}
\usepackage{epsfig}
\usepackage{color}
\usepackage{graphics}
\usepackage{graphicx}
\usepackage{epstopdf}
\usepackage{multicol,graphics}
\usepackage{mathtools}
\usepackage{indentfirst}
\usepackage{float}

\usepackage{subfigure}
\usepackage{booktabs}

\newtheorem{theorem}{Theorem}[section]
\newtheorem{lemma}[theorem]{Lemma}
\newtheorem{corollary}[theorem]{Corollary}

\newtheorem{definition}[theorem]{Definition}
\newtheorem{remark}[theorem]{Remark}
\newtheorem{proposition}[theorem]{Proposition}
\numberwithin{equation}{section}

\allowdisplaybreaks[4]

\makeatletter
 \def\@textbottom{\vskip \z@ \@plus 30pt}
 \let\@texttop\relax
\makeatother

\usepackage{multirow}
\usepackage{arydshln}
\usepackage{hyperref}
\hypersetup{hidelinks,
colorlinks=true,linkcolor=blue,citecolor=red,
pdfstartview=Fit,
pdfborder={1 1 1},
bookmarksnumbered=true,
breaklinks=true}

\begin{document}

\title[Nonlocal dispersal predator-prey systems in shifting habitats]{\textbf{Propagation dynamics for nonlocal dispersal predator-prey systems in shifting habitats: A Hamilton-Jacobi approach}}

\author[Tao, Li, Ruan and Xu]{Wen Tao$^{1}$, Wan-Tong Li$^{1,*}$, Shigui Ruan$^{2}$ and Wen-Bing Xu$^{3}$}
\thanks{\hspace{-.1cm}
$^1$School of Mathematics and Statistics, Lanzhou University, Lanzhou, Gansu 730000, P. R. China.\\
$^2$Department of Mathematics, University of Miami, Coral Gables, FL 33146, USA.\\
$^3$School of Mathematical Sciences, Capital Normal University, Beijing 100048, P. R. China.
\\
$^*${\sf Corresponding author} (wtli@lzu.edu.cn)}

\date{\today}

\begin{abstract}
This paper is concerned with the spreading speeds of nonlocal dispersal predator-prey systems in shifting habitats under general initial conditions. By employing geometric optics techniques and theory of viscosity solutions, we reformulate the problem into the study of Hamilton-Jacobi equations. Through a detailed analysis of the structure of viscosity solutions, we provide a complete classification of explicit formulas for the spreading speed of the prey population, especially in cases where it invades the habitat more rapidly than predators, yielding two fundamentally distinct ``nonlocal determinacy'' results derived by different mechanisms. We also obtain an upper bound for spreading speed of the predators, incorporating the decay rate of the initial data and the speed of shifting habitats. These findings demonstrate that there are complex connections among spreading speeds, habitat shifting speed and initial conditions, and emphasize the significance of nonlocal dispersal in determining the propagation dynamics of predator-prey systems.

\textbf{Keywords}: Spreading speed, predator-prey system, shifting habitats, Hamilton-Jacobi equation, viscosity solution.

\textbf{AMS Subject Classification (2020)}: 35K57, 35B40, 35D40, 92D25

\end{abstract}
\maketitle

\section{Introduction}\label{sec-introduction}

The survival of populations in nature is always affected by climate change, which can  be caused by factors such as global warming, industrialization, and overexploitation.
Over the past decade,  the complex interplay between environmental heterogeneity driven by  climate change and population dispersal patterns has emerged as a critical focus for mathematical modeling in spatial ecological studies.
Potapov and Lewis \cite{potapov2004} and  Berestycki et al.\cite{berestycki2009,berestycki2014}   proposed the following reaction-diffusion model
\begin{align}\label{eq:rd-shift}
	u_{t}(x,t)= u_{xx}(x,t)+ u(x,t) g(x-c_{e}t, u(x,t)), \quad (x,t)\in \mathbb{R}\times\mathbb{R}^{+},
\end{align}
where  $u(x,t)$ represents the population density at position $x$ and time $t$, and the function
$g$ models the shifting habitats, depending on a moving frame of the form $x-c_{e}t$ with $c_{e}\in \mathbb{R}$ being the speed of shifting habitats. A typical example of $g$ is
\begin{align}\label{eq:growth}
	g(x-c_{e}t,u(x,t))=\alpha(x-c_{e}t)-u(x,t),
\end{align}
where $\alpha(\cdot)$ is the carrying capacity of the environment as a function of $x-c_e t$. Throughout the paper, when $\alpha(\cdot)$ is referenced, we always assume that $\alpha\in C(\mathbb{R};\mathbb{R})$ and the limits $\alpha_{-}:=\alpha(-\infty)$ and $\alpha_{+}:=\alpha(+\infty)$ exist.
In shifting habitats models,  the concept of spreading speed is usually applied to predict whether a population will survive or become extinct. For a nonnegative density function $u$, we say that it admits  {\it rightward and leftward  spreading speeds} (see \cite{aronson1975,wein1982}) if
\begin{equation*}
	\begin{cases}
 	\underset{t\rightarrow\infty}\lim \underset{x\leqslant (c_{u}^{l}-\eta)t,\,x\geqslant (c_{u}^{r}+\eta)t}\sup u(x,t)=0, &\hspace{1em} \text{for~any~}\eta>0, \\
	\underset{t\rightarrow\infty}\liminf \underset{ (c_{u}^{l}+\eta)t\leqslant x\leqslant (c_{u}^{r}-\eta)t}\inf u(x,t)>0, & \hspace{1em} \text{for~any~}\eta\in(0,(c_{u}^{r}-c_{u}^{l})/2).
\end{cases}
\end{equation*}

For the reaction-diffusion model \eqref{eq:rd-shift}-\eqref{eq:growth}  with compactly supported initial conditions, Li et al.\cite{li2014} and Hu et al.\cite{hu2019} employed the classical super-subsolution method to study the spreading speed under the condition  $\alpha_{-}>0\geqslant \alpha_{+}$. Subsequently, Yi and Zhao \cite{yi2020b} adopted the theory of monotone dynamical systems to analyze propagation dynamics in monostable  equations without spatial translational invariance, generalizing the results of \cite{li2014}.
When $\alpha_{-}>\alpha_{+}>0$, Hu et al.\cite{hu2020}  studied the spreading speed as well as the threshold behavior of solutions via the super-subsolution method. Recently, employing the theory of viscosity solutions for Hamilton-Jacobi equations, Lam and Yu \cite{lam2022a} derived explicit formulas for the spreading speed, specifically taking into account the influence of exponential decay in the initial data. We also refer to Lam et al.\cite{lam2025} and Giletti et al. \cite{giletti2024} for the case where the resource distribution has an interior maximum patch.
For reaction-diffusion Lotka-Volterra competition systems in shifting habitats, related results can be found in \cite{yuan2021,wu2022,dong2021b}, while predator-prey systems are discussed in \cite{choi2021,guo2023,lam2024}.

Motivated by the interplay between long-range population dispersal and climate change, Li et al.\cite{li2018} introduced the following nonlocal dispersal model
\begin{equation}\label{non-shifting}
	u_t(x,t)=(J*u-u)(x,t)+u(x,t)[\alpha(x-c_{e} t)-u(x,t)], \quad (x,t)\in \mathbb{R}\times\mathbb{R}^{+},
	\end{equation}
	where
	\begin{align}\label{eq:non-oper}
		(J\ast u-u)(x,t)=\int_{\mathbb{R}}J(x-y)u(y,t)\,dy-u(x,t)
	\end{align}
	represents a nonlocal diffusion operator, which captures movement between both adjacent and nonadjacent spatial locations  (see \cite{murray2003}).
Building upon the theory of semigroup for nonlocal diffusion problems and the super-subsolution method, Li et al. \cite{li2018} and Qiao et al.\cite{qiao2022} studied the spreading speeds of \eqref{non-shifting} with compactly supported initial data.
In our recent work \cite{tao-shift}, under the assumptions that $\min\left\{\alpha_{-},\alpha_{+}\right\}>0$ and $\alpha(\cdot)$ is weakly monotonic, we employed the theory of viscosity solutions for Hamilton-Jacobi equations to establish a complete classification of the linear spreading speeds in  \eqref{non-shifting}. Furthermore, the analysis reveals that when the initial decay rate falls within certain moderate ranges, the classical super-subsolution method appears to be inapplicable to \eqref{non-shifting} due to structural incompatibilities arising from the local concavity properties of viscosity solutions. This phenomena precisely illustrates the superiority of the viscosity solution framework for Hamilton-Jacobi equations over classical methods in addressing the problem \eqref{non-shifting}. Here, we also refer to Wu et al.\cite{wu2019} and Wang et al.\cite{wang2024} for the analysis of spreading speeds in nonlocal dispersal Lotka-Volterra competition systems within shifting habitats, and to Qiao et al.\cite{qiao2024} and Wang and Qiao\cite{wang2025b} for the time-periodic case. Additionally, Wang et al. \cite{wang2022} provide a relatively comprehensive review on shifting habitats.

This paper is devoted to investigating the spreading speed for the following nonlocal dispersal predator-prey system with shifting habitats
\begin{equation}\label{eq:pp-shift}
\left\{\begin{aligned}
&u_{t}=d_{1}(J_{1}\ast u-u)+r_{1}u\left[\alpha(x-c_{e}t)-u-av\right], &&(x,t)\in\mathbb{R}\times\mathbb{R}^{+}, \\
&v_{t}=d_{2}(J_{2}\ast v-v)+r_{2}v(-1+bu-v),&& (x,t)\in\mathbb{R}\times\mathbb{R}^{+},\\
&u(x,0)=u_{0}(x),\,v(x,0)=v_{0}(x),&& x\in\mathbb{R},
\end{aligned}\right.
\end{equation}
	where $u(x,t)$ and $v(x,t)$ denote the densities of the prey and predators at location $x$ and time $t$, respectively. The constant $d_{i}>0$ ($i=1,2$) are the dispersal rates,  $r_{i}>0$ ($i=1,2$) are the intrinsic growth rates, and $a,b>0$ represent  the predation rate and conversion rate, respectively. We assume  that the  spatial convolution kernels $J_{i}$ ($i=1,2$) satisfy
\begin{itemize}
	\item[\bf (J)] $J_{i} \in C\left(\mathbb{R}\right)$ are symmetric, nonnegative and compactly supported, and $\int_{\mathbb{R}} J_{i}(y) d y=1$.
	\end{itemize}
For the initial data, we always assume that  $(u_{0},v_{0})$ is nontrivial and belongs to $ \mathcal{BC}_{\alpha_{-}}\times\mathcal{BC}_{\mathcal{V}_{-}}$,	where  $\mathcal{V}_{-}:=b\alpha_{-}-1$ and
\begin{align*}
	\mathcal{B C}_{r}:=\{\phi \in C(\mathbb{R})| \phi \text { is uniformly continuous on } \mathbb{R},\,\,0 \leqslant  \phi \leqslant  r\}.
\end{align*}

It is well known that the predator-prey system \eqref{eq:pp-shift} lacks a comparison principle, posing a fundamental analytical challenge in studying its long-time dynamical behavior.
To address this, Choi et al.\cite{choi2021} investigated forced waves and spreading speeds for both nonlocal and local dispersal versions of \eqref{eq:pp-shift} under the condition $\alpha_{-}>0> \alpha_{+}$  by applying the  comparison principle for a scalar equation. Subsequently, Zhao et al. \cite{zhao2024a} extended and improved the main results in \cite{choi2021}. Their approach relied on establishing a priori estimate for solution to  \eqref{eq:pp-shift}  by imposing large diffusion conditions (see \cite[Theorems 3.1 \& 4.1]{zhao2024a} therein), thereby enabling the use of classical compactness arguments to study the spreading speeds and persistence of the prey and predators.
Recently, Zhou and Wu \cite{zhou2024a} studied \eqref{eq:pp-shift} when the environment is consistently favorable for the prey survival, specifically where the resource function $\alpha(\cdot)$ satisfies
\begin{itemize}
\item[\bf (A)]  $\alpha\in C(\mathbb{R};\mathbb{R}^{+})$ is nonincreasing and $0<\alpha_{+}<\alpha_{-}<\infty$.
		\end{itemize}
Their analysis rests on two key assumptions: first, that predators can survive at the prey's carrying capacity $\alpha_{+}$, i.e.,
\begin{itemize}
	\item[\bf (H1)]  $\mathcal{V}_{+}:= b\alpha_{+}-1>0$;
\end{itemize}
second, that both the prey and predators have sufficiently large diffusion rates, specified as
\begin{align}\label{eq:large-diffusion}
	d_{1}>r_{1}\alpha_{-}+L_{1}, \quad d_{2}>r_{2}\mathcal{V}_{-}+L_{2},
\end{align}
with some $L_{i}=L_{i}(r_{1},r_{2},a,b,\alpha_{-})>0$, $i=1,2$.
In this setting, they extended the compactness techniques developed by Ducrot et al.\cite{ducrot2019} and established the spreading speeds for both species under the condition of compactly supported initial data, see Table \ref{table-compact} for specific scenarios.

The analysis in \cite{zhou2024a} was limited to the case of favorable habitat expansion (i.e. $c_{e}>0$), where the spreading speeds of the prey and predators coincide with either the shifting speed $c_{e}$ or those of the limiting equations. In the following cases, however, the situation differs substantially.
\begin{itemize}
		\item {\bf Exponentially decaying initial data}: As mentioned above, for the scalar equation \eqref{non-shifting}, the classical super-subsolution method seems to fail for exponentially decaying initial data with small decay rates. This fundamental limitation consequently renders the scalar comparison methods employed  in \cite{zhou2024a} inapplicable to the analysis of \eqref{eq:pp-shift} with exponentially decaying initial data.
	\item {\bf More favorable habitat shrinks} (i.e., $c_e\leqslant 0$): The spreading speed of the prey will exhibit {\it nonlocal determinacy} (which will be explained later), and is not determined solely by local information at the invasion front, which significantly complicates, or even prevents, the construction of traditional super-subsolutions.
	\item {\bf Small diffusion rates}: The {\it a priori} approach employed in \cite{zhou2024a} fails, rendering the persistence and compactness methods from \cite{ducrot2019} inapplicable for the analysis of \eqref{eq:pp-shift}.
\end{itemize}
Therefore, in these scenarios, the determination of spreading speeds for \eqref{eq:pp-shift} remains an open challenge, as it is hindered by the complex interplay between the shifting speed and the decay rate of initial data.

To establish a unified analytical framework for \eqref{eq:pp-shift}, with a focus on the effects of the shifting speed and the decay rates of initial conditions while addressing the challenges mentioned above, we employ a methodology based on the theory of viscosity solutions for Hamilton-Jacobi equations to derive explicit quantitative formulas of spreading speeds.
A key feature of this technique is that it converts the original nonlocal integro-differential system into a local first-order fully nonlinear partial differential equation.
This geometric optics approach was originally established by Freidlin \cite{freidlin1985} using the probabilistic large deviation framework, whereas Evans and Souganidis \cite{evans1989} developed a theoretical foundation for analyzing reaction-diffusion equations via Hamilton-Jacobi equations from a PDE perspective.
Recently, this approach has been significantly extended to various ecological systems. Lam et al.\cite{lam2025} and Lam and Yu \cite{lam2022a} applied this framework to heterogeneous shifting habitat problems, while Liu et al.\cite{liu2020,liu2021a,liu2021}  adapted the methodology to analyze Lotka-Volterra competition systems. Additionally, Lam and Lee \cite{lam2024} used  this methodology to tackle predator-prey systems with heterogeneous conversion efficiency.  In the realm of nonlocal Lotka-Volterra parabolic equations, we refer to Lam et al.\cite{lam2023}.  Regarding applications involving spatially periodic media, Liang and Zhou\cite{liang2020a} treated the nonlocal diffusion models, and Kang and Liu\cite{kang2025} examined age-structured models using this framework. In addition, we refer to Loy and Perthame\cite{loy2024} on nonlocal kinetic equations and Henderson and Lam\cite{henderson2025} on road-field reaction-diffusion models.

We consider two types of initial data with exponential or faster decay:
	\begin{itemize}
	\item[\textbf{(I$^{\lambda}$)}] $u_{0}$ and $v_{0}$ are strictly positive on $\mathbb{R}$, and there exist   positive constants $\lambda_{i}^r$ and $\lambda_{i}^l$ with  $i=1,2$  such that
	\begin{equation}\label{initial-exp}
\begin{cases}
	u_{0}(x)\sim  e^{-\lambda_{1}^r x},\,v_{0}(x)\sim e^{-\lambda_{2}^r x} &\text{as~}x\rightarrow+\infty, \\
	u_{0}(x)\sim e^{\lambda_{1}^l x},\,v_{0}(x)\sim e^{\lambda_{2}^l x} &\text{as~}x\rightarrow-\infty.
\end{cases}
	\end{equation}
	\item[\textbf{(I$^{\infty}$)}] $u_{0}$ and $v_{0}$ satisfy
	\begin{equation*}\label{initial-compact}
	\underset{|x|\rightarrow+\infty}\limsup\,u_{0}(x) e^{\lambda |x|}=	\underset{|x|\rightarrow+\infty}\limsup\,v_{0}(x) e^{\lambda |x|}=0 \quad \text{for any }\lambda>0.
	\end{equation*}
	\end{itemize}
Any compactly supported  function  satisfies condition (I$^\infty$).
For notational consistency, we sometimes refer to ``(I$^{\infty}$)'' as ``(I$^{\lambda}$)  with $\lambda=\infty$''.
We further propose the following strong assumption on the reaction term to ensure that the predators consistently achieve successful dispersal:
\begin{itemize}
	\item[\bf (H2)]  $\alpha_{+}-a\mathcal{V}_{-}>0$, where $\mathcal{V}_{-}=b\alpha_{-}-1$.
	\end{itemize}
	This assumption also guarantees the establishment of a comparison principle for the underlying Hamilton-Jacobi equation (see Lemma \ref{lem:viscosity-comparison-t-indep}).
In particular, it follows directly that $\mathcal{V}_{-}>0$ under assumptions (A) and (H1).

Define the Hamiltonians
	\begin{align}\label{pp-hamiltonian}
		H_{1,\pm}(p)=H_{1}(p)+r_{1}\alpha_{\pm}, \quad H_{2,\pm}(p)=H_{2}(p)+r_{2}\mathcal{V}_{\pm}, \quad p\in \mathbb{R},
	\end{align}
where
\begin{align}\label{eq:hi}
	H_{i}(p)=d_{i}\left(\int_{\mathbb{R}} J_{i}(y)e^{py}\,dy-1\right), \quad i=1,2.
\end{align}
We denote the associated Lagrangians
\begin{equation}\label{eq:lagrangian}
	L_{i,\pm}(q)=\underset{p\in \mathbb{R}}\sup \{\,pq-H_{i,\pm}(p)\}, \quad  q\in \mathbb{R}, \quad i=1,2
\end{equation}
and
\begin{align}\label{eq:speed-c-pm}
	c_{i,\pm}(\mu)=\frac{H_{i,\pm}(\mu)}{\mu}, \quad \mu\in \mathbb{R}\backslash\{0\}.
\end{align}
By Lemma \ref{lem:hamilton} below, $L_{i,\pm}$ is well-defined and there exists a unique global minimum point $\mu^{*}_{i,\pm}$ of $c_{i,\pm}(\cdot)$ in $\mathbb{R}^{+}$ such that $	 c^{*}_{i,\pm}:=\underset{\mu\in\mathbb{R}^{+}}\inf\, c_{i,\pm}(\mu)$. Then we set
\begin{align}\label{eq:speed-homo}
	s_{i,\pm}^{r}= c_{i,\pm}(\lambda_{i}^r\land \mu^{*}_{1,\pm}) \quad \text{and} \quad  	s_{i,\pm}^{l}= c_{i,\pm}(\lambda_{i}^l\land \mu^{*}_{1,\pm})  \quad \text{for }\lambda_{i}^r, \lambda_{i}^l\in (0,\infty],
\end{align}
where $p\land q=\min\left\{p,q\right\}$. Similarly, we denote  $p\vee q=\max\left\{p,q\right\}$.
According to the classical theory of spreading speeds for nonlocal dispersal equations\cite{shen2010,xu2021}, $s_{1,\pm}^r$ (resp. $-s_{1,\pm}^l$) represents the rightward (resp. leftward) spreading speed of the prey in the absence of predators, under constant habitat quality $\alpha(\cdot)\equiv \alpha_{\pm}$ with initial data satisfying  (I$^{\lambda}$), $\lambda\in (0,\infty]$. Similarly, $s_{2,\pm}^r$ (resp. $-s_{2,\pm}^l$) represents the rightward (resp. leftward) spreading speed of predators when the prey density is maintained at $\alpha_{\pm}$, under initial data satisfying  (I$^{\lambda}$), $\lambda\in (0,\infty]$.

This paper focuses on the scenario where the prey spreads faster than predators, as formulated by the following assumption:
\begin{itemize}
	\item[\bf (FU)]  $s_{2,-}^{r}<s_{1,+}^{r}$ and $s_{2,-}^{l}<s_{1,+}^{l}$.
	\end{itemize}
In biological contexts, assumption (FU) indicates that prey outpace predators in spreading through the habitat in both directions. This typically occurs when predator search efficiency is low or prey reproductive rates are high. The fact that some prey diffuse faster than their predators is a common phenomenon in population dynamics. One example is that zooplankton (the prey) perform daily vertical migrations, diving to deeper and darker waters during the day to avoid fish (predators),  and ascending at night to feed. Another example is in western Canada, caribou and moose (the prey) behave faster to avoid linear features (such as roads or seismic lines) that wolves and bears (predators) use as efficient travel corridors. We exclude scenarios where $s_{2,-}^{r}\geqslant s_{1,+}^{r}$ or $s_{2,-}^{l}\geqslant s_{1,+}^{l}$, as such conditions may lead to co-dispersal between the prey and predators, meaning both species share the same spreading speed, and the analytical framework presented here is no longer applicable. This work probably constitutes a crucial advancement toward further exploring these scenarios, which we will investigate separately in future work.

\subsection{Analytical framework}
The approach adopted in this work analyzes invasion fronts macroscopically instead of tracking microscopic population densities. By respectively incorporating the shifting habitat and initial conditions into the discontinuous coefficients and boundary condition of the Hamilton-Jacobi equation, explicit formulas for the spreading speed are derived.

Applying the hyperbolic scaling from \eqref{eq:hyperbolic} in the prey equation, we obtain the corresponding singular limit problem \eqref{eq:we-viscosity} via the Hopf-Cole transformation in \eqref{eq:hopf} (a geometric optics approximation viewpoint). It can be shown that its solution $w^{\varepsilon}$ is locally uniformly bounded in $\mathbb{R}\times\mathbb{R}^{+}$.

Moreover, by the half-relaxed limits method introduced by Barles and Perthame\cite{barles1987}, we demonstrate that the lower semicontinuous envelope $w_{*}$ and the upper semi-continuous envelope $w^{*}$ of $w^{\varepsilon}$ as defined in \eqref{eq:half-relaxed} are respectively a viscosity supersolution and a viscosity subsolution of the following time-dependent Hamilton-Jacobi equation:
\begin{align*}
	\min\left\{\partial_{t}w+H_{1}(\partial_{x}w)+R(x/t),w\right\}=0, \quad (x,t)\in \mathbb{R}\times\mathbb{R}^{+},
\end{align*}
where $H_{1}(p)$ is given by \eqref{eq:hi} and $R$ is defined in Propositions \ref{prop:lower-hj-control} and \ref{prop:upper-hj-control}.
In particular, in the context of shifting habitats, the following $1$-homogeneities of $w_{*}$ and $w^{*}$ (as introduced in   \cite{liu2021,lam2022a})
\begin{align}\label{eq:1-homogeneity}
	\rho_{*}(x/t)=w_{*}(x/t,1), \quad 	\rho^{*}(x/t)=w^{*}(x/t,1)
\end{align}
are respectively a viscosity supersolution and a viscosity subsolution of the following time-independent Hamilton-Jacobi equation:
\begin{align}\label{eq:hj-t-indep-sec1}
	\min\left\{\rho(s)-s\rho'(s)+H_{1}(\rho'(s))+R(s),\rho(s)\right\}=0, \quad s\in \mathbb{R}.
\end{align}
Additionally, both $\rho_{*}$ and $\rho^{*}$ satisfy boundary conditions determined by the persistence of the prey and the decay rate of the initial data in \eqref{eq:pp-shift}, see Lemma \ref{lem:low-hj-control} and  Corollary \ref{coro:up-hj-control}.

It is important to note that equation \eqref{eq:hj-t-indep-sec1} satisfies a comparison principle on both right and left halves of the real axis, as established in Lemma \ref{lem:viscosity-comparison-t-indep}. Based on the parameter regions classified in Lemmas \ref{lem:right-division}-\ref{lem:left-division} (see Figure \ref{figure:speed-exp}), we construct a suitable viscosity solution $\rho\in C(\mathbb{R};[0,\infty))$ for \eqref{eq:hj-t-indep-sec1} and establish that
\begin{align}\label{eq:r-equiv}
	\rho\leqslant \rho_{*}\leqslant \rho^{*}\leqslant \rho \,(\Rightarrow \rho_{*}= \rho^{*}), \quad \text{in }\mathbb{R}.
\end{align}
This inequality plays a critical role  in  deriving explicit formulas for the spreading speeds of the prey. Detailed arguments are presented in Section \ref{sec:formulas-prey}.

For predators, we analyze the $u$-equation and utilize the stability properties of viscosity solutions to derive the following Hamilton-Jacobi equation:
\begin{align*}
	\min\left\{\rho-s\rho'(s)+H_{2}(\rho'(s))+R_{0}(s),\rho\right\}=0, \quad s\in \mathbb{R},
\end{align*}
with the boundary conditions \eqref{eq:hj-tindep-low-boundary-I},
where $H_{2}(p)$ and $R_{0}$ are respectively given by \eqref{eq:hi} and \eqref{eq:R0-usc}, see Subsection \ref{subsec:v-low-hj} for details.
By constructing viscosity subsolutions, we obtain an upper bound for the spreading speed of predators, as presented in Section \ref{sec:formula-max-predator}. However, the precise spreading speed of predators remains undetermined now, and is deferred to future work.

\begin{remark}
	\rm{
By \eqref{eq:1-homogeneity} and the definition of the half-relaxed limit in \eqref{eq:half-relaxed}, we have $\rho_{*}\leqslant \rho^{*}$ on $\mathbb{R}$. However, since $\rho_{*}$ and $\rho^{*}$ satisfy different Hamilton-Jacobi equations \eqref{eq:hj-tindep-up} and \eqref{eq:hj-tindep-low}, respectively,  we cannot directly apply Lemma \ref{lem:viscosity-comparison-t-indep} to deduce that $\rho_{*}\geqslant  \rho^{*}$.
Instead, we invoke a sufficient (though possibly stronger than necessary) condition (FU) to establish \eqref{eq:r-equiv}. This, in turn, implies that as $\varepsilon\rightarrow0$,
		\begin{align}\label{eq:we-converge}
			w^{\varepsilon}\rightarrow t\rho_{*}=t\rho^{*}=t\rho \quad \text{locally uniformly in } \mathbb{R}\times\mathbb{R}^{+}.
		\end{align}
See details in Corollary \ref{coro:unique-vis}. In particular, combining with \eqref{eq:we-converge}, we obtain the large deviation estimate for $u$:
		\begin{align}\label{eq:large-devi}
			 u(ct, t)\sim e^{-t\rho(c)+o(t)} \quad \text{as}~ t\rightarrow\infty, \quad \text{for any } c\in\{s\in \mathbb{R}|\rho(s)>0\}.
		\end{align}
	}
\end{remark}
\begin{table}[htpb]
		\begin{center}
			\caption{Spreading speed of \eqref{eq:pp-shift} with compactly supported initial data.}
			\renewcommand{\arraystretch}{1.2}
\begin{tabular}{c|cccc}
 \hline
\multirow{2}{*}{Various scenarios}  & \multicolumn{2}{c|}{Spreading speed of the prey}   & \multicolumn{2}{c}{Spreading speed of predators}   \\ \cline{2-5}
  & \multicolumn{1}{c:}{$c_{e}>0$} & \multicolumn{1}{c|}{$c_{e}\leqslant 0$} & \multicolumn{1}{c:}{$c_{e}>0$}   & $c_{e}\leqslant 0$    \\
	\hline
\multirow{3}{*}{$c^{*}_{2,+}<c^{*}_{2,-}<c^{*}_{1,+}<c^{*}_{1,-}$} & \multicolumn{1}{c:}{\multirow{4}{*}{See \cite{zhou2024a}$^{*,a}$}} & \multicolumn{1}{c|}{\multirow{2}{*}{This work provides} } & \multicolumn{1}{c:}{\multirow{4}{*}{See \cite{zhou2024a}$^{*,b}$}} & \multirow{2}{*}{This work provides}\\
  & \multicolumn{1}{c:}{}   &   \multicolumn{1}{c|}{\multirow{2}{*}{a classification}} & \multicolumn{1}{c:}{}        &   \multirow{2}{*}{upper bounds}  \\
  & \multicolumn{1}{c:}{}   & \multicolumn{1}{c|}{}    & \multicolumn{1}{c:}{}  & \multirow{1}{*}{}   \\
	\cline{1-1}  \cline{3-3} \cline{5-5}
$c^{*}_{2,+}<c^{*}_{1,+}<c^{*}_{2,-}<c^{*}_{1,-}$ & \multicolumn{1}{c:}{} & \multicolumn{1}{c|}{\multirow{1}{*}{Open} } & \multicolumn{1}{c:}{}    &   Open \\
 \hline
$c^{*}_{2,+}<c^{*}_{1,+}<c^{*}_{1,-}<c^{*}_{2,-}$ & \multicolumn{4}{c}{\multirow{2}{*}{Open}} \\
\cline{1-1}
$c^{*}_{1,+}<c^{*}_{2,+}<c^{*}_{2,-}<c^{*}_{1,-}$&  \multicolumn{4}{c}{}     \\
 \hline
$c^{*}_{2,\pm}>c^{*}_{1,\pm}$& \multicolumn{1}{c:}{See \cite{zhou2024a}} & \multicolumn{1}{c|}{Open}  & \multicolumn{1}{c:}{See \cite{zhou2024a}} & Open \\
 \hline
\end{tabular}
\end{center}
\begin{flushleft}
$^{*,a}$ All critical cases are addressed in this paper, see Theorem \ref{thm:u-speed-comp}. \\
$^{*,b}$ Upper bounds are provided in this paper for all  critical cases, see Theorems \ref{thm:v-maxspeed-right}-\ref{thm:v-maxspeed-left}.
\end{flushleft}
	\label{table-compact}
\end{table}

\begin{table}[htpb]
	\begin{center}
			\caption{Spreading speed of \eqref{eq:pp-shift} with exponentially decaying initial data.}
			\renewcommand{\arraystretch}{1.3}
\begin{tabular}{c|cc}
\hline
 Various scenarios & \multicolumn{1}{c|}{Spreading speed of the prey} & Spreading speed of predators \\ \hline
\multirow{2}{*}{$s_{2,-}^{r}<s_{1,+}^{r}$,\,$s_{2,-}^{l}<s_{1,+}^{l}$} & \multicolumn{1}{c|}{\multirow{1}{*}{This work provides}}    & \multirow{1}{*}{This work provides}   \\ [-5pt]
 & \multicolumn{1}{c|}{a classification}   &   \multicolumn{1}{c}{\multirow{1}{*}{upper bounds} } \\ \hline
 Otherwise& \multicolumn{2}{c}{Open}                                                  \\ \hline
\end{tabular}
\label{table-exp}
\end{center}
\end{table}

\subsection{Advances and new phenomena}
Our work appears to be the first to analyze the long-time dynamics of a nonlocal dispersal predator-prey system using the theory of viscosity solutions.
This work makes significant advances in the analysis of spreading speed for \eqref{eq:pp-shift}, providing a complete description for several cases that had eluded a complete characterization in past studies. We also reveal some previously uncharacterized propagation phenomena in system \eqref{eq:pp-shift}. These phenomena are often difficult to address within the framework of traditional super-subsolution methods and compactness techniques, particularly in nonlocal dispersal systems. Our work therefore establishes a new exploration in this emerging research direction.

\begin{itemize}
	\item The spreading speed of the prey is subject to \textbf{nonlocal determinacy}. Our results in Theorems \ref{thm:u-speed-exp-right}-\ref{thm:u-speed-comp} indicate that the spreading speed of the prey is determined not merely by the decay of initial data,  but by the complex coupling of $c_{e}$ and two limiting states (i.e. the case of  $\alpha(\cdot)\equiv \alpha_{-}$ or $\alpha(\cdot)\equiv \alpha_{+}$).
 In particular, within the regions $V_{1,r}^d$, $V_{1,l}^b$ and $V_{1,l}^c$ (see the colored areas in Figure \ref{figure:speed-exp}), the spreading speed of the prey does not align solely with the shifting speed $c_{e}$ or the spreading speed of the limiting equation. Instead,
	it emerges as a global phenomenon that cannot be inferred from local conditions at the invasion front alone; we therefore term this \textbf{nonlocal determinacy} and call the resulting speed the \textbf{nonlocally determined speed}. This nonlocal behavior is reflected in a dynamic adjustment of the decay rate of $u$ near the shifting environmental front, an adjustment driven by the globally heterogeneous distribution of the resource function $\alpha(\cdot)$.
Notably, during leftward propagation,  such nonlocal determinacy can occur for any initial decay rate. However, in rightward propagation, it arises exclusively when the initial decay rate is sufficiently small (i.e. $\lambda_{1}^r<\mu^{*}_{1,-}$).

Moreover, for a given $\lambda_{1}^r<\mu^{*}_{1,-}$, the prey's rightward spreading speed exhibits a transition among several distinct regimes as $c_{e}$ increases: critical or non-critical wave speed of the limiting equation, the habitat shifting speed $c_{e}$, and the nonlocally determined speed.
A similar transition has been reported for the scalar equation \eqref{eq:rd-shift} in \cite{lam2025} and for \eqref{non-shifting} in our previous work \cite{tao-shift} .
	\item Theorems \ref{thm:u-speed-exp-right}-\ref{thm:u-speed-comp} establish a complete characterization of the linear spreading speed for the prey. In particular, under the initial conditions satisfying {\rm (I$^{\infty}$)}, Theorem \ref{thm:u-speed-comp} establishes the spreading speed for both $c_e>0$ and $c_{e}\leqslant 0$, the latter of which was not addressed in  \cite{zhou2024a}. A significant finding is that when $c_{e}\leqslant 0$, the prey  exhibits a nonlocally determined spreading speed, a novel phenomenon observed for \eqref{eq:pp-shift}.  Furthermore, our analytical framework achieves these results without requiring the large diffusion rate conditions \eqref{eq:large-diffusion} imposed in \cite{zhou2024a}.

	\item Based on the large deviation estimate \eqref{eq:large-devi} and the construction of viscosity solutions, our results indicate that when $c_{e}>s_{1,+}^r$ (or $c_{e}<-s_{1,-}^l$), the rightward (or leftward) spreading speed is  $c_{u}^r$ (or $c_{u}^l$). However, this does not imply that the solution of the Cauchy problem with front-like initial data converges to a traveling wave with speed $c_{u}^r$ (or $c_{u}^l$), due to differences in their decay behavior as $x\to+\infty$ (or $x\to-\infty$).
\end{itemize}
These novel observations, along with the remaining open questions in Tables \ref{table-compact} and \ref{table-exp}, will inspire deeper research into model \eqref{eq:pp-shift}.

\subsection{Main theorems} Now we introduce the necessary notations  and present the main theorems. Since $H_{i}$ given by \eqref{eq:hi} is convex and smooth on $\mathbb{R}$, so is its Lagrangian $L_{i}$ in \eqref{eq:lagrangian}. Therefore, the first order derivative of $H_{i}$ (or $L_{i}$) denoted by $H_{i}'$ (or $L_{i}'$) exists and is strictly increasing on $\mathbb R$ (see Lemma \ref{lem:hamilton}).
When $(c_{e},\lambda_{i}^{r})\in \hat{\mathcal{R}}_{i}\cap\{(c_{e},\lambda_{i}^{r})|c_{e}\lambda_{i}^{r}-H_{i,+}(\lambda_{i}^{r})< L_{i,-}(c_{e})\}$, where $\hat{\mathcal{R}}_{i}$ is given by
\begin{align}\label{eq:region-R}
	 \hat{\mathcal{R}}_{i}:=\{(c_{e},\lambda_{i}^{r})|c_{e}>c_{i,+}(\lambda_{i}^{r}) \text{~for~}\lambda_{i}^{r}\leqslant \mu_{0,i}\text{~or~} c_{e}>c^{*}_{i,+} \text{~for~}\lambda_{i}^{r}\in(\mu_{0,i},\infty)\},
\end{align}
and $\mu_{0,i}$ is the smallest root of $c^{*}_{i,-}=c_{i,+}(\mu)$ in $\mathbb{R}^{+}$,
let $p^{*}_{i}=p^{*}_{i}(c_{e},\lambda_{i}^{l})$ be the smallest root of
\begin{align}\label{eq:root-left-intro}
c_{e} p-H_{i,-}(p)=c_{e}\lambda_{i}^{r}-H_{i,+}(\lambda_{i}^{r}).
\end{align}
For $c_e\in
\overline{\mathcal{E}}_{i}:=\{c_{e}|c_{e}\leqslant -c^{*}_{i,-}=-H'_{i}(\mu^{*}_{i,-})\}$, let  $\bar{p}_{i}=\bar{p}_{i}(c_{e})$ be the smallest root of
\begin{equation}\label{eq:p-bar-root}
	-c_{e}p-H_{i,+}(p)=L_{i,-}(c_{e}).
\end{equation}
Then there exists a unique constant  $\bar{c}_{i}>c^{*}_{i,-}$ such that $\bar{p}_{i}(\bar{c}_{i})=\mu^{*}_{i,+}$(see Lemma \ref{lem:left-para-p}). Besides, for $(c_{e},\lambda^{l}_{i})\in\underline{\mathcal{E}}_{i}:=\{(c_{e},\lambda^{l}_{i})\,|\,c_{e}\leqslant -c_{i,-}(\lambda^{l}_{i}) \text{~when~}\lambda^{l}_{i}\leqslant \mu^{*}_{i,-},\text{~or~}c_{e}\leqslant -H_{i}'(\lambda^{l}_{i}) \text{~when~}\lambda^{l}_{i}>\mu^{*}_{i,-}\}$, let $\underline{p}_{i}=\underline{p}_{i}(c_{e},\lambda^{l}_{i})$ be the smallest root of
\begin{equation}\label{eq:p-under-root}
	-c_{e}p-H_{i,+}(p)=-c_{e}\lambda^{l}_{i}-H_{i,-}(\lambda^{l}_{i}).
\end{equation}

\begin{figure}[htpb]
	\centering
	\subfigure[Rightward division.]{\includegraphics[width=0.43\textwidth]{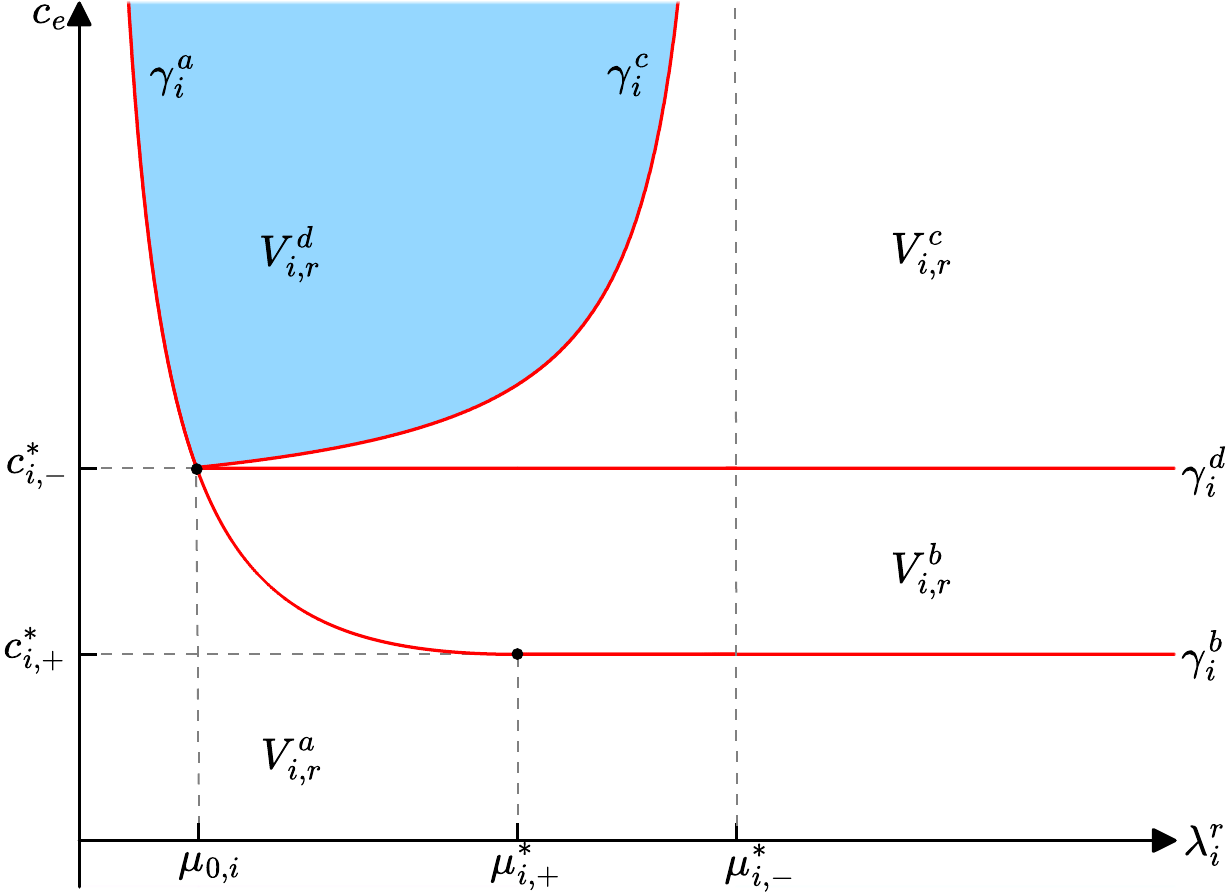}}\hspace{1.5em}
	\subfigure[Leftward division.]{\includegraphics[width=0.43\textwidth]{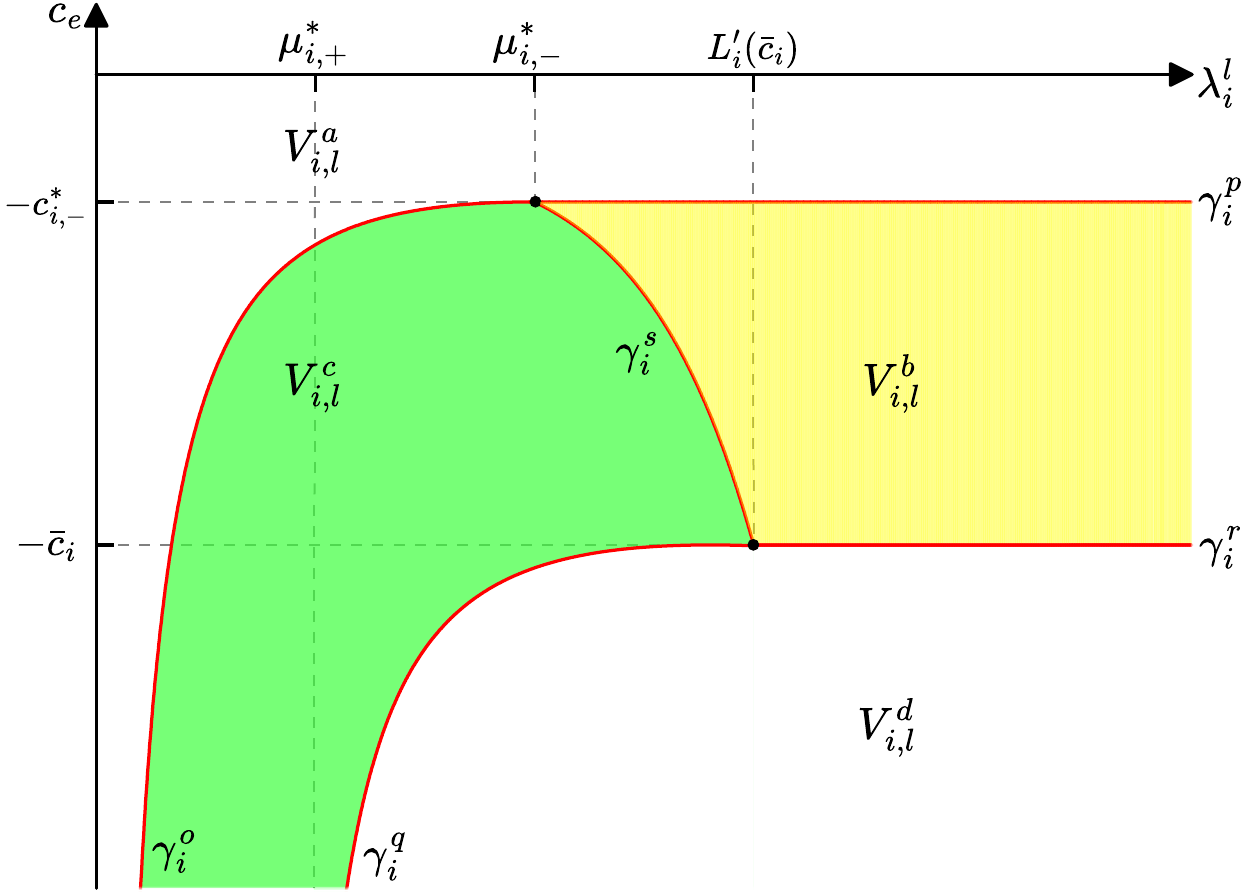}}
	\caption{Division of $E_{i}^{r}:=\{(\lambda_{i}^{r}, c_e)\}$ and $E_{i}^{l}:=\{(\lambda_{i}^{l}, c_e)\}$ by Lemmas \ref{lem:right-division}-\ref{lem:left-division}. }
	\label{figure:speed-exp}
	\end{figure}

The following three theorems provide explicit formulas for the spreading speed of the prey.

\begin{theorem}\label{thm:u-speed-exp-right}
	Assume that  {\rm (J)}, {\rm (A)}, {\rm (H1)-(H2)}, {\rm (FU)} and {\rm (I$^{\lambda}$)} hold. Let $\gamma_{1}^{a},\gamma_{1}^{b},\cdots, \gamma_{1}^{d}$ and  $V_{1,r}^{a}$, $V_{1,r}^{b}$,$\cdots$,$V_{1,r}^{d}$ be given by Lemma \ref{lem:right-division} (see also Figure \ref{figure:speed-exp}-(a)).
Then the rightward spreading speed of the prey satisfies the  following alternatives.
\begin{equation*}
		c_{u}^{r}=
	\begin{cases}
	s_{1,+}^{r}	 &\text{if } (\lambda_{1}^r, c_e)\in V_{1,r}^{a}\cup \gamma_{1}^{a}\cup \gamma_{1}^{b},\\
 c_{e} &\text{if } (\lambda_{1}^r, c_e)\in V_{1,r}^{b}\cup \gamma_{1}^{d},\\
	c_{1,-}^{*} &\text{if }  (\lambda_{1}^r, c_e)\in V_{1,r}^{c}\cup \gamma_{1}^{c},\\
		c_{1,-}(p^{*}_{1}) &\text{if }(\lambda_{1}^r, c_e)\in V_{1,r}^{d}.
	\end{cases}
	\end{equation*}
\end{theorem}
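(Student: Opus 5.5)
We will follow the Hamilton--Jacobi framework outlined in the Analytical framework subsection. Set $\varepsilon=1/k$ and rescale the prey equation hyperbolically, $u^\varepsilon(x,t)=u(x/\varepsilon,t/\varepsilon)$. Applying the Hopf--Cole transform $w^\varepsilon=-\varepsilon\log u^\varepsilon$, we obtain the half-relaxed limits $w_*$ and $w^*$. By Propositions \ref{prop:lower-hj-control}--\ref{prop:upper-hj-control}, $w_*$ is a viscosity supersolution and $w^*$ a viscosity subsolution of the time-dependent variational inequality, with rates $R$ that differ for the two envelopes. The predators enter $R$ only through the region where $v$ is already established.

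Under (FU), the predator speed is at most $s^r_{2,-}<s^r_{1,+}$. Ahead of $s^r_{2,-}$ the predator term is therefore negligible, and there $R(s)$ reduces to $r_1\alpha_-$ for $s<c_e$ and to $r_1\alpha_+$ for $s>c_e$. We would exploit this by proving the comparison only on the half-line $s> s_{2,-}^r+\eta$, where both envelopes see the same reaction. The 1-homogeneous reductions $\rho_*$ and $\rho^*$ from \eqref{eq:1-homogeneity} then solve \eqref{eq:hj-t-indep-sec1} as super- and subsolutions on $(s^r_{2,-},\infty)$. They also satisfy two boundary conditions:
\begin{itemize}
\item a left boundary condition $\rho=0$ near $s^r_{2,-}$, coming from prey persistence behind the front (Lemma \ref{lem:low-hj-control});
\item the growth condition $\rho(s)\sim\lambda_1^r s$ (or $+\infty$ for (I$^\infty$)) as $s\to+\infty$, coming from the initial decay (Corollary \ref{coro:up-hj-control}).
\end{itemize}

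The core of the proof is to construct, for each parameter region of Lemma \ref{lem:right-division}, an explicit continuous viscosity solution $\rho\ge 0$ of
$$\min\{\rho-s\rho'+H_1(\rho')+R(s),\rho\}=0$$
with these boundary data. We then invoke Lemma \ref{lem:viscosity-comparison-t-indep} on the right half-line to get $\rho\le\rho_*\le\rho^*\le\rho$. The spreading speed $c_u^r$ is the free boundary $\sup\{s:\rho(s)=0\}$. The $\liminf$ of $u$ inside the zero set follows from the standard argument that $w^\varepsilon\to0$ locally uniformly implies positivity, via the persistence lemmas. Extinction ahead of $c_u^r$ follows from $\rho>0$ together with \eqref{eq:large-devi}.

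The candidate solutions are piecewise, built from the Legendre structure.
\begin{itemize}
\item On $s>c_e$, use $\rho=\lambda s-H_{1,+}(\lambda)$ with $\lambda=\lambda_1^r\wedge\mu^*_{1,+}$ where the line stays above $L_{1,+}$, and $\rho=L_{1,+}(s)$ otherwise.
\item On $s<c_e$, use $\rho=\max\{0,L_{1,-}(s)\}$ or an affine piece $p s-H_{1,-}(p)$.
\item Glue the pieces at $s=c_e$, where $R$ jumps.
\end{itemize}
In $V^a_{1,r}$ (including $\gamma^a_1,\gamma^b_1$) the front lies in $s>c_e$ and has zero $s^r_{1,+}=c_{1,+}(\lambda)$. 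In $V^b_{1,r}$ (including $\gamma^d_1$) the right piece is already positive at $c_e$ while the left piece vanishes there, giving speed $c_e$. In $V^c_{1,r}$ (including $\gamma^c_1$) the zero of $L_{1,-}$ at $c^*_{1,-}$ lies left of $c_e$. In $V^d_{1,r}$, continuity at $c_e$ of $\lambda_1^r s-H_{1,+}(\lambda_1^r)$ forces the left slope $p$ to be the smallest root $p_1^*$ of \eqref{eq:root-left-intro}. Hence $\rho=p_1^*s-H_{1,-}(p_1^*)$ on the left, with zero at $c_{1,-}(p^*_1)$: this is the nonlocally determined speed. The smallest root is the correct choice because it yields the maximal subsolution, and the inequality $c_e\lambda_1^r-H_{1,+}(\lambda_1^r)<L_{1,-}(c_e)$ guarantees the root exists with $p_1^*<\mu^*_{1,-}$.

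\textbf{Main obstacle.} We expect the hardest step to be verifying the viscosity inequalities at the discontinuity $s=c_e$ and at the kinks. The discontinuous $R$ requires using its lower and upper semicontinuous envelopes for the super- and subsolution conditions respectively. At a convex kink at $c_e$ (slope jumping up), only the supersolution test with subdifferential slopes $p\in[p_-,p_+]$ applies. Using convexity of $H_1$, we must check
$$\rho(c_e)-c_ep+H_1(p)+\min(r_1\alpha_\pm)\ge0,$$
which reduces to the defining root equation. Concave kinks must be checked against the subsolution test, and these are exactly where the region boundaries $\gamma^i_1$ come from. A second delicate point is justifying that predators do not alter $R$ on $(s^r_{2,-},\infty)$ for $\rho^*$. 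For this we use the upper bound on predator spread (to be proved separately via the $v$-equation comparison with $u\le\alpha_-$) together with (FU).
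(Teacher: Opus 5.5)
Your plan uses the same Hamilton--Jacobi machinery and essentially the same piecewise profiles as the paper, but the way you set up the comparison has a genuine gap. You restrict to the half-line $(s^r_{2,-}+\eta,\infty)$, where $\bar R$ and $\underline R$ agree, and this requires the left boundary value $\rho^*(s^r_{2,-}+\eta)=0$. Nothing available a priori gives it. Lemma \ref{lem:low-hj-control} only yields $\rho_*(0)=0$; for $\rho_*$ the left boundary is harmless anyway, since $\rho_*\ge 0$. The only a priori persistence, \eqref{eq:u-x0-r1}, gives $w^*(x,t)=0$ merely for $|x|<\tilde c^*t$, with $\tilde c^*$ from \eqref{eq:speed-homo-low-bound}. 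This can be far below $s^r_{2,-}$: let $\alpha_+-a\mathcal{V}_-\to 0^+$, which leaves (FU) unchanged. That the prey already occupies the region up to speed $s^r_{2,-}+\eta$ is part of the lower bound $c^r_u\ge s_r$ you want to prove, so your argument is circular exactly on the persistence side.

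The paper avoids this by comparing on all of $[0,\infty)$, using $\rho_*(0)=\rho^*(0)=0$ from Proposition \ref{prop:local-bound-u}. It also does not ask $\rho$ to solve a single equation. Instead, $\rho$ is a subsolution of \eqref{eq:hj-tindep-up} (with $\bar R$, giving $\rho\le\rho_*$) and a supersolution of \eqref{eq:hj-tindep-low} (with $\underline R$, giving $\rho^*\le\rho$); see Proposition \ref{prop:speed-r}. Every constructed $\rho$ vanishes on $[0,s_r]$ with $s_r\ge s^r_{1,+}>s^r_{2,-}$ by (FU). So the predator-affected part of $\underline R$ enters only the supersolution test at points where $\rho\equiv 0$ and $\phi'=0$. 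There it reduces to $\underline R^*\ge 0$, i.e.\ (H2), and subsolution tests are void. This is the real role of (FU) and (H2), and it is what your plan is missing.

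Your treatment of the jump at $c_e$ is also reversed. With the paper's (standard) definitions, supersolutions are tested with the upper envelope $R^*$ and subsolutions with the lower envelope $R_*$. You state the opposite and check $\rho(c_e)-c_ep+H_1(p)+\min\{r_1\alpha_+,r_1\alpha_-\}\ge 0$ at convex kinks. Under your convention the decisive checks fail:
\begin{enumerate}
\item[(i)] In $V^b_{1,r}$ the profile cannot jump at $c_e$. It vanishes there and leaves with slope $\nu_2$ (Lemma \ref{lem:right-B}), where $\nu_1<\mu^*_{1,+}<\nu_2$ solve $c_{1,+}(\nu)=c_e$. For $p\in(\nu_1,\nu_2)$ in the subdifferential, $-c_ep+H_{1,+}(p)<0$.
\item[(ii)] In $V^d_{1,r}$ the kink at $c_e$ is concave, with slope $p^*_1$ on the left and $\lambda^r_1<p^*_1$ on the right. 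Your convention would demand $c_ep-H_{1,-}(p)\ge c_ep^*_1-H_{1,-}(p^*_1)$ on $[\lambda^r_1,p^*_1]$. This is false, because $p\mapsto c_ep-H_{1,-}(p)$ is strictly increasing on $(0,L_1'(c_e))\ni p^*_1$.
\end{enumerate}
With the correct envelopes, (i) becomes $-c_ep+H_{1,-}(p)\ge 0$, true since $c_e\le c^*_{1,-}$. Likewise (ii) becomes $c_ep-H_{1,+}(p)\ge c_e\lambda^r_1-H_{1,+}(\lambda^r_1)$, true since $p\mapsto c_ep-H_{1,+}(p)$ is increasing on $(0,L_1'(c_e))$.

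Two smaller points. First, $p^*_1<\mu^*_{1,-}$ comes from $c_e>k_1(\lambda^r_1)$ (Lemma \ref{lem:p*-equiv}), not from the inequality that guarantees existence of $p^*_1$. Second, in $V^c_{1,r}$, once $\lambda^r_1$ exceeds the smaller root $\check p_1$ of $c_ep-H_{1,+}(p)=L_{1,-}(c_e)$, gluing $L_{1,-}$ at $c_e$ requires an extra affine piece whose slope is the larger root $\hat p_1$ (Lemma \ref{lem:right-D}).
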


\begin{theorem}\label{thm:u-speed-exp-left}
	Assume that  {\rm (J)}, {\rm (A)}, {\rm (H1)-(H2)}, {\rm (FU)} and {\rm (I$^{\lambda}$)} hold. Let $\gamma_{1}^{o},\gamma_{1}^{p},\cdots, \gamma_{1}^{s}$ and  $V_{1,l}^{a}$, $V_{1,l}^{b}$,$\cdots$,$V_{1,l}^{d}$ be given by Lemma \ref{lem:left-division} (see also Figure \ref{figure:speed-exp}-(b)).
	Then the leftward spreading speed of the prey satisfies the  following alternatives.
		\begin{equation*}
		c_{u}^{l}=
	\begin{cases}
	-s_{1,-}^{l}	 &\text{if } (\lambda_{1}^l,c_{e})\in V_{1,l}^{a}\cup \gamma_{1}^{o}\cup \gamma_{1}^{p},\\
 -c_{1,+}(\underline{p}_{1}) &\text{if } (\lambda_{1}^l,c_{e})\in V_{1,l}^{c},\\
	-c_{1,-}(\bar p_{1}) &\text{if } (\lambda_{1}^l,c_{e})\in V_{1,l}^{b}\cup \gamma_{1}^{s},\\
	-c_{1,+}^{*} &\text{if } (\lambda_{1}^l,c_{e})\in V_{1,l}^{d}\cup \gamma_{1}^{q}\cup \gamma_{1}^{r}.
	\end{cases}
	\end{equation*}
\end{theorem}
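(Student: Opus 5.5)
We prove Theorem \ref{thm:u-speed-exp-left} with the Hamilton--Jacobi framework already used for the rightward statement, now on the left half-line. We reflect $x\mapsto -x$, which turns the leftward problem into a rightward one. Under the reflection the habitat becomes nondecreasing: it equals $\alpha_{+}$ near $-\infty$ and $\alpha_{-}$ near $+\infty$, and it moves with speed $-c_e$. The initial decay rate is $\lambda_1^l$.

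Let $w^{\varepsilon}$ be the Hopf--Cole/hyperbolic rescaling of $u$, and let $\rho_*$, $\rho^*$ be the $1$-homogeneous half-relaxed limits defined in \eqref{eq:1-homogeneity}. By Propositions \ref{prop:lower-hj-control} and \ref{prop:upper-hj-control}, they are respectively a viscosity supersolution and a viscosity subsolution of \eqref{eq:hj-t-indep-sec1} on $(-\infty,0)$. The reaction term is $R(s)=r_1\alpha_-$ for $s<c_e$ and $R(s)=r_1\alpha_+$ for $s>c_e$, with the appropriate semicontinuous envelope at $s=c_e$. On the left, the predators play no role in the supersolution or subsolution inequalities, because (FU) gives $s^l_{2,-}<s^l_{1,+}$. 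Hence wherever $\rho>0$ the predator density is exponentially small, and the reaction is simply $r_1\alpha(\cdot)$.

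The boundary data come from Lemma \ref{lem:low-hj-control} and Corollary \ref{coro:up-hj-control}. Persistence of the prey in the bulk gives $\rho_*=\rho^*=0$ on a neighbourhood of $s=0$. The initial data give $\rho(s)\sim -\lambda_1^l s$ as $s\to-\infty$.

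The heart of the proof is to construct, in each region of Lemma \ref{lem:left-division}, an explicit $\rho\in C((-\infty,0];[0,\infty))$ that is a viscosity solution with these boundary data. Then the comparison principle on the left half-line (Lemma \ref{lem:viscosity-comparison-t-indep}, which is valid under (H2)) gives $\rho\le\rho_*\le\rho^*\le\rho$, as in \eqref{eq:r-equiv}. The leftward speed is then $c_u^l=\sup\{s<0:\rho(s)>0\}$, and the usual argument converts this into the two spreading statements: $\rho^*>0$ gives extinction, and $\rho_*=0$ gives persistence via the persistence lemmas. Each candidate $\rho$ is built from pieces of the form $\rho(s)=\max\{0,\,-ps-H_{1,\pm}(p)\}$, or the Legendre-type profile $sL'$/$L_{1,\pm}(s)$, glued at $s=c_e$. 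The pieces are:
\begin{enumerate}
\item[(a)] In $V^a_{1,l}\cup\gamma^o_1\cup\gamma^p_1$ the front lies in the $\alpha_-$ zone. Take $\rho(s)=\max\{0,\,-\hat\lambda s-H_{1,-}(\hat\lambda)\}$ with $\hat\lambda=\lambda^l_1\wedge\mu^*_{1,-}$, using the envelope $L_{1,-}$ when $\lambda_1^l>\mu^*_{1,-}$. Its zero is $-s^l_{1,-}$.
\item[(b)] In $V^c_{1,l}$ the front lies in the $\alpha_+$ zone. To the left of $c_e$ we keep slope $\lambda^l_1$ with $H_{1,-}$; at $s=c_e$ we switch to slope $\underline p_1$ with $H_{1,+}$. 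The root equation \eqref{eq:p-under-root} is exactly continuity at $c_e$, and the zero is $-c_{1,+}(\underline p_1)$.
\item[(c)] In $V^b_{1,l}\cup\gamma^s_1$ use $L_{1,-}(s)$ for $s<c_e$ and then the affine piece with slope $\bar p_1$. Continuity at $c_e$ is \eqref{eq:p-bar-root}, and the zero is $-c_{1,-}(\bar p_1)$.
\item[(d)] In $V^d_{1,l}\cup\gamma^q_1\cup\gamma^r_1$ the $\alpha_+$ zone ends with $L_{1,+}$, giving speed $-c^*_{1,+}$.
\end{enumerate}

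For each piece we must check the viscosity inequalities away from the gluing point (these are classical), at the points where $\rho$ becomes $0$, and at $s=c_e$. At $c_e$ the slope jump must be of the right type. A concave kink imposes only subsolution tests and a convex kink only supersolution tests, and these must be checked against the appropriate envelopes $R_*$ and $R^*$. The parameter constraints that define each region (Lemmas \ref{lem:left-division} and \ref{lem:left-para-p}, the ordering $\underline p_1,\bar p_1\lessgtr\mu^*_{1,\pm}$, and $c_e\le -c^*_{1,-}$) are exactly what make the kink inequality hold with the correct sign.

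We expect this kink verification at $s=c_e$ in the nonlocally determined regions $V^b_{1,l}$ and $V^c_{1,l}$ to be the main obstacle. There $\rho$ has a corner, and we must show through the smallest-root choice and the convexity of $H_1$ that every test slope in the sub/superdifferential satisfies the required inequality for the discontinuous $R$. A second point to treat carefully is the boundary curves: they must be handled by continuity of the formulas in $(\lambda^l_1,c_e)$, or by a direct check, so that the cases glue consistently.
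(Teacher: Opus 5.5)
Your plan follows the paper's own proof. Theorem \ref{thm:u-speed-exp-left} is assembled from Lemmas \ref{lem:left-A}--\ref{lem:left-D}. Each lemma builds an explicit $\rho$ on $(-\infty,0]$ that is a subsolution of the $\bar R$-equation and a supersolution of the $\underline R$-equation with the correct boundary data, and then Corollary \ref{coro:speed-l} is applied. The reflection $x\mapsto -x$ is never needed.

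The step you expect to be the main obstacle is actually easy. In every leftward construction the kink at $c_e$ is convex: the slope jumps from $-\lambda_1^l$ to $-\underline p_1$ with $\underline p_1<\lambda_1^l$, or from $-L_1'(-c_e)$ to $-\bar p_1$ with $\bar p_1<L_1'(-c_e)$. So only the supersolution test with $\underline R^*(c_e)=r_1\alpha_-$ arises. That test reduces either to monotonicity of $\mu\mapsto -c_e\mu-H_{1,-}(\mu)$ on $(0,L_1'(-c_e)]$ or to the definition of $L_{1,-}$. What does need fixing is the following.

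\textbf{Case (c), the speed.} Your own gluing condition \eqref{eq:p-bar-root} makes the affine piece $-\bar p_1 s-H_{1,+}(\bar p_1)$. Its zero is $-c_{1,+}(\bar p_1)$, not $-c_{1,-}(\bar p_1)$, and the two differ because $c_{1,-}>c_{1,+}$. This is what Lemma \ref{lem:left-D} actually proves. It is also the only value continuous across $\gamma_1^p$ and $\gamma_1^r$: at $c_e=-c^*_{1,-}$ one has $\bar p_1=\mu_{0,1}$, giving $-c^*_{1,-}$, and at $c_e=-\bar c_1$ one has $\bar p_1=\mu^*_{1,+}$, giving $-c^*_{1,+}$. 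The $c_{1,-}$ in the displayed statement is a misprint, so do not try to prove it.

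\textbf{Case (c), the tail.} Using $L_{1,-}(s)$ on all of $s<c_e$ breaks the boundary condition $\rho(s)/|s|\to\lambda_1^l$, since $L_{1,-}$ is superlinear. Lemma \ref{lem:viscosity-comparison-t-indep} then no longer gives $\rho\le\rho_*$. You need the tail $-\lambda_1^l s-H_{1,-}(\lambda_1^l)$ on $(-\infty,-H_1'(\lambda_1^l)]$, which joins $L_{1,-}$ in a $C^1$ way. The condition $c_e>-H_1'(\lambda_1^l)$ in the definition of $V^b_{1,l}$ is exactly what makes the $L_{1,-}$ segment nonempty.

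\textbf{Case (a).} When $\lambda_1^l>\mu^*_{1,-}$, the profile must be the three-piece function \eqref{eq:left-Bb}, with slope $\lambda_1^l$ at $-\infty$. The one-line formula with $\hat\lambda=\mu^*_{1,-}$ would break $\rho^*\le\rho$.

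\textbf{Case (d).} This is not yet a construction. You need the two profiles \eqref{eq:left-Aa}--\eqref{eq:left-Ab}. The key input is $\underline p_1\ge\mu^*_{1,+}$ (respectively $\bar p_1\ge\mu^*_{1,+}$) from Lemma \ref{lem:left-pp-equiv}. This gives $-H_1'(\underline p_1)\le -c^*_{1,+}$, so an $L_{1,+}$ segment fits before the zero at $-c^*_{1,+}$. When instead $\underline p_1$ or $\bar p_1$ is below $\mu^*_{1,+}$, the affine piece reaches zero first. That dichotomy is exactly what separates $V^c_{1,l}$ and $V^b_{1,l}$ from $V^d_{1,l}$ along $\gamma_1^q$ and $\gamma_1^r$.

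\textbf{Two smaller slips.}
\begin{enumerate}
\item Extinction comes from $\rho_*>0$, and persistence from $\rho^*=0$ (Lemma \ref{lem:persist-solu} requires $w^*=0$), not the other way round. This is harmless only because you have the full equality.
\item The predators do enter the supersolution check on the zero set $(s_l,0)$. There the test reduces to $\underline R^*\ge 0$, which is exactly where (H2) is used.
\end{enumerate}
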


\begin{theorem}\label{thm:u-speed-comp}
	Assume that  {\rm (J)}, {\rm (A)}, {\rm (H1)-(H2)}, {\rm (FU)} and {\rm (I$^{\infty}$)} hold. Then the rightward and leftward spreading speeds of the prey  satisfy the  following alternatives.
	\begin{eqnarray*}
&c_{u}^{r}=\left\{\begin{aligned}
&c^{*}_{1,-}&&\text{if }c_{e}\geqslant c^{*}_{1,-},\\
&c_{e}&&\text{if }c_{e}\in(c^{*}_{1,+},c^{*}_{1,-}),\\
&c^{*}_{1,+}&&\text{if }c_{e}\leqslant c^{*}_{1,+},
\end{aligned}
\right. \quad
&c_{u}^l=\left\{\begin{aligned}
&-c^{*}_{1,-}&&\text{if }c_{e}\geqslant -c^{*}_{1,-},\\
&-c_{1,-}(\bar{p}_{1})&&\text{if }c_{e}\in(-\bar{c}_{1},-c^{*}_{1,-}),\\
&-c^{*}_{1,+}&&\text{if }c_{e}\leqslant -\bar{c}_{1}.
\end{aligned}
\right.
\end{eqnarray*}
\end{theorem}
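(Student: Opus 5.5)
Theorem \ref{thm:u-speed-comp} will be obtained as the $\lambda_1^r,\lambda_1^l\to\infty$ counterpart of Theorems \ref{thm:u-speed-exp-right}--\ref{thm:u-speed-exp-left}, inside the same Hamilton--Jacobi framework. We use the hyperbolic scaling and Hopf--Cole transform, and pass to the half-relaxed limits $w_*,w^*$. Their $1$-homogeneous profiles $\rho_*,\rho^*$ are a supersolution and a subsolution of \eqref{eq:hj-t-indep-sec1}. Under (I$^\infty$), the boundary conditions from Lemma \ref{lem:low-hj-control} and Corollary \ref{coro:up-hj-control} become $\rho_*(0)=\rho^*(0)=0$ and $\rho(s)=\infty$ for $s\neq 0$ at $t=0$, i.e.\ $w(x,0)=0$ if $x=0$ and $+\infty$ otherwise. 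By (FU) and (H2), the predator stays behind the prey front. Hence, on the region ahead of the predators, the effective reaction is $R(s)=r_1\alpha_-$ for $s<c_e$ and $r_1\alpha_+$ for $s>c_e$, with the predator term $-a r_1 v$ absent.

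The plan is to write down an explicit candidate viscosity solution $\rho\in C(\mathbb{R};[0,\infty))$ of \eqref{eq:hj-t-indep-sec1} in each regime of $c_e$. We then invoke the comparison principle of Lemma \ref{lem:viscosity-comparison-t-indep} separately on $[0,\infty)$ and $(-\infty,0]$ to get \eqref{eq:r-equiv}. The spreading speeds are then the endpoints of the zero set of $\rho$, via the large deviation estimate \eqref{eq:large-devi} and the usual persistence argument on $\{\rho=0\}$.

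\textbf{Rightward candidates.} For $c_e\geqslant c^*_{1,-}$ the front stays in the $\alpha_-$ zone, and we take $\rho=L_{1,-}(s)$ for $s\ge c^*_{1,-}$ and $0$ below. For $c_e\leqslant c^*_{1,+}$ the front lies in the $\alpha_+$ zone, and $\rho=L_{1,+}(s)$ near the front. The relevant point here is that $L_{1,-}(c_e)\le L_{1,+}(c_e)$, so the profile $L_{1,+}$ is not dragged by the favorable zone. For $c_e\in(c^*_{1,+},c^*_{1,-})$ we take $\rho=0$ on $[0,c_e]$ and, for $s>c_e$, the solution in the $\alpha_+$ zone with $\rho(c_e)=0$. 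That solution is the Legendre-type branch $\rho(s)=\sup_p\{ps-H_{1,+}(p)\}$ restricted by the constraint at $s=c_e$, and it is positive for $s>c_e$ because $c_e>c^*_{1,+}$. This gives $c_u^r=c_e$. Each case is just the limit $\lambda_1^r\to\infty$ of the regions $V_{1,r}^c$, $V_{1,r}^b$ and $V_{1,r}^a$ of Lemma \ref{lem:right-division}, since $\mu^*_{1,\pm}<\infty$ and $V_{1,r}^d$ disappears for large $\lambda_1^r$.

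\textbf{Leftward candidates.} Here $s<0$. For $c_e\ge -c^*_{1,-}$ the region $s<c_e$ near the left front has $\alpha_-$, and $\rho=L_{1,-}(s)$ gives $-c^*_{1,-}$. For $c_e<-c^*_{1,-}$ the habitat edge moves left faster than the $\alpha_-$ front, so the left front sits in the $\alpha_+$ zone but with a nonlocal boundary value. On $(c_e,0)$ the profile is $L_{1,-}$, and at $s=c_e$ it takes the value $L_{1,-}(c_e)>0$. Continuing into $s<c_e$ with Hamiltonian $H_{1,+}$ gives the affine piece $\rho(s)=-\bar p_1(s-c_e)+L_{1,-}(c_e)$, where $\bar p_1$ is given by \eqref{eq:p-bar-root} and ensures tangency to the Hamiltonian. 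Its zero is at $-c_{1,-}(\bar p_1)$ (after checking the formula with \eqref{eq:p-bar-root}). This construction holds only while $\bar p_1\le\mu^*_{1,+}$, i.e.\ $c_e>-\bar c_1$ by Lemma \ref{lem:left-para-p}. For $c_e\le-\bar c_1$ the affine piece is replaced by the envelope $\max\{\cdot,L_{1,+}(s)\}$ with zero at $-c^*_{1,+}$. Again this matches the $\lambda_1^l\to\infty$ limit of $V_{1,l}^a$, $V_{1,l}^b\cup\gamma_1^s$ and $V_{1,l}^d$ in Lemma \ref{lem:left-division}; $V_{1,l}^c$ disappears because $\underline{\mathcal E}_1$ degenerates.

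\textbf{Main obstacle.} The hard step is verifying the viscosity inequalities at the kink $s=c_e$, where $R$ is discontinuous and the envelopes $R^*,R_*$ enter. At a convex corner, supersolution test functions must satisfy the inequality with $R_*$; at a concave corner, subsolution tests need $R^*$. This is exactly where the choices of $\bar p_1$ as smallest root and of $\bar c_1$ matter. We also need that, near $\rho=0$, the two equations (with and without the predator term) coincide, which uses (FU). Alternatively, the whole theorem may be reduced to the earlier theorems by monotonicity in the initial data: (I$^\infty$) data lie below every (I$^\lambda$) datum, while compactly supported subsolutions can be placed below them. Letting $\lambda\to\infty$ and using continuity of the speed formulas in $\lambda$ then gives the result.
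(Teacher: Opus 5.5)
Your overall plan is the paper's. You build one continuous profile $\rho$ that is a subsolution of \eqref{eq:hj-tindep-up} and a supersolution of \eqref{eq:hj-tindep-low}, vanishes exactly on $[s_l,s_r]$, and satisfies $\rho(s)/|s|\to\infty$, and you feed it into Proposition \ref{prop:speed-r} and Corollary \ref{coro:speed-l}. Your matching of regimes to the $\lambda\to\infty$ limits of the regions in Lemmas \ref{lem:right-division}--\ref{lem:left-division} is also correct. Your profiles for $c_e\leqslant c^*_{1,+}$, for $c_e\in(c^*_{1,+},c^*_{1,-})$ and for $c_e\geqslant -c^*_{1,-}$ agree with the paper's. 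However, two of the profiles you actually write down are not viscosity solutions.

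(i) For $c_e\geqslant c^*_{1,-}$, taking $\rho=L_{1,-}$ on all of $[c^*_{1,-},\infty)$ fails the supersolution test of \eqref{eq:hj-tindep-low} on $(c_e,\infty)$. There $\underline R=r_1\alpha_+$, and $L_{1,-}-sL_{1,-}'+H_1(L_{1,-}')+r_1\alpha_+=r_1(\alpha_+-\alpha_-)<0$. So you get no bound $\rho^*\leqslant\rho$, and hence no lower bound on $c_u^r$. The correct profile is the limit of \eqref{eq:right-Dc}:
\begin{itemize}
\item $L_{1,-}$ only on $[c^*_{1,-},c_e)$;
\item then the line $\hat p_1 s-H_{1,+}(\hat p_1)$, with $\hat p_1$ the larger root of \eqref{eq:check-hat-p-root-right}, which is continuous at $c_e$ and has a convex kink there;
\item then $L_{1,+}$ beyond the tangency point $H_1'(\hat p_1)$.
\end{itemize}

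(ii) For $c_e<-c^*_{1,-}$ you have interchanged the zones: $s<c_e$ is the $\alpha_-$ zone and $(c_e,0)$ is the $\alpha_+$ zone. Your $L_{1,-}$ on $(c_e,0)$ fails the supersolution test for the same reason as in (i). Your line $-\bar p_1(s-c_e)+L_{1,-}(c_e)$ on $(-\infty,c_e)$ is $\geqslant L_{1,-}(c_e)>0$, so it produces no front. It also grows only linearly at $-\infty$, which violates $\rho(s)/|s|\to\lambda_1^l=\infty$, and the comparison step needs that condition. The correct profile is the limit of \eqref{eq:left-D0}: $L_{1,-}$ on $(-\infty,c_e]$, which is what supplies the superlinear growth, and $\max\{0,-\bar p_1 s-H_{1,+}(\bar p_1)\}$ on $(c_e,0]$. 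For $c_e\leqslant-\bar c_1$ it is the limit of \eqref{eq:left-Ab}: the line, then $L_{1,+}$ after tangency at $-H_1'(\bar p_1)$, then $0$ on $(-c^*_{1,+},0]$.

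The line vanishes at $-H_{1,+}(\bar p_1)/\bar p_1=-c_{1,+}(\bar p_1)$, not at $-c_{1,-}(\bar p_1)$. This is what Lemma \ref{lem:left-compact} actually proves. It is also the only value that is continuous at $c_e=-c^*_{1,-}$, where $\bar p_1=\mu_{0,1}$ and $c_{1,+}(\mu_{0,1})=c^*_{1,-}$, and at $c_e=-\bar c_1$. The $c_{1,-}$ in the displayed statement is a misprint, and your ``check'' that forces agreement with it is an algebra error.

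Two further problems.

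First, your envelope convention is reversed. By the definitions in Section \ref{sec:preli}, supersolutions are tested with $R^*$ and subsolutions with $R_*$. This is decisive at the kink $s=c_e$. For \eqref{eq:right-compact}, the supersolution test uses $\underline R^*(c_e)=r_1\alpha_-$ and reduces to $H_{1,-}(p)\geqslant c_e p$ on $[0,\nu_2]$, which holds since $c_e<c^*_{1,-}$. With $r_1\alpha_+$ instead, the test fails for $p\in(\nu_1,\nu_2)$.

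Second, your fallback via monotonicity in the initial data is not available. The predator--prey system has no comparison principle. Even for a scalar equation, dominating (I$^{\infty}$) data by slower-decaying (I$^{\lambda}$) data yields only upper bounds on the speed.

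The paper takes the limit $\lambda\to\infty$ at the level of the Hamilton--Jacobi profiles instead. The exponential-case profiles converge in $C_{loc}$ as $\lambda_1^r,\lambda_1^l\to\infty$. Their sub/supersolution properties for the fixed equations \eqref{eq:hj-tindep-up} and \eqref{eq:hj-tindep-low} pass to the limit by stability \cite[Theorem 6.2]{barles2013}. Superlinearity of $L_{1,\pm}$ then gives $\lim\rho(s)/|s|=\infty$. With the corrected profiles, and either that stability step or a direct kink check using the correct envelopes, your plan goes through.
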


\begin{figure}[htpb]
	\centering
	\includegraphics[height=2.2in,trim=2 40 0 0, clip]{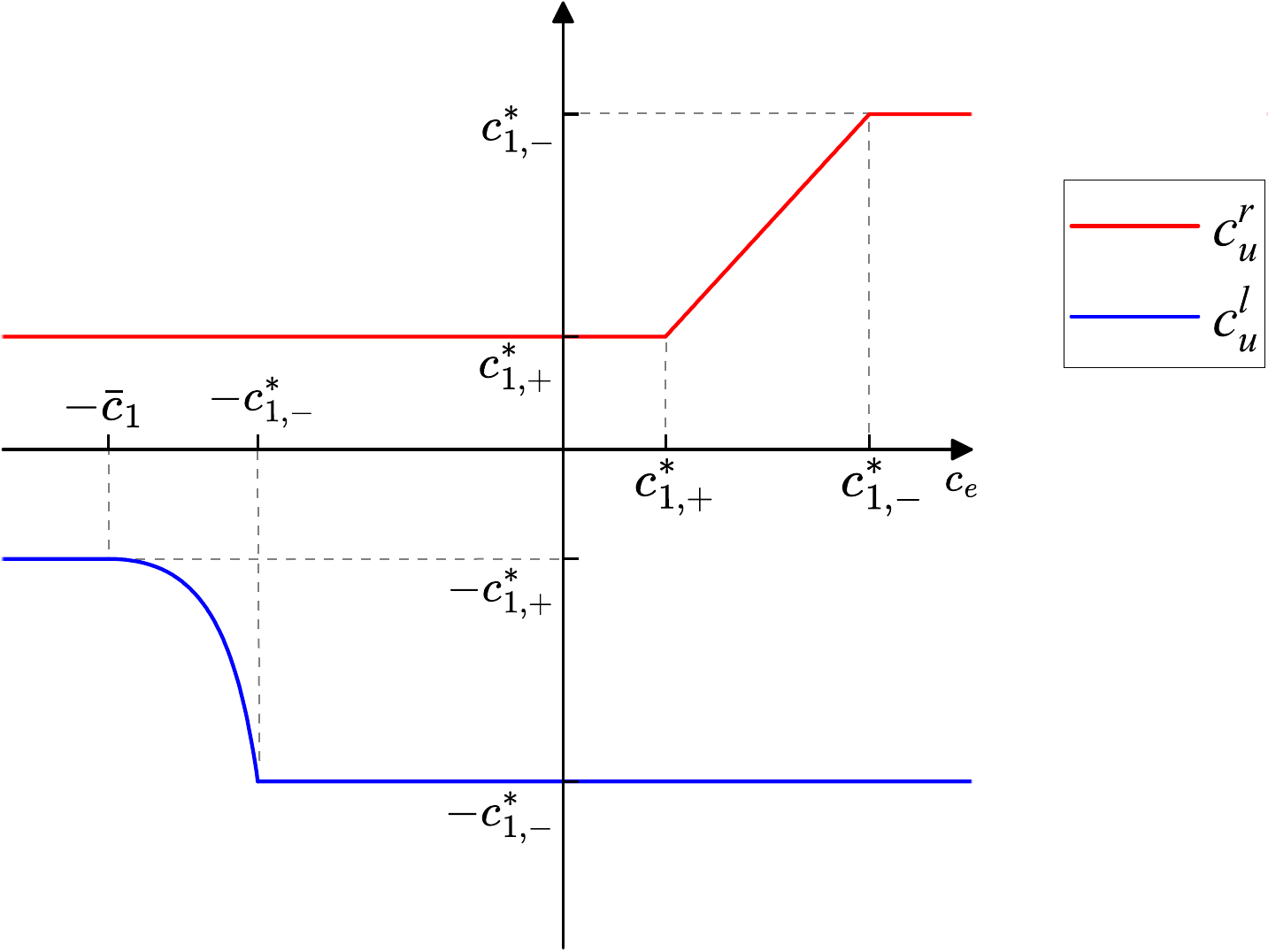}
	\caption{Spreading speeds of the prey with respect to $c_{e}$ when ($\text{I}^\infty$) holds.
	}
	\label{figure:speed-com}
\end{figure}

The results in Theorems \ref{thm:u-speed-exp-right}, \ref{thm:u-speed-exp-left} and \ref{thm:u-speed-comp} provide a complete characterization of  the dependence of the rightward and  leftward spreading speeds on the shifting sped $c_e$ and  the decay rates of the initial data under conditions {\rm (I$^{\lambda}$)} or {\rm (I$^{\infty}$)}.
Moreover, these spreading speeds are continuous with respect to $c_{e}\in \mathbb{R}$ and $\lambda_{1}^r,\lambda_{1}^l\in (0,\infty]$. An illustrative diagram of this dependence under condition  ($\text{I}^\infty$) is given by Figure \ref{figure:speed-com}.

The following theorem further addresses the persistence of the prey.

\begin{theorem}\label{thm:u-terrace}
	Assume that  {\rm (J)}, {\rm (A)}, {\rm (H1)-(H2)}, {\rm (FU)} and {\rm (I$^{\lambda}$)} with $\lambda\in(0,\infty]$ hold. Then we have the following alternatives for the propagating terrace of the prey.
\begin{enumerate}
	\item[\rm(a)] If $c_{e}\geqslant s_{1,+}^r$, then
	\begin{equation}\label{eq:u-terrace-1}
	\begin{cases}
		\underset{t\rightarrow+\infty}\lim\, \underset{ (s_{2,-}^r+\eta)t \leqslant x \leqslant (c_{u}^r-\eta)t}\sup\, |u(x,t)-\alpha_{-}|=0, & \forall \eta\in (0,(c_{u}^r-s_{2,-}^r)/2),\\
		\underset{t\rightarrow+\infty}\lim\, \underset{ (-s_{1,-}^l+\eta)t \leqslant x \leqslant (-s_{2}^l-\eta)t}\sup\, |u(x,t)-\alpha_{-}|=0, & \forall \eta\in (0,(s_{1,-}^l-s_{2,-}^l)/2).
	\end{cases}
	\end{equation}
	\item[\rm(b)] If $s_{2,-}^r<c_{e}< s_{1,+}^r$, then
	\begin{equation*}\label{eq:u-terrace-2}
	\begin{cases}
		\underset{t\rightarrow+\infty}\lim\, \underset{ (c_{e}+\eta)t \leqslant x \leqslant (s_{1,+}^r-\eta)t}\sup\, |u(x,t)-\alpha_{+}|=0, & \forall \eta\in (0,(s_{1,+}^r-c_{e})/2),\\
		\underset{t\rightarrow+\infty}\lim\, \underset{ (s_{2,-}^r+\eta)t \leqslant x \leqslant (c_{e}-\eta)t}\sup\, |u(x,t)-\alpha_{-}|=0, & \forall \eta\in (0,(c_{e}-s_{2,-}^r)/2),\\
		\underset{t\rightarrow+\infty}\lim\, \underset{ (-s_{1,-}^l+\eta)t \leqslant x \leqslant (-s_{2}^l-\eta)t}\sup\, |u(x,t)-\alpha_{-}|=0, & \forall \eta\in (0,(s_{1,-}^l-s_{2,-}^l)/2).
	\end{cases}
	\end{equation*}
	\item[\rm(c)] If $-s_{1,-}^l<c_{e}<-s_{2,-}^l$, then
	\begin{equation}\label{eq:u-terrace-3}
		\begin{cases}
			\underset{t\rightarrow+\infty}\lim\, \underset{ (s_{2,-}^r+\eta)t \leqslant x \leqslant (s_{1,+}^r-\eta)t}\sup\, |u(x,t)-\alpha_{+}|=0, & \forall \eta\in (0,(s_{1,+}^r-s_{2,-}^r)/2),\\
			\underset{t\rightarrow+\infty}\lim\, \underset{ (c_{e}+\eta)t \leqslant x \leqslant (-s_{2,-}^l-\eta)t}\sup\, |u(x,t)-\alpha_{+}|=0, & \forall \eta\in (0,(-s_{2,-}^l-c_{e})/2),\\
			\underset{t\rightarrow+\infty}\lim\, \underset{ (-s_{1,-}^l+\eta)t \leqslant x \leqslant (c_{e}-\eta)t}\sup\, |u(x,t)-\alpha_{-}|=0, & \forall \eta\in (0,(s_{1,-}^l+c_{e})/2).
		\end{cases}
		\end{equation}
	\item[\rm(d)] If $c_{e}\leqslant -s_{1,-}^l$, then
	\begin{equation*}\label{eq:u-terrace-4}
		\begin{cases}
			\underset{t\rightarrow+\infty}\lim\, \underset{ (s_{2,-}^r+\eta)t \leqslant x \leqslant (s_{1,+}^r-\eta)t}\sup\, |u(x,t)-\alpha_{+}|=0, & \forall \eta\in (0,(s_{1,+}^r-s_{2,-}^r)/2),\\
			\underset{t\rightarrow+\infty}\lim\, \underset{ (c_{u}^l+\eta)t \leqslant x \leqslant (-s_{2,-}^l-\eta)t}\sup\, |u(x,t)-\alpha_{+}|=0, & \forall \eta\in (0,(-s_{2,-}^l-c_{u}^l)/2),\\
		\end{cases}
		\end{equation*}
\end{enumerate}
where $c_{u}^r$ and $c_{u}^l$ are given by Theorems \ref{thm:u-speed-exp-right}-\ref{thm:u-speed-comp}.
\end{theorem}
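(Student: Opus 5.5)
Our plan is to convert the spreading information already available into convergence of $u$ to the local carrying capacity on each region of the terrace. We use three inputs: the prey speeds of Theorems \ref{thm:u-speed-exp-right}--\ref{thm:u-speed-comp}, the upper bounds on the predator speed (Theorems \ref{thm:v-maxspeed-right}--\ref{thm:v-maxspeed-left}), and the shifting habitat. Consider first any region $\{(\eta_1)t\le x\le (\eta_2)t\}$ that lies strictly ahead of the predators, i.e. with $\eta_1>s_{2,-}^r$ on the right or $\eta_2<-s_{2,-}^l$ on the left. By (FU) and the predator upper bound, $v\to0$ uniformly there. The bound we actually use is the crude one: compare $v$ with the scalar equation $v_t=d_2(J_2*v-v)+r_2v(\mathcal V_- -v)$, valid because $u\le\alpha_-$, which gives the speed $s_{2,-}^{r/l}$.

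The equation for $u$ on these regions is then, asymptotically, the scalar shifting-habitat equation \eqref{non-shifting} with $\alpha(x-c_et)$. The limit of $\alpha(x-c_et)$ is $\alpha_+$ where $x/t>c_e$ and $\alpha_-$ where $x/t<c_e$. We therefore expect $u\to\alpha_{+}$ on cones with $c_e<x/t$ and $u\to\alpha_-$ on cones with $x/t<c_e$, intersected with the region where the prey has spread. The case split (a)--(d) is just the bookkeeping of where $c_e$ sits relative to the cones $(s_{2,-}^r,c_u^r)$ and $(c_u^l,-s_{2,-}^l)$. In case (a) we have $c_e\ge s_{1,+}^r$, so the prey cone lies entirely behind the habitat edge.

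To prove each convergence we proceed in two steps.

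\emph{Upper estimate.} We have $\limsup u\le\alpha_\pm$ locally uniformly on the cone. For $x/t>c_e+\eta$ this follows by comparing $u$ with solutions of $u_t=d_1(J_1*u-u)+r_1u(\alpha(x-c_et)-u)$, using that $\alpha$ is nonincreasing and $\alpha(x-c_et)\le \alpha_++o(1)$ there. We would do this with a spatially uniform ODE supersolution on a slightly smaller cone; since the kernel is compactly supported, a cone-localized comparison argument works.

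\emph{Lower estimate.} We have $\liminf u\ge\alpha_\pm$. First, positivity: the large-deviation identity $w^\varepsilon\to t\rho$ with $\rho=0$ on the cone (Corollary \ref{coro:unique-vis}) gives only $u\ge e^{-o(t)}$. To upgrade this to $\liminf u>0$ we use the standard Evans--Souganidis argument. The test function is $\rho$, which vanishes on the interior of the cone. Suppose $u(x_n,t_n)\to0$ along a sequence in the cone. Then the rescaled $w^\varepsilon$ would satisfy $\partial_tw+H_1(\partial_xw)+r_1(\alpha-av-u)\le$ small with positive reaction. Because $v\to0$ and the limiting $\alpha\ge\alpha_+>0$, this contradicts the fact that $w=0$ is a strict supersolution there. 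This is the route of Lemma \ref{lem:low-hj-control}, and we would reuse it.

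Having $\liminf u\ge\delta>0$, we pass to limits along sequences $(x_n+\cdot,t_n+\cdot)$. The translated solutions converge to an entire solution $U$ of $U_t=d_1(J_1*U-U)+r_1U(\alpha_\pm-U)$ with $\delta\le U\le\alpha_-$. We use the estimate $v\to0$ to kill the predation term, and the cone location to freeze $\alpha$ at $\alpha_\pm$. Compactness for the nonlocal problem here comes from uniform continuity of the initial data and the equicontinuity in $x$ propagated by the equation, in the Ducrot style. This is the step we expect to be the main obstacle, since there is no smoothing; we would handle it by taking limits in the $x$-variable via Arzel\`a--Ascoli on the time-integrated equation, or by using $\limsup/\liminf$ functions instead. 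By uniqueness of bounded positive entire solutions of the logistic equation bounded away from $0$ (via ODE comparison with $\min U$ and $\max U$), we get $U\equiv\alpha_\pm$, which yields uniform convergence.

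Finally, in the left-side statements of (a) and (b) the cone $(-s_{1,-}^l,-s_{2,-}^l)$ lies left of $c_e$, so the limit is $\alpha_-$. In (c) the cone is split at $c_e$ into an $\alpha_+$ part and an $\alpha_-$ part. In (d) the whole left cone lies ahead of the habitat edge, giving $\alpha_+$.
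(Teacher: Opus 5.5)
Your case bookkeeping and the crude bound $v\to0$ outside the cone $-s_{2,-}^l\leqslant x/t\leqslant s_{2,-}^r$ (the paper's \eqref{eq:prop-v}) are fine. However, the step you flag as the main obstacle is a genuine gap, and it is also unnecessary.

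Passing to limits of the translates $u(x_n+\cdot,t_n+\cdot)$ requires equicontinuity in $x$ that is uniform as $t_n\to\infty$. Nonlocal dispersal has no smoothing. The standard estimate on $u(x+h,t)-u(x,t)$ only gives a bound growing like $e^{Kt}$, unless the dispersal rates dominate the reaction. That domination is exactly what the large-diffusion hypothesis \eqref{eq:large-diffusion} of Zhou and Wu provides, and the paper deliberately does not assume it. Your fallbacks (Arzel\`a--Ascoli on the time-integrated equation, or limsup/liminf functions) are not worked out and do not obviously close.

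What you are missing is that the test-function argument gives the sharp lower bound, not merely positivity. The relevant result is Lemma \ref{lem:persist-solu}, not Lemma \ref{lem:low-hj-control}. Take $(x_0,t_0)$ in the interior of $\{w^*=0\}$, $\phi=|x-x_0|^2+|t-t_0|^2$, and a maximum point $(x_\varepsilon,t_\varepsilon)\to(x_0,t_0)$ of $w^\varepsilon-\phi$. Equation \eqref{eq:we-viscosity} at that point forces $u^\varepsilon(x_\varepsilon,t_\varepsilon)\geqslant \alpha((x_\varepsilon-c_et_\varepsilon)/\varepsilon)-av(x_\varepsilon/\varepsilon,t_\varepsilon/\varepsilon)-o(1)$. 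The inequality $w^\varepsilon(x_0,t_0)\leqslant w^\varepsilon(x_\varepsilon,t_\varepsilon)$ then transfers this to $u^\varepsilon(x_0,t_0)$. By Corollary \ref{coro:unique-vis}, $w^*=t\rho$ vanishes on the open cone $\{c_u^l<x/t<c_u^r\}$. On each subcone in the statement, $v\to0$ and $\alpha((x-c_et)/\varepsilon)\to\alpha_\pm$. So you get $\liminf u^\varepsilon\geqslant\alpha_\pm$ directly (Corollary \ref{coro:u-approach-max-capacity}), with no compactness or Liouville step.

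Your separate upper estimate also fails as stated. For a nonlocal equation, a spatially uniform ODE supersolution can be compared with $u$ on a cone only if it dominates $u$ on the exterior collar of width $\text{spt}\,J_1$ along the moving boundary. There you know only $u\leqslant\alpha_-$, while your ODE solution decreases to $\alpha_++\delta$, so the comparison cannot be started.

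For the limits equal to $\alpha_-$, the trivial bound $u\leqslant\alpha_-$ is all that is needed. For those equal to $\alpha_+$, the paper uses a global supersolution: the nonincreasing forced wave $\psi_\delta$ of the $\delta$-perturbed scalar problem \eqref{eq:forced-aux}. This gives $u\leqslant\psi_\delta(x-c_et-x_\delta)\to\alpha_++\delta$ whenever $x-c_et\to+\infty$. That covers every $\alpha_+$-subcone in (b)--(d), since each lies at distance at least $\eta t$ to the right of $x=c_et$; then let $\delta\to0$. An explicit moving-frame supersolution $\alpha_++\delta+\alpha_-e^{-\kappa(x-c_et-z_1)}$, with $\kappa$ small and $z_1$ large, would serve equally well.
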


For predators, the following two theorems provide respectively the upper bounds for the rightward and leftward spreading speeds (where the negative sign merely indicates the direction).

\begin{theorem}\label{thm:v-maxspeed-right}
	Assume that  {\rm (J)}, {\rm (A)}, {\rm (H1)} and {\rm (I$^{\lambda}$)} with $\lambda\in(0,\infty]$ hold. Let $\gamma_{2}^{a},\gamma_{2}^{b},\cdots, \gamma_{2}^{d}$ and  $V_{2,r}^{a}$, $V_{2,r}^{b}$,$\cdots$,$V_{2,r}^{d}$ be given by Lemma \ref{lem:right-division}.
	Then the rightward spreading speed of predators satisfies  following alternatives.
	\begin{enumerate}
		\item[\rm(a)] If $(\lambda_{2}^r, c_e)\in V_{2,r}^{a}\cup \gamma_{2}^{a}\cup \gamma_{2}^{b}$, then   $ c_{v}^{r}\leqslant s_{2,+}^r$.
		\item[\rm(b)] If $(\lambda_{2}^r, c_e)\in V_{2,r}^{b}\cup \gamma_{2}^{d}$, or $\lambda_{2}^r=\infty$ and $c^{*}_{2,+}<c_{e}\leqslant c^{*}_{2,-}$, then  $c_{v}^{r}\leqslant c_{e}$.
		\item[\rm(c)] If $(\lambda_{2}^r, c_e)\in V_{2,r}^{c}\cup \gamma_{2}^{c}$, or $\lambda_{2}^r=\infty$ and $c_{e}> c^{*}_{2,-}$, then   $ c_{v}^{r}\leqslant c_{2,-}^{*}$.
		\item[\rm(d)] If $(\lambda_{2}^r, c_e)\in V_{2,r}^{d}$,  then $ c_{v}^{r}\leqslant c_{2,-}(p^{*}_{2})$.
	\end{enumerate}
\end{theorem}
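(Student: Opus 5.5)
The plan is to compare $v$ with the solution of a scalar monostable equation in which the prey density is replaced by an upper bound, and then to read off the spreading speed of that scalar problem from the prey analysis already carried out. Since $v\geqslant 0$, the $u$-equation gives $u_t\leqslant d_1(J_1\ast u-u)+r_1u[\alpha(x-c_et)-u]$. The scalar comparison principle for this equation yields $u\leqslant \bar u$, where $\bar u$ solves \eqref{non-shifting} with the same initial datum $u_0\leqslant\alpha_-$; in particular $u\leqslant \alpha_-$. By (A), and by the results of \cite{tao-shift} (or directly by the argument of Section~\ref{sec:formulas-prey} with $a=0$), $\limsup_{t\to\infty}\sup_{x\geqslant (c_e+\eta)t}\bar u(x,t)\leqslant\alpha_+$ for every $\eta>0$. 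The reason is that ahead of the environmental front $\alpha(x-c_et)\to\alpha_+$, and a constant supersolution of the logistic part then pulls $\bar u$ below $\alpha_++o(1)$. Consequently $v$ is a subsolution of
\[
v_t\leqslant d_2(J_2\ast v-v)+r_2v(-1+b\,\tilde\alpha(x-c_et)+o(1)-v),
\]
where $\tilde\alpha$ equals $\alpha_-$ behind the front and $\alpha_+$ ahead of it. In other words, $v$ is dominated by the solution $\bar v$ of a scalar shifting-habitat equation of the form \eqref{non-shifting}, with Hamiltonians $H_{2,\pm}$ in place of $H_{1,\pm}$, with the initial datum $v_0$, and with decay rate $\lambda_2^r$.

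Next I will apply the Hamilton--Jacobi machinery of Subsection~\ref{subsec:v-low-hj}. Under the Hopf--Cole transform $v=e^{-w^\varepsilon/\varepsilon}$, the half-relaxed lower limit $\rho_*$ for $v$ is a viscosity supersolution of
\[
\min\{\rho-s\rho'+H_2(\rho')+R_0(s),\rho\}=0,
\]
with $R_0(s)=r_2\mathcal V_-$ for $s<c_e$ and $R_0(s)=r_2\mathcal V_+$ for $s>c_e$ (taken upper semicontinuous), subject to the boundary condition $\rho_*(s)\geqslant \lambda_2^r s$ as $s\to+\infty$ coming from (I$^\lambda$). The object to build is a viscosity \emph{subsolution} $\rho$ of this equation with the same boundary data, and to invoke the comparison Lemma~\ref{lem:viscosity-comparison-t-indep} on the right half-line to obtain $\rho\leqslant\rho_*$. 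Then $\rho_*>0$ on $(\bar c,\infty)$ with $\bar c:=\sup\{s:\rho(s)=0\}$ forces $v\to0$ uniformly on $x\geqslant(\bar c+\eta)t$, hence $c_v^r\leqslant\bar c$.

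The subsolution is exactly the explicit viscosity solution that Section~\ref{sec:formulas-prey} constructs for the prey in the regions of Lemma~\ref{lem:right-division}, with index $1$ replaced by $2$:
\begin{itemize}
\item In $V^a_{2,r}\cup\gamma^a_2\cup\gamma^b_2$, take $\rho(s)=\max\{\lambda s-H_{2,+}(\lambda),0\}$ with $\lambda=\lambda_2^r\wedge\mu^*_{2,+}$, patched with the Lagrangian $L_{2,+}$ part, so that the zero level ends at $s^r_{2,+}$.
\item In $V^b_{2,r}\cup\gamma^d_2$ the zero set ends at $c_e$.
\item In $V^c_{2,r}$ the $L_{2,-}$ branch gives $c^*_{2,-}$.
\item In $V^d_{2,r}$ the $\alpha_-$-branch $p^*_2 s-H_{2,-}(p^*_2)$ is joined to the $\alpha_+$-branch at $s=c_e$ through \eqref{eq:root-left-intro}, which gives $c_{2,-}(p^*_2)$.
\end{itemize}
For $\lambda_2^r=\infty$, the additional ranges in (b) and (c) follow from the (I$^\infty$) profiles used in Theorem~\ref{thm:u-speed-comp}.

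The main obstacle is justifying that $\rho_*$ really is a supersolution of the equation with $R_0$. This requires the locally uniform upper bound $u\leqslant\alpha_\pm+o(1)$ in the correct regions, obtained through the scalar comparison described above. It also requires checking that the discontinuity of $R_0$ at $s=c_e$ is compatible with the comparison lemma, in particular with the form in which (H2) is used there. If (H2) is genuinely needed for the comparison, I would instead use the monotone scalar equation for $\bar v$ directly, whose comparison principle holds without it.
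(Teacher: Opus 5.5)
Your outline follows the paper's route in all essentials. You bound $u$ from above, dominate $v$ by a scalar KPP-type problem, and take the lower half-relaxed limit to get a supersolution $\rho^0_*$ of \eqref{eq:I-hj-tindep-low} with $R_0$ from \eqref{eq:R0-usc}. You then compare with explicit subsolutions via Lemma \ref{lem:viscosity-comparison-t-indep}.

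The one step that fails as justified is the bound $u\leqslant\alpha_++o(1)$ ahead of $c_et$. The constant $\alpha_++\delta$ is not a supersolution behind the front. On the moving half-line $x\geqslant(c_e+\eta)t$, the kernel imports mass from the region where $u$ may be close to $\alpha_-$, so a front-like supersolution is required. The Hamilton--Jacobi analysis of Section \ref{sec:formulas-prey} with $a=0$ does not help either: it only locates where $u$ vanishes or stays positive, and gives no upper bound by $\alpha_+$.

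The paper supplies exactly this ingredient through the nonincreasing forced wave $\psi_\delta$ of the $(\alpha+\delta)$-problem \cite{qiao2022a}. This gives $u\leqslant\varphi_\delta$ globally by \eqref{eq:u-com-forced}, and it buys three things:
\begin{itemize}
\item your equation with an $o(1)$ coefficient becomes a concrete comparison problem, namely $n_\delta$ with coefficient $b\varphi_\delta-1$;
\item the jump of $R_\delta$ sits exactly at $s=c_e$; as written, your $o(1)$ is not uniform just ahead of $c_et$, so your front would really sit at $c_e+\eta$;
\item $\delta\to0$ is handled by stability (Remark \ref{remark:hj-unif-bound-d}).
\end{itemize}

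Reusing the prey profiles with index $2$ is legitimate. Their subsolution property for \eqref{eq:hj-tindep-up} uses only the two-level structure of $\bar R$, which with index $2$ is precisely $R_0$. It is, however, more than needed. The boundary condition is only $\lim\rho(s)/s\leqslant\lambda_2^r$, so the paper uses lines of smaller slope:
\begin{itemize}
\item in (b), $\max\{0,\mu_es-H_{2,+}(\mu_e)\}$, with $\mu_e$ the smaller root of $c_{2,+}(\mu)=c_e$;
\item in (c), the two-piece function with slope $\lambda_e$ for $s\geqslant c_e$ and slope $\mu^*_{2,-}$ for $s<c_e$, where $k_2(\lambda_e)=c_e$ and $\lambda_e\leqslant\lambda_2^r$.
\end{itemize}
This also covers $\lambda_2^r=\infty$ without superlinear $L_{2,\pm}$-branches.

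Your worry about (H2) is moot. Under (H1), $R_0\geqslant r_2\mathcal{V}_+>0$, and \eqref{eq:R0-usc} is among the admissible $R$ in Lemma \ref{lem:viscosity-comparison-t-indep}.

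Finally, the comparison only gives $v\to0$ uniformly for $x/t$ in compact subsets of $(\bar c,\infty)$, not on the whole half-line $x\geqslant(\bar c+\eta)t$. The far range $x\geqslant(s_{2,-}^r+\eta)t$ needs the separate comparison with the homogeneous $\alpha_-$ problem, \eqref{eq:v-speed-dete-2}, as in Proposition \ref{prop:max-speed-v}.
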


\begin{theorem}\label{thm:v-maxspeed-left}
	Assume that  {\rm (J)}, {\rm (A)},  {\rm (H1)} and {\rm (I$^{\lambda}$)} with $\lambda\in(0,\infty]$ hold. Let $\gamma_{2}^{o},\gamma_{2}^{p},\cdots, \gamma_{2}^{s}$ and  $V_{2,l}^{a}$, $V_{2,l}^{b}$,$\cdots$,$V_{2,l}^{d}$ be given by Lemma \ref{lem:left-division}.	Then the leftward spreading speed of predators satisfies  following alternatives.
	\begin{enumerate}
		\item[\rm(a)] If $(\lambda_{2}^l,c_{e})\in V_{2,l}^{a}\cup \gamma_{2}^{o}\cup \gamma_{2}^{p}$, then  $ c_{v}^{l} \geqslant- s_{2,-}^{l}$.
		\item[\rm(b)]  If $(\lambda_{2}^l,c_{e})\in V_{2,l}^{c}$,	 then   $ c_{v}^{l} \geqslant -c_{2,+}(\underline{p}_{2})$.
		\item[\rm(c)] If $(\lambda_{2}^l,c_{e})\in V_{2,l}^{b}\cup \gamma_{2}^{s}$, or $\lambda_{2}^l=\infty$ and $-\bar{c}_{2}<c_{e}<-c^{*}_{2,-}$, then   $ c_{v}^{l} \geqslant -c_{2,-}(\bar p_{2})$.
		\item[\rm(d)] If $(\lambda_{2}^l,c_{e})\in V_{2,l}^{d}\cup \gamma_{2}^{q}\cup \gamma_{2}^{r}$, or $\lambda_{2}^l=\infty$ and $c_{e}<-\bar{c}_{2}$, then   $ c_{v}^{l} \geqslant -c_{2,+}^{*}$.
	\end{enumerate}
\end{theorem}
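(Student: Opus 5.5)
The plan is to reduce the leftward predator problem to a scalar comparison argument for $v$. Only an upper bound is claimed, so we never need persistence of the prey. The argument runs through a Hamilton–Jacobi subsolution, mirroring the prey analysis, and needs neither (H2) nor (FU).

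\emph{Step 1: a scalar upper problem for $v$.} Since $u$ is bounded above by the solution of the scalar prey equation without predators, and that in turn by the $\alpha(\cdot)$-logistic problem, we get $\limsup_{t\to\infty} u\le \alpha(x-c_et)+o(1)$ locally uniformly along rays. Hence $v$ is a subsolution of
\[
\bar v_t=d_2(J_2*\bar v-\bar v)+r_2\bar v\bigl(b\,\bar\alpha_\varepsilon(x-c_et)-1-\bar v\bigr),
\]
where $\bar\alpha_\varepsilon$ is a slightly enlarged nonincreasing profile with limits $\alpha_\pm+\varepsilon$. By the scalar comparison principle, $v\le \bar v$ with $\bar v(\cdot,0)=v_0$. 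This makes the problem a scalar shifting-habitat problem of the form \eqref{non-shifting}, with growth rates $r_2\mathcal V_\pm$ (positive by (H1)) and Hamiltonians $H_{2,\pm}$.

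\emph{Step 2: the Hamilton–Jacobi limit.} Apply the hyperbolic scaling and the Hopf–Cole transform $\bar w^\varepsilon=-\varepsilon\log \bar v(x/\varepsilon,t/\varepsilon)$. The lower half-relaxed limit $\bar w_*$ is then a viscosity supersolution of
\[
\min\{\partial_tw+H_2(\partial_xw)+R_0(x/t),w\}=0 ,
\]
where $R_0=r_2\mathcal V_{+}$ for $s>c_e$ and $R_0=r_2\mathcal V_-$ for $s<c_e$, taking the upper semicontinuous version at $s=c_e$ as in \eqref{eq:R0-usc}. Its $1$-homogeneous profile $\rho_*$ is a supersolution of the stationary equation with $H_2$ and $R_0$. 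At $s\to-\infty$ it satisfies the boundary behaviour $\rho_*(s)\ge \lambda_2^l|s|$-type conditions coming from (I$^\lambda$), exactly as in \eqref{eq:hj-tindep-low-boundary-I}.

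\emph{Step 3: comparison with an explicit subsolution.} On the left half-line, and in each region $V_{2,l}^a,\dots,V_{2,l}^d$ and the curves $\gamma_2^{o},\dots,\gamma_2^s$ from Lemma~\ref{lem:left-division}, we construct the same piecewise-affine/Legendre viscosity solution $\rho$ that was built for the prey. The only change is that the indices $1$ and $2$ are interchanged, i.e. $H_{2,\pm}$, $\bar p_2$, $\underline p_2$, $\bar c_2$ are used.

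By construction this $\rho$ vanishes exactly on $[c^\#,\,\cdot\,]$, where $c^\#$ is the claimed bound: $-s_{2,-}^l$, $-c_{2,+}(\underline p_2)$, $-c_{2,-}(\bar p_2)$, or $-c_{2,+}^*$. It is strictly positive for $s<c^\#$. For $\lambda_2^l=\infty$ we use the limiting constructions $\bar p_2$ and $c^*_{2,+}$, which covers the extra ranges stated in (c) and (d).

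Since $\rho$ is a viscosity subsolution and $\rho_*$ a supersolution with ordered boundary data, the comparison principle of Lemma~\ref{lem:viscosity-comparison-t-indep} on the left half-line gives $\rho\le\rho_*$. Hence $\rho_*(s)>0$ for $s<c^\#$, and the local uniform convergence then implies $\sup_{x\le (c^\#-\eta)t}\bar v\to0$. Therefore $v\to0$ there, and $c_v^l\ge c^\#$.

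\emph{Main obstacle.} Lemma~\ref{lem:viscosity-comparison-t-indep} is stated under (H2), which is used to make the prey Hamiltonian well-behaved. Here we must check that the scalar $\bar v$-equation alone yields a comparison principle with a discontinuous $R_0$ and no (H2). I would handle this as follows. Because $\bar\alpha_\varepsilon$ is monotone and $R_0$ is upper semicontinuous with a single jump at $s=c_e$, the standard argument goes through once $\rho$ is checked to satisfy the subsolution inequality in the appropriate (lower) envelope sense at $s=c_e$. This is the delicate point, particularly in the nonlocally determined regions $V_{2,l}^b,V_{2,l}^c$, where the kink of $\rho$ sits precisely at $s=c_e$.

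A secondary technical point is removing the $\varepsilon$-enlargement by continuity of $c^\#$ in $\alpha_\pm$.
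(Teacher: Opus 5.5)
Your route is the paper's: bound $v$ by a scalar predator equation driven by an enlarged prey profile that moves with the habitat, pass to the Hopf--Cole lower relaxed limit to get a supersolution of \eqref{eq:I-hj-tindep-low} on $\mathbb{R}^-$, and compare it (Lemma \ref{lem:viscosity-comparison-t-indep}, Proposition \ref{prop:max-speed-v}) with the leftward prey profiles \eqref{eq:left-Aa}--\eqref{eq:left-D0} rewritten with index $2$. The step that fails as written is Step 1.

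\textbf{The gap.} The scalar comparison for $v$ needs the pointwise inequality $u(x,t)\le\bar\alpha_\varepsilon(x-c_et)$ on all of $\mathbb{R}\times\mathbb{R}^+$. A statement $\limsup_{t\to\infty}u\le\alpha(x-c_et)+o(1)$ along rays controls $u$ only on cones $\{x/t\in K\}$ for large $t$. It says nothing at finite times or outside such cones, so ``Hence $v$ is a subsolution'' does not follow. (The ray bound itself also needs proof.) What is missing is a supersolution of the prey equation that travels with the habitat.

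\textbf{How the paper closes it.} It takes the nonincreasing forced wave $\psi_\delta$ of \eqref{eq:forced-aux}, which exists by \cite{qiao2022a} and satisfies $\psi_\delta(\pm\infty)=\alpha_\pm+\delta$. It shifts $\psi_\delta$ by $x_\delta$ so that it lies above $u_0$. Because $\alpha$ is nonincreasing and $-av\le0$, $\varphi_\delta=\psi_\delta(x-c_et-x_\delta)$ is a supersolution of the prey equation. Hence \eqref{eq:u-com-forced} holds globally, and then $v\le n_\delta$. A slowly varying nonincreasing profile $\bar\alpha\ge\alpha+\delta/2$ with limits $\alpha_\pm+\delta$ would also work, since $r_1\bar\alpha(\bar\alpha-\alpha)\ge r_1\alpha_+\delta/2$ dominates $|c_e\bar\alpha'|+d_1|J_1*\bar\alpha-\bar\alpha|$. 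Either way, this verification has to be done explicitly.

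\textbf{Remaining points.} With that repaired the argument goes through, but note the following.
\begin{itemize}
\item With a fixed enlargement, Step 2 yields $R_\delta$ from \eqref{eq:RD-usc}, not $R_0$. The paper removes $\delta$ at the viscosity level, via the second relaxed limit $I^0_*$ and stability (Remark \ref{remark:hj-unif-bound-d}), and then compares once with $R_0$. Your alternative is to compare for each $\delta$ and let $\delta\to0$ in the explicit bound. That is legitimate and avoids the double relaxed limit, but you must check that $\bar p_2$, $\underline p_2$, $\bar c_2$ and the region boundaries depend continuously on $\alpha_\pm$.
\item Relaxed limits only control compact sets of $x/t$. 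The far field $x\le-(s^l_{2,-}+\eta)t$ therefore needs the separate comparison used in \eqref{eq:v-speed-dete-2}.
\item Your ``main obstacle'' is not one. Lemma \ref{lem:viscosity-comparison-t-indep} explicitly lists $R_0$ from \eqref{eq:R0-usc} as admissible. Its only requirement is positivity of $R$, which for $R_0$ is (H1); (H2) is needed only for $\underline R_i$.
\item At $s=c_e$, every profile you transplant has a convex corner. In \eqref{eq:left-Aa} and \eqref{eq:left-C0}, $\rho'(c_e^-)=-\lambda_2^l<-\underline p_2$. In \eqref{eq:left-Ab} and \eqref{eq:left-D0}, $\rho'(c_e^-)=L_2'(c_e)=-L_2'(-c_e)<-\bar p_2$. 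So no $C^1$ function touches $\rho$ from above there, and the subsolution test with $(R_0)_*(c_e)=r_2\mathcal{V}_+$ is vacuous. Away from $c_e$ and the zero set, the profiles are classical solutions.
\end{itemize}
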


In estimating the upper bound of the predators' spreading speed, our analysis considers only the habitat shift speed $c_{e}$ and the predators' initial decay condition, excluding the prey's spreading speed.
Notably, since predators cannot survive without the prey, their spreading speed cannot surpass that of the prey. As the prey spreads, it effectively creates a virtual shifting habitat enabling predator survival.
We conjecture that, under assumption (FU), the upper bounds established in Theorems \ref{thm:v-maxspeed-right}-\ref{thm:v-maxspeed-left} precisely characterize the predators' actual spreading speeds in some scenarios. However, as this requires overcoming challenges in analyzing the coupling effect of the prey near the propagation front, which falls beyond the direct applicability of the current Hamilton-Jacobi framework, rigorously proving this conjecture remains an open challenge.

\textbf{Outline of the paper.} Section \ref{sec:preli} contains preliminary materials. Section \ref{sec:hamilton-limit} is devoted to the study of  the limiting Hamilton-Jacobi equation for the prey. In Section \ref{sec:formulas-prey}, by constructing viscosity solutions, we derive explicit formulas for the spreading speed of the prey and prove Theorems \ref{thm:u-speed-exp-right}-\ref{thm:u-speed-comp}. Section \ref{sec:formula-max-predator} applies ideas from the preceding sections to establish upper bounds for the spreading speed of predators, proving Theorems \ref{thm:v-maxspeed-right}-\ref{thm:v-maxspeed-left}. In Section \ref{sec:terrace-prey}, we investigative the persistence of the prey and prove Theorem \ref{thm:u-terrace}.

\section{Preliminaries}\label{sec:preli}

\subsection{Division of the parameter area}

\begin{lemma}\label{lem:hamilton}
Assume that {\rm (A) and \rm (H1) hold}.	Let $H_{i,\pm}$ be given by \eqref{pp-hamiltonian} with $i=1,2$.
	Then $H_{i,\pm}$ is symmetric, superlinear and strictly convex on $\mathbb{R}$, and thus the associated Lagrangian \eqref{eq:lagrangian}	 is well-defined, symmetric, superlinear and strictly convex   on $\mathbb{R}$.  Moreover, $c_{i,\pm}$ defined by \eqref{eq:speed-c-pm}  is strictly decreasing when $\mu\in(0,\mu^{*}_{i,\pm})$ and strictly increasing when $\mu\in(\mu^{*}_{i,\pm},+\infty)$, where $\mu^{*}_{i,\pm}$ is the unique global minimum point of $c_{i,\pm}$ in $\mathbb{R}^{+}$ such that
			\begin{align*}
				 c^{*}_{i,\pm}:=c_{i,\pm}(\mu^{*}_{i,\pm})=\underset{\mu\in\mathbb{R}^{+}}\inf\, c_{i,\pm}(\mu).
			\end{align*}
In particular, it holds that
	\begin{equation}\label{eq:lag-pm}
		L_{i,\pm}(q)\leqslant 0 \quad \text{if~and~only~if}\quad|q|\leqslant c_{i,\pm}^{*},
	\end{equation}
	and the equality holds only if $|q|=c_{i,\pm}^{*}$. Besides, the Lagrangian $L_{i,\pm}$ can be expressed as
	\begin{equation*}
		L_{i,\pm}(q)=qL_{i}'(q)-H_{i,\pm}(L_{i}'(q))\quad \text{for~}q\in \mathbb{R}.
	\end{equation*}
\end{lemma}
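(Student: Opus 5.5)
We will first establish the properties of $H_i$ directly from (J), then transfer them to $H_{i,\pm}$, $L_{i,\pm}$ and $c_{i,\pm}$.

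\emph{Properties of $H_{i,\pm}$.} Since $J_i$ is continuous, compactly supported and nonnegative, $\int J_i(y)e^{py}\,dy$ is finite and smooth in $p$, and may be differentiated under the integral. Symmetry of $J_i$ gives $\int J_i(y)e^{py}dy=\int J_i(y)\cosh(py)\,dy$, so $H_i$ is even. The second derivative is $H_i''(p)=d_i\int J_i(y)y^2e^{py}dy$, which is positive because $J_i\not\equiv 0$ forces $J_i>0$ on a set of positive measure away from $y=0$; hence $H_i$ is strictly convex. For superlinearity, pick $\delta>0$ with $\int_{|y|\ge\delta}J_i>0$. Then $H_i(p)\ge d_i\big(\int_{|y|\geqslant\delta}J_i(y)\cosh(py)dy-1\big)$, which grows like $e^{\delta|p|}$, so $H_i(p)/|p|\to\infty$. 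Adding the constants $r_1\alpha_\pm$ or $r_2\mathcal V_\pm$ preserves all three properties. The standard Legendre–Fenchel theory then gives that $L_{i,\pm}$ is finite, convex, even and superlinear. Because $H_i'$ is a smooth strictly increasing bijection of $\mathbb R$ (by superlinearity and strict convexity), the supremum defining $L_{i,\pm}(q)$ is attained at the unique $p=(H_i')^{-1}(q)$. Thus $L_{i,\pm}\in C^2$ with $L_i'=(H_i')^{-1}$, the same function for both signs since the constants do not affect derivatives. This also yields strict convexity of $L_{i,\pm}$ and the formula $L_{i,\pm}(q)=qL_i'(q)-H_{i,\pm}(L_i'(q))$.

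\emph{Monotonicity of $c_{i,\pm}$.} Positivity of the constants $\alpha_\pm>0$ by (A) and $\mathcal V_\pm>0$ by (H1) (which also gives $\mathcal V_->\mathcal V_+>0$) implies $H_{i,\pm}(0)>0$. Compute
\begin{equation*}
c_{i,\pm}'(\mu)=\frac{g(\mu)}{\mu^2},\qquad g(\mu):=\mu H_i'(\mu)-H_{i,\pm}(\mu).
\end{equation*}
Here $g(0)=-H_{i,\pm}(0)<0$ and $g'(\mu)=\mu H_i''(\mu)>0$ for $\mu>0$, so $g$ is strictly increasing on $\mathbb R^+$. Superlinearity together with convexity gives $g(\mu)\geqslant \mu H_i'(\mu)-H_i(\mu)-C\to\infty$: indeed $\mu H_i'(\mu)-H_i(\mu)\geqslant H_i(\mu)-2H_i(\mu/2)\cdot$(a convexity estimate), or one checks directly from the integral that it grows exponentially. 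Hence $g$ has a unique zero $\mu^*_{i,\pm}>0$, and $c_{i,\pm}$ decreases on $(0,\mu^*)$ and increases afterwards, with $c_{i,\pm}(\mu)\to+\infty$ at $0^+$ and at $\infty$. Moreover $c^*_{i,\pm}=H_i'(\mu^*_{i,\pm})>0$.

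\emph{Characterization of $L_{i,\pm}\leqslant 0$.} For $q\geqslant0$, we have $L_{i,\pm}(q)\leqslant 0$ iff $pq\leqslant H_{i,\pm}(p)$ for all $p$. For $p\leqslant 0$ this is automatic, since $H_{i,\pm}(p)\geqslant H_{i,\pm}(0)>0\geqslant pq$ by evenness and convexity. For $p>0$ the condition reads $q\leqslant c_{i,\pm}(p)$, i.e. $q\leqslant c^*_{i,\pm}$. Equality $L_{i,\pm}(q)=0$ requires the supremum to be attained at some $p$ with $pq=H_{i,\pm}(p)$; this forces $p>0$ and $q=c_{i,\pm}(p)\geqslant c^*$, so $q=c^*$. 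Evenness of $L_{i,\pm}$ handles $q<0$. The main (mild) obstacle is the growth of $g$ at infinity, which we settle by the explicit integral bound $\mu H_i'(\mu)-H_i(\mu)=d_i\int J_i(y)(\mu y-1)e^{\mu y}dy+d_i$, whose positive-$y$ part dominates exponentially.
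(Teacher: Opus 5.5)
Your proposal is correct and amounts to the standard direct verification that the paper omits (it only says the proof is a slight modification of \cite{tao2025}): evenness, strict convexity and exponential growth of $H_i$ from (J); positivity of $H_{i,\pm}(0)$ from (A)--(H1); the sign analysis of $\mu H_i'(\mu)-H_{i,\pm}(\mu)$; and Legendre duality with $L_i'=(H_i')^{-1}$. The garbled ``convexity estimate'' aside can be dropped, since superlinearity alone closes that step: your function $g$ is strictly increasing with $g(0)<0$, so if it never vanished then $c_{i,\pm}$ would be decreasing on $\mathbb{R}^+$, contradicting $c_{i,\pm}(\mu)\to+\infty$ as $\mu\to\infty$; your explicit integral bound is a valid alternative.
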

The  proof of this lemma follows from a slight modification of the proof of \cite[Proposition 1.3 and Lemma 4.1]{tao2025}, and thus, we omit the details here.

Define two auxiliary functions
\begin{align}
	\label{k-lam}	& k_{i}(\mu)=\frac{H_{i,-}(\mu^{*}_{i,-})-H_{i,+}(\mu)}{\mu^{*}_{i,-}-\mu} \quad \text{for }\mu\in(0,\mu^{*}_{i,-}), \\
	\label{g-lam}		 &g_{i}(\mu)=\frac{H_{i,-}(\mu)-H_{i,+}(\mu^{*}_{i,+})}{\mu-\mu^{*}_{i,+}} \quad \text{for }\mu\in(\mu^{*}_{i,+},L'_{i}(\bar c_{i})],
	\end{align}
	where $\bar{c}_{i}$ is given by Lemma \ref{lem:left-para-p}. Their properties are summarized in the following lemma.
	\begin{lemma}
		\label{lem:well-division}
The following assertions hold:
	\begin{enumerate}
		\item[\rm(a)] The function $k_{i}(\cdot)$ defined by \eqref{k-lam} is strictly increasing. In particular, $k_{i}(\mu_{0,i})=c^{*}_{i,-}$ and $ \lim_{\mu\rightarrow\mu^{*}_{i,-}}\,  k_{i}(\mu)=+\infty$.
		\item[\rm(b)] The function $g_{i}(\cdot)$ defined by \eqref{g-lam} is strictly decreasing. In particular, $g_{i}(L'_{i}(\bar{c}_{i}))=\bar{c}_{i}$. For $\mu\in(\mu^{*}_{1,+},\mu^{*}_{1,-})$, the intervals $(c_{i,-}(\mu), g_{i}(\mu))$ is nonempty, and $(H_{i}'(\mu), g_{i}(\mu))$ for $\mu\in(\mu^{*}_{i,-},L'_{i}(\bar{c}_{i}))$ is also nonempty.
	\end{enumerate}
\end{lemma}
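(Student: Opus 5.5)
Both functions are secant slopes of convex functions, so I would get monotonicity by rewriting each as a difference quotient and using strict convexity of $H_i$ (Lemma \ref{lem:hamilton}). Throughout I use $H_{i,\pm}=H_i+r_i\alpha_\pm$ (resp. $+r_i\mathcal V_\pm$). Under (A) and (H1), $H_{i,-}-H_{i,+}=\delta_i>0$ is a constant, with $\delta_1=r_1(\alpha_--\alpha_+)$ and $\delta_2=r_2b(\alpha_--\alpha_+)$.

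\textbf{Part (a).} Set $m=\mu^*_{i,-}$ and write
\begin{align*}
k_i(\mu)=\frac{H_{i,-}(m)-H_{i,-}(\mu)}{m-\mu}+\frac{\delta_i}{m-\mu}.
\end{align*}
The first term is the secant slope of the strictly convex $H_{i,-}$ between $\mu$ and the fixed point $m$, so it is strictly increasing in $\mu$. The second term is strictly increasing on $(0,m)$ and tends to $+\infty$ as $\mu\to m^-$. The first term stays bounded near $m$, since it tends to $H_i'(m)$. Hence $k_i$ is strictly increasing and $k_i(\mu)\to+\infty$.

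For the value at $\mu_{0,i}$, note that $c^*_{i,-}=c_{i,-}(m)=H_{i,-}(m)/m=H_i'(m)$, because $m$ minimizes $H_{i,-}(\mu)/\mu$. The defining relation $c_{i,+}(\mu_{0,i})=c^*_{i,-}$ means $H_{i,+}(\mu_{0,i})=c^*_{i,-}\mu_{0,i}$. Therefore
\begin{align*}
k_i(\mu_{0,i})=\frac{c^*_{i,-}m-c^*_{i,-}\mu_{0,i}}{m-\mu_{0,i}}=c^*_{i,-}.
\end{align*}
I also need $\mu_{0,i}<m$. This follows because $c_{i,+}(\mu)=c_{i,-}(\mu)-\delta_i/\mu<c_{i,-}(\mu)$, while $c_{i,+}(\mu)\to+\infty$ as $\mu\to0^+$. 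So the smallest root of $c_{i,+}=c^*_{i,-}$ lies in $(0,m)$, using $c_{i,+}(m)<c^*_{i,-}$.

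\textbf{Part (b).} Set $n=\mu^*_{i,+}$ and write
\begin{align*}
g_i(\mu)=\frac{H_{i,+}(\mu)-H_{i,+}(n)}{\mu-n}+\frac{\delta_i}{\mu-n}.
\end{align*}
Here the monotonicity is not termwise: the first term increases while the second decreases. So I would differentiate. The numerator of $g_i'(\mu)$ is
\begin{align*}
N(\mu)=H_i'(\mu)(\mu-n)-\bigl(H_{i,-}(\mu)-H_{i,+}(n)\bigr).
\end{align*}
Its derivative is $N'(\mu)=H_i''(\mu)(\mu-n)>0$ for $\mu>n$, and $N(n^+)=-\delta_i<0$. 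So $N$ is increasing, and it remains to show $N<0$ on the whole interval $(n,L_i'(\bar c_i)]$. This is the main obstacle. It must come from the characterization of $\bar c_i$ in Lemma \ref{lem:left-para-p}, namely $\bar p_i(\bar c_i)=n$.

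Equation \eqref{eq:p-bar-root} at $c_e=-\bar c_i$, $p=n$, reads $\bar c_i n-H_{i,+}(n)=L_{i,-}(\bar c_i)$. Writing $L_{i,-}(\bar c_i)=\bar c_i P-H_{i,-}(P)$ with $P=L_i'(\bar c_i)$ (so $H_i'(P)=\bar c_i$), this gives
\begin{align*}
H_{i,-}(P)-H_{i,+}(n)=\bar c_i(P-n)=H_i'(P)(P-n).
\end{align*}
That is exactly $N(P)=0$, and it simultaneously gives $g_i(P)=\bar c_i$. Since $N$ is increasing with $N(P)=0$, we get $N<0$ on $(n,P)$. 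Hence $g_i'<0$ there, and $g_i$ is strictly decreasing on $(n,P]$.

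\textbf{Nonemptiness of the two intervals.} For the first interval, take $\mu\in(\mu^*_{1,+},\mu^*_{1,-})$. Here $c_{i,-}(\mu)$ is the secant slope of $H_{i,-}$ from $0$, and I would compare it with the secant slope from $n$:
\begin{align*}
g_i(\mu)-c_{i,-}(\mu)=\frac{\mu H_{i,+}(\mu)-\mu H_{i,+}(n)-(\mu-n)H_{i,-}(\mu)}{\mu(\mu-n)}.
\end{align*}
Using $H_{i,+}(n)=nH_i'(n)$ and convexity reduces the positivity of the numerator to $\delta_i n+H_{i,+}(\mu)n-\mu nH_i'(n)>0$. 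Dividing by $n$, this is $\delta_i+H_{i,+}(\mu)-H_{i,+}(n)-H_i'(n)(\mu-n)>0$, which holds by convexity of $H_{i,+}$ together with $\delta_i>0$.

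For the second interval, take $\mu\in(\mu^*_{i,-},P)$. The sign of $g_i(\mu)-H_i'(\mu)$ equals the sign of $-N(\mu)$, and $N(\mu)<0$ was shown above. So $g_i(\mu)>H_i'(\mu)$ and the interval is nonempty.
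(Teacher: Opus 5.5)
Your proof is correct and is the kind of direct calculus argument the paper has in mind (the paper omits the proof as ``straightforward calculus'' and refers to the authors' earlier work); the key step is using $\bar p_i(\bar c_i)=\mu^*_{i,+}$ to get $N(L_i'(\bar c_i))=0$, which fixes the sign of both $g_i'$ and $g_i-H_i'$. One slip to fix: in your display for $g_i(\mu)-c_{i,-}(\mu)$, the first term of the numerator should be $\mu H_{i,-}(\mu)$, not $\mu H_{i,+}(\mu)$, so the numerator is $nH_{i,-}(\mu)-\mu H_{i,+}(n)$; this is exactly the quantity $\delta_i n+nH_{i,+}(\mu)-\mu nH_i'(n)$ that your reduced inequality treats, and that reduction is an identity rather than a use of convexity.
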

This lemma follows from straightforward calculus; we omit the details here, but refer the reader to \cite{tao-shift} for further elaboration.
	We are now ready to present two lemmas that partition the parameters $(\lambda_{i}^{r}, c_e)$ and $(\lambda_{i}^{l}, c_e)$ into several regions.
\begin{lemma}\label{lem:right-division}
For $i=1,2$, define the curves $\gamma_{i}^{a},\gamma_{i}^{b},\cdots, \gamma_{i}^{d}$ as follows:
	\begin{eqnarray*}
		&&	\gamma_{i}^{a}: c_{e}=c_{i,+}(\lambda^{r}_{i}) \hspace{1.7em}\text{ for }\lambda^{r}_{i}\in (0,\mu^{*}_{i,+}],\\
		&&	\gamma_{i}^{b}: c_{e}\equiv c^{*}_{i,+} \hspace{3.5em}\text{ for }\lambda^{r}_{i}\in [\mu^{*}_{i,+},\infty),\\
	&&	\gamma_{i}^{c}:c_{e}=k_{i}(\lambda^{r}_{i}) \hspace{2.3em}\text{ for }\lambda^{r}_{i}\in [\mu_{0,i},\mu^{*}_{i,-}),\,\\
		&&	\gamma_{i}^{d}:c_{e}\equiv c^{*}_{i,-}\hspace{3.2em}\text{ for }\lambda^{r}_{i}\in [\mu_{0,i},\infty).
		\end{eqnarray*}
	Then the half-plane $E_{i}^r:=\{(\lambda_{i}^r, c_e)|\lambda_{i}^r\in(0,\infty),~c_e\in\mathbb R\}$  is divided into the following four disjoint regions:
\begin{eqnarray*}
	&& V_{i,r}^{a}: 	\{(\lambda_{i}^r, c_e)|\lambda_{i}^r\in (0,\infty) \text{ and }c_{e}<s_{i,+}^{r} \},\\
	&&V_{i,r}^{b}:  \{(\lambda_{i}^r, c_e)|\lambda_{i}^r\in (\mu_{0,i},\infty) \text{ and }s_{i,+}^r<c_{e}<c^{*}_{i,-} \},\\
	&&V_{i,r}^{c}: \hspace{-0.25cm}\begin{array}{ll}
		\left\{(\lambda_{i}^r, c_e)\,\begin{array}{|ll}
			\lambda_{1}^r\in (\mu_{0,i},\mu^{*}_{i,-}) \text{ and }c^{*}_{i,-}<c_{e}<k_{i}(\lambda_{i}^r), \\
	\text{or}~~ \lambda_{i}^r\in [\mu^{*}_{i,-},\infty)\text{ and }c_{e}>c^{*}_{i,-}
	\end{array}\right\},
	\end{array}\\
	&&V_{i,r}^{d}: \{(\lambda_{i}^r, c_e)|\lambda_{i}^r\in (0,\mu^{*}_{i,-}) \text{ and }c_{e}>c_{i,+}(\lambda_{i}^r)\vee k_{i}(\lambda_{i}^r) \}.
	\end{eqnarray*}
\end{lemma}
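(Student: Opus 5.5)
The plan is to fix $\lambda=\lambda_i^r\in(0,\infty)$ and verify that on the vertical line $\{\lambda\}\times\mathbb R$ the thresholds produced by the curves are ordered so that the four open regions and the curves $\gamma_i^a,\dots,\gamma_i^d$ tile the line with no overlaps and no gaps. I read $s_{i,+}^r$ as $c_{i,+}(\lambda\wedge\mu^*_{i,+})$, taking the index in \eqref{eq:speed-homo} to be $i$. With this reading, $\gamma_i^a\cup\gamma_i^b$ is exactly the graph $c_e=s_{i,+}^r$.

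\textbf{Step 1: ordering of the constants.} Since $\alpha_->\alpha_+$, we have $H_{i,+}=H_{i,-}-\delta_i$ with $\delta_1=r_1(\alpha_--\alpha_+)>0$ and $\delta_2=r_2b(\alpha_--\alpha_+)>0$. Hence $c_{i,+}(\mu)=c_{i,-}(\mu)-\delta_i/\mu<c_{i,-}(\mu)$, which gives $c^*_{i,+}<c^*_{i,-}$.

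The minimizer $\mu^*_{i,\pm}$ is the unique positive root of $\mu H_i'(\mu)-H_i(\mu)=H_{i,\pm}(0)$. The map $\mu\mapsto \mu H_i'(\mu)-H_i(\mu)$ has derivative $\mu H_i''(\mu)>0$, and $H_{i,+}(0)<H_{i,-}(0)$, so $\mu^*_{i,+}<\mu^*_{i,-}$. By (A) and (H1), $H_{i,+}(0)>0$, so $c_{i,+}(\mu)\to+\infty$ as $\mu\to0^+$. Since $c_{i,+}(\mu^*_{i,+})=c^*_{i,+}<c^*_{i,-}$ and $c_{i,+}$ is strictly decreasing on $(0,\mu^*_{i,+})$ (Lemma \ref{lem:hamilton}), the smallest root satisfies $\mu_{0,i}\in(0,\mu^*_{i,+})$.

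Moreover, $c_{i,+}(\mu^*_{i,-})=c^*_{i,-}-\delta_i/\mu^*_{i,-}<c^*_{i,-}$, and $c_{i,+}$ is increasing on $(\mu^*_{i,+},\infty)$. Together with the previous paragraph this yields the key inequality
\begin{equation*}
c_{i,+}(\lambda)<c^*_{i,-}\ \text{for }\lambda\in(\mu_{0,i},\mu^*_{i,-}],\qquad c_{i,+}(\lambda)>c^*_{i,-}\ \text{for }\lambda\in(0,\mu_{0,i}).
\end{equation*}

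\textbf{Step 2: mediant identity for $k_i$.} Write $A=H_{i,-}(\mu^*_{i,-})=\mu^*_{i,-}c^*_{i,-}$ and $B=H_{i,+}(\lambda)=\lambda c_{i,+}(\lambda)$. Then
\begin{equation*}
k_i(\lambda)=\frac{A-B}{\mu^*_{i,-}-\lambda}
\end{equation*}
is an "external mediant" of the two ratios $c^*_{i,-}=A/\mu^*_{i,-}$ and $c_{i,+}(\lambda)=B/\lambda$. A one-line cross-multiplication gives, for $\lambda<\mu^*_{i,-}$, the equivalences
\begin{equation*}
k_i(\lambda)\ge c^*_{i,-}\iff c_{i,+}(\lambda)\le c^*_{i,-}\iff k_i(\lambda)\ge c_{i,+}(\lambda),
\end{equation*}
with strict versions as well.

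Combining with Step 1:
\begin{itemize}
\item For $\lambda<\mu_{0,i}$, we get $c_{i,+}(\lambda)\vee k_i(\lambda)=c_{i,+}(\lambda)=s^r_{i,+}$, and this exceeds $c^*_{i,-}$.
\item For $\lambda\in[\mu_{0,i},\mu^*_{i,-})$, we get $c_{i,+}(\lambda)\vee k_i(\lambda)=k_i(\lambda)\ge c^*_{i,-}$, with equality only at $\mu_{0,i}$. This is consistent with Lemma \ref{lem:well-division}(a).
\end{itemize}
In both cases $s^r_{i,+}\le c^*_{i,-}$ once $\lambda\ge\mu_{0,i}$.

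\textbf{Step 3: case check on each vertical line.}
\begin{itemize}
\item For $\lambda\in(0,\mu_{0,i})$, the line splits as $c_e<s^r_{i,+}$ (region $V^a$), $c_e=s^r_{i,+}$ (curve $\gamma^a$), and $c_e>s^r_{i,+}$ (region $V^d$). Regions $V^b$, $V^c$ and curves $\gamma^c,\gamma^d$ do not meet this line.
\item For $\lambda\in(\mu_{0,i},\mu^*_{i,-})$, the thresholds are $s^r_{i,+}<c^*_{i,-}<k_i(\lambda)$. They give, in order, $V^a$, $\gamma^a\cup\gamma^b$, $V^b$, $\gamma^d$, $V^c$, $\gamma^c$, $V^d$.
\item For $\lambda\ge\mu^*_{i,-}$, the thresholds are $c^*_{i,+}<c^*_{i,-}$. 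They give $V^a$, $\gamma^b$, $V^b$, $\gamma^d$, $V^c$.
\item The boundary value $\lambda=\mu_{0,i}$ is handled separately. There the three thresholds coincide at $c^*_{i,-}$, and the point is covered by $\gamma^a\cap\gamma^c\cap\gamma^d$.
\end{itemize}
Disjointness of the open regions is then immediate from the strict inequalities.

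I expect the main obstacle to be Step 1's claim that $c_{i,+}$ stays strictly below $c^*_{i,-}$ on all of $(\mu_{0,i},\mu^*_{i,-}]$. If this failed, a gap would open above $c^*_{i,-}$ between $V^c$, which would be empty, and $V^d$. The claim is resolved by the ordering $\mu_{0,i}<\mu^*_{i,+}<\mu^*_{i,-}$, the monotonicity of $c_{i,+}$ from Lemma \ref{lem:hamilton}, and the endpoint value $c_{i,+}(\mu^*_{i,-})<c^*_{i,-}$.
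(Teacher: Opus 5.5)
Your proposal is correct. Note that the paper gives no proof of this lemma. It is stated directly after Lemma \ref{lem:well-division}, whose calculus ($k_i$ strictly increasing, $k_i(\mu_{0,i})=c^{*}_{i,-}$, $k_i\to+\infty$ at $\mu^{*}_{i,-}$) is deferred to earlier work. The division is evidently meant to be read off from that monotonicity together with the shape of $c_{i,+}$.

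Your key step takes a different route. Instead of using monotonicity of $k_i$, you use the secant identities
\[
k_i(\lambda)-c^{*}_{i,-}=\frac{\lambda\bigl(c^{*}_{i,-}-c_{i,+}(\lambda)\bigr)}{\mu^{*}_{i,-}-\lambda},\qquad k_i(\lambda)-c_{i,+}(\lambda)=\frac{\mu^{*}_{i,-}\bigl(c^{*}_{i,-}-c_{i,+}(\lambda)\bigr)}{\mu^{*}_{i,-}-\lambda}.
\]
These reduce every comparison on a vertical line to the sign of $c^{*}_{i,-}-c_{i,+}(\lambda)$. Your Step 1 settles that sign via $\mu_{0,i}<\mu^{*}_{i,+}<\mu^{*}_{i,-}$ and $c_{i,+}(\mu^{*}_{i,-})<c^{*}_{i,-}$.

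What your route buys:
\begin{enumerate}
\item It is elementary and self-contained; no derivative of $k_i$ is needed.
\item It shows transparently why $c_{i,+}\vee k_i$ appears in $V^{d}_{i,r}$: the maximum is $c_{i,+}$ exactly when $\lambda<\mu_{0,i}$, and $k_i$ otherwise.
\end{enumerate}

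What the monotonicity route buys is the global geometry: $\gamma^{c}_{i}$ is the graph of an increasing function with a vertical asymptote at $\mu^{*}_{i,-}$. The paper uses this later, for instance to get the unique $\lambda_e$ with $k_1(\lambda_e)=c_e$ in Lemma \ref{lem:right-D}. That route still needs your Step 1 facts to resolve the maximum and to get $s^{r}_{i,+}<c^{*}_{i,-}$ on $(\mu_{0,i},\infty)$. Neither the geometry nor monotonicity is needed for the partition itself.

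Two small remarks:
\begin{enumerate}
\item Your reading of \eqref{eq:speed-homo} with $\mu^{*}_{i,\pm}$ in place of the printed $\mu^{*}_{1,\pm}$ is necessary; otherwise the statement fails for $i=2$.
\item At $\lambda=\mu_{0,i}$, say explicitly that $c_e<c^{*}_{i,-}$ lies in $V^{a}_{i,r}$ and $c_e>c^{*}_{i,-}$ lies in $V^{d}_{i,r}$. This holds because the $\lambda$-ranges of $V^{b}_{i,r}$ and $V^{c}_{i,r}$ are open at $\mu_{0,i}$.
\end{enumerate}
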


\begin{lemma}\label{lem:left-division}
For $i=1,2$, define the curves $\gamma_{i}^{o},\gamma_{i}^{p},\cdots, \gamma_{i}^{s}$ as follows:
	\begin{eqnarray*}
		&&	\gamma_{i}^{o}: c_{e}=-c_{i,-}(\lambda^{l}_{i}) \hspace{1.8em}\text{ for }\lambda_{i}^{l}\in (0,\mu^{*}_{i,-}],\\
		&&	\gamma_{i}^{p}: c_{e}\equiv -c^{*}_{i,-} \hspace{3.5em}\text{ for }\lambda^{l}_{i}\in [\mu^{*}_{i,+},\infty),\\
		&&	\gamma_{i}^{q}: c_{e}=-g_{i}(\lambda_{i}^{l})  \hspace{2.5em}\text{ for }\lambda_{i}^{l}\in (\mu^{*}_{i,+},L_{i}'(\bar{c}_{i})],\\
		&&	\gamma_{i}^{r}:c_{e}\equiv-\bar{c}_{i} \hspace{4.1em}\text{ for }\lambda_{i}^{l}\in [L_{i}'(\bar{c}_{i}),+\infty),\\
		&&	\gamma_{i}^{s}:c_{e}=-H_{i}'(\lambda^{r}_{i}) \hspace{2em}\text{ for }\lambda^{l}_{i}\in [\mu^{*}_{i,-},L_{i}'(\mu^{*}_{i,-}\bar{c}_{i})].
		\end{eqnarray*}
	Then the half-plane $E_{i}^l:=\{(\lambda_{i}^l, c_e)|\lambda_{i}^l\in(0,\infty),~c_e\in\mathbb R\}$ is divided into the following four disjoint regions:
\begin{eqnarray*}
	&& V_{i,l}^{a}: 	\{(\lambda_{i}^l, c_e)|\lambda_{i}^l\in (0,\infty) \text{ and }c_{e}>-s_{i,-}^{l} \},\\
	&&V_{i,l}^{b}:  \{(\lambda_{i}^l, c_e)|\lambda_{i}^r\in (\mu^{*}_{i,-},\infty) \text{ and } -H_{i}'(\lambda_{i}^l)\vee -\bar{c}_{i}<c_{e}<-c^{*}_{i,-} \},\\
	&&V_{i,l}^{c}: \hspace{-0.25cm}\begin{array}{ll}
		\left\{(\lambda_{i}^l, c_e)\,\begin{array}{|ll}
			\lambda_{i}^r\in (0,\mu^{*}_{i,+}] \text{ and }c_{e}<-c_{i,-}(\lambda_{i}^l), \\
	\text{or}~~ \lambda_{i}^l\in (\mu^{*}_{i,+},\mu^{*}_{i,-})\text{ and } -g_{i}(\lambda_{i}^l)<c_{e}<- c_{i,-}(\lambda_{i}^l),\\
	\text{or}~~\lambda_{i}^l\in [\mu^{*}_{i,-},L_{i}'(\bar{c}_{i})) \text{ and } -g_{i}(\lambda_{i}^l)<c_{e}<-H_{i}'(\lambda_{i}^l)
	\end{array}\right\},
	\end{array}\\
	&&V_{i,l}^{d}:  \hspace{-0.25cm}\begin{array}{ll}
		\left\{(\lambda_{i}^l, c_e)\,\begin{array}{|ll}
			\lambda_{i}^r\in (\mu^{*}_{i,+},L_{i}'(\bar{c}_{i})] \text{ and }c_{e}<-g_{i}(\lambda_{i}^l), \\
	\text{or}~~ \lambda_{i}^l\in (L_{i}'(\bar{c}_{i}),\infty)\text{ and } c_{e}<- \bar{c}_{i}\\
	\end{array}\right\}.
	\end{array}\\
	\end{eqnarray*}
\end{lemma}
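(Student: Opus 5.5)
The plan is to reduce the statement to elementary facts about the one-variable functions $c_{i,\pm}$, $H_{i,\pm}$, $k_i$ and $g_i$. The claim that $V_{i,l}^{a},\dots,V_{i,l}^{d}$ are disjoint and, together with the curves $\gamma_i^{o},\dots,\gamma_i^{s}$, exhaust $E_i^l$ will be checked by slicing $E_i^l$ at a fixed $\lambda:=\lambda_i^l$ and locating the thresholds in the $c_e$-direction. The ingredients are Lemma \ref{lem:hamilton}, giving strict convexity of $H_i$ and the monotonicity of $c_{i,\pm}$ on either side of $\mu^*_{i,\pm}$, and Lemma \ref{lem:well-division}(b), giving that $g_i$ is strictly decreasing with $g_i(L_i'(\bar c_i))=\bar c_i$ and nonempty intervals $(c_{i,-}(\mu),g_i(\mu))$, $(H_i'(\mu),g_i(\mu))$. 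Throughout I use $\mu^*_{i,+}<\mu^*_{i,-}$ and $c^*_{i,+}<c^*_{i,-}$. Both follow from $\alpha_+<\alpha_-$ in (A): they give $H_{i,+}<H_{i,-}$, and $\mu^*$ is characterized by $H_i'(\mu)\mu-H_i(\mu)=r_i\alpha_\pm$ (respectively $r_i\mathcal V_\pm$), whose left side is increasing.

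First I partition the $\lambda$-axis into the intervals $(0,\mu^*_{i,+}]$, $(\mu^*_{i,+},\mu^*_{i,-})$, $[\mu^*_{i,-},L_i'(\bar c_i))$ and $[L_i'(\bar c_i),\infty)$. On each interval I identify the thresholds.
\begin{itemize}
\item[(i)] The top threshold is $-s^l_{i,-}$, which equals $-c_{i,-}(\lambda)$ if $\lambda\le\mu^*_{i,-}$ and $-c^*_{i,-}=-H_i'(\mu^*_{i,-})$ otherwise. This is the curve $\gamma_i^o\cup\gamma_i^p$ (with the obvious reading of the range of $\gamma_i^p$), and above it lies $V_{i,l}^a$.
\item[(ii)] For $\lambda\ge\mu^*_{i,-}$ there is a middle threshold $-H_i'(\lambda)$ (curve $\gamma_i^s$). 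Since $H_i'$ is increasing and $H_i'(\mu^*_{i,-})=c^*_{i,-}$, it lies below $-c^*_{i,-}$.
\item[(iii)] The bottom threshold is $-g_i(\lambda)$ for $\lambda\in(\mu^*_{i,+},L_i'(\bar c_i)]$ (curve $\gamma_i^q$), and $-\bar c_i$ beyond (curve $\gamma_i^r$).
\end{itemize}

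On each slice I then verify the strict ordering of the thresholds, namely bottom $<$ middle $<$ top where they exist. For $\lambda\in(\mu^*_{i,+},\mu^*_{i,-})$ this is $-g_i(\lambda)<-c_{i,-}(\lambda)$, which is exactly the nonemptiness in Lemma \ref{lem:well-division}(b). For $\lambda\in[\mu^*_{i,-},L_i'(\bar c_i))$ it is $-g_i(\lambda)<-H_i'(\lambda)\le -c^*_{i,-}$, again from Lemma \ref{lem:well-division}(b) together with (ii). For $\lambda\ge L_i'(\bar c_i)$ I need $-H_i'(\lambda)\le -\bar c_i$, i.e.\ $H_i'(\lambda)\ge \bar c_i$, which holds because $H_i'(L_i'(\bar c_i))=\bar c_i$ (Legendre duality) and $H_i'$ is increasing. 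There $V_{i,l}^b$ is cut off by $\max\{-H_i'(\lambda),-\bar c_i\}=-\bar c_i$, consistent with its definition. For $\lambda\le \mu^*_{i,+}$ only the top threshold exists, and everything below it is $V_{i,l}^c$.

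With this ordering, each slice is a union of disjoint open intervals, namely $V^d$ below the bottom threshold, $V^c$ or $V^b$ between thresholds, and $V^a$ on top, separated by the listed curve points. Disjointness and exhaustiveness then follow immediately, which finishes the argument.

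The main obstacle is continuity at the interval endpoints $\lambda=\mu^*_{i,+}$, $\mu^*_{i,-}$ and $L_i'(\bar c_i)$, i.e.\ checking that the pieces of curves match so that no region has a gap. At $\mu^*_{i,+}$ I would show $g_i(\mu)\to+\infty$ as $\mu\downarrow\mu^*_{i,+}$. The numerator tends to $H_{i,-}(\mu^*_{i,+})-H_{i,+}(\mu^*_{i,+})=r_i(\alpha_--\alpha_+)>0$ (respectively $r_ib(\alpha_--\alpha_+)$), while the denominator tends to $0^+$, so $V^c$ correctly fills everything below $-c_{i,-}$ for $\lambda\le\mu^*_{i,+}$. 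At $\mu^*_{i,-}$ I use $c_{i,-}(\mu^*_{i,-})=H_i'(\mu^*_{i,-})$. At $L_i'(\bar c_i)$ I use $g_i(L_i'(\bar c_i))=\bar c_i=H_i'(L_i'(\bar c_i))$, so $\gamma_i^q$, $\gamma_i^r$ and $\gamma_i^s$ meet at one point.
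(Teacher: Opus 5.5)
Your proposal is correct and is essentially the argument the paper relies on: the paper gives no separate proof of this division and bases it on the monotonicity facts in Lemma \ref{lem:hamilton} and Lemma \ref{lem:well-division}, which is exactly your slice-by-slice ordering of the thresholds $-g_i$ (or $-\bar c_i$), $-H_i'$ and $-s^l_{i,-}$, with the misprinted ranges of $\gamma_i^p$ and $\gamma_i^s$ read as you read them. Two small points should be made explicit: on the last slice, and when ordering the $\lambda$-intervals via $L_i'(\bar c_i)>\mu^*_{i,-}$, you use $\bar c_i>c^*_{i,-}$ from Lemma \ref{lem:left-para-p}; and at $\lambda=\mu^*_{i,-}$ the inequality $g_i>H_i'$ is not literally covered by Lemma \ref{lem:well-division}(b), but it follows directly from $g_i(\mu)>c_{i,-}(\mu)\iff c_{i,-}(\mu)>c^*_{i,+}$ (and a weak inequality would suffice for the partition anyway).
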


\subsection{Viscosity solutions}

For a given function $R:\mathbb{R}\rightarrow\mathbb{R}$ which may be discontinuous, define its upper semi-continuous (u.s.c. for short) and lower semi-continuous (l.s.c. for short) envelope of $R$ as follows:
\begin{equation*}
	R^*(s)=\limsup_{s'\to s} R(s')\quad \text{ and } \quad R_*(s)=\liminf_{s'\to s} R(s').
\end{equation*}
Consider the following time-dependent Hamilton-Jacobi equation:
\begin{equation}\label{t-dep}
\min\left\{\partial_{t}w+H_{i}(\partial_{x}w)+R(x/t),w\right\}=0, \quad (x,t)\in\mathbb{R}\times\mathbb{R}^{+},
\end{equation}
where $H_{i}$ is given by \eqref{eq:hi}.

We introduce the definition of discontinuous viscosity solutions for \eqref{t-dep} as in \cite{barles2013}.
\begin{definition}
	{\rm
	A locally bounded l.s.c. function $w$ is  a  {\it viscosity supersolution} of \eqref{t-dep} if $w\geqslant  0$, and for any test functions $\phi\in C^1(\mathbb{R}\times\mathbb{R}^{+})$, if $(x_0,t_{0})\in \mathbb{R}\times\mathbb{R}^{+}$ is a strict local minimum point of $w-\phi$, then
\begin{equation*}
	\partial_t\phi(x_0,t_{0})+ H_{i}(\partial_x\phi(x_0,t_{0}))+R^{*}(x_0/t_{0})\geqslant  0.
\end{equation*}
	A locally bounded u.s.c. function $w$ is a {\it viscosity subsolution} of \eqref{t-dep} if for any test functions $\phi\in C^1(\mathbb{R}\times\mathbb{R}^{+})$,  if $(x_0,t_{0})\in \mathbb{R}\times\mathbb{R}^{+}$  is a strict local maximum point of $w-\phi$ and $w(x_0,t_{0})>0$, then
 \begin{equation*}
		\partial_t\phi(x_0,t_{0})+ H_{i}(\partial_x\phi(x_0,t_{0}))+R_{*}(x_0/t_{0})\leqslant  0.
	\end{equation*}
	}
\end{definition}

	Now consider the following time-independent Hamilton-Jacobi equation
\begin{equation}\label{t-indep}
	\min\left\{\rho(s)-s\rho'(s)+H_{i}(\rho'(s))+R(s),\rho(s) \right\}=0, \quad s\in\mathbb{R},
\end{equation}
where $H_{i}$ is given by \eqref{eq:hi}.

\begin{definition}
		{\rm
	A locally bounded l.s.c. function $\rho$ is a {\it viscosity supersolution} of \eqref{t-indep} if $\rho\geqslant0$, and for any test function $\phi\in C^{1}(\mathbb{R})$, if $s_0$ is a local minimum point of $\rho-\phi$, then
	\begin{equation}\label{eq:t-indep-vis-sup}
		 \rho(s_{0})-s_{0}\phi'(s_{0})+H_{i}(\phi'(s_{0}))+R^{*}(s_{0})\geqslant0.
	\end{equation}
	A locally bounded u.s.c. function $\rho$ is a {\it viscosity subsolution} of \eqref{t-indep} if for any test function $\phi\in C^{1}(\mathbb{R})$, if $s_0$ is a local maximum point of $\rho-\phi$ and $\rho(s_{0})>0$, then
	\begin{equation*}
		 \rho(s_{0})-s_{0}\phi'(s_{0})+H_i(\phi'(s_{0}))+R_{*}(s_{0})\leqslant0.
	\end{equation*}
		}
\end{definition}

\begin{lemma}\label{lem:samehj}
	Let $w(x,t) = t\rho\left({x}/{t}\right)$ for $(x,t)\in \mathbb{R}\times\mathbb{R}^{+}$.
Then $\rho(s)$ is a viscosity subsolution (resp. supersolution) of \eqref{t-indep} on  $\mathbb{R}$ if and only if $w(x,t)$ is  a viscosity subsolution (resp. supersolution) of \eqref{t-dep} on $\mathbb{R}\times\mathbb{R}^{+}$.
\end{lemma}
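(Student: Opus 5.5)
The plan is to transfer test functions between the two settings through the change of variables $s=x/t$. Fix $(x_0,t_0)\in\mathbb{R}\times\mathbb{R}^{+}$ and put $s_0=x_0/t_0$. First, $\rho$ is u.s.c. (resp. l.s.c.) if and only if $w(x,t)=t\rho(x/t)$ is, and $\rho(s_0)>0$ if and only if $w(x_0,t_0)>0$. Also $\rho\geqslant0$ if and only if $w\geqslant0$. So the semicontinuity requirements, the positivity condition in the subsolution definition and the sign condition in the supersolution definition all match, and only the differential inequalities need to be compared.

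\emph{From $\rho$ to $w$.} Let $\phi\in C^1(\mathbb{R}\times\mathbb{R}^{+})$ and let $(x_0,t_0)$ be a (strict) local maximum of $w-\phi$ with $w(x_0,t_0)>0$. Define $\psi(s):=\phi(st_0,t_0)/t_0$. Restricting to the slice $t=t_0$ shows that $s_0$ is a local maximum of $\rho-\psi$, and $\psi'(s_0)=\partial_x\phi(x_0,t_0)$. The subsolution property of $\rho$ gives
\[
\rho(s_0)-s_0\partial_x\phi+H_i(\partial_x\phi)+R_*(s_0)\leqslant0 .
\]
It remains to identify $\partial_t\phi(x_0,t_0)$. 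Restrict now to the ray $t\mapsto(ts_0,t)$. Along it, $w(ts_0,t)-\phi(ts_0,t)=t\rho(s_0)-\phi(ts_0,t)$ has a local maximum at $t_0$, and this function is differentiable in $t$. Hence
\[
\rho(s_0)=s_0\partial_x\phi+\partial_t\phi .
\]
Substituting this identity turns the inequality above into $\partial_t\phi+H_i(\partial_x\phi)+R_*(x_0/t_0)\leqslant0$. The supersolution case is identical with minima and reversed inequalities.

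\emph{From $w$ to $\rho$.} Let $\psi\in C^1(\mathbb{R})$ and let $s_0$ be a local maximum of $\rho-\psi$. After subtracting a constant we may assume $\rho(s_0)=\psi(s_0)$, and after adding $(s-s_0)^2$ to $\psi$ we may assume the maximum is strict. Set $\phi(x,t):=t\psi(x/t)$. Since $w-\phi=t(\rho-\psi)(x/t)$ with $t>0$ near $t_0$, the point $(x_0,t_0)$ is a local maximum of $w-\phi$ for any $t_0>0$ with $x_0=s_0t_0$. This maximum is not strict along the ray $x/t=s_0$, where $w-\phi\equiv0$. To repair this, use $\phi+(t-t_0)^2$; it does not change the derivatives at $(x_0,t_0)$ and makes the maximum strict.

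Direct computation gives $\partial_x\phi=\psi'(s_0)$ and $\partial_t\phi=\psi(s_0)-s_0\psi'(s_0)=\rho(s_0)-s_0\psi'(s_0)$. The subsolution inequality for $w$ then becomes exactly the subsolution inequality for $\rho$ at $s_0$. The supersolution case is again symmetric.

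The only delicate points are the strictness of extrema and the identity $\rho(s_0)=s_0\partial_x\phi+\partial_t\phi$ in the first direction. That identity is the main obstacle: it is where the $1$-homogeneity of $w$ enters, and it needs differentiability only along the ray, not of $\rho$ itself.
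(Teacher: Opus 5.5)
Your proof is correct and is essentially the standard argument the paper defers to (the cited lemmas of Lam--Yu and Liu et al.). It transfers test functions through the slice $t=t_0$ and the homogeneous extension $t\psi(x/t)$, and it uses the vanishing derivative along the ray $x/t=s_0$ to obtain $\rho(s_0)=s_0\partial_x\phi+\partial_t\phi$. Your normalization $\psi(s_0)=\rho(s_0)$ together with the perturbations $(s-s_0)^2$ and $(t-t_0)^2$ correctly reconciles the strict-extremum convention of the time-dependent definition with the non-strict one of the time-independent definition.
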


This lemma follows directly by adapting the proofs of \cite[Lemma 2.5]{lam2022a} or \cite[Lemma 2.3]{liu2021}, and we omit the details of its proof.

Now, we define an u.s.c. function and three l.s.c. functions, depending on the value of $c_{e}$, as follows:
\begin{equation}\label{eq:shift-bar}
	\bar{R}(s)=
\begin{cases}
	r_{1}\alpha_{-} &\text{for } s\in (-\infty,c_{e}],\\
	r_{1}\alpha_{+}  &\text{for } s\in (c_{e},\infty).
\end{cases}
\end{equation}

\begin{subequations}
	\begin{align}\label{eq:shift-under-1}
		\text{If } c_{e}>s_{2,-}^r, \hspace{2cm}	\underline R_{1}(s)=\left\{\begin{array}{ll}
			r_{1}\alpha_{-} &\text{for }s\in(-\infty,-s_{2,-}^l),\\
			r_{1}(	\alpha_{-}-a\mathcal{V}_{-}) &\text{for }s\in[-s_{2,-}^l,s_{2,-}^r],\\
			r_{1}\alpha_{-} &\text{for }s\in(s_{2,-}^r,c_{e}),\\
			r_{1}\alpha_{+} &\text{for }s\in[c_{e},\infty).
	\end{array}\right.
	\end{align}
\begin{align}\label{eq:shift-under-2}
	\hspace{0.1cm} \text{If } -s_{2,-}^l\leqslant c_{e}\leqslant s_{2,-}^r, \hspace{0.4cm} \underline  R_{2}(s)=\left\{\begin{array}{ll}
		r_{1}\alpha_{-} &\text{for }s\in(-\infty,-s_{2,-}^l),\\
		r_{1}(\alpha_{-}-a\mathcal{V}_{-}) &\text{for }s\in[-s_{2,-}^l,c_{e}),\\
		r_{1}	(\alpha_{+}-a\mathcal{V}_{-})  &\text{for }s\in[c_{e},s_{2,-}^r],\\
		r_{1}	\alpha_{+} &\text{for }s\in[s_{2,-}^r,\infty).
	\end{array}\right.
\end{align}
\begin{align}\label{eq:shift-under-3}
	\hspace{0.1cm}\text{If } c_{e}<-s_{2,-}^l,  \hspace{1.7cm} \underline 	 R_{3}(s)=\left\{\begin{array}{ll}
		r_{1}\alpha_{-} &\text{for }s\in(-\infty,c_{e}),\\
		r_{1}\alpha_{+} &\text{for }s\in[c_{e},-s_{2,-}^l),\\
		r_{1}(\alpha_{+}-a\mathcal{V}_{-})  &\text{for }s\in[-s_{2,-}^l,s_{2,-}^r],\\
		r_{1}\alpha_{+} &\text{for }s\in(s_{2,-}^r,\infty).
\end{array}\right.
\end{align}
	\end{subequations}
\begin{lemma}\label{lem:viscosity-comparison-t-indep}
Assume that $R:\mathbb{R}\rightarrow \mathbb{R}^{+}$ is defined  by \eqref{eq:shift-bar} or \eqref{eq:shift-under-1}-\eqref{eq:shift-under-3} or \eqref{eq:R0-usc} below. Let $\bar \rho$ (resp. $\underline \rho$) be a nonnegative viscosity supersolution (resp. subsolution) of \eqref{t-indep} on $\mathbb{R}$. Suppose further that
\begin{align*}
	\underline \rho(0)\leqslant \bar \rho(0), \quad \underset{s\rightarrow+\infty}\lim\, \underline \rho(s)=+\infty (resp. \underset{s\rightarrow-\infty}\lim\, \underline \rho(s)=+\infty),
\end{align*}
and
	\begin{equation*}
	\underset{s\rightarrow+\infty}\lim\, \frac{\underline \rho(s)}{s}\leqslant \underset{s\rightarrow+\infty}\liminf\,{\frac{\bar \rho(s)}{s}}\left(resp. \underset{s\rightarrow-\infty}\lim\, \frac{\underline \rho(s)}{|s|}\leqslant \underset{s\rightarrow-\infty}\liminf\,{\frac{\bar \rho(s)}{|s|}}\right).
	\end{equation*}
	If in addition
	\begin{align*}
		\underset{s\rightarrow+\infty}\lim\, \frac{\underline \rho(s)}{s}\leqslant +\infty (resp. \underset{s\rightarrow-\infty}\lim\, \frac{\underline \rho(s)}{|s|}\leqslant +\infty),
	\end{align*}
 then $\underline \rho(s)\leqslant \bar \rho(s)$ in $\mathbb{R}^{+}$(resp. $\mathbb{R}^{-}$).
	\end{lemma}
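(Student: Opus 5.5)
Working on the right half-line (the left one follows by $s\mapsto -s$), the plan is to reduce to a comparison on a bounded interval $[0,M]$ and run a doubling-of-variables argument there, using the linear growth at infinity to exclude a maximum of $\underline\rho-\bar\rho$ escaping to infinity. Since $R$ is piecewise constant with finitely many jumps, $R^*$ and $R_*$ differ only at the jump points. The key structural fact is that each $R$ has the ``right'' monotonicity across its jumps to play the role of a one-sided continuity condition. For \eqref{eq:shift-bar}, $R$ jumps downward at $c_e$. The drift term $-s\rho'$ has speed $-s$, so characteristics are transverse at each jump point $s_0$ with $s_0\neq 0$. That is the configuration in which the Barles–Perthame / Ishii type comparison for discontinuous coefficients goes through, provided we double variables asymmetrically: $\Phi(s,\sigma)=\underline\rho(s)-\bar\rho(\sigma)-\frac{(s-\sigma-\delta\varepsilon)^2}{2\varepsilon^{2}}$ with a shift $\delta=\pm1$ chosen according to the side of the jump from which the larger value of $R$ is taken. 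For the jump at $s=0$ (possible for $\underline R_2$ when $c_e=0$) we use instead the hypothesis $\underline\rho(0)\le\bar\rho(0)$, which is exactly why it appears.

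\textbf{Step 1 (localization).} Because $\underline\rho(s)\to+\infty$, the set $\{\underline\rho>0\}$ contains $[s_1,\infty)$, so there the subsolution inequality always holds. On the zero set of $\underline\rho$ we have $\underline\rho\le 0\le\bar\rho$ trivially. Next perturb the subsolution to a strict one: set $\underline\rho_\theta(s)=\theta\underline\rho(s)-(1-\theta)C$ for $\theta<1$, or alternatively subtract $\eta(1+s)$. Convexity of $H_i$ gives
\[
\theta\underline\rho-s\theta p+H_i(\theta p)+R\le \theta\bigl(\underline\rho-sp+H_i(p)+R\bigr)+(1-\theta)\bigl(H_i(0)+R\bigr),
\]
so we lose a bounded positive amount which is absorbed by the $-(1-\theta)C$ constant. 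The growth hypothesis $\lim \underline\rho/s\le\liminf\bar\rho/s$, together with $\theta<1$, then forces $\underline\rho_\theta-\bar\rho\to-\infty$, or at least to be negative for $s$ large. Hence a positive supremum over $[0,\infty)$ is attained at some finite $\hat s>0$; the point $\hat s=0$ is excluded by the boundary hypothesis.

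\textbf{Step 2 (doubling).} At the maximizers $(s_\varepsilon,\sigma_\varepsilon)$ of $\Phi$, both points lie near $\hat s$, and the common test slope is $p_\varepsilon=(s_\varepsilon-\sigma_\varepsilon-\delta\varepsilon)/\varepsilon^2$. Subtracting the two viscosity inequalities gives
\[
\underline\rho_\theta(s_\varepsilon)-\bar\rho(\sigma_\varepsilon)\le (s_\varepsilon-\sigma_\varepsilon)p_\varepsilon+\bigl[R^*(\sigma_\varepsilon)-R_*(s_\varepsilon)\bigr]-\kappa,
\]
with $\kappa>0$ coming from the strict perturbation. The first term is $O(\varepsilon)\cdot p_\varepsilon$, and it tends to $0$ once we show $p_\varepsilon$ is bounded. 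Boundedness comes from the superlinearity of $H_i$ (Lemma \ref{lem:hamilton}) combined with local boundedness in the subsolution inequality; this is where the coercivity of $H_i$ enters.

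\textbf{Main obstacle.} The hard step is controlling $R^*(\sigma_\varepsilon)-R_*(s_\varepsilon)$ when $\hat s$ is a jump point. The choice of $\delta$ forces $\sigma_\varepsilon$ to lie strictly on the side where $R$ is smaller, and $s_\varepsilon$ on the side where it is larger, so that the bracket is $\le 0$ in the limit. This requires the standard ``cone condition'' argument. The drift $-s\rho'$ at $s\ne0$ supplies a term $(\sigma_\varepsilon-s_\varepsilon)p_\varepsilon$ whose sign is controlled by $\delta$, and one must verify for each jump in \eqref{eq:shift-bar}, \eqref{eq:shift-under-1}--\eqref{eq:shift-under-3} and \eqref{eq:R0-usc} that the needed sign is consistent. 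Assumption (H2), $\alpha_+-a\mathcal V_->0$, keeps every value of $R$ positive, which is what makes the $\min\{\cdot,\rho\}$ structure compatible with $\bar\rho\ge0$. If the transversality argument fails at some jump, the fallback is the Tourin/Giga–Hamamuki approach for piecewise-constant coefficients: treat each interval separately, where comparison is classical, and glue by continuity of $\underline\rho$ and $\bar\rho$ at the interfaces via the optimal-control representation of the supersolution.
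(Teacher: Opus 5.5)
The paper gives no argument for this lemma; it only refers to Proposition~2.4 of the authors' earlier shifting-habitat paper. Your scheme is therefore a self-contained alternative: a strict subsolution obtained by convexity, plus a penalization shifted toward the side where $R$ is larger. For a finite limiting slope it can be completed. As written, however, Step~1 has a genuine gap.

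\textbf{Step 1: the slope at infinity.} Making $\underline\rho_\theta-\bar\rho$ negative at infinity by taking $\theta<1$ requires $\ell:=\lim_{s\to\infty}\underline\rho(s)/s\in(0,\infty)$.

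\emph{Positivity of $\ell$.} This does not follow from $\underline\rho\to+\infty$ alone; it comes from the equation. On $\{\underline\rho>0\}$, $\underline\rho$ is locally Lipschitz and a.e.
\[
\Bigl(\frac{\underline\rho}{s}\Bigr)'=\frac{s\underline\rho'-\underline\rho}{s^{2}}\geqslant\frac{H_i(\underline\rho')+R_*(s)}{s^{2}}>0,
\]
since $H_i\geqslant0$ and $R>0$. This, and not compatibility with $\bar\rho\geqslant0$, is where positivity of $R$ enters.

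\emph{The case $\ell=+\infty$.} This is the more serious problem, because it is not excluded. The last hypothesis as printed ($\leqslant+\infty$) is vacuous, and the paper applies the lemma with $\lambda_1^r=+\infty$ under {\rm (I$^{\infty}$)}. Then $\liminf\bar\rho/s=+\infty$ as well, $\theta\underline\rho-\bar\rho$ need not be bounded above, and your localization produces no finite maximizer.

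You can close this with your own finite-slope result:
\begin{itemize}
\item For each $\lambda>0$, $\max\{0,\lambda s-H_i(\lambda)-\sup R\}$ is a subsolution with finite slope $\lambda$, so it lies below $\bar\rho$. Taking the supremum over $\lambda$ gives $\bar\rho(s)\geqslant L_i(s)-\sup R$ for $s>0$, where $L_i$ is the Legendre transform of $H_i$.
\item The a.e.\ subsolution inequality gives $\underline\rho\leqslant s\underline\rho'-H_i(\underline\rho')-R_*\leqslant L_i(s)-\inf R$ wherever $\underline\rho>0$.
\item Hence $\theta\underline\rho-\bar\rho\leqslant-(1-\theta)L_i(s)+C\to-\infty$, by superlinearity of $L_i$.
\end{itemize}

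\textbf{Step 2: what the shift actually gives at a jump.} Once $p_\varepsilon$ is bounded, the shift yields only $s_\varepsilon-\sigma_\varepsilon=\delta\varepsilon+\varepsilon^{2}p_\varepsilon$. That is an ordering of the two points, not a guarantee that they lie on opposite sides of the jump. The ordering is nevertheless enough: if $R$ is nondecreasing near $\hat s$ and $\sigma<s$, then $R^*(\sigma)\leqslant R_*(s)$; take $\delta=-1$ for a downward jump. You do need to localize, e.g.\ by adding $|s-\hat s|^4$, so that the maximizers stay in a neighbourhood of $\hat s$ containing only that jump.

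Two further points should be made explicit:
\begin{itemize}
\item The bound $\max\Phi\geqslant\underline\rho_\theta(\hat s+\delta\varepsilon)-\bar\rho(\hat s)$ recovers the positive maximum only if $\underline\rho$ is continuous at $\hat s$.
\item Both this continuity and the bound on $p_\varepsilon$ come from the local Lipschitz estimate for subsolutions. Where $\underline\rho>0$, every superdifferential slope satisfies $H_i(p)<sp$, so it lies in a bounded interval by superlinearity.
\end{itemize}
With $p_\varepsilon$ bounded, the drift contributes only $(s_\varepsilon-\sigma_\varepsilon)p_\varepsilon=O(\varepsilon)$. So no transversality of characteristics is needed, jumps of either orientation are handled the same way, and the fallback you mention is unnecessary.
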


		This lemma is a direction consequence of {\cite[Proposition 2.4]{tao-shift}}; we omit the proof here.

\section{Upper and lower limiting Hamilton-Jacobi equations}\label{sec:hamilton-limit}
In this section, we derive the upper and lower limiting Hamilton-Jacobi equations under assumptions  (J), (A), (H1)-(H2) and (FU).

\subsection{Crucial estimates}
Define the hyperbolic scaling for the $u$-equation by
\begin{align}\label{eq:hyperbolic}
	 u^{\varepsilon}(x,t)=u\left(\frac{x}{\varepsilon},\frac{t}{\varepsilon}\right),  \quad (x,t)\in \mathbb{R}\times\mathbb{R}^{+}.
\end{align}
By the comparison principle for a scalar equation, the following Hopf-Cole transform is well-defined for any $\varepsilon>0$,
\begin{align}\label{eq:hopf}
	w^{\varepsilon}(x,t)=-\varepsilon\ln u^{\varepsilon}(x,t), \quad (x,t)\in \mathbb{R}\times\mathbb{R}^{+}.
\end{align}
A direct computation shows that $w^{\varepsilon}$ satisfies the following equation on $\mathbb{R}\times\mathbb{R}^{+}$
\begin{align}\label{eq:we-viscosity}
	 w^{\varepsilon}_t+d_{1}\left(\int_{\mathbb{R}}J_{1}(y)e^{-\frac{w^{\varepsilon}(x-\varepsilon y, t)-w^{\varepsilon}(x, t)}{\varepsilon}}\,dy-1\right)+r_{1}\left[\alpha\left(\frac{x-c_{e}t}{\varepsilon}\right)-u^{\varepsilon}-av\left(\frac{x}{\varepsilon},\frac{t}{\varepsilon}\right)\right]=0.
\end{align}

\begin{lemma}\label{lemma:local-bound}
 Assume that {\rm (I$^{\lambda}$)} holds for some $\lambda\in(0,\infty)$. Then there exists a constant $K>0$, independent of small $\varepsilon$,  such that
 \begin{equation}\label{eq:optimal-we}
	\max\left\{\lambda^{r}_{1}x_{+}+\lambda^{l}_{1}x_{-} -K(t+\varepsilon),-\varepsilon\ln \alpha_{-}\right\}\leqslant  w^{\varepsilon}(x,t)\leqslant \lambda^{r}_{1}x_{+}+\lambda^{l}_{1}x_{-} +K(t+\varepsilon)
 \end{equation}
 for all $ (x,t)\in\mathbb{R}\times\mathbb{R}^{+}$, where $x_{+}=x\vee 0$ and $x_{-}=-x\vee 0$ for $x\in\mathbb{R}$.
\end{lemma}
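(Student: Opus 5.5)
Translated back to $u$, the two-sided estimate \eqref{eq:optimal-we} says
\[
u(x,t)\leqslant \min\{\alpha_-,\;Ce^{-\lambda_1^r x_+-\lambda_1^l x_-+Kt}\}
\qquad\text{and}\qquad
u(x,t)\geqslant C^{-1}e^{-\lambda_1^r x_+-\lambda_1^l x_- -Kt}
\]
for the unscaled solution, with $C=e^{K}$. Indeed, setting $x\mapsto x/\varepsilon$, $t\mapsto t/\varepsilon$ and $w^\varepsilon=-\varepsilon\ln u^\varepsilon$, a bound $u(x,t)\leqslant Ce^{-\lambda x_+ +Kt}$ becomes $w^\varepsilon\geqslant \lambda x_+-K t-\varepsilon\ln C$, and $\varepsilon\ln C$ is absorbed into $K\varepsilon$. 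So I will prove these pointwise exponential bounds for $u$ with constants independent of $\varepsilon$. Since $\lambda<\infty$, condition (I$^{\lambda}$) gives $C_0>0$ with $C_0^{-1}e^{-\lambda_1^r x_+-\lambda_1^l x_-}\leqslant u_0(x)\leqslant C_0 e^{-\lambda_1^r x_+-\lambda_1^l x_-}$. Positivity of $u_0$ and continuity handle compact sets, and ``$\sim$'' handles the tails. The proof then reduces to comparing $u$ with solutions of scalar linear or logistic equations, which do enjoy a comparison principle even though the system does not.

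\textbf{Upper bound.} Since $v\geqslant0$ and $\alpha\leqslant\alpha_-$, $u$ is a subsolution of $U_t=d_1(J_1*U-U)+r_1U(\alpha_--U)$. Comparison gives $u\leqslant\alpha_-$, because $u_0\leqslant\alpha_-$. It also gives $u\leqslant \bar U$, where $\bar U$ solves the linear equation $\bar U_t=d_1(J_1*\bar U-\bar U)+r_1\alpha_-\bar U$. I take the supersolution
\[
\bar U=C_0\bigl(e^{-\lambda_1^r x+K_1t}+e^{\lambda_1^l x+K_1t}\bigr),\qquad K_1=\max\{H_{1}(\lambda_1^r),H_1(\lambda_1^l)\}+r_1\alpha_-.
\]
Plugging in $e^{-\lambda x+kt}$ gives exactly the Hamiltonian $H_1(\lambda)$ from \eqref{eq:hi}, using the symmetry of $J_1$. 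Since $e^{-\lambda_1^r x}+e^{\lambda_1^l x}\leqslant 2e^{-\lambda_1^r x_+-\lambda_1^l x_-}\cdot e^{0}$ up to a factor, after adjusting constants this yields the right-hand inequality of \eqref{eq:optimal-we} as well as $w^\varepsilon\geqslant -\varepsilon\ln\alpha_-$.

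\textbf{Lower bound.} This is the main obstacle, because the predator term $-av$ pushes $u$ down. First I bound $v$: comparison for the scalar $v$-equation with $u\leqslant\alpha_-$ gives $v\leqslant\mathcal V_-$, since $v_0\leqslant\mathcal V_-$. Hence
\[
u_t\geqslant d_1(J_1*u-u)-Mu,\qquad M:=r_1\bigl(|\alpha|_\infty+\alpha_-+a\mathcal V_-\bigr),
\]
using $\alpha>0$ and $u\leqslant\alpha_-$. So $u$ is a supersolution of a linear equation with constant coefficients. Now I take the subsolution
\[
\underline U=C_0^{-1}\bigl(e^{-\lambda_1^r x}\wedge e^{\lambda_1^l x}\bigr)e^{-K_2 t}.
\]
A minimum of two exponentials each solving the linear equation with $K_2\geqslant M+d_1+\max(|H_1(\lambda_1^r)|,|H_1(\lambda_1^l)|)$ is again a subsolution; for the nonlocal operator this follows since $J_1*\min\leqslant\min J_1*$. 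Alternatively I can use $K_2\geqslant M+d_1$ and simply drop the nonnegative $d_1J_1*u$ term, so that $u\geqslant u_0e^{-(M+d_1)t}$ holds directly. That second route is the cleanest: it gives $u(x,t)\geqslant C_0^{-1}e^{-\lambda_1^rx_+-\lambda_1^lx_- -(M+d_1)t}$ immediately.

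Finally I set $K$ as the maximum of the constants, plus $\ln C_0$ and $\ln 2$, so that the $\varepsilon$-multiples are absorbed. The scaling $t\mapsto t/\varepsilon$ multiplied by $\varepsilon$ keeps $Kt$, and all constants are independent of $\varepsilon$.
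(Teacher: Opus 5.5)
Most of your plan is sound and matches the paper. The translation to pointwise bounds on $u$ is correct, as are the bounds $u\leqslant\alpha_-$ and $v\leqslant\mathcal{V}_-$. So is the lower bound $u\geqslant u_0e^{-(M+d_1)t}$ obtained by discarding $d_1J_1*u\geqslant 0$; the paper's constant $K_1\geqslant d_1+\cdots$ does exactly this. Note, however, that this lower bound on $u$ gives the \emph{right}-hand inequality in \eqref{eq:optimal-we}, and the upper bound on $u$ gives the left-hand one; you have them swapped.

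The genuine gap is in the upper bound on $u$. Your sum $C_0\bigl(e^{-\lambda_1^{r}x}+e^{\lambda_1^{l}x}\bigr)e^{K_1t}$ is a valid supersolution. But the inequality $e^{-\lambda_1^{r}x}+e^{\lambda_1^{l}x}\leqslant C\,e^{-\lambda_1^{r}x_{+}-\lambda_1^{l}x_{-}}$ that you use to conclude is false for every constant $C$: for $x>0$ the ratio equals $1+e^{(\lambda_1^{r}+\lambda_1^{l})x}$. What the sum actually yields after scaling is
\[
w^\varepsilon(x,t)\geqslant-\varepsilon\ln\bigl(e^{-\lambda_1^{r}x/\varepsilon}+e^{\lambda_1^{l}x/\varepsilon}\bigr)-K_1t-\varepsilon\ln C_0=-\bigl(\lambda_1^{l}x_{+}+\lambda_1^{r}x_{-}\bigr)-K_1t+O(\varepsilon).
\]
This is weaker than the trivial bound $w^\varepsilon\geqslant-\varepsilon\ln\alpha_-$ and carries no decay information. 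Yet this half of the lemma is exactly what later produces $w_*(x,0)\geqslant\lambda_1^{r}x$ and hence the boundary condition $\lim_{s\to+\infty}\rho_*(s)/s\geqslant\lambda_1^{r}$.

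The repair is to use a minimum instead of a sum. This is what the paper does with its two separate barriers $\lambda_1^{r}x-K_2(t+\varepsilon)$ and $-\lambda_1^{l}x-K_2(t+\varepsilon)$ for $w^\varepsilon$. Since $u_0\leqslant C_0e^{-\lambda_1^{r}x_{+}-\lambda_1^{l}x_{-}}=C_0\min\{e^{-\lambda_1^{r}x},e^{\lambda_1^{l}x}\}$, each of $C_0e^{-\lambda_1^{r}x+K_1t}$ and $C_0e^{\lambda_1^{l}x+K_1t}$ lies above $u_0$ on all of $\mathbb{R}$. Comparison then puts $u$ below each of them, hence below $C_0e^{-\lambda_1^{r}x_{+}-\lambda_1^{l}x_{-}+K_1t}$.

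Equivalently, the minimum of two supersolutions of the linear nonlocal equation is again a supersolution, and that is exactly where your inequality $J_1*\min\leqslant\min J_1*$ belongs. For a minimum of \emph{subsolutions}, where you invoked it, it points the wrong way. That slip is harmless: with $K_2\geqslant M+d_1$, any nonnegative profile times $e^{-K_2t}$ is a subsolution, and you use the direct route anyway. With this change your proof coincides with the paper's.
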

\begin{proof}
By   \eqref{initial-exp}, there exist two constants $C_{2}>C_{1}>0$ such that
	\begin{equation*}
C_{1}e^{-\left(\lambda^{r}_{1}x_{+}+\lambda^{l}_{1}x_{-}\right)}\leqslant u_{0}(x)\leqslant 	 C_{2}e^{-\left(\lambda^{r}_{1}x_{+}+\lambda^{l}_{1}x_{-}\right)} \quad \text{for~}x\in\mathbb{R}.
	\end{equation*}
	From the Hopf-Cole transform, it follows that
	\begin{equation*}\label{eq:w-epsilon-initial}
	 -\varepsilon\ln C_{2}+\lambda^{r}_{1}x_{+}+\lambda^{l}_{1}x_{-}\leqslant w^{\varepsilon}(x,0)\leqslant -\varepsilon\ln C_{1}+\lambda^{r}_{1}x_{+}+\lambda^{l}_{1}x_{-} \quad \text{for~}x\in \mathbb{R}.
	\end{equation*}

	Now, define
	\begin{equation*}\label{eq:sup-1}
		\bar w^{\varepsilon}(x,t)=\lambda^{r}_{1}x_{+}+\lambda^{l}_{1}x_{-}+K_{1}(t+\varepsilon),
	\end{equation*}
	where $K_{1}>0$ is chosen such that
	\begin{equation*}\label{eq:constant-K}
		K_{1}\geqslant \max\left\{d_{1}+\alpha_{-}+r_{1}(\alpha_{-}+a\mathcal{V}_{-}),|\ln C_{1}|\right\}.
	\end{equation*}
Using the comparison principle, one can verify that $w^{\varepsilon}\leqslant \bar w^{\varepsilon}$ on $\mathbb{R}\times\mathbb{R}^{+}$ for any $\varepsilon>0$.

Next, define
\begin{equation*}
	\underline w_{1}^\varepsilon(x,t)=\lambda^{r}_{1}x-K_{2}(t+\varepsilon)\quad \text{and}\quad \underline w_{2}^\varepsilon(x,t)=-\lambda^{l}_{1}x-K_{2}(t+\varepsilon),
\end{equation*}
with $K_{2}>0$ satisfying
\begin{align*}
		K_{2}\geqslant \max\left\{|\ln C_{2}|,d_{1}\int_{\mathbb{R}}J_{1}(y)e^{(\lambda^{r}_{1}\vee \lambda^{l}_{1})y}\,dy-d_{1}+r_{1}\alpha_{-}\right\}.
\end{align*}
By the comparison principle, for all small $\varepsilon>0$, we have
\begin{equation*}
	w^{\varepsilon} \geqslant \underline w_{1}^\varepsilon \quad \text{on~}\mathbb{R}\times\mathbb{R}^{+}\quad \text{and}\quad w^{\varepsilon} \geqslant \underline w_{2}^\varepsilon \quad \text{on~}\mathbb{R}\times\mathbb{R}^{+}.
\end{equation*}
Since $u^{\varepsilon}\leqslant \alpha_{-}$ on $\mathbb{R}\times\mathbb{R}^{+}$ for  any $\varepsilon>0$, it follows that $w^{\varepsilon}\geqslant -\varepsilon\ln \alpha_{-}$ on $\mathbb{R}\times\mathbb{R}^{+}$. Therefore,
\begin{equation*}\label{eq:sub-w-epsilon}
	w^{\varepsilon}(x,t) \geqslant \max\left\{\lambda^{r}_{1}x_{+}+\lambda^{l}_{1}x_{-} - K_{2}(t+\varepsilon),-\varepsilon\ln \alpha_{-}\right\}, \quad (x,t)\in\mathbb{R}\times\mathbb{R}^{+}.
\end{equation*}
The conclusion follows by setting $K=\max\{K_{1},K_{2}\}$.
\end{proof}

\begin{proposition}\label{prop:local-bound-u}
	Assume that {\rm (I$^{\lambda}$)} holds for some $\lambda\in(0,\infty]$.
 Then for any compact subset $Q$ of $\mathbb{R}\times\mathbb{R}^{+}$, there exist constants $C=C(Q)>0$ and $\varepsilon_{0}>0$ such that for any $\varepsilon\in(0,\varepsilon_{0})$,
\begin{equation}\label{eq:local-bound-w}
\underset{(x,t)\in Q}\sup |w^{\varepsilon}(x,t)|\leqslant C.
\end{equation}
Consequently, the following half-relaxed limits are well-defined:
\begin{equation}\label{eq:half-relaxed}
		w_*(x,t):=\liminf_{\substack{\varepsilon \rightarrow 0\\(y,s)\rightarrow(x,t)}}w^{\varepsilon}(y,s), \quad w^*(x,t):=\limsup_{\substack{\varepsilon \rightarrow 0\\(y,s)\rightarrow(x,t)}} w^{\varepsilon}(y,s).
		\end{equation}
Moreover, the following results hold in the classical sense:
\begin{itemize}
\item $w^{*}(0,t)=w_{*}(0,t)=0$ for all $t\in\mathbb{R}^{+}$,
\item if {\rm (I$^{\lambda}$)} holds with $\lambda<\infty$, then
\begin{align}\label{eq:viscosity-initial-value}
	  w^{*}(x,0)=w_{*}(x,0)=\left\{\begin{array}{ll}
		\lambda^{r}_{1}x &  \text{for~} x\in\mathbb{R}^{+}\cup\{0\},\\
	-\lambda^{l}_{1}x &  \text{for~}x\in\mathbb{R}^{-},
	\end{array}\right.
	\end{align}
\item	if {\rm (I$^{\infty}$)} holds, then
\begin{align}\label{eq:viscosity-initial-value-fast-decay}
	w^{*}(x,0)=w_{*}(x,0)=+\infty \quad \text{for all~}x\neq 0.
\end{align}
\end{itemize}
\end{proposition}

\begin{proof}[Sketch of proof]
The locally uniform bound \eqref{eq:local-bound-w} follows from assumption (H2) and an argument similar to the proof of \cite[Theorem 4.5]{liang2020a}, we omit the details here.

By (H2) and a standard comparison argument based on the spreading theory in \cite{shen2010,xu2021}, we find that for any $\eta_{0}\in(0,\tilde c^{*})$,
\begin{equation}\label{eq:u-x0-r1}
	\underset{t\rightarrow\infty}\liminf \underset{|x|\leqslant \eta_{0}t}\inf\, u(x,t)\geqslant \alpha_{+}-a\mathcal{V}_{-}>0,
\end{equation}
where
\begin{align}\label{eq:speed-homo-low-bound}
	\tilde c^{*}=\underset{\mu\in \mathbb{R}^{+}}\inf\,\frac{H_{1}(\mu)+r_{1}(\alpha_{+}-a\mathcal{V}_{-})}{\mu}>0.
\end{align}
Hence, for any $t>0$ and $|x|\leqslant \eta_{0}t$, we get that
\begin{equation*}
	\underset{\varepsilon\rightarrow0}\liminf\, u^{\varepsilon}(x,t)=\underset{\varepsilon\rightarrow0}\liminf\, u\left(\frac{x}{\varepsilon},\frac{t}{\varepsilon}\right)\geqslant \alpha_{+}-a\mathcal{V}_{-}>0,
\end{equation*}
which implies that for any given $t>0$,
\begin{align}\label{eq:we-bound-01}
	\underset{|x|\leqslant \eta_{0}t}\sup\,w^{\varepsilon}(x,t)\leqslant -\varepsilon\ln (\alpha_{+}-a\mathcal{V}_{-}).
\end{align}
Taking the half-relaxed limit as $\varepsilon\rightarrow0$ in \eqref{eq:we-bound-01} yields $w^{*}(0,t)\leqslant 0$ for any given $t>0$. On the other hand, since $u\leqslant \alpha_{-}$, we have  $w_{*}(0,t)\geqslant \lim_{\varepsilon\rightarrow0}\, -\varepsilon \ln \alpha_{-}= 0$ for $t>0$. Hence, $w^{*}(0,t)=w_{*}(0,t)=0$ for any $t>0$.

We now prove \eqref{eq:viscosity-initial-value}. From Lemma \ref{lemma:local-bound}, we have
	\begin{align*}
		\max\left\{\lambda^{r}_{1}x_{+}+\lambda^{l}_{1}x_{-} -Kt,0\right\}\leqslant w_{*}(x,t)\leqslant w^{*}(x,t)\leqslant \lambda^{r}_{1}x_{+}+\lambda^{l}_{1}x_{-} +Kt, \quad (x,t)\in\mathbb{R}\times\mathbb{R}^{+}.
	\end{align*}
Letting $t\rightarrow 0^{+}$ and using the definition of the half-relaxed limit, we obtain  \eqref{eq:viscosity-initial-value}.

Finally, we show \eqref{eq:viscosity-initial-value-fast-decay}.  If (I$^{\infty}$) holds, then for any $\lambda>0$, there exists $C_{3}>\max\{1, \alpha_{-}\}$  such that $u_{0}(x)\leqslant C_{3}\exp{(-\lambda|x|)}$ for all $x\in\mathbb{R}$. Let $\bar{u}$ be the solution of
\begin{equation*}
\begin{cases}
	\bar u_{t}=d_{1}(J_{1}\ast \bar u-\bar u)+r_{1}\bar u(\alpha_{-}-\bar u), &(x,t)\in \mathbb{R}\times\mathbb{R}^{+}, \\
	\bar u(x,0)= C_{3}\exp{(-\lambda|x|)},& x\in \mathbb{R}.
\end{cases}
\end{equation*}
By the comparison principle, we have
	\begin{equation}\label{eq:sup-u}
		u\leqslant \bar{u}\quad \text{on~}\mathbb{R}\times[0,\infty).
	\end{equation}
Repeating the argument in Lemma \ref{lemma:local-bound}, there exists $K_{3}>0$, independent of $\varepsilon$, such that
	\begin{equation*}
			\bar{w}^{\varepsilon}(x,t)\geqslant \max\left\{\lambda|x| -K_{3}(t+\varepsilon),-\varepsilon\ln C_{3}\right\}, \quad (x,t)\in \mathbb{R}\times\mathbb{R}^{+},
	\end{equation*}
	where $\bar{w}^{\varepsilon}(x,t)=-\varepsilon\ln \bar{u}(x/\varepsilon,t/\varepsilon)$. Employing \eqref{eq:sup-u} and letting $\varepsilon\rightarrow0^{+}$, we obtain
	\begin{equation*}
			w_{*}\geqslant \max\left\{\lambda|x|-K_{3}t,0\right\}, \quad (x,t)\in \mathbb{R}\times\mathbb{R}^{+}.
	\end{equation*}
	For $x\neq0$, letting $t\rightarrow0^{+}$, we have
	\begin{equation}\label{eq:w*-to-infinity}
			w_{*}(x,0)\geqslant \lambda|x|, \quad x\in \mathbb{R}.
	\end{equation}
Since $\lambda>0$ is arbitrary, taking $\lambda\rightarrow+\infty$ yields $w_{*}(x,0)=+\infty$ for any  $x\neq0$. Hence,  $w^{*}(x,0)\geqslant w_{*}(x,0)=+\infty$ for any  $x\neq0$. The proof is complete.
\end{proof}

\subsection{Lower limiting Hamilton-Jacobi equation}
\begin{proposition}\label{prop:lower-hj-control}
	Assume that {\rm (I$^{\lambda}$)} holds for some $\lambda\in(0,\infty]$. Then $w_{*}$ is a viscosity supersolution of the following Hamilton-Jacobi equation
	\begin{align}\label{eq:hj-tdep-up}
		\min\left\{\partial_{t}w+H_{1}(\partial_{x}w)+\bar R(x/t),w\right\}=0, \quad (x,t)\in \mathbb{R}\times\mathbb{R}^{+},
	\end{align}
	where $H_{1}$ and $\bar R$ are given by \eqref{eq:hi} and \eqref{eq:shift-bar}, respectively.
\end{proposition}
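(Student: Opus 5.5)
The plan is the standard Barles--Perthame half-relaxed limit argument, applied to \eqref{eq:we-viscosity}. The key observation is that $u\geqslant 0$ and $v\geqslant 0$, so the reaction term in \eqref{eq:we-viscosity} is bounded above by $r_1\alpha\left(\frac{x-c_e t}{\varepsilon}\right)$. Hence $w^{\varepsilon}$ is a supersolution of the inequality
\begin{align*}
w^{\varepsilon}_t+d_{1}\left(\int_{\mathbb{R}}J_{1}(y)e^{-\frac{w^{\varepsilon}(x-\varepsilon y, t)-w^{\varepsilon}(x, t)}{\varepsilon}}\,dy-1\right)+r_{1}\alpha\left(\frac{x-c_{e}t}{\varepsilon}\right)\geqslant 0.
\end{align*}
Equivalently, $\partial_t w^\varepsilon+\cdots+r_1\alpha(\cdot)\geqslant r_1(u^\varepsilon+av)\geqslant 0$.

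The condition $w_*\geqslant 0$ is already given by Proposition \ref{prop:local-bound-u}: since $u\leqslant \alpha_-$, we have $w^\varepsilon\geqslant -\varepsilon\ln\alpha_-\to 0$.

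For the differential inequality, fix $\phi\in C^1$ and a strict local minimum $(x_0,t_0)$ of $w_*-\phi$. By the standard half-relaxed limit lemma, there exist $\varepsilon_n\to 0$ and local minima $(x_n,t_n)\to(x_0,t_0)$ of $w^{\varepsilon_n}-\phi$ with $w^{\varepsilon_n}(x_n,t_n)\to w_*(x_0,t_0)$. At $(x_n,t_n)$, the minimum property gives $w^{\varepsilon_n}(x_n-\varepsilon_n y,t_n)-w^{\varepsilon_n}(x_n,t_n)\geqslant \phi(x_n-\varepsilon_n y,t_n)-\phi(x_n,t_n)$. This inequality holds for $y$ in the compact support of $J_1$ once $n$ is large, since $\varepsilon_n y$ is then small. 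Because the map $z\mapsto e^{-z/\varepsilon}$ is decreasing, the nonlocal term evaluated on $w^{\varepsilon_n}$ is bounded above by the same term evaluated on $\phi$. Moreover $\partial_t w^{\varepsilon_n}=\partial_t\phi$ there; if needed, a smooth-in-time regularity argument or a test-function perturbation handles this, as in \cite{liang2020a}. Therefore
\begin{align*}
\partial_t\phi(x_n,t_n)+d_1\left(\int J_1(y)e^{-\frac{\phi(x_n-\varepsilon_n y,t_n)-\phi(x_n,t_n)}{\varepsilon_n}}dy-1\right)+r_1\alpha\left(\frac{x_n-c_et_n}{\varepsilon_n}\right)\geqslant 0.
\end{align*}
As $n\to\infty$, a Taylor expansion shows the difference quotient tends to $\partial_x\phi(x_0,t_0)\,y$ uniformly on $\operatorname{supp}J_1$, so the integral term tends to $H_1(\partial_x\phi(x_0,t_0))$.

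The remaining, and mildly delicate, step is the limit of the coefficient $r_1\alpha((x_n-c_et_n)/\varepsilon_n)$. Note that $\bar R$ as written in \eqref{eq:shift-bar} is already u.s.c., with $\bar R^*=\bar R$. Split into cases according to the sign of $x_0-c_et_0$. If $x_0>c_et_0$, then $(x_n-c_et_n)/\varepsilon_n\to+\infty$, so the coefficient tends to $r_1\alpha_+=\bar R(x_0/t_0)$. If $x_0<c_et_0$, it tends to $r_1\alpha_-$. If $x_0=c_et_0$, the argument of $\alpha$ is uncontrolled, but assumption (A) gives $\alpha\leqslant\alpha_-$, so $\limsup_n r_1\alpha(\cdot)\leqslant r_1\alpha_-=\bar R^*(x_0/t_0)$. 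In all cases, taking $\limsup$ yields
\[
\partial_t\phi(x_0,t_0)+H_1(\partial_x\phi(x_0,t_0))+\bar R^*(x_0/t_0)\geqslant 0,
\]
which is exactly the supersolution property for \eqref{eq:hj-tdep-up}.

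The main obstacle is rigor in the nonlocal step. The localization argument needs the minimum of $w^{\varepsilon_n}-\phi$ to hold on the whole $\varepsilon_n$-neighborhood determined by $\operatorname{supp}J_1$. This is what assumption (J) (compact support) and the local boundedness \eqref{eq:local-bound-w} are used for; one may replace $\phi$ by $\phi$ plus a large penalty outside a small ball to make the minimum global. Everything else is routine.
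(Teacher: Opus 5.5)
Your proposal is correct and follows essentially the same argument as the paper. You drop the nonnegative term $r_1(u^{\varepsilon}+av)$, bound the nonlocal term at a minimum of $w^{\varepsilon_n}-\phi$ using the compact support of $J_1$, pass to the limit to obtain $H_1(\partial_x\phi(x_0,t_0))$, and bound $\limsup_n r_1\alpha\bigl((x_n-c_et_n)/\varepsilon_n\bigr)$ by $\bar R(x_0/t_0)=\bar R^*(x_0/t_0)$. Your explicit case split on the sign of $x_0-c_et_0$, using $\alpha\leqslant\alpha_-$ from {\rm (A)} at the interface, simply spells out the one-line $\limsup$ bound that the paper states directly.
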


\begin{proof}
Since $w^{\varepsilon}\geqslant -\varepsilon\ln \alpha_{-}$ on $\mathbb{R}\times\mathbb{R}^{+}$, it follows that $w_{*}\geqslant 0$ on the same domain. To complete the proof, for any test function $\phi\in C^{1}(\mathbb{R}\times\mathbb{R}^{+})$, suppose that $w_{*}-\phi$ attains its strict local minimum at $(x_{0},t_{0})\in \mathbb{R}\times\mathbb{R}^{+}$. It suffices to show that
	\begin{align*}
		\partial_{t}\phi(x_{0},t_{0})+H_{1}(\partial_{x}\phi(x_{0},t_{0}))+\bar R^{*}(x_{0}/t_{0})\geqslant 0.
	\end{align*}
	By the definition of $w_{*}$, there exist sufficiently small $r>0$ and $x_1>0$, and two  sequences $\{\varepsilon_n\}$ and $\{(x_{\varepsilon_n},t_{\varepsilon_n})\}$ with $|x_{\varepsilon_n}-x_0|\leqslant  x_1$ and $|t_{\varepsilon_n}-t_0|\leqslant  r$ such   that $\varepsilon_n\rightarrow0$ and $(x_{\varepsilon_n},t_{\varepsilon_n})\rightarrow(x_0,t_0)$ as $n\rightarrow\infty$,  and  $w^{\varepsilon_n}-\phi$ attains its global minimum in $Q := (x_0-2x_1,x_0+2x_1)\times(t_0-r,t_0+r)$ at $(x_{\varepsilon_n},t_{\varepsilon_n})$. Particularly, it holds
\begin{equation}\label{eq:we-QMr}
	 (w^{\varepsilon_n}-\phi)(x_{\varepsilon_n},t_{\varepsilon_n})\leqslant (w^{\varepsilon_n}-\phi)(x,t),\quad  \forall (x,t)\in Q.
\end{equation}
Since $w^{\varepsilon_{n}}$ solves \eqref{eq:we-viscosity} and $u,\,v$ are nonnegative, a direct computation shows that at $(x_{\varepsilon_n},t_{\varepsilon_n})$,
\begin{equation}\label{eq:phi-e-partial}
	\partial _t \phi+d_{1}\displaystyle\int_\mathbb{R}J_{1}(y)\left(e^{-\frac{w^{\varepsilon_n}\left(x_{\varepsilon_n}-{\varepsilon_n} y,t_{\varepsilon_n}\right)-w^{\varepsilon_n}(x_{\varepsilon_n},t_{\varepsilon_n})}{{\varepsilon_n}}}-1\right)\,dy+r_{1}\alpha\left(\frac{x_{\varepsilon_n}-c_{e}t_{\varepsilon_n}}{\varepsilon_{n}}\right)\geqslant0.
\end{equation}
Choose $M>0$ such that $\text{spt}(J)\subset [-M,M]$. For sufficiently large  $n$, we have $|x_{\varepsilon_n}-x_0|<x_1$ and   $|\varepsilon_n y|<x_1$ for $y\in[-M,M]$, and then $x_{\varepsilon_n}-{\varepsilon_n} y\in (x_0-2x_1,x_0+2x_1)$. Therefore, we obtain from \eqref{eq:we-QMr} that
\begin{equation}\label{eq:I-Me}
	\begin{aligned}
		 &\int_\mathbb{R}J_{1}(y)\left(e^{-\frac{w^{\varepsilon_n}\left(x_{\varepsilon_n}-{\varepsilon_n} y,t_{\varepsilon_n}\right)-w^{\varepsilon_n}(x_{\varepsilon_n},t_{\varepsilon_n})}{{\varepsilon_n}}}\right)\,dy\\
		 \leqslant&\displaystyle\int_{-M}^{M}J_{1}(y)\left(e^{-\frac{\phi\left(x_{\varepsilon_n}-{\varepsilon_n} y,t_{\varepsilon_n}\right)-\phi\left(x_{\varepsilon_n},t_{\varepsilon_n}\right)}{{\varepsilon_n}}}\right)\,dy:=I^{M,{\varepsilon_n}}.
	\end{aligned}
\end{equation}
Since $\phi\in C^1$ and $(x_{\varepsilon_n},t_{\varepsilon_n})\rightarrow(x_0,t_0)$ as $n\rightarrow\infty$. It holds uniformly  for $y\in[-M,M]$ that
\begin{equation*}
	 \lim_{n\to\infty}\frac{\phi\left(x_{\varepsilon_n}-{\varepsilon_n} y,t_{\varepsilon_n}\right)-\phi\left(x_{\varepsilon_n},t_{\varepsilon_n}\right)}{{\varepsilon_n}}=-y\partial_{x} \phi(x_0,t_0),
\end{equation*}
and hence,
 \begin{equation}\label{eq:local-hp}
	 \lim_{n\to\infty}I^{M,\varepsilon_n}=\displaystyle\int_{[-M,M]}J_{1}(y) e ^{y\partial_{x}  \phi(x_0,t_0)}\,dy.
 \end{equation}
Moreover, since $(x_{\varepsilon_n},t_{\varepsilon_n})\rightarrow(x_0,t_0)$ as $n\to\infty$, we have
\begin{equation}\label{eq:r-usc}
	 \underset{n\rightarrow\infty}\limsup\,\,\alpha\left(\frac{x_{\varepsilon_n}-c_{e}t_{\varepsilon_n}}{\varepsilon_n}\right)\leqslant \bar R\left(\frac{x_{0}}{t_{0}}\right)=\bar{R}^{*}\left(\frac{x_{0}}{t_{0}}\right),
\end{equation}
where the last equality follows from the fact that $\bar{R}$ is u.s.c. on $\mathbb{R}$.
Passing  to the limit as $n\rightarrow\infty$ in  \eqref{eq:phi-e-partial} and using \eqref{eq:I-Me}-\eqref{eq:r-usc}, we complete  the proof.
\end{proof}

\begin{lemma}\label{lem:low-hj-control}
	Assume that {\rm (I$^{\lambda}$)} holds for some $\lambda\in(0,\infty]$. Then $\rho_{*}(x/t):=w_{*}\left({x}/{t},1\right)$ is a viscosity supersolution of the following Hamilton-Jacobi equation
	\begin{equation}\label{eq:hj-tindep-up}
		\min\left\{\rho-s\rho'+H_{1}(\rho')+\bar R(s),\rho\right\}=0, \quad s\in \mathbb{R}^{+}~(\text{or}\,~\mathbb{R}^{-}),\\
	\end{equation}
	where $H_{1}$ and $\bar R$ are given by \eqref{eq:hi} and \eqref{eq:shift-bar}, respectively. Moreover, the following boundary condition holds in the classical sense:
	\begin{align}\label{eq:hj-tindep-up-boundary}
		\rho_{*}(0)=0, \quad \text{and} \quad \underset{s\rightarrow+\infty}\lim\, \rho_{*}(s)/s\geqslant \lambda_{1}^r~(or\, \underset{s\rightarrow-\infty}\lim\, \rho_{*}(s)/|s|\geqslant \lambda_{1}^l).
	\end{align}
\end{lemma}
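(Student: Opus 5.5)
We will derive the statement from Proposition \ref{prop:lower-hj-control} by way of the self-similar structure of $w_*$. The first step is to show that $w_*$ is $1$-homogeneous, i.e. $w_*(kx,kt)=k\,w_*(x,t)$ for all $k>0$ and $(x,t)\in\mathbb{R}\times\mathbb{R}^+$. This comes directly from the half-relaxed limit definition \eqref{eq:half-relaxed}. Since $w^{\varepsilon}(x,t)=-\varepsilon\ln u(x/\varepsilon,t/\varepsilon)$, we have $w^{\varepsilon}(ky,ks)=k\,w^{\varepsilon/k}(y,s)$. Taking $\liminf$ as $\varepsilon\to0$ and $(y,s)\to(x,t)$, with $\varepsilon/k\to0$ as well, gives $w_*(kx,kt)=k\,w_*(x,t)$. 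Hence $w_*(x,t)=t\,w_*(x/t,1)=t\rho_*(x/t)$. Lower semicontinuity of $\rho_*$ is inherited from $w_*$, and so is nonnegativity, because $w_*\geqslant0$.

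Next we apply Lemma \ref{lem:samehj}. Since $w_*=t\rho_*(x/t)$ is a viscosity supersolution of \eqref{eq:hj-tdep-up} by Proposition \ref{prop:lower-hj-control}, $\rho_*$ is a viscosity supersolution of \eqref{t-indep} with $R=\bar R$ and $i=1$, which is \eqref{eq:hj-tindep-up}. There is a small technical mismatch: the time-dependent definition uses strict local minima, while the time-independent one uses arbitrary local minima. We handle it in the standard way, replacing $\phi$ by $\phi-|s-s_0|^2$, which does not change $\phi'(s_0)$ and makes the minimum strict. The supersolution property on $\mathbb{R}^+$ and on $\mathbb{R}^-$ is then the restriction of the property on $\mathbb{R}$.

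The boundary conditions come from Proposition \ref{prop:local-bound-u}. First, $\rho_*(0)=w_*(0,1)=0$. For the growth at $+\infty$, suppose first that $\lambda<\infty$. Lemma \ref{lemma:local-bound} passed to the limit gives $w_*(x,t)\geqslant\lambda_1^r x_+-Kt$, so $\rho_*(s)\geqslant\lambda_1^r s-K$ for $s>0$, and hence $\liminf_{s\to+\infty}\rho_*(s)/s\geqslant\lambda_1^r$. Under (I$^\infty$) we instead use the bound $w_*\geqslant\lambda|x|-K_3t$ from the proof of Proposition \ref{prop:local-bound-u}, which holds for every $\lambda>0$. This gives $\liminf_{s\to+\infty}\rho_*(s)/s\geqslant\lambda$ for every $\lambda$, i.e. the limit is $+\infty=\lambda_1^r$. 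The case $s\to-\infty$ is symmetric and uses $\lambda_1^l$.

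The main obstacle is a notational one rather than a mathematical one: the statement writes $\lim$, but the limit need not a priori exist, so it should be read as $\liminf$. The only substantive point is the homogeneity step, where we must check that the rescaling $\varepsilon\mapsto\varepsilon/k$ commutes with the half-relaxed $\liminf$. This holds because the sequences $\varepsilon\to0$ and $\varepsilon/k\to0$ range over the same class.
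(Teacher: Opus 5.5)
Your proposal is correct and follows the paper's proof: the same $\varepsilon\mapsto\varepsilon/t$ rescaling gives $w_*(x,t)=t\,w_*(x/t,1)$, Lemma \ref{lem:samehj} together with Proposition \ref{prop:lower-hj-control} gives the supersolution property, and Proposition \ref{prop:local-bound-u} gives $\rho_*(0)=0$. The one variation is at $\pm\infty$, where you use the linear lower bound $w_*\geqslant\lambda_1^r x_{+}+\lambda_1^l x_{-}-Kt$ (resp.\ $\lambda|x|-K_3t$) while the paper uses lower semicontinuity of $w_*$ at $t=0$, $\liminf_{s\to+\infty}w_*(1,1/s)\geqslant w_*(1,0)=\lambda_1^r$; this carries the same information, though your side remark on strict minima needs a correction: perturbing $\phi$ by $-|s-s_0|^2$ does not make the minimum of $w_*-t\phi(x/t)$ strict, because after normalizing $\phi(s_0)=\rho_*(s_0)$ this difference vanishes along the whole ray $x=s_0t$, so you should instead perturb in $(x,t)$, e.g.\ $t\phi(x/t)-|x-x_0|^2-|t-t_0|^2$, which leaves the derivatives at $(x_0,t_0)$ unchanged.
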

\begin{proof}
The definition of the half-relaxed limit and \eqref{eq:hopf} imply that for $(x,t)\in \mathbb{R}\times\mathbb{R}^{+}$,
\begin{align*}
	w_{*}(x,t)=	\liminf_{\substack{\varepsilon \rightarrow 0\\(y,s)\rightarrow(x,t)}}-\varepsilon\ln u\left(\frac{y}{\varepsilon},\frac{s}{\varepsilon}\right)=t\cdot\liminf_{\substack{\varepsilon \rightarrow 0\\(y,s)\rightarrow(x/t,1)}}-\frac{\varepsilon}{t}\ln  u\left(\frac{y}{\varepsilon/t},\frac{s}{\varepsilon/t}\right)=tw_{*}\left(\frac{x}{t},1\right).
\end{align*}
By Lemma \ref{lem:samehj} and Proposition \ref{prop:lower-hj-control},   $\rho_{*}$ is indeed a viscosity supersolution of \eqref{eq:hj-tindep-up} on $\mathbb{R}^{+}$. It remains to verify the boundary conditions.
From Proposition \ref{prop:local-bound-u}, we obtain $\rho_{*}(0)=w_{*}(0,1)=0$. Furthermore, since $w_{*}$ is l.s.c., we have
\begin{align*}
	&\underset{s\rightarrow+\infty}\liminf\, \frac{\rho_{*}(s)}{s}=\underset{s\rightarrow+\infty}\liminf\, w_{*}\left(1,\frac{1}{s}\right)\geqslant w_{*}(1,0)=\lambda_{1}^{r}\in(0,\infty],\\
	&\underset{s\rightarrow-\infty}\liminf\, \frac{\rho_{*}(s)}{|s|}=\underset{s\rightarrow-\infty}\liminf\, w_{*}\left(-1,\frac{1}{-s}\right)\geqslant w_{*}(-1,0)=\lambda_{1}^{l}\in(0,\infty].
\end{align*}
This completes the proof.
\end{proof}

\subsection{Upper limiting Hamilton-Jacobi equation}
\begin{proposition}\label{prop:upper-hj-control}
	Assume that {\rm (I$^{\lambda}$)} holds for some $\lambda\in(0,\infty]$. Then $w^{*}$ is a viscosity subsolution of the following Hamilton-Jacobi equation
	\begin{align}\label{eq:hj-tdep-low}
		\min\left\{\partial_{t}w+H_{1}(\partial_{x}w)+\underline R(x/t),w\right\}=0, \quad (x,t)\in \mathbb{R}\times\mathbb{R}^{+},
	\end{align}
	where $H_{1}$ is given by \eqref{eq:hi}, and  $\underline R$ is given by \eqref{eq:shift-under-1}, \eqref{eq:shift-under-1} or \eqref{eq:shift-under-3}, depending on the value of $c_{e}$.
\end{proposition}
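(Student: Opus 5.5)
We follow the template of Proposition \ref{prop:lower-hj-control}, now with test functions touching $w^*$ from above. Upper semicontinuity of $w^*$ is automatic from \eqref{eq:half-relaxed}. Fix $\phi\in C^{1}(\mathbb{R}\times\mathbb{R}^{+})$ and let $w^*-\phi$ attain a strict local maximum at $(x_0,t_0)$ with $w^*(x_0,t_0)>0$. By the definition of $w^*$ there are $\varepsilon_n\to0$ and points $(x_{\varepsilon_n},t_{\varepsilon_n})\to(x_0,t_0)$ at which $w^{\varepsilon_n}-\phi$ attains its maximum over a fixed neighbourhood $Q$, with $w^{\varepsilon_n}(x_{\varepsilon_n},t_{\varepsilon_n})\to w^*(x_0,t_0)>0$.

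Plugging this maximality into \eqref{eq:we-viscosity} reverses the inequality in \eqref{eq:I-Me}. Using the compact support of $J_1$ exactly as before, the nonlocal term is bounded below by $I^{M,\varepsilon_n}$, whose limit is $\int J_1(y)e^{y\partial_x\phi(x_0,t_0)}\,dy$. Hence at $(x_{\varepsilon_n},t_{\varepsilon_n})$
\begin{equation*}
\partial_t\phi+d_1\Big(\int_{\mathbb{R}}J_1(y)e^{-\frac{\phi(x_{\varepsilon_n}-\varepsilon_n y,t_{\varepsilon_n})-\phi(x_{\varepsilon_n},t_{\varepsilon_n})}{\varepsilon_n}}dy-1\Big)+r_1\Big[\alpha\Big(\tfrac{x_{\varepsilon_n}-c_et_{\varepsilon_n}}{\varepsilon_n}\Big)-u^{\varepsilon_n}-av\Big(\tfrac{x_{\varepsilon_n}}{\varepsilon_n},\tfrac{t_{\varepsilon_n}}{\varepsilon_n}\Big)\Big]\leqslant 0 .
\end{equation*}
Two terms then need to be controlled. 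The first is $u^{\varepsilon_n}$. Since $w^{\varepsilon_n}\to w^*(x_0,t_0)>0$, we get $u^{\varepsilon_n}=e^{-w^{\varepsilon_n}/\varepsilon_n}\to0$ at these points. This is exactly why the positivity requirement appears in the subsolution definition. The second is the reaction term. It suffices to show
\begin{equation*}
\liminf_{n\to\infty}\Big[\alpha\Big(\tfrac{x_{\varepsilon_n}-c_et_{\varepsilon_n}}{\varepsilon_n}\Big)-av\Big(\tfrac{x_{\varepsilon_n}}{\varepsilon_n},\tfrac{t_{\varepsilon_n}}{\varepsilon_n}\Big)\Big]\geqslant \underline R_*(x_0/t_0)/r_1 .
\end{equation*}
Passing to the limit then yields $\partial_t\phi+H_1(\partial_x\phi)+\underline R_*(x_0/t_0)\leqslant0$.

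The liminf inequality is the main step. We split by the position of $s_0=x_0/t_0$ relative to $c_e$, $-s^l_{2,-}$ and $s^r_{2,-}$.

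For the $\alpha$-part, assumption (A) says $\alpha$ is nonincreasing with limits $\alpha_\pm$ and $\alpha\geqslant \alpha_+$. If $s_0<c_e$, the argument $(x_{\varepsilon_n}-c_et_{\varepsilon_n})/\varepsilon_n\to-\infty$, so $\alpha\to\alpha_-$. If $s_0>c_e$, it tends to $+\infty$ and $\alpha\to\alpha_+$. At $s_0=c_e$ we only have $\alpha\geqslant\alpha_+$, and this matches the lower envelope.

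For the $v$-part, we first note $v\leqslant \bar v$, where $\bar v$ solves the scalar equation $\bar v_t=d_2(J_2*\bar v-\bar v)+r_2\bar v(\mathcal V_--\bar v)$ with data $v_0$. This holds because $bu\leqslant b\alpha_-$. The classical spreading results \cite{shen2010,xu2021}, applied to decay rates $\lambda_2^{r},\lambda_2^{l}$, then give $\bar v(x,t)\to0$ uniformly for $x\geqslant (s^r_{2,-}+\eta)t$ or $x\leqslant-(s^l_{2,-}+\eta)t$. We also have the global bound $v\leqslant\mathcal V_-$ everywhere.

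Consequently, for $s_0\in(s^r_{2,-},\infty)\cup(-\infty,-s^l_{2,-})$ we get $av\to0$, while on $[-s^l_{2,-},s^r_{2,-}]$ we use $av\leqslant a\mathcal V_-$. Combining the two parts reproduces exactly the values of $\underline R_1$, $\underline R_2$, $\underline R_3$ in the three regimes of $c_e$, including the values at the breakpoints, where the closed intervals realize the lower semicontinuous envelope. One subtlety is that the speeds $s^r_{2,-},s^l_{2,-}$ in the definition of $\underline R$ use $\lambda_2\wedge\mu^*_{1,\pm}$ as written. I would verify that the scalar predator bound furnishes at least this speed, invoking (FU) if needed, or else read the index as $\mu^*_{2,-}$.

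Finally, the condition $w^*\geqslant0$ is not required of subsolutions, so the proof is complete once the above limit is taken. I expect the main obstacle to be the uniformity of the decay of $v$ along the moving sequence $(x_{\varepsilon_n}/\varepsilon_n,\,t_{\varepsilon_n}/\varepsilon_n)$. This is handled by the uniform-in-cone statement of the scalar spreading result, since $x_{\varepsilon_n}/t_{\varepsilon_n}\to s_0$ places these points in a fixed cone for large $n$.
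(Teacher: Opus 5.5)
Your proposal is correct and matches the paper's proof step for step. It uses the same maximizing sequences, the same reversed nonlocal inequality leading to \eqref{eq:phi-e-partial-b}, and the same fact that $u^{\varepsilon_n}\to0$ because $w^*(x_0,t_0)>0$. It also combines \eqref{eq:r-lsc} with the scalar-comparison bound \eqref{eq:prop-v} on $v$ in the same way. Your worry about the index in $s^{r}_{2,-}$ is harmless: $c_{2,-}(\lambda\wedge\mu)\geqslant \inf_{\nu\in(0,\lambda]}c_{2,-}(\nu)=c_{2,-}(\lambda\wedge\mu^{*}_{2,-})$ for every $\mu>0$, so the speed as literally written is never smaller than the true spreading speed of $\bar v$.
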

\begin{proof}[Sketch of proof]
	For any test function $\phi\in C^1(\mathbb{R}\times\mathbb{R}^+)$, suppose that $w^{*}-\phi$ attains its strict local maximum at $(x_0,t_0)\in \mathbb{R}\times\mathbb{R}^{+}$ and $w^*(x_{0},t_{0})>0$, we aim to show that
\begin{equation}\label{eq:phi-viscosity-sub}
	\partial _t \phi(x_0,t_0)+H_{1}(\partial_{x}\phi(x_0,t_0))+\underline{R}_{*}(x_{0}/t_{0})\leqslant 0.
\end{equation}
By the definition of $w^{*}$, there exist sequences $\{\varepsilon_n\}$ and $\{(x_{\varepsilon_n},t_{\varepsilon_n})\}$ such that $\varepsilon_n\rightarrow 0$, $(x_{\varepsilon_n},t_{\varepsilon_n})\rightarrow (x_{0},t_{0})$, $w^{\varepsilon_{n}}(x_{\varepsilon_{n}},{t_{\varepsilon_{n}}})\rightarrow w^{*}(x_{0},t_{0})>0$ as $n\rightarrow\infty$, and $w^{\varepsilon_{n}}-\phi$ attains its maximum at $(x_{\varepsilon_{n}},{t_{\varepsilon_{n}}})$ in a small neighborhood of $(x_{0},t_{0})$. Choose a sufficiently large $M>0$ such that $\text{spt}(J)\subset [-M,M]$, we obtain from \eqref{eq:we-viscosity} that
\begin{equation}\label{eq:phi-e-partial-b}
\begin{aligned}
	\partial _t \phi(x_{\varepsilon_n},t_{\varepsilon_n})+&d_{1}\displaystyle\int_{-M}^{M}J_{1}(y)\left(e^{-\frac{\phi\left(x_{\varepsilon_n}-{\varepsilon_n} y,t_{\varepsilon_n}\right)-\phi(x_{\varepsilon_n},t_{\varepsilon_n})}{{\varepsilon_n}}}-1\right)\,dy\\
	 +&r_{1}\left[\alpha\left(\frac{x_{\varepsilon_n}-c_{e}t_{\varepsilon_n}}{\varepsilon_{n}}\right)-av\left(\frac{x_{\varepsilon_n}}{\varepsilon_{n}},\frac{t_{\varepsilon_n}}{\varepsilon_{n}}\right)\right]-r_{1}u^{\varepsilon_{n}}(x_{\varepsilon_n},t_{\varepsilon_n})\leqslant 0.
\end{aligned}
\end{equation}
Since  $(x_{\varepsilon_n},t_{\varepsilon_n})\rightarrow(x_0,t_0)$ as $n\rightarrow \infty$, we have
\begin{equation}\label{eq:r-lsc}
	 \underset{n\rightarrow \infty}\liminf\,\,\alpha\left(\frac{x_{\varepsilon_n}-c_{e}t_{\varepsilon_n}}{\varepsilon_n}\right)\geqslant \bar{R}_{*}\left(\frac{x_{0}}{t_{0}}\right)=\begin{cases}
			\alpha_{-}& \text{if }x_{0}/t_{0}<c_{e},\\
			\alpha_{+}& \text{if }x_{0}/t_{0}\geqslant c_{e}.
		\end{cases}
\end{equation}
Moreover, by the comparison principle and the spreading theory of the nonlocal dispersal equation, we obtain
\begin{align}\label{eq:prop-v}
	\quad 	\underset{n\rightarrow \infty}\limsup\,v\left(\frac{x_{\varepsilon_n}}{\varepsilon_{n}},\frac{t_{\varepsilon_n}}{\varepsilon_{n}}\right)\leqslant \begin{cases}
		\mathcal{V}_{-}& \text{if } -s_{2,-}^{l}\leqslant x_{0}/t_{0}\leqslant s_{2,-}^{r},\\
		0& \text{if }x_{0}/t_{0}<-s_{2,-}^{l}, \text{or } x_{0}/t_{0}>s_{2,-}^{r}.
	\end{cases}
\end{align}
Note that $u^{\varepsilon_{n}}(x_{\varepsilon_{n}},{t_{\varepsilon_{n}}})\rightarrow0$ as $n\rightarrow\infty$, the desired inequality \eqref{eq:phi-viscosity-sub} follows from \eqref{eq:phi-e-partial-b}-\eqref{eq:prop-v}  via a similar  argument to the proof of Proposition \ref{prop:lower-hj-control}.
\end{proof}

Similar to Lemma \ref{lem:low-hj-control}, we obtain  the  following corollary.
\begin{corollary}\label{coro:up-hj-control}
	Assume that {\rm (I$^{\lambda}$)} holds for some $\lambda\in(0,\infty]$. Then $\rho^{*}(x/t):=w^{*}\left({x}/{t},1\right)$ is a  viscosity subsolution of the following Hamilton-Jacobi equation
	\begin{equation}\label{eq:hj-tindep-low}
		\min\left\{\rho-s\rho'+H_{1}(\rho')+\underline R(s),\rho\right\}=0, \quad  s\in \mathbb{R}^{+}~(or\, \mathbb{R}^{-}),
	\end{equation}
where $H_{1}$ is given by \eqref{eq:hi},
and  $\underline R$ is given by \eqref{eq:shift-under-1}, \eqref{eq:shift-under-1} or \eqref{eq:shift-under-3}, depending on the value of $c_{e}$.
Moreover, the following boundary conditions hold in the classical sense:
	\begin{align}\label{eq:hj-tindep-low-boundary}
		\rho^{*}(0)=0, \quad \text{and} \quad \underset{s\rightarrow+\infty}\lim\, \rho^{*}(s)/s\leqslant \lambda_{1}^r~(or\, \underset{s\rightarrow-\infty}\lim\, \rho^{*}(s)/|s|\leqslant \lambda_{1}^l).
	\end{align}
\end{corollary}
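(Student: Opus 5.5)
We prove Corollary \ref{coro:up-hj-control} by following the proof of Lemma \ref{lem:low-hj-control} step by step, using Proposition \ref{prop:upper-hj-control} in place of Proposition \ref{prop:lower-hj-control}.

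\emph{Step 1: self-similarity.} The half-relaxed limit is invariant under the hyperbolic scaling. For fixed $t>0$, replacing $\varepsilon$ by $\varepsilon/t$ in the definition \eqref{eq:half-relaxed} and applying the same change of variables used for $w_{*}$ in the proof of Lemma \ref{lem:low-hj-control} gives
\begin{align*}
w^{*}(x,t)=t\,w^{*}(x/t,1)=t\rho^{*}(x/t), \quad (x,t)\in\mathbb{R}\times\mathbb{R}^{+}.
\end{align*}
This identity uses only the structure $w^{\varepsilon}(x,t)=-\varepsilon\ln u(x/\varepsilon,t/\varepsilon)$.

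\emph{Step 2: the equation for $\rho^{*}$.} Since $w^{*}$ is u.s.c., so is $\rho^{*}$. By Proposition \ref{prop:upper-hj-control}, $w^{*}$ is a viscosity subsolution of \eqref{eq:hj-tdep-low}. Lemma \ref{lem:samehj}, applied with $R=\underline R$, then shows that $\rho^{*}$ is a viscosity subsolution of \eqref{eq:hj-tindep-low}. Restricting to test points in $\mathbb{R}^{+}$ or in $\mathbb{R}^{-}$ gives the half-line statements. In the subsolution definition the relevant envelope is $\underline R_{*}$, which coincides with $\underline R$ at the relevant points because each $\underline R_{j}$ is l.s.c. as defined.

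\emph{Step 3: boundary conditions.} Proposition \ref{prop:local-bound-u} gives $\rho^{*}(0)=w^{*}(0,1)=0$.

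For the growth condition at $+\infty$, first take (I$^{\lambda}$) with $\lambda<\infty$. By homogeneity, $\rho^{*}(s)/s=w^{*}(1,1/s)$. Because $w^{*}$ is u.s.c. up to $t=0$,
\begin{align*}
\limsup_{s\to+\infty}\frac{\rho^{*}(s)}{s}\leqslant w^{*}(1,0)=\lambda_{1}^{r},
\end{align*}
where the last equality is \eqref{eq:viscosity-initial-value}. The case $s\to-\infty$ is the same, using $w^{*}(-1,0)=\lambda_{1}^{l}$.

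If (I$^{\infty}$) holds, the right-hand side is $+\infty$ and the inequality is trivial.

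\emph{Main obstacle.} The delicate point is that upper semicontinuity of $w^{*}$ must hold up to the boundary $t=0$. This requires the half-relaxed limit to be taken over $t\geqslant 0$. The upper bound in Lemma \ref{lemma:local-bound},
\begin{align*}
w^{\varepsilon}\leqslant \lambda_{1}^{r}x_{+}+\lambda_{1}^{l}x_{-}+K(t+\varepsilon),
\end{align*}
supplies this control, because it gives $\limsup w^{*}(1,\tau)\leqslant\lambda_{1}^{r}$ directly as $\tau\to0^{+}$.

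The existence of the limit, as opposed to the limsup, is not needed for the comparison in Lemma \ref{lem:viscosity-comparison-t-indep}. If it is needed, I would obtain it from the convexity/Lipschitz structure: $w^{*}$ is Lipschitz in $x$ by the linear bounds of Lemma \ref{lemma:local-bound}.
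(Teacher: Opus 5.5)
Your proposal is correct and follows the route the paper indicates. The paper proves the corollary ``similar to Lemma \ref{lem:low-hj-control}'', using the scaling identity $w^{*}(x,t)=t\,w^{*}(x/t,1)$, then Proposition \ref{prop:upper-hj-control} together with Lemma \ref{lem:samehj}, and finally the boundary data from Proposition \ref{prop:local-bound-u}. Your use of the explicit bound $w^{*}\leqslant \lambda_{1}^{r}x_{+}+\lambda_{1}^{l}x_{-}+Kt$ from Lemma \ref{lemma:local-bound} is a clean way to avoid relying on semicontinuity at $t=0$.

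One side remark is off: two-sided linear bounds do not make $w^{*}$ Lipschitz in $x$. You do not need that claim anyway. Since $w_{*}\leqslant w^{*}$, Lemma \ref{lemma:local-bound} gives $\lambda_{1}^{r}-K/s\leqslant \rho^{*}(s)/s\leqslant \lambda_{1}^{r}+K/s$ for $s>0$, so the limit exists and equals $\lambda_{1}^{r}$. Under (I$^{\infty}$), the lower bound $w_{*}\geqslant \lambda|x|-K_{3}t$ from the proof of Proposition \ref{prop:local-bound-u} shows the limit is $+\infty$.
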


\begin{lemma}\label{lem:persist-solu}
	Assume that {\rm (I$^{\lambda}$)} holds for some $\lambda\in(0,\infty]$ and the set $\mathcal{B}={\rm{Int}}\{(x,t)\in \mathbb{R}\times\mathbb{R}^{+}|w^{*}(x,t)=0\}$ is nonempty. Then
	\begin{align*}
		\underset{\varepsilon\rightarrow0}\liminf\,u^{\varepsilon}>0 \quad  \text{locally uniformly in } \, \mathcal{B}.
	\end{align*}
\end{lemma}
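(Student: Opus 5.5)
This is the standard Evans–Souganidis persistence argument adapted to the nonlocal setting; I would argue by contradiction. Fix a compact set $K\subset\mathcal{B}$ and suppose there are $\varepsilon_n\to0$ and $(x_n,t_n)\in K$ with $u^{\varepsilon_n}(x_n,t_n)\to0$. After passing to a subsequence, $(x_n,t_n)\to(x_0,t_0)\in K$. Choose a small closed ball $\overline{B}=\overline{B}_{2\delta}(x_0,t_0)\subset\mathcal{B}$. On this ball $w^{*}\equiv0$ and $w_*\ge0$, so $w^{\varepsilon}\to0$ uniformly on $\overline B$. Indeed, $\limsup w^{\varepsilon}\le w^{*}=0$ uniformly by the definition of the half-relaxed limit, and $w^{\varepsilon}\ge-\varepsilon\ln\alpha_-$ gives the lower bound.

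Next I use the test function $\phi(x,t)=|x-x_0|^2+|t-t_0|^2$. Since $w^{\varepsilon}\to0$ uniformly on $\overline B$, the function $w^{\varepsilon_n}-\phi$ attains its maximum over $\overline B$ at points $(y_n,s_n)$ with $(y_n,s_n)\to(x_0,t_0)$; in particular they lie in the interior for large $n$. I evaluate \eqref{eq:we-viscosity} at $(y_n,s_n)$. Because $J_1$ has compact support in $[-M,M]$ and $y_n-\varepsilon_n y\in\overline B$ for large $n$, maximality gives
$$w^{\varepsilon_n}(y_n-\varepsilon_ny,s_n)-w^{\varepsilon_n}(y_n,s_n)\le\phi(y_n-\varepsilon_ny,s_n)-\phi(y_n,s_n).$$
With this comparison, the nonlocal term is bounded below by the same expression with $\phi$ in place of $w^{\varepsilon_n}$, and that expression tends to $H_1(\partial_x\phi(x_0,t_0))=H_1(0)=0$. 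At the maximum point we also have $w^{\varepsilon_n}_t=\partial_t\phi$, which tends to $0$. Hence
$$r_1\bigl[\alpha(\cdot)-u^{\varepsilon_n}-av\bigr](y_n,s_n)\le o(1).$$
Here I use $\alpha\ge\alpha_+$ and $v\le\mathcal{V}_-$ asymptotically; the latter follows from comparison with $v_t\le d_2(J_2*v-v)+r_2v(\mathcal{V}_--v)$. This yields
$$\liminf_n u^{\varepsilon_n}(y_n,s_n)\ge\alpha_+-a\mathcal{V}_->0$$
by (H2).

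The remaining step is to transfer positivity from $(y_n,s_n)$ back to $(x_n,t_n)$, and I expect this to be the main obstacle. The test-function argument only controls $u$ at the specially chosen maximum points, not at arbitrary points, and the nonlocal equation has no Harnack inequality. I would handle this with a local barrier built from the scalar comparison principle: $u$ is a supersolution of $u_t\ge d_1(J_1*u-u)+r_1u(\alpha_+-a(\mathcal{V}_-+\eta)-u)$ for large times. On the microscopic scale, a KPP-type equation of this form propagates a lower bound of fixed positive size over distances of order $1/\varepsilon$ in times of order $1/\varepsilon$. Concretely, I would repeat the maximum argument with $\phi$ recentered at every point of a small neighbourhood. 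This gives points where $u^{\varepsilon_n}$ is bounded below that are $o(1)$-dense in $B_\delta$. I would then use the hyperbolic spreading estimate \eqref{eq:u-x0-r1} for the comparison equation, started from such a point and shifted in space and time, to cover $(x_n,t_n)$, choosing the time lag so that $s_n<t_n$. Alternatively, one can invoke the ``$w^*=0$ in an open set implies $\liminf u^\varepsilon>0$'' lemma of Evans–Souganidis or \cite[Lemma 4.7]{liang2020a}, adapted as in \cite{tao-shift}. The conclusion is $\liminf u^{\varepsilon_n}(x_n,t_n)>0$, which contradicts the assumption and proves local uniformity on $K$.
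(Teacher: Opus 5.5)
Up to the bound $\liminf_n u^{\varepsilon_n}(y_n,s_n)\geqslant\alpha_+-a\mathcal{V}_->0$ at the maximum points, your argument is correct. It is exactly the paper's computation: the same test function, maximality used inside the convolution, $H_1(0)=0$, $\alpha\geqslant\alpha_+$ and $v\leqslant\mathcal{V}_-$. The gap is the last step, which you flag as the main obstacle and then only sketch. The barrier/KPP route is not a proof as written. Spreading from a translated bump needs a lower bound on a microscopic interval of fixed width, not at a single point. It also needs the spreading estimate to be uniform in the starting position, and a careful choice of time lag. None of this is carried out. Invoking the Evans--Souganidis lemma amounts to citing the statement you are asked to prove.

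The missing idea is that no transfer argument is needed: it is built into the test function. Since $\phi\geqslant0$ and $\phi$ vanishes at its center, maximality of $w^{\varepsilon_n}-\phi$ at $(y_n,s_n)$ gives
\[
w^{\varepsilon_n}(x_0,t_0)=(w^{\varepsilon_n}-\phi)(x_0,t_0)\leqslant(w^{\varepsilon_n}-\phi)(y_n,s_n)\leqslant w^{\varepsilon_n}(y_n,s_n).
\]
That is, $u^{\varepsilon_n}(x_0,t_0)\geqslant u^{\varepsilon_n}(y_n,s_n)$, and this is precisely how the paper concludes (see \eqref{eq:local-estimate-3.2}).

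For the locally uniform statement in your contradiction setup, center the test function at $(x_n,t_n)$ instead of $(x_0,t_0)$. Take $\phi_n(x,t)=|x-x_n|^2+|t-t_n|^2$ and let $(y_n,s_n)$ maximize $w^{\varepsilon_n}-\phi_n$ on $\overline B$.
\begin{itemize}
\item Your uniform convergence $w^{\varepsilon_n}\to0$ on $\overline B$ gives $\phi_n(y_n,s_n)\leqslant w^{\varepsilon_n}(y_n,s_n)-w^{\varepsilon_n}(x_n,t_n)\to0$. So $(y_n,s_n)$ is interior for large $n$ and converges to $(x_0,t_0)$.
\item At $(y_n,s_n)$, both $\partial_t\phi_n$ and $\partial_x\phi_n$ tend to $0$, and $\bigl(\phi_n(y_n-\varepsilon_n y,s_n)-\phi_n(y_n,s_n)\bigr)/\varepsilon_n\to0$ uniformly on $[-M,M]$. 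Your computation then yields $\liminf_n u^{\varepsilon_n}(y_n,s_n)\geqslant\alpha_+-a\mathcal{V}_-$.
\item The same one-line inequality gives $u^{\varepsilon_n}(x_n,t_n)\geqslant u^{\varepsilon_n}(y_n,s_n)$, contradicting $u^{\varepsilon_n}(x_n,t_n)\to0$.
\end{itemize}
Done this way, your version is slightly more careful than the paper's. The paper proves the bound at each fixed point and then appeals to the point being arbitrary to claim local uniformity.
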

\begin{proof}
		Fix	 $(x_{0},t_{0})\in \mathcal{B}$, and	define the test function
	\begin{equation*}
		\phi(x,t)=|x-x_{0}|^{2}+|t-t_{0}|^{2}, \quad (x,t)\in\mathbb{R}^{+}\times\mathbb{R}^{+}.
\end{equation*}
Then $w^{*}-\phi$ attains its strict local maximum at $(x_{0},t_{0})$. By the definition of $w^{*}$, up to extraction of a subsequence, there exists a sequence $\{(x_{\varepsilon},t_{\varepsilon})\}_{\varepsilon}$ such that $(x_{\varepsilon},t_{\varepsilon})$ is a local maximum point of $w^{\varepsilon}-\phi$ and $(x_{\varepsilon},t_{\varepsilon})\rightarrow(x_{0},t_{0})$ as $\varepsilon\rightarrow0$. In particular, we have
\begin{equation}\label{eq:local-max}
 (w^{\varepsilon}-\phi)(x,t)\leqslant (	 w^{\varepsilon}-\phi)(x_{\varepsilon},t_{\varepsilon})
\end{equation}
in a small neighborhood of $(x_{0},t_{0})$. Since $w^{\varepsilon}$ satisfies \eqref{eq:we-viscosity}, it follows from \eqref{eq:local-max} that
\begin{equation}\label{eq:local-estimate-persist}
	\begin{aligned}
	 &r_{1}\left[u^\varepsilon(x_{\varepsilon},t_{\varepsilon})-\alpha\left(\frac{x_{\varepsilon}-c_{e}t_{\varepsilon}}{\varepsilon}\right)+av\left(\frac{x_{\varepsilon}}{\varepsilon},\frac{t_{\varepsilon}}{\varepsilon}\right)\right]\\
	\geqslant 	 &\partial_{t}\phi(x_{\varepsilon},t_{\varepsilon})-d_{1}\displaystyle\int_\mathbb{R}J_{1}(y)\left(1-e^{-\frac{\phi\left(x_{\varepsilon}-\varepsilon y,t_{\varepsilon}\right)-\phi(x_{\varepsilon},t_{\varepsilon})}{\varepsilon}}\right)\,dy.
\end{aligned}
\end{equation}
Noting that $\partial_{x}\phi(x_{0},t_{0})=0$, we apply a similar argument as in \eqref{eq:local-hp}  to obtain
\begin{equation}\label{eq:local-estimate-2}
		 \underset{\varepsilon\rightarrow0}\lim\,\displaystyle\int_\mathbb{R}J_{1}(y)\left(1-e^{-\frac{\phi\left(x_{\varepsilon}-\varepsilon y,t_{\varepsilon}\right)-\phi(x_{\varepsilon},t_{\varepsilon})}{\varepsilon}}\right)\,dy=0.
\end{equation}
Combining \eqref{eq:local-estimate-persist} and \eqref{eq:local-estimate-2}, we have
\begin{equation}\label{eq:local-estimate-3}
	\underset{\varepsilon\rightarrow0}\liminf\, u^\varepsilon(x_{\varepsilon},t_{\varepsilon})\geqslant \underset{\varepsilon\rightarrow0}\liminf\,\left[\alpha\left(\frac{x_{\varepsilon}-c_{e}t_{\varepsilon}}{\varepsilon}\right)-av\left(\frac{x_{\varepsilon}}{\varepsilon},\frac{t_{\varepsilon}}{\varepsilon}\right)\right].
\end{equation}
Moreover, by \eqref{eq:local-max} and the nonnegativity of $\phi$, we obtain $ w^{\varepsilon}(x_{0},t_{0})\leqslant w^{\varepsilon}(x_{\varepsilon},t_{\varepsilon})$. Then the Hopf-Cole transform implies that
	\begin{equation}\label{eq:local-estimate-3.2}
  u^\varepsilon(x_0,t_0)\geqslant u^\varepsilon(x_\varepsilon,t_\varepsilon).
	\end{equation}
Therefore, from \eqref{eq:local-estimate-3} and \eqref{eq:local-estimate-3.2}, it holds that
\begin{equation*}
	\liminf_{\varepsilon\to0} u^\varepsilon(x_0,t_0)\geqslant  \underset{\varepsilon\rightarrow0}\liminf\,\left[\alpha\left(\frac{x_{\varepsilon}-c_{e}t_{\varepsilon}}{\varepsilon}\right)-av\left(\frac{x_{\varepsilon}}{\varepsilon},\frac{t_{\varepsilon}}{\varepsilon}\right)\right] \geqslant \alpha_{+}-a\mathcal{V}_{-}>0.
\end{equation*}
Since  $(x_{0},t_{0})\in \mathcal{B}$ is arbitrary, the result is established.
\end{proof}
\begin{corollary}\label{coro:u-approach-max-capacity}
	Assume that {\rm (I$^{\lambda}$)} holds for some  $\lambda\in(0,\infty]$ and the conditions in Lemma \ref{lem:persist-solu} hold. Suppose further that $c_{e}>s_{2,-}^{r}$ or $c_{e}<-s_{2,-}^{l}$. Then
	\begin{align}
		 &\underset{\varepsilon\rightarrow0}\liminf\,u^{\varepsilon}\geqslant \alpha_{+} \quad  \text{locally uniformly in }
		\mathcal{B}_{1} =\left\{\begin{array}{ll}
			\mathcal{B}\cap\{(x,t)|c_{e}<x/t<-s_{2,-}^l\}\text{ or, } \\
			\mathcal{B}\cap\{(x,t)|x/t>s_{2,-}^r\}, 	\,\,  \text{if } c_{e}<-s_{2,-}^{l}; \\
			\mathcal{B}\cap\{(x,t)|x/t>c_{e}\}, 	\,\,  \text{if } c_{e}>s_{2,-}^{r}.
		\end{array}\right. \label{eq:u-terrace-min}\\
		&	\underset{\varepsilon\rightarrow0}\liminf\,u^{\varepsilon}= \alpha_{-} \quad  \text{locally uniformly in }
		\mathcal{B}_{2} =\left\{\begin{array}{ll}
				\mathcal{B}\cap\{(x,t) |s_{2,-}^{r}<x/t<c_{e}\} \text{ or, }\\
				\mathcal{B}\cap\{(x,t) |x/t<-s_{2,-}^l\}, \,\,\text{if } c_{e}>s_{2,-}^{r};\\
				\mathcal{B}\cap\{(x,t)|x/t<c_{e}\}, 	\,\,  \text{if } c_{e}<-s_{2,-}^{l}.
			\end{array}\right. \label{eq:u-terrace-max}
			\end{align}
\end{corollary}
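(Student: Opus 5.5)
The plan is to refine the argument of Lemma \ref{lem:persist-solu} by sharpening the limit inequality \eqref{eq:local-estimate-3} in the sub-regions where predators are absent at the macroscopic scale. First fix $(x_{0},t_{0})\in\mathcal{B}_{1}$ (respectively $\mathcal{B}_{2}$). We repeat the construction from Lemma \ref{lem:persist-solu}: the test function $\phi=|x-x_{0}|^{2}+|t-t_{0}|^{2}$ and the maximizing points $(x_{\varepsilon},t_{\varepsilon})\to(x_{0},t_{0})$. This yields
\begin{equation*}
\liminf_{\varepsilon\to0}u^{\varepsilon}(x_{0},t_{0})\geqslant\liminf_{\varepsilon\to0}\Big[\alpha\Big(\frac{x_{\varepsilon}-c_{e}t_{\varepsilon}}{\varepsilon}\Big)-av\Big(\frac{x_{\varepsilon}}{\varepsilon},\frac{t_{\varepsilon}}{\varepsilon}\Big)\Big].
\end{equation*}

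Next we evaluate the two terms on the right. Since $\mathcal{B}_{i}$ is open, we may assume $x_{0}/t_{0}$ lies strictly inside the relevant interval.

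\emph{The $\alpha$ term.} If $x_{0}/t_{0}>c_{e}$, then $(x_{\varepsilon}-c_{e}t_{\varepsilon})/\varepsilon\to+\infty$, so $\alpha\to\alpha_{+}$. If $x_{0}/t_{0}<c_{e}$, it tends to $-\infty$, so $\alpha\to\alpha_{-}$.

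\emph{The $v$ term.} By \eqref{eq:prop-v}, which comes from comparing $v$ with the scalar equation $v_{t}=d_{2}(J_{2}*v-v)+r_{2}v(\mathcal{V}_{-}-v)$ and using the spreading speeds $s^{r}_{2,-},s^{l}_{2,-}$, we get $v(x_{\varepsilon}/\varepsilon,t_{\varepsilon}/\varepsilon)\to0$ whenever $x_{0}/t_{0}>s^{r}_{2,-}$ or $x_{0}/t_{0}<-s^{l}_{2,-}$.

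\emph{Checking each region.} In every listed region the slope lies outside $[-s^{l}_{2,-},s^{r}_{2,-}]$. For instance, when $c_{e}>s^{r}_{2,-}$ and $x/t>c_{e}$, we have $x/t>s^{r}_{2,-}$. When $c_{e}<-s^{l}_{2,-}$ and $x/t<c_{e}$, we have $x/t<-s^{l}_{2,-}$. The remaining cases are similar. Hence the right-hand side equals $\alpha_{+}$ on $\mathcal{B}_{1}$ and $\alpha_{-}$ on $\mathcal{B}_{2}$. Uniformity on compact subsets follows because the constants obtained are independent of the point, or by a standard contradiction argument with sequences of points.

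For \eqref{eq:u-terrace-max}, we also need the upper bound $\limsup u^{\varepsilon}\leqslant\alpha_{-}$. This holds trivially because $u\leqslant\alpha_{-}$, since $u_{0}\in\mathcal{BC}_{\alpha_{-}}$ and the comparison principle applies with $v\geqslant0$. Together with the lower bound this gives the equality. For \eqref{eq:u-terrace-min} only the inequality is claimed, so nothing more is needed there.

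The main subtle step is making the passage from $v\to0$ along $(x_{\varepsilon}/\varepsilon,t_{\varepsilon}/\varepsilon)$ rigorous. This requires the uniform-in-cone statement $\lim_{t\to\infty}\sup_{x\geqslant(s^{r}_{2,-}+\eta)t}v(x,t)=0$, which holds for the comparison scalar problem with initial data satisfying (I$^{\lambda}$). Here the definition of $s^{r}_{2,-}$ through $\lambda^{r}_{2}\wedge\mu^{*}$ must match the decay of $v_{0}$. I would invoke \cite{shen2010,xu2021} for that scalar result. It applies because $x_{\varepsilon}/t_{\varepsilon}$ stays in a compact subset of the open cone and $t_{\varepsilon}/\varepsilon\to\infty$.
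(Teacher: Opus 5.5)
Your proposal is correct and is essentially the paper's argument: you rerun the test-function construction of Lemma \ref{lem:persist-solu}, use the sign of $x_0/t_0-c_e$ together with (A) for the $\alpha$ term, use \eqref{eq:prop-v} to make the $v$ term vanish in each listed cone, and close the equality on $\mathcal{B}_2$ with $u\leqslant\alpha_-$. You evaluate the $\alpha$ and $v$ terms along $(x_\varepsilon,t_\varepsilon)$, whereas the paper writes them at $(x_0,t_0)$; yours is the more careful form of the same step.
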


\begin{proof}

We first prove \eqref{eq:u-terrace-min} only for the case $c_{e}<-s_{2,-}^{l}$, and the case $c_{e}>s_{2,-}^r$ is similar. For any $(x_{0},t_{0})$ belongs to either $\mathcal{B}\cap\{(x,t)|c_{e}<x/t<-s_{2,-}^l\}$ or $\mathcal{B}\cap\{(x,t)|x/t>s_{2,-}^r\}$, it follows from (A) and \eqref{eq:prop-v} that
	\begin{align}\label{eq:low-bound-a-v}
		 \underset{\varepsilon\rightarrow0}\liminf\,\left[\alpha\left(\frac{x_{0}-c_{e}t_{0}}{\varepsilon}\right)-av\left(\frac{x_{0}}{\varepsilon},\frac{t_{0}}{\varepsilon}\right)\right]= \alpha_{+}.
	\end{align}
	Then, by the same argument as in the proof of Lemma \ref{lem:persist-solu}, we have
	\begin{align*}
		 \underset{\varepsilon\rightarrow0}\liminf\,u^{\varepsilon}(x_{0},t_{0})\geqslant \alpha_{+}.
	\end{align*}

We now prove \eqref{eq:u-terrace-max} for the case $c_{e}>s_{2,-}^{r}$, the other case can be treated similarly. For any $(x_{0},t_{0})\in\mathcal{B}_{2}$, the left side of the equality  \eqref{eq:low-bound-a-v} is bounded below by $\alpha_{-}$. Since $u^{\varepsilon}\leqslant \alpha_{-}$ on $\mathbb{R}\times\mathbb{R}^{+}$, the result follows by the same reasoning as before.
\end{proof}

\section{Explicit formulas for the spreading speed of the prey}\label{sec:formulas-prey}
This section is devoted to deriving the explicit formulas of the spreading speed for the prey. Throughout, we assume that assumptions (J), (A), (H1)-(H2) and (FU) hold.

\subsection{Determinacy of the spreading speed}

\begin{proposition}\label{prop:speed-r}
Assume that {\rm (I$^{\lambda}$)} holds for some $\lambda\in(0,\infty]$,
and let  $\rho\in C([0,\infty);[0,\infty))$  satisfy  the boundary conditions
	\begin{align}\label{eq:vis-boundary-condition}
		\rho(0)=0, \quad  \underset{s\rightarrow+\infty}\lim\, \rho(s)/s=\lambda_{1}^r\in(0,\infty].
	\end{align}
If $\rho$ is a viscosity subsolution of \eqref{eq:hj-tindep-up} and a viscosity supersolution of \eqref{eq:hj-tindep-low} on $\mathbb{R}^{+}$,  and if there exists a unique $s_{r}>0$ such that
	\begin{align*}
		\rho(s)=0 \quad \text{for }s\in[0,s_{r}],\quad \text{and} \quad  \rho(s)>0 \quad \text{for }s\in(s_{r},\infty),
	\end{align*}
	then $c_{u}^r=s_{r}$.
\end{proposition}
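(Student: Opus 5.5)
The plan is to sandwich the two half-relaxed limits by $\rho$ on $[0,\infty)$ and then read off the spreading speed from the zero set of $\rho$.

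\textbf{Step 1: comparison in both directions.} By Lemma \ref{lem:low-hj-control}, $\rho_{*}$ is a viscosity supersolution of \eqref{eq:hj-tindep-up} with $\rho_{*}(0)=0$ and $\liminf_{s\to+\infty}\rho_{*}(s)/s\geqslant\lambda_{1}^r$. Our $\rho$ is a subsolution of the same equation with $\rho(0)=0$ and $\rho(s)/s\to\lambda_{1}^r$. So Lemma \ref{lem:viscosity-comparison-t-indep} applies on $\mathbb{R}^{+}$ with $\underline\rho=\rho$ and $\bar\rho=\rho_{*}$, giving $\rho\leqslant\rho_{*}$. Its growth hypothesis $\rho(s)\to\infty$ follows from $\lambda_{1}^r>0$.

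Symmetrically, by Corollary \ref{coro:up-hj-control}, $\rho^{*}$ is a subsolution of \eqref{eq:hj-tindep-low} with $\rho^{*}(0)=0$ and $\lim\rho^{*}(s)/s\leqslant\lambda_{1}^r$. Since $\rho$ is a supersolution of \eqref{eq:hj-tindep-low}, we take $\underline\rho=\rho^{*}$ and $\bar\rho=\rho$ and obtain $\rho^{*}\leqslant\rho$. This step needs care in two places:
\begin{itemize}
\item the comparison lemma's requirement that $\underline\rho\to\infty$ at $+\infty$ must be checked for $\rho^{*}$, e.g.\ via $\rho^{*}\geqslant\rho_{*}\geqslant\rho$;
\item when $\lambda_{1}^r=\infty$, the slope conditions degenerate. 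I would handle this case by comparing on truncated problems or using the $+\infty$ version of the lemma as stated.
\end{itemize}
Together with $\rho_{*}\leqslant\rho^{*}$ we get $\rho_{*}=\rho^{*}=\rho$ on $[0,\infty)$. By $1$-homogeneity, $w_{*}=w^{*}=t\rho(x/t)$ for $x\geqslant0$, so $w^{\varepsilon}\to t\rho(x/t)$ locally uniformly there.

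\textbf{Step 2: extinction beyond $s_r$.} Fix $\eta>0$. On the compact set $\{(x,1): s_r+\eta\leqslant x\leqslant M\}$ we have $\rho\geqslant\delta>0$ by continuity, so $w^{\varepsilon}(x,1)\geqslant\delta/2$ for small $\varepsilon$. Setting $\varepsilon=1/t$, this gives $u(x,t)\leqslant e^{-\delta t/2}\to0$ uniformly for $(s_r+\eta)t\leqslant x\leqslant Mt$.

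For $x\geqslant Mt$ with $M$ large, I would use the bound $u\leqslant\bar u$ from Proposition \ref{prop:local-bound-u} (or the upper estimate of Lemma \ref{lemma:local-bound} type), together with the classical fact that the scalar $\alpha_{-}$-problem spreads at a finite speed $\leqslant c_{1,-}(\lambda_1^r\wedge\mu^*_{1,-})$. Choosing $M$ larger than that speed gives $\sup_{x\geqslant Mt}u\to0$.

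\textbf{Step 3: persistence behind $s_r$.} We have $w^{*}=0$ on the open cone $\{0<x/t<s_r\}$, so this cone lies in the set $\mathcal{B}$ of Lemma \ref{lem:persist-solu}. That lemma gives $\liminf_{\varepsilon\to0}u^{\varepsilon}>0$ locally uniformly there. Evaluating at $t=1$, the compact set $\{(x,1): \eta\leqslant x\leqslant s_r-\eta\}$ yields $\liminf_{t\to\infty}\inf_{\eta t\leqslant x\leqslant(s_r-\eta)t}u>0$. The region $0\leqslant x\leqslant\eta t$ is covered by \eqref{eq:u-x0-r1}, choosing $\eta<\tilde c^{*}$.

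\textbf{Main obstacle.} The main difficulty is Step 1, namely verifying the hypotheses of Lemma \ref{lem:viscosity-comparison-t-indep} for the semicontinuous envelopes, especially the infinite-slope case (I$^{\infty}$). Step 3 needs only the uniformity in Lemma \ref{lem:persist-solu}, which is already stated there.
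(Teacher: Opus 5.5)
Your proposal is correct and follows the paper's proof essentially step for step. The paper compares $\rho\leqslant\rho_{*}$ for \eqref{eq:hj-tindep-up} and $\rho^{*}\leqslant\rho$ for \eqref{eq:hj-tindep-low} via Lemma \ref{lem:viscosity-comparison-t-indep}. It then gets extinction for $x/t>s_r$ from $w_{*}>0$, with the scalar $\alpha_{-}$-comparison at speed $s_{1,-}^r$ handling the far field, and persistence in $0<x/t<s_r$ from Lemma \ref{lem:persist-solu} together with \eqref{eq:u-x0-r1}. Two points you add are implicit in the paper: the growth hypothesis $\rho^{*}\to+\infty$ needed in the second comparison follows from $\rho^{*}\geqslant\rho_{*}\geqslant\rho$, and the identification $\rho_{*}=\rho^{*}=\rho$ is what the paper records separately as Corollary \ref{coro:unique-vis}.
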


\begin{proof}
 We first show that ${c}_{u}^r\leqslant s_{r}$.
Recall that $\rho_{*}$ given by Lemma \ref{lem:low-hj-control} is a viscosity supersolution of \eqref{eq:hj-tindep-up} and satisfies the boundary conditions \eqref{eq:hj-tindep-up-boundary}. Since $\rho$ is a viscosity subsolution of \eqref{eq:hj-tindep-up} and satisfies \eqref{eq:vis-boundary-condition}, the comparison principle (Lemma \ref{lem:viscosity-comparison-t-indep}) implies
\begin{align}\label{eq:vis-r-sub-control}
	\rho \leqslant \rho_{*} \quad \text{on }\mathbb{R}^{+}.
\end{align}
Thus,  $\rho_{*}(s)>0$ for any  $s>s_{r}$. We now claim that
\begin{align*}
	\underset{\varepsilon\rightarrow0}\limsup\,u^{\varepsilon}=0 \quad  \text{locally uniformly in }\{(x,t)\in\mathbb{R}^{+}\times\mathbb{R}^{+}|x/t>s_{r}\}.
\end{align*}
To verify this, fix $(x_{0},t_{0})$ such that $x_0/t_{0}>s_{r}$. Since $\rho_{*}(x_{0}/t_{0})>0$, Lemma \ref{lem:samehj} implies that $w_{*}(x_{0},t_{0})>\kappa$ for some $\kappa>0$. By the definition of the half-relaxed limit, $w^{\varepsilon}>\kappa/2$ in a small neighborhood of $(x_{0},t_{0})$ for any sufficiently small $\varepsilon$. The Hopf-Cole transform then implies that
\begin{align*}
	 u^{\varepsilon}(x,t)=e^{-\frac{w^{\varepsilon}(x,t)}{\varepsilon}}\leqslant e^{-\frac{\kappa}{2\varepsilon}} \quad \text{in a small neighborhood of } (x_{0},t_{0}).
\end{align*}
By the nonnegativity of $u^{\varepsilon}$, taking $\varepsilon\rightarrow0$ in the above inequality yields the claim.
Therefore, for any given $\eta_{2}>\eta_{1}>0$,  we have
\begin{align}\label{speed-dete-1}
		 \underset{t\rightarrow\infty}\lim\,\underset{(s_{r}+\eta_{1})t\leqslant x\leqslant (s_{r}+\eta_{2})t }\sup\,u(x,t)=\underset{\varepsilon\rightarrow0}\lim\, \underset{s_{r}+\eta_{1}\leqslant y\leqslant s_{r}+\eta_{2}}\sup\,u^{\varepsilon}(y,1)=0.
\end{align}
On the other hand, a standard comparison argument shows that
\begin{align}\label{speed-dete-2}
	\underset{t\rightarrow\infty}\lim\,\underset{x\geqslant (s_{1,-}^{r}+\eta)t}\sup\,u(x,t)=0 \quad \text{for any }\eta>0,
\end{align}
where $s_{1,-}^r$ is defined in \eqref{eq:speed-homo}. Combining \eqref{speed-dete-1} and \eqref{speed-dete-2}, we conclude that  ${c}_{u}^r\leqslant s_{r}$.

We now prove that ${c}_{u}^r\geqslant s_{r}$.
Recall that $\rho^{*}$ given by Corollary \ref{coro:up-hj-control} is a viscosity subsolution of \eqref{eq:hj-tindep-low} and satisfies the boundary conditions \eqref{eq:hj-tindep-low-boundary}. Since $\rho$ is a viscosity supersolution of \eqref{eq:hj-tindep-low} and satisfies \eqref{eq:vis-boundary-condition}, the comparison principle implies
\begin{align}\label{eq:vis-r-sup-control}
	\rho \geqslant \rho^{*} \quad \text{on }\mathbb{R}^{+}.
\end{align}
Thus, $\rho^{*}(s)=0$ for any  $ s\in[0,s_{r}]$. By Lemmas \ref{lem:samehj} and \ref{lem:persist-solu}, it follows that
\begin{align*}
	\underset{\varepsilon\rightarrow0}\liminf\,u^{\varepsilon}>0 \quad  \text{locally uniformly in }\{(x,t)\in\mathbb{R}^{+}\times\mathbb{R}^{+}|0<x/t<s_{r}\}.
\end{align*}
Hence, for any given $s_{r}>\eta_{2}>\eta_{1}>0$, we have
\begin{equation}\label{eq:lower-bound-u2}
	 \underset{t\rightarrow\infty}\liminf\,\underset{(s_{r}-\eta_{2})t\leqslant  x\leqslant (s_{r}-\eta_{1})t }\inf\,u(x,t)=\underset{\varepsilon\rightarrow0}\liminf\, \underset{s_{r}-\eta_{2}\leqslant  y\leqslant s_{r}-\eta_{1}}\inf\,u^{\varepsilon}(y,1)>0,
\end{equation}
Combining \eqref{eq:u-x0-r1} and \eqref{eq:lower-bound-u2}, we obtain ${c}_{u}^r\geqslant s_{r}$,   and the proof is complete.
\end{proof}

A similar result holds as follows.
\begin{corollary}\label{coro:speed-l}
Assume that {\rm (I$^{\lambda}$)} holds for some $\lambda\in(0,\infty]$, and let $\rho\in C((-\infty,0];[0,\infty))$  satisfy  the boundary conditions
		\begin{align}\label{eq:vis-boundary-condition-left}
			\rho(0)=0, \quad  \underset{s\rightarrow-\infty}\lim\, \rho(s)/|s|=\lambda_{1}^l\in(0,\infty].
		\end{align}
If 	$\rho$	is a viscosity subsolution of \eqref{eq:hj-tindep-up} and a viscosity supersolution of \eqref{eq:hj-tindep-low} on $\mathbb{R}^{-}$, and if there exists a unique $s_{l}<0$ such that
		\begin{align*}
			\rho(s)=0 \quad \text{for }s\in[s_{l},0],\quad \text{and} \quad  \rho(s)>0 \quad \text{for }s\in(-\infty,s_{l}),
		\end{align*}
		then $c_{u}^l=s_{l}$.
\end{corollary}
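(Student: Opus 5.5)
The plan is to mirror the proof of Proposition \ref{prop:speed-r} on the negative half-line, using the left-hand versions of all ingredients. The argument has two halves: a one-sided comparison giving $\rho\leqslant \rho_{*}$ (extinction to the left of $s_{l}$), and a comparison giving $\rho^{*}\leqslant\rho$ (persistence between $s_{l}$ and $0$).

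\emph{Upper estimate on the leftward reach.} By Lemma \ref{lem:low-hj-control}, $\rho_{*}$ is a viscosity supersolution of \eqref{eq:hj-tindep-up} on $\mathbb{R}^{-}$. It satisfies $\rho_{*}(0)=0$ and $\liminf_{s\to-\infty}\rho_{*}(s)/|s|\geqslant\lambda_{1}^{l}$. The given $\rho$ is a subsolution of the same equation, and by \eqref{eq:vis-boundary-condition-left} it has $\rho(s)\to+\infty$ as $s\to-\infty$ with slope $\lambda_{1}^{l}$. The left-hand version of Lemma \ref{lem:viscosity-comparison-t-indep} then gives $\rho\leqslant\rho_{*}$ on $\mathbb{R}^{-}$, so $\rho_{*}>0$ on $(-\infty,s_{l})$. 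Next I use Lemma \ref{lem:samehj} (the $1$-homogeneity $w_{*}=t\rho_{*}(x/t)$) together with the Hopf–Cole transform. For $x_{0}/t_{0}<s_{l}$ there is $\kappa>0$ with $w^{\varepsilon}>\kappa/2$ near $(x_{0},t_{0})$, hence $u^{\varepsilon}\leqslant e^{-\kappa/(2\varepsilon)}$ there. This shows $u\to0$ uniformly on $(s_{l}-\eta_{2})t\leqslant x\leqslant(s_{l}-\eta_{1})t$. The far-left region $x\leqslant(-s_{1,-}^{l}-\eta)t$ is handled by comparing $u$ with the scalar equation with $\alpha\equiv\alpha_{-}$ (no predators) and invoking the classical spreading result. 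Together these give $\lim_{t\to\infty}\sup_{x\leqslant(s_{l}-\eta)t}u=0$, i.e.\ $c_{u}^{l}\geqslant s_{l}$.

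\emph{Lower estimate.} By Corollary \ref{coro:up-hj-control}, $\rho^{*}$ is a subsolution of \eqref{eq:hj-tindep-low} on $\mathbb{R}^{-}$ with $\rho^{*}(0)=0$ and $\limsup_{s\to-\infty}\rho^{*}(s)/|s|\leqslant\lambda_{1}^{l}$. The given $\rho$ is a supersolution of \eqref{eq:hj-tindep-low} with matching growth. Applying the left comparison of Lemma \ref{lem:viscosity-comparison-t-indep} gives $\rho^{*}\leqslant\rho$ on $\mathbb{R}^{-}$, so $\rho^{*}=0$ on $[s_{l},0]$ and $w^{*}=0$ on the open cone $\{s_{l}<x/t<0\}$. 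Lemma \ref{lem:persist-solu} then yields $\liminf_{\varepsilon\to0}u^{\varepsilon}>0$ locally uniformly there. Hence $\liminf_{t}\inf_{(s_{l}+\eta_{2})t\leqslant x\leqslant(s_{l}+\eta_{1})t}u>0$. The neighborhood of $x/t=0$ is covered by \eqref{eq:u-x0-r1}, and combining gives $c_{u}^{l}\leqslant s_{l}$.

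\emph{Main obstacle.} The delicate step is verifying the hypotheses of the comparison lemma on the left. The roles of sub- and supersolution there must line up with the growth at $-\infty$: in the first comparison $\rho$ is the subsolution and needs $\lim\rho/|s|=\lambda_{1}^{l}\leqslant\liminf\rho_{*}/|s|$; in the second, $\rho^{*}$ is the subsolution. In the second comparison, $\rho^{*}$ may not blow up or have a limit slope, and when $\lambda_{1}^{l}=\infty$ the slope condition is degenerate. I would handle this exactly as in Proposition \ref{prop:speed-r}, since Lemma \ref{lem:viscosity-comparison-t-indep} is stated symmetrically for $\mathbb{R}^{-}$. If needed, one can reflect $s\mapsto-s$: $H_{1}$ is symmetric, and $\underline R,\bar R$ become step functions of the same type, so the right-half argument applies verbatim.
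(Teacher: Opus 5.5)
Your proposal is correct and is the mirror image of the paper's proof of Proposition \ref{prop:speed-r}, which is what the paper means by ``a similar result holds'': $\rho\leqslant\rho_*$ together with the scalar comparison at speed $s_{1,-}^l$ gives extinction to the left of $s_l$, and $\rho^*\leqslant\rho$ together with Lemma \ref{lem:persist-solu} and \eqref{eq:u-x0-r1} gives persistence on $(s_l,0)$. The blow-up of $\rho^*$ at $-\infty$ that you flag as a worry is automatic, since $\rho^*\geqslant\rho_*$ and $\liminf_{s\to-\infty}\rho_*(s)/|s|\geqslant\lambda_1^l>0$.
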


\begin{corollary}\label{coro:unique-vis}
Assume that {\rm (I$^{\lambda}$)}  holds for some $\lambda\in(0,\infty]$.
If $\rho\in C(\mathbb{R};[0,\infty))$ satisfies the assumptions of both Proposition \ref{prop:speed-r} and Corollary \ref{coro:speed-l}, then as $\varepsilon\rightarrow 0$, $w^{\varepsilon}$ defined by \eqref{eq:hopf} converges locally uniformly to $t\rho\in C(\mathbb{R}\times\mathbb{R}^{+})$.
\end{corollary}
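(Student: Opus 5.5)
The plan is to squeeze the half-relaxed limits between $\rho$ from both sides, using the comparison results already invoked in the proofs of Proposition \ref{prop:speed-r} and Corollary \ref{coro:speed-l}. On $\mathbb{R}^{+}$, $\rho$ is a viscosity subsolution of \eqref{eq:hj-tindep-up}. The function $\rho_{*}$ is a supersolution of the same equation by Lemma \ref{lem:low-hj-control}. Both vanish at $0$, and their slopes at $+\infty$ are ordered by \eqref{eq:vis-boundary-condition} and \eqref{eq:hj-tindep-up-boundary}. Lemma \ref{lem:viscosity-comparison-t-indep} therefore gives $\rho\leqslant\rho_{*}$ on $\mathbb{R}^{+}$, exactly as in \eqref{eq:vis-r-sub-control}. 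Symmetrically, $\rho$ is a supersolution of \eqref{eq:hj-tindep-low} and $\rho^{*}$ is a subsolution (Corollary \ref{coro:up-hj-control}), with boundary data \eqref{eq:hj-tindep-low-boundary}, so $\rho^{*}\leqslant\rho$ on $\mathbb{R}^{+}$ as in \eqref{eq:vis-r-sup-control}. The same two comparisons on $\mathbb{R}^{-}$ use the left-hand versions of the boundary conditions \eqref{eq:vis-boundary-condition-left}. Combining these with $\rho_{*}\leqslant\rho^{*}$, which holds by definition of the half-relaxed limits, yields $\rho\leqslant\rho_{*}\leqslant\rho^{*}\leqslant\rho$ on $\mathbb{R}$, and hence $\rho_{*}=\rho^{*}=\rho$.

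Next I transfer this back to $w_{*}$ and $w^{*}$ through the $1$-homogeneity shown in the proof of Lemma \ref{lem:low-hj-control}: $w_{*}(x,t)=t\rho_{*}(x/t)$, and likewise $w^{*}(x,t)=t\rho^{*}(x/t)$. This gives $w_{*}=w^{*}=t\rho(x/t)$ on $\mathbb{R}\times\mathbb{R}^{+}$. Since $\rho$ is continuous, this function is continuous.

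The final step is the standard Barles–Perthame argument: when the lower and upper half-relaxed limits agree and are continuous, $w^{\varepsilon}$ converges locally uniformly. I would argue by contradiction. Suppose there are a compact set $Q$, some $\delta>0$, a sequence $\varepsilon_n\to0$ and points $(x_n,t_n)\in Q$ with $|w^{\varepsilon_n}(x_n,t_n)-t_n\rho(x_n/t_n)|\geqslant\delta$. Proposition \ref{prop:local-bound-u} guarantees local boundedness, so along a subsequence $(x_n,t_n)\to(x_0,t_0)$. Then
\[
w_{*}(x_0,t_0)\leqslant\liminf_n w^{\varepsilon_n}(x_n,t_n)\leqslant\limsup_n w^{\varepsilon_n}(x_n,t_n)\leqslant w^{*}(x_0,t_0).
\]
Together with the continuity of $t\rho(x/t)$, this contradicts the $\delta$-gap.

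The main point needing care is checking the hypotheses of Lemma \ref{lem:viscosity-comparison-t-indep} on each half-line. This includes growth of the subsolution to $+\infty$ at the relevant end, and the case $\lambda_{1}^{r}$ or $\lambda_{1}^{l}$ equal to $\infty$. These verifications are identical to those already performed in the proofs of Proposition \ref{prop:speed-r} and Corollary \ref{coro:speed-l}, so I would simply cite them. One must also make sure that the value at $s=0$ is matched, which holds since all three functions vanish there.
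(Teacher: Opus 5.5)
Your proposal is correct and is essentially the paper's proof. It uses the sandwich $\rho\leqslant\rho_{*}\leqslant\rho^{*}\leqslant\rho$ on each half-line from \eqref{eq:vis-r-sub-control} and \eqref{eq:vis-r-sup-control}, then the $1$-homogeneity $w_{*}=t\rho_{*}(x/t)$, $w^{*}=t\rho^{*}(x/t)$, and then the standard fact that coinciding continuous half-relaxed limits give locally uniform convergence, which you spell out more fully than the paper does (one trivial slip: the convergent subsequence of $(x_n,t_n)$ comes from the compactness of $Q$, not from Proposition \ref{prop:local-bound-u}).
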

\begin{proof}
From \eqref{eq:vis-r-sub-control}, \eqref{eq:vis-r-sup-control} and the definition of half-relaxed limit, we obtain
	\begin{align*}
		\rho\leqslant \rho_{*}\leqslant \rho^{*}\leqslant \rho \quad \text{on }\mathbb{R}^{+}.
		\end{align*}
The same inequality holds on $\mathbb{R}^{-}$. Since  $\rho_{*}(0)=\rho^{*}(0)$, it follows that
	\begin{align*}
		\rho_{*}= \rho^{*}=\rho \quad  \text{on }\mathbb{R}.
	\end{align*}
	By Lemma \ref{lem:samehj}, we deduce that
	\begin{align*}
		w_{*}=w^{*}=t\rho \quad \text{on }\mathbb{R}\times\mathbb{R}^{+}.
	\end{align*}
The result now follows from \eqref{eq:half-relaxed}.
\end{proof}

\subsection{Rightward spreading speed}\label{subsection-Rspeed}
In this subsection, we study the rightward spreading speed by constructing a series of viscosity super  and subsolutions.
We begin by establishing some auxiliary results.

Since $L_{1,+}(\cdot)>L_{1,-}(\cdot)$,  we claim that for any fixed $c_{e}\geqslant c^{*}_{1,-}$,  the equation in $p$ as follows
\begin{align}\label{eq:check-hat-p-root-right}
	c_{e}p-H_{1,+}(p)=L_{1,-}(c_{e})
\end{align}
has two real roots $\check p_{1}=\check p_{1}(c_{e})<L_{1}'(c_{e})<\hat p_{1}=\hat p_{1}(c_{e})$. Indeed, define an auxiliary function:
\begin{align*}
	F_{r}(c_{e},p)=c_{e}p-H_{1,+}(p)-L_{1,-}(c_{e}).
\end{align*}
It is straightforward to check  that $F(c_{e},p)$ is strictly increasing for $p\in(0,L_{1}'(c_{e}))$ and strictly decreasing for $p\in(L_{1}'(c_{e}),\infty)$. Note that  $F(c_{e},\infty)<0$ (due to the superlinearity of $H_{1,+}$ in $p$) and
\begin{align*}
	F_{r}(c_{e},L_{1}'(c_{e}))=L_{1,+}(c_{e})>L_{1,-}(c_{e})>0,\quad 	 F_{r}(c_{e},0)=-H_{1,+}(0)-L_{1,-}(c_{e})<0.
\end{align*}
Then the claim follows from the monotonicity of $F_{r}$ in $p$.
Particularly, we  have $\check{p}_{1}(c^{*}_{1,-})=\mu_{0,1}$, where $\mu_{0,1}$ is the smallest root of $c_{1,+}(\mu)=c^{*}_{1,-}$.
Recall that $f_{+}(\mu)= c_{e}\mu-H_{1,+}(\mu)$ is strictly increasing in $(0,L_{1}'(c_{e}))$ and strictly decreasing in $(L_{1}'( c_{e}),\infty)$.  It follows that
\begin{align}\label{eq:aux-right-func}
		f_{+}(\lambda_{1}^r)<L_{1,-}( c_{e})\quad \text{for }\lambda_{1}^r\in(0,\check{p}_{1}), \quad f_{+}(\lambda_{1}^r)>L_{1,-}(c_{e})\quad \text{for }\lambda_{1}^r\in(\check{p}_{1},L'_{1}(c_{e})].
\end{align}

\begin{lemma}\label{lem:well-p*} We have the following assertions.
	\begin{enumerate}
		\item[\rm(a)] In the region $\hat{\mathcal{R}}_{1}\cap\{\lambda_{1}^r< \check{p}_{1}\}$, where $\hat{\mathcal{R}}_{1}$ is given by \eqref{eq:region-R}, the following  equation for $p$
		\begin{align}\label{eq:root-right-1}
		c_{e} p-H_{1,-}(p)= c_{e}\lambda_{1}^r-H_{1,+}(\lambda_{1}^r)
		\end{align}
		admits the smallest root $p^{*}_{1}=p^{*}_{1}(c_{e},\lambda_{1}^r)\in(\lambda_{1}^r,L'_{1}(c_{e}))$.
		\item[\rm(b)] In the  region $\hat{\mathcal{R}}_{1}\cap\{\lambda_{1}^r=\check{p}_{1}\}$,  the equation \eqref{eq:root-right-1} admits the unique root $p^{*}_{1}=L'_{1}(c_{e})$.
	\end{enumerate}
\end{lemma}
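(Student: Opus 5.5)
Both parts reduce to studying the single function
\[
G(p):=c_{e}p-H_{1,-}(p)-\bigl(c_{e}\lambda_{1}^r-H_{1,+}(\lambda_{1}^r)\bigr),
\]
whose zeros are exactly the roots of \eqref{eq:root-right-1}. Since $H_{1,-}=H_1+r_1\alpha_-$ is smooth and strictly convex (Lemma \ref{lem:hamilton}), $G$ is strictly concave. It is strictly increasing on $(-\infty,L_1'(c_e))$ and strictly decreasing on $(L_1'(c_e),\infty)$, with maximum value
\[
G(L_1'(c_e))=L_{1,-}(c_e)-f_+(\lambda_1^r),
\]
where $f_+(\mu)=c_e\mu-H_{1,+}(\mu)$. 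We also have $G(p)\to-\infty$ as $|p|\to\infty$ by superlinearity. Before using this, I would check that $c_e\geqslant c^*_{1,-}$ on $\hat{\mathcal R}_1$. In the branch $\lambda_1^r>\mu_{0,1}$ this holds since $c^*_{1,+}<c^*_{1,-}$ is not enough by itself; there I would instead read $\lambda_1^r\leqslant\check p_1$ together with the definition of $\check p_1$, noting that $\check p_1$ is only defined for $c_e\geqslant c^*_{1,-}$. So this is presumably implicit in the statement, and $\check p_1$, $L_{1,-}(c_e)\geqslant0$ and \eqref{eq:aux-right-func} are all available.

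For (a), take $\lambda_1^r<\check p_1$. Then \eqref{eq:aux-right-func} gives $f_+(\lambda_1^r)<L_{1,-}(c_e)$, so $G(L_1'(c_e))>0$. Next I evaluate at $p=\lambda_1^r$:
\[
G(\lambda_1^r)=H_{1,+}(\lambda_1^r)-H_{1,-}(\lambda_1^r)=r_1(\alpha_+-\alpha_-)<0
\]
by (A). Since $\check p_1<L_1'(c_e)$, we have $\lambda_1^r<L_1'(c_e)$. The intermediate value theorem together with strict monotonicity of $G$ on $[\lambda_1^r,L_1'(c_e)]$ then gives exactly one zero $p^*_1$ in $(\lambda_1^r,L_1'(c_e))$. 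To see that it is the smallest root, note that $G$ is increasing on $(-\infty,\lambda_1^r]$ and $G(\lambda_1^r)<0$, so $G<0$ there. The remaining zero of $G$ lies in $(L_1'(c_e),\infty)$.

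For (b), take $\lambda_1^r=\check p_1$. By the definition \eqref{eq:check-hat-p-root-right}, $f_+(\check p_1)=L_{1,-}(c_e)$, so $\max G=G(L_1'(c_e))=0$. By strict concavity this maximum is attained only at $L_1'(c_e)$, which is therefore the unique root.

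The only delicate point I expect is confirming that the region $\hat{\mathcal R}_1\cap\{\lambda_1^r\leqslant\check p_1\}$ forces $c_e\geqslant c^*_{1,-}$, so that $\check p_1$ is defined and $\check p_1<L_1'(c_e)$. Beyond that, the argument is elementary calculus.
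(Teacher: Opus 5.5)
Your proposal is correct and follows the paper's proof essentially step for step. It uses the same auxiliary function $G_r$, the same sign evaluations $G_r(\lambda_1^r)<0<G_r(L_1'(c_e))$ via \eqref{eq:aux-right-func}, and the same uniqueness of the maximizer of $c_ep-H_{1,-}(p)$ for part (b). Like the paper, you leave the requirement $c_e\geqslant c^*_{1,-}$ implicit through the definition of $\check p_1$.

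Your version is slightly more careful in two places. You keep the factor $r_1$ in $G(\lambda_1^r)=r_1(\alpha_+-\alpha_-)$, which the paper drops. You also rule out roots below $\lambda_1^r$, including negative $p$, by the monotonicity of $G$ on all of $(-\infty,L_1'(c_e))$.
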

\begin{proof}
We first prove part (a). For any given $c_{e}>c^{*}_{1,-}$, the root $\check p_{1}$ is uniquely determined, so the region $\hat{\mathcal{R}}_{1}\cap\{\lambda_{1}^r< \check{p}_{1}\}$ is nonempty. By \eqref{eq:aux-right-func}, we have
	\begin{align*}
		c_{e}\lambda_{1}^r-H_{1,+}(\lambda_{1}^r)< L_{1,-}(c_{e})=\underset{p\in \mathbb{R}}\sup\,\{c_{e} p-H_{1,-}(p)\}.
	\end{align*}
Define an auxiliary function:
	\begin{align*}
		G_{r}(c_{e},\lambda_{1}^r,p)=c_{e} p-H_{1,-}(p)-c_{e}\lambda_{1}^r+H_{1,+}(\lambda_{1}^r).
	\end{align*}
Clearly, $G_{r}$ is strictly increasing for  $p\in (0,L'_{1}(c_{e}))$ and strictly decreasing for  $p\in (L'_{1}(c_{e}),\infty)$. By \eqref{eq:aux-right-func}, we obtain
	\begin{align*}
	G_{r}(c_{e},\lambda_{1}^r,\lambda_{1}^r)=-\alpha_{-}+\alpha_{+}<0, \quad G_{r}(c_{e},\lambda_{1}^r,L'_{1}(c_{e}))=L_{1,-}(c_{e})-(c_{e}\lambda_{1}^r-H_{1,+}(\lambda_{1}^r))> 0.
	\end{align*}
	By the monotonicity of $G_{r}$ in $p$,   there exists a unique $p^{*}_{1}\in(\lambda_{1}^r,L'_{1}(c_{e}))$ such that $G_{r}(c_{e},\lambda_{1}^r,p^{*}_{1})=0$. Moreover, due to the monotonicity of $G_{r}$ and the superlinearity of $H_{1,-}$  in $p$, one can verify that $p^{*}_{1}$ is   the smallest root of $G_{r}(c_{e},\lambda_{1}^r,p)=0$.

Part (b) follows directly from the definition of $p^{*}_{1}$ since the global maximum point of the Lagrangian $L_{1,-}$ is unique. The proof is complete.
\end{proof}

\begin{lemma}\label{lem:p*-equiv}
	For any $\lambda_{1}^r$ and $c_{e}$ such that $p^{*}_{1}=p^{*}_{1}(c_{e},\lambda_{1}^r)$ is well-defined, we have
	\begin{align*}
		c_{e}>(resp. \, <\, or\, =)~k_{1}(\lambda_{1}^r)	 \Longleftrightarrow p^{*}_{1}<(resp. \, >\, or\, =)~\mu^{*}_{1,-}, \quad \forall \lambda_{1}^r\in(0,\mu^{*}_{1,-}).
	\end{align*}
\end{lemma}
\begin{proof}
	We only prove the  case ``$>$'', the other cases are similar. Note that when $c_{e}\geqslant c^{*}_{1,-}$ and $p^{*}_{1}$ is well-defined, it holds that $\mu^{*}_{1,-}\leqslant L'_{1}(c_{e})$. Since $p^{*}_{1}\in (\lambda_{1}^r,L'_{1}(c_{e})]$, and $\hat f(\mu)= c_{e}\mu-H_{1,-}(\mu)$ is increasing in $(0,L'_{1}(c_{e})]$, we have
	\begin{equation*}
	\begin{aligned}
		c_{e}>k_{1}(\lambda_{1}^r)\quad \Longleftrightarrow \quad &	 c_{e}\lambda_{1}^r-H_{1,+}(\lambda_{1}^r)<	c_{e} \mu^{*}_{1,-}-H_{1,-}(\mu^{*}_{1,-})\\
		\quad \Longleftrightarrow \quad &c_{e}p^{*}_{1}-H_{1,-}(p^{*}_{1})<c_{e} \mu^{*}_{1,-}-H_{1,-}(\mu^{*}_{1,-})\\
		 \Longleftrightarrow \quad & p^{*}_{1}<\mu^{*}_{1,-},
	\end{aligned}
	\end{equation*}
where the last step follows from the monotonicity of $\hat f$.	The proof is complete.
\end{proof}

Based on the division given in Lemma \ref{lem:right-division}, the following four lemmas address the four cases in Theorem \ref{thm:u-speed-exp-right}, which constitute the proof of this theorem.

\begin{lemma}\label{lem:right-A}
If $(\lambda_{1}^r, c_e)\in V_{1,r}^{a}\cup \gamma_{1}^{a}\cup \gamma_{1}^{b}$, then $c_{u}^r=s_{1,+}^r$.
\end{lemma}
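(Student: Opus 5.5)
The proof will go through Proposition \ref{prop:speed-r}. We write down an explicit continuous function $\rho$ on $[0,\infty)$ that vanishes exactly on $[0,s_{1,+}^r]$, and check that it is a viscosity subsolution of \eqref{eq:hj-tindep-up} and a viscosity supersolution of \eqref{eq:hj-tindep-low} on $\mathbb{R}^{+}$. Write $\lambda:=\lambda_{1}^r\wedge\mu^{*}_{1,+}$, so that $s_{1,+}^r=c_{1,+}(\lambda)$. The region $V_{1,r}^{a}\cup\gamma_{1}^{a}\cup\gamma_{1}^{b}$ is exactly $\{c_e\leqslant s_{1,+}^r\}$. The natural candidate is the homogeneous profile
\begin{equation*}
\rho(s)=\begin{cases}\lambda(s-s_{1,+}^r)_{+}, & \lambda_{1}^r\leqslant\mu^{*}_{1,+},\\
\lambda_{1}^r(s-c_{1,+}(\lambda_{1}^r))\vee L_{1,+}(s)\vee 0, & \lambda_{1}^r>\mu^{*}_{1,+}.\end{cases}
\end{equation*}
For $\lambda_{1}^r=\infty$ we take $\rho=\max\{L_{1,+},0\}$ on $\mathbb{R}^{+}$. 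Equivalently, this is the Legendre-type formula $\rho(s)=\sup_{p\leqslant\lambda_{1}^r}\{ps-H_{1,+}(p)\}_{+}$. It satisfies $\rho(0)=0$ and $\rho(s)/s\to\lambda_{1}^r$. By \eqref{eq:lag-pm} and the monotonicity of $c_{1,+}$ from Lemma \ref{lem:hamilton}, its zero set in $\mathbb{R}^{+}$ is exactly $[0,s_{1,+}^r]$.

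\textbf{Subsolution of \eqref{eq:hj-tindep-up}.} Where $\rho>0$ we have $s>s_{1,+}^r\geqslant c_e$, so $\bar R(s)=r_1\alpha_{+}$. Hence $\bar R_{*}=r_1\alpha_+$ there, except possibly at $s=c_e$, which lies in the zero region when $c_e<s_{1,+}^r$. On $\{\rho>0\}$ each branch is a classical solution of $\rho-s\rho'+H_{1,+}(\rho')=0$:
\begin{itemize}
\item for the linear branch, $\rho-s\lambda+H_{1,+}(\lambda)=-\lambda s_{1,+}^r+H_{1,+}(\lambda)=0$;
\item for $L_{1,+}$, the identity $L_{1,+}(q)=qL_1'(q)-H_{1,+}(L_1'(q))$ from Lemma \ref{lem:hamilton} gives the same conclusion.
\end{itemize}
At the junction between the two branches, $\rho$ is a maximum of two classical solutions. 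Test functions touching from above have slopes in the convex hull of the two slopes, and convexity of $H_1$ gives the subsolution inequality. At points with $\rho=0$ no condition is required.

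\textbf{Supersolution of \eqref{eq:hj-tindep-low}.} Since $\rho\geqslant0$, we only need to check test points. On $\{\rho>0\}$ we have $\underline R\leqslant r_1\alpha_+$ pointwise: in every case \eqref{eq:shift-under-1}--\eqref{eq:shift-under-3}, $\underline R$ equals $r_1\alpha_+$ or $r_1(\alpha_+-a\mathcal{V}_-)$ for $s\geqslant c_e$. The wrong-way inequality from the lower-order term is harmless here, because the supersolution condition uses $\underline R^{*}$, and it suffices that $\underline R^{*}\geqslant r_1\alpha_+$ on $\{\rho>0\}$. This is exactly where (FU) enters: $s_{2,-}^r<s_{1,+}^r$. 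For $s>s_{1,+}^r$ we therefore get $s>s_{2,-}^r$ and $s\geqslant c_e$, so $\underline R(s)=r_1\alpha_+$ in all three cases. The same holds at the kinks inside $\{\rho>0\}$, which are convex corners (a maximum of solutions). There $\rho$ admits no $C^1$ function touching from below, unless the slopes coincide, and then the check is the classical one.

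At the corner $s=s_{1,+}^r$, test slopes from below lie in $[0,\lambda]$. We need $-s\,p+H_1(p)+\underline R^{*}(s)\geqslant0$, and since $\underline R^{*}(s)\geqslant r_1\alpha_+$ it suffices to have $H_{1,+}(p)\geqslant s_{1,+}^r p$ for $p\in[0,\lambda]$. This holds because $c_{1,+}(p)\geqslant c_{1,+}(\lambda)$ for $p\leqslant\lambda\leqslant\mu^{*}_{1,+}$. On the interior of the zero set only $p=0$ is allowed, and $H_1(0)+\underline R^{*}\geqslant r_1(\alpha_+-a\mathcal{V}_-)>0$ by (H2).

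\textbf{Main obstacle.} The delicate point is this bookkeeping of $\underline R^{*}$ near $s_{1,+}^r$. In the critical case $c_e=s_{1,+}^r$ (the curves $\gamma_1^a,\gamma_1^b$), $\bar R(c_e)=r_1\alpha_-$ at the corner. This does not hurt the subsolution property, because $\rho(c_e)=0$ there and the subsolution condition is only imposed where $\rho>0$. Once these checks are done, Proposition \ref{prop:speed-r} with $s_r=s_{1,+}^r$ gives $c_u^r=s_{1,+}^r$.
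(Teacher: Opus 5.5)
Your strategy is the paper's: construct an explicit profile, check that it is a subsolution of \eqref{eq:hj-tindep-up} and a supersolution of \eqref{eq:hj-tindep-low}, and apply Proposition \ref{prop:speed-r} with $s_r=s_{1,+}^r$. Your check at the corner $s=s_{1,+}^r$ is also the paper's: slopes from below in $[0,\lambda]$, monotonicity of $c_{1,+}$, and $\underline R^{*}\geqslant r_1\alpha_+$. The inequality form is in fact the right one, since for $c_e=c^*_{1,+}$ one has $\underline R_1^{*}(c_e)=r_1\alpha_-$.

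However, your displayed formula for $\lambda_1^r>\mu^{*}_{1,+}$ is wrong. By the definition of the Legendre transform, $L_{1,+}(s)\geqslant \lambda_1^r s-H_{1,+}(\lambda_1^r)$ for every $s$. So your maximum collapses to $\max\{L_{1,+}(s),0\}$, which is superlinear and violates $\rho(s)/s\to\lambda_1^r$. With that function, Proposition \ref{prop:speed-r} cannot be applied. Indeed $\rho\leqslant\rho_*$ is false for large $s$, because $\rho_*(s)\leqslant \lambda_1^r s+K$ by Lemma \ref{lemma:local-bound}.

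The function you need is your Legendre-type formula $\sup_{p\leqslant\lambda_1^r}\{ps-H_{1,+}(p)\}_+$. This equals $0$ on $[0,c^{*}_{1,+}]$, equals $L_{1,+}$ on $[c^{*}_{1,+},H_1'(\lambda_1^r)]$, and is the tangent line $\lambda_1^r s-H_{1,+}(\lambda_1^r)$ beyond that point. This is exactly the paper's \eqref{eq:right-Ab}. The two expressions you call ``equivalent'' are not.

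Your discussion of the junction and of corners has the sides reversed. With the correct $\rho$, the junction at $H_1'(\lambda_1^r)$ is a point of tangency, so $\rho$ is $C^1$ there and the only non-smooth point is $s_{1,+}^r$. At a convex corner (a maximum), no $C^1$ function touches from above, so the subsolution test is vacuous. Functions touching from below do exist there, with every slope between the one-sided derivatives. Your statements ``test functions touching from above have slopes in the convex hull'' and ``no $C^1$ function touching from below'' are therefore both false.

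The second statement would actually conceal a failure. If $\rho$ really were a maximum of two classical solutions with a genuine convex kink inside $\{\rho>0\}$, strict convexity of $p\mapsto -sp+H_1(p)$ would make the supersolution inequality fail at intermediate slopes. Your own check at $s_{1,+}^r$, which uses slopes from below in $[0,\lambda]$, is the correct treatment of a convex corner. Once $\rho$ is defined by the sup formula and these remarks are dropped, your argument coincides with the paper's.
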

\begin{proof}
	We consider two cases: (a) $\lambda_{1}^r\in(0,\mu^{*}_{1,+})$, and (b) $\lambda_{1}^r\in[\mu^{*}_{1,+},\infty)$.

	For case (a), we define
	\begin{equation}\label{eq:right-Aa}
		\rho(s)=\max\left\{0,\lambda_{1}^r s-H_{1,+}(\lambda_{1}^r)\right\} \quad \text{for }s\in [0,\infty).
	\end{equation}
	For case (b), we define
	\begin{equation}\label{eq:right-Ab}
		\rho(s)=
	\begin{cases}
		\lambda_{1}^r s-H_{1,+}(\lambda_{1}^r) & \text{for } s\in[H_{1}'(\lambda_{1}^r ),\infty),\\
		L_{1,+}(s) & \text{for } s\in[c^{*}_{1,+},H_{1}'(\lambda_{1}^r )),\\
		0& \text{for } s\in[0,c^{*}_{1,+}).
	\end{cases}
	\end{equation}
	We claim that $\rho$, as defined above, is a viscosity subsolution of \eqref{eq:hj-tindep-up} and a viscosity supersolution of \eqref{eq:hj-tindep-low} on $\mathbb{R}^{+}$. We only provide the details for case (b), and case (a) is similar.

	Clearly, $\rho\in C^{1}(\mathbb{R}^{+}\backslash\{c^{*}_{1,+}\})$ and satisfies \eqref{eq:vis-boundary-condition} with $s_{r}=c^{*}_{1,+}$.
Since $c_{e}\leqslant c^{*}_{1,+}$, $\rho$ is a classical solution of \eqref{eq:hj-tindep-up} on $\mathbb{R}^{+}\backslash\{c^{*}_{1,+}\}$, and $\rho(s)>0$ iff $s>c^{*}_{1,+}$. Hence, $\rho$ is a viscosity subsolution of \eqref{eq:hj-tindep-up} on $\mathbb{R}^{+}$.

Now prove that $\rho$ is also a viscosity supersolution of \eqref{eq:hj-tindep-low} on $\mathbb{R}^{+}$. By (FU) and the definitions \eqref{eq:shift-under-1}-\eqref{eq:shift-under-3}, $\rho$ is a classical solution of \eqref{eq:hj-tindep-low} on $\mathbb{R}^{+}\backslash\{c^{*}_{1,+}\}$. It remains to consider the case where  $\rho-\phi$ attains its local minimum at $c^{*}_{1,+}$  for any test function $\phi\in C^{1}(\mathbb{R}^{+})$. One can easily verify that  $\phi'(c^{*}_{1,+})\in[0,\mu^{*}_{1,+}]$. Moreover, since $\underline{R}^{*}(c^{*}_{1,+})=\underline{R}_{i}^{*}(c^{*}_{1,+})=r_{1}\alpha_{+}$ for $i=1,2,3$,  it holds at $c^{*}_{1,+}$ that
	\begin{align*}
		-c^{*}_{1,+}\phi'+H_{1}(\phi')+\underline{R}^{*}(c^{*}_{1,+})=		 -c^{*}_{1,+}\phi'+H_{1,+}(\phi')\geqslant 0,
	\end{align*}
	where the last inequality follows from the monotonicity of $c_{1,+}(\cdot)$ on $\mathbb{R}^{+}$. Hence, $\rho$ is a viscosity supersolution on $\mathbb{R}^{+}$.

This lemma follows from Proposition \ref{prop:speed-r}.
\end{proof}

\begin{lemma}\label{lem:right-B}
	If  $(\lambda_{1}^r, c_e)\in V_{1,r}^{b}\cup \gamma_{1}^{d}$, then $c_{u}^r=c_{e}$.
\end{lemma}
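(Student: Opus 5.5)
The plan is to follow the template of Lemma \ref{lem:right-A}: construct an explicit continuous $\rho$ on $[0,\infty)$ that vanishes exactly on $[0,c_e]$ and is positive afterwards. We then check that it is a viscosity subsolution of \eqref{eq:hj-tindep-up} and a viscosity supersolution of \eqref{eq:hj-tindep-low}, and conclude $c_u^r=c_e$ by Proposition \ref{prop:speed-r}. In the region $V_{1,r}^{b}\cup\gamma_1^d$ we have $\lambda_1^r>\mu_{0,1}$ and $s_{1,+}^r<c_e\leqslant c^*_{1,-}$. The natural candidate is the largest solution generated by the Hamiltonian $H_{1,+}$ to the right of $c_e$, with zero boundary value at $s=c_e$. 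Define
\begin{equation*}
\rho(s)=\begin{cases}
0 & s\in[0,c_e],\\
\sup_{p}\,\{\,p(s-c_e)\,\}\text{-type cone: } \lambda(s-c_e) & \text{near } c_e^+,\\
\lambda_1^r s-H_{1,+}(\lambda_1^r) & s \text{ large},
\end{cases}
\end{equation*}
where the middle piece is made precise as follows. Let $\hat\mu\in(0,\mu^*_{1,+}]$ be the smaller root of $c_{1,+}(\hat\mu)=c_e$, which exists because $c_e>s_{1,+}^r\geqslant c^*_{1,+}$. On $[c_e,\infty)$ put $\rho(s)=\hat\mu s-H_{1,+}(\hat\mu)=\hat\mu(s-c_e)$. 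If $\lambda_1^r\geqslant\hat\mu$, glue this to $\lambda_1^r s-H_{1,+}(\lambda_1^r)$ by taking the maximum, inserting the Lagrangian piece $L_{1,+}(s)$ on $[H_1'(\hat\mu),H_1'(\lambda_1^r)]$ when $\lambda_1^r>\mu^*_{1,+}$, exactly as in \eqref{eq:right-Ab}. The condition $c_e>s_{1,+}^r=c_{1,+}(\lambda_1^r\wedge\mu^*_{1,+})$ gives $\lambda_1^r\wedge\mu^*_{1,+}>\hat\mu$. Hence the affine pieces are ordered correctly, and $\lim\rho(s)/s=\lambda_1^r$.

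On $(c_e,\infty)$ the function $\rho$ is a classical (or max-of-classical, hence viscosity sub-) solution of $\rho-s\rho'+H_{1,+}(\rho')=0$. Here $\bar R=\underline R=r_1\alpha_+$, because $c_e>s^r_{2,-}$ by (FU) and the relevant $\underline R$ is $\underline R_1$, which equals $r_1\alpha_+$ on $[c_e,\infty)$. So both required properties hold away from $s=c_e$, except at kinks where $\rho$ is a max of solutions. There it is a subsolution automatically, and convexity in the supersolution direction is handled by the $L_{1,+}$ gluing, which is $C^1$. On $(0,c_e)$ we have $\rho\equiv0$. Subsolution for \eqref{eq:hj-tindep-up} is vacuous there since $\rho=0$. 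Supersolution for \eqref{eq:hj-tindep-low} needs $\underline R\geqslant0$, and this holds because by (H2) $\underline R_1\geqslant r_1(\alpha_--a\mathcal V_-)>0$.

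The key step, and the main obstacle, is the junction point $s=c_e$, where $\rho$ has a corner with slopes $0$ and $\hat\mu$. For the supersolution property of \eqref{eq:hj-tindep-low}, test slopes satisfy $\phi'(c_e)\in[0,\hat\mu]$, and we use $\underline R^*(c_e)=r_1\alpha_-$ (the u.s.c. envelope takes the larger value). We then need $-c_e p+H_{1,-}(p)\geqslant0$ for $p\in[0,\hat\mu]$, i.e.\ $c_{1,-}(p)\geqslant c_e$. This holds because $c_e\leqslant c^*_{1,-}\leqslant c_{1,-}(p)$; this is exactly where $c_e\leqslant c^*_{1,-}$ (including $\gamma_1^d$) enters. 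For the subsolution property of \eqref{eq:hj-tindep-up} at $c_e$ there is nothing to check, since $\rho(c_e)=0$; moreover no $C^1$ function touches a convex corner from above. I would also double check that $\lambda_1^r>\mu_{0,1}$ is what guarantees $\hat\mu<\lambda_1^r$ in the borderline case $c_e=c^*_{1,-}$. Once this is verified, Proposition \ref{prop:speed-r} with $s_r=c_e$ gives $c_u^r=c_e$.
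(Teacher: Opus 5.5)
Your overall route (Proposition \ref{prop:speed-r} with $s_r=c_e$) and your check at the junction $s=c_e$ are fine. In fact the junction check works for every test slope $p\geqslant 0$, since it only uses $\underline R^*(c_e)=r_1\alpha_-$ and $c_{1,-}(p)\geqslant c^*_{1,-}\geqslant c_e$.

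The gap is the profile on $(c_e,\infty)$. You take the smaller root $\hat\mu=\nu_1<\mu^*_{1,+}$ of $c_{1,+}(\mu)=c_e$ and glue by a maximum, and this does not yield an admissible $\rho$. Let $\nu_2>\mu^*_{1,+}$ be the larger root.
\begin{itemize}
\item If $\lambda_1^r<\nu_2$ (this includes every $\lambda_1^r\leqslant\mu^*_{1,+}$ in the region), then $c_{1,+}(\lambda_1^r)<c_e$. Hence $\lambda_1^r c_e-H_{1,+}(\lambda_1^r)>0$, and the maximum of the two lines equals $\lambda_1^r s-H_{1,+}(\lambda_1^r)$ on all of $[c_e,\infty)$. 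So your $\rho$ jumps from $0$ to a positive value at $c_e$.
\item If $\lambda_1^r>\nu_2$, the lines cross at some $s_\times>c_e$, and the maximum has a convex corner there. At that corner the supersolution test for \eqref{eq:hj-tindep-low} fails. The map $p\mapsto ps_\times-H_{1,+}(p)$ is strictly concave and equals $\rho(s_\times)$ at $p=\hat\mu$ and at $p=\lambda_1^r$. So $\rho(s_\times)-s_\times p+H_{1,+}(p)<0$ for $p\in(\hat\mu,\lambda_1^r)$; a maximum of solutions is only a subsolution.
\item The Lagrangian insertion cannot repair this. We have $H_1'(\hat\mu)<c^*_{1,+}<c_e$ while $L_{1,+}(c_e)>0$, so $L_{1,+}$ never meets $\hat\mu(s-c_e)$ on $[c_e,\infty)$.
\item If you just keep $\max\{0,\hat\mu(s-c_e)\}$, you have a subsolution of \eqref{eq:hj-tindep-up} with $\lim\rho(s)/s=\hat\mu<\lambda_1^r$. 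That gives $\rho\leqslant\rho_*$ and hence $c_u^r\leqslant c_e$ (this is the argument of Lemma \ref{lem:v-right-B}). But the reverse comparison $\rho\geqslant\rho^*$ requires $\liminf\rho(s)/s\geqslant\lambda_1^r$, so the lower bound $c_u^r\geqslant c_e$ is lost.
\end{itemize}

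The missing idea, which is what the paper does, is to use the larger root $\nu_2$ and glue by a minimum.
\begin{itemize}
\item For $\lambda_1^r\in(\nu_1,\nu_2)$, take $\rho=\min\{\max\{0,\lambda_1^r s-H_{1,+}(\lambda_1^r)\},\max\{0,\nu_2 s-H_{1,+}(\nu_2)\}\}$. It vanishes exactly on $[0,c_e]$, has slope $\nu_2$ on $[c_e,\hat s]$ and slope $\lambda_1^r<\nu_2$ beyond. The corner at $\hat s$ is concave, so the supersolution condition there is automatic, and the subsolution condition holds by the same concavity argument with the inequality reversed.
\item For $\lambda_1^r\geqslant\nu_2$, use $\nu_2 s-H_{1,+}(\nu_2)$ on $[c_e,H_1'(\nu_2)]$, then $L_{1,+}$ on $[H_1'(\nu_2),H_1'(\lambda_1^r)]$, then the line of slope $\lambda_1^r$. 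This is $C^1$ on $(c_e,\infty)$ because $H_1'(\nu_2)>c_{1,+}(\nu_2)=c_e$: the line of slope $\nu_2$ is the tangent from $(c_e,0)$ to the graph of $L_{1,+}$, which is also what the value-function heuristic gives.
\end{itemize}
Your junction argument at $c_e$, now with test slopes in $[0,\nu_2]$, then completes the proof.
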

\begin{proof}
	Since $c_{e}>c^{*}_{1,+}$, by the monotonicity of $c_{1,+}(\cdot)$ on $\mathbb{R}^{+}$, there exist  $\nu_{1}<\mu^{*}_{1,+}<\nu_{2}$ such that $c_{e}=c_{1,+}(\nu_{i}),\,i=1,2$. In particular,
$(\nu_{1},c_{e})\in \gamma_{1}^{a}$. We consider two cases: (a) $\lambda_{1}^r\in(\nu_{1},\nu_{2})$, and (b) $\lambda_{1}^r\in[\nu_{2},\infty)$.

	\textbf{Case (a).} Define
	\begin{equation}\label{eq:right-Ba}
		\rho(s)=\min\left\{\rho_{1}(s),\rho_{2}(s)\right\} \quad \text{for }s\in [0,\infty),
	\end{equation}
where
\begin{align*}
	 \rho_{1}(s)=\max\left\{0,\lambda_{1}^rs-H_{1,+}(\lambda_{1}^r)\right\}, \quad \rho_{2}(s)=\max\left\{0,\nu_{2}s-H_{1,+}(\nu_{2})\right\}.
\end{align*}
It is easy to verify that there exists a unique $\hat s>c_{e}$ such that $\rho_{1}(\hat s)=\rho_{2}(\hat s)$, $\rho$ is of $C^{1}$ on $\mathbb{R}^{+}\backslash\{c_{e},\hat s\}$ and satisfies \eqref{eq:vis-boundary-condition} with $s_{r}=c_{e}$, and $\rho(s)>0$ iff $s>c_{e}$.
By  Proposition \ref{prop:speed-r}, it suffices to show that $\rho$ is a viscosity subsolution of \eqref{eq:hj-tindep-up} and a viscosity supersolution of \eqref{eq:hj-tindep-low} on $\mathbb{R}^{+}$.

First, since $\rho$ is a classical solution of \eqref{eq:hj-tindep-up} on $\mathbb{R}^{+}\backslash\{c_{e},\hat s\}$, hence it is an a.e. viscosity subsolution of \eqref{eq:hj-tindep-up} on $\mathbb{R}^{+}$. By \cite[Chap. II, Proposition 5.1]{bardi1997} (it can be extended to \eqref{eq:hj-tindep-up} with a minor modification), we know that $\rho$ is a viscosity subsolution of \eqref{eq:hj-tindep-up} on $\mathbb{R}^{+}$.

Next prove that $\rho$ is a viscosity supersolution of \eqref{eq:hj-tindep-low} on $\mathbb{R}^{+}$. Note that $\underline{R}$ in \eqref{eq:hj-tindep-low} is given by \eqref{eq:shift-under-1} since  $c_{e}>s_{1,+}^r> s_{2,-}^r$. Then $\rho$ is a classical solution of \eqref{eq:hj-tindep-low} on $\mathbb{R}^{+}\backslash\{c_{e},\hat s\}$. At $\hat s$, since $\rho'(\hat s^{-})=\nu_{2}>\lambda_{1}^r=\rho'(\hat s^{+})$, the viscosity supersolution definition is automatically satisfied. It remains to consider the case where $\rho-\phi$ attains its local minimum at $c_{e}$. Then $\phi'(c_{e})\in[0,\nu_{2}]$, and at $c_{e}$, we have
\begin{align*}
	-c_{e}\phi'+H_{1}(\phi')+\underline{R}^{*}(c_{e})\geqslant -c^{*}_{1,-}\phi'+H_{1,-}(\phi')\geqslant 0,
\end{align*}
where the last inequality follows from the monotonicity of $c_{1,-}(\cdot)$ on $\mathbb{R}^{+}$.

\textbf{Case (b).} Define
\begin{equation}\label{eq:right-Bb}
	\rho(s)=
\begin{cases}
	\lambda_{1}^rs-H_{1,+}(\lambda_{1}^r) & \text{for }s\in[H_{1}'(\lambda_{1}^r),\infty),\\
	L_{1,+}(s) & \text{for }s\in[H_{1}'(\nu_{2}),H_{1}'(\lambda_{1}^r)),\\
	\max\left\{0,\nu_{2}s-H_{1,+}(\nu_{2})\right\}  & \text{for }s\in[0,H_{1}'(\nu_{2})).
\end{cases}
\end{equation}
Then $\rho\in C^{1}(\mathbb{R}^{+}\backslash\{c_{e}\})$ and it satisfies \eqref{eq:vis-boundary-condition} with $s_{r}=c_{e}$, and it is a classical solution   of \eqref{eq:hj-tindep-low} on $\mathbb{R}^{+}\backslash\{c_{e}\}$ with $\underline{R}=\underline{R}_{1}$. The remaining verification that  $\rho$ is  a viscosity supersolution of \eqref{eq:hj-tindep-low} on $\mathbb{R}^{+}$ is similar to case (a). Besides, one can verify similarly as before that $\rho$ is a viscosity subsolution of \eqref{eq:hj-tindep-up} on $\mathbb{R}^{+}$.
 The conclusion follows from Proposition \ref{prop:speed-r}.
\end{proof}

\begin{lemma}\label{lem:right-C}
	If $(\lambda_{1}^r, c_e)\in V_{1,r}^{d}$, then $c_{u}^r=c_{1,-}(p^{*}_{1})$.
\end{lemma}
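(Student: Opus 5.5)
The plan is to construct an explicit piecewise-linear function $\rho$ on $[0,\infty)$ that is a viscosity subsolution of \eqref{eq:hj-tindep-up} and a viscosity supersolution of \eqref{eq:hj-tindep-low}, with $s_r=c_{1,-}(p^{*}_{1})$, and then to invoke Proposition \ref{prop:speed-r}. In $V_{1,r}^{d}$ we have $\lambda_1^r<\mu^{*}_{1,-}$ and $c_e>c_{1,+}(\lambda_1^r)\vee k_1(\lambda_1^r)$. By Lemma \ref{lem:well-p*}, $p^{*}_{1}\in(\lambda_1^r,L_1'(c_e)]$ is well defined, and by Lemma \ref{lem:p*-equiv} it satisfies $p^{*}_{1}<\mu^{*}_{1,-}$. (I take for granted that $V^d_{1,r}$ lies in the domain of Lemma \ref{lem:well-p*}.) Set
\begin{equation*}
\rho(s)=\begin{cases}
\lambda_1^r s-H_{1,+}(\lambda_1^r) & s\geqslant c_e,\\
p^{*}_{1}s-H_{1,-}(p^{*}_{1}) & s_r\leqslant s<c_e,\\
0 & 0\leqslant s<s_r,
\end{cases}\qquad s_r:=c_{1,-}(p^{*}_{1}).
\end{equation*}

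\textbf{Continuity and the boundary conditions.} Continuity at $c_e$ is exactly the defining equation \eqref{eq:root-right-1}. We need $s_r<c_e$, which is equivalent to $c_ep^{*}_{1}-H_{1,-}(p^{*}_{1})>0$. This quantity equals $c_e\lambda_1^r-H_{1,+}(\lambda_1^r)$, which is positive since $c_e>c_{1,+}(\lambda_1^r)$. Hence $\rho(s)>0$ exactly for $s>s_r$, $\rho(0)=0$, and $\rho(s)/s\to\lambda_1^r$ as $s\to+\infty$. Moreover $s_r\geqslant c^{*}_{1,-}>s^r_{2,-}$ by (FU), and $c_e>s_{1,+}^r>s_{2,-}^r$. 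Therefore $\underline R=\underline R_1$, which equals $r_1\alpha_-$ on $(s_{2,-}^r,c_e)\supset(s_r,c_e)$ and $r_1\alpha_+$ on $[c_e,\infty)$. The upper coefficient $\bar R$ takes the same values on these intervals.

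\textbf{Smooth pieces and the two kinks.} Each linear piece is a classical solution of both equations on the open intervals. On $(s_r,c_e)$ one uses $H_{1,-}$; on $(c_e,\infty)$ one uses $H_{1,+}$. On $[0,s_r)$ we have $\rho\equiv0$, so the minimum in the equation vanishes. At $s_r$ the kink is convex, with slope jumping from $0$ to $p^{*}_{1}$.
\begin{itemize}
\item The subsolution test at $s_r$ is void, since $\rho(s_r)=0$.
\item For the supersolution property of \eqref{eq:hj-tindep-low}, any test slope satisfies $q\in[0,p^{*}_{1}]$, and we need $-s_rq+H_{1,-}(q)\geqslant0$, i.e. $c_{1,-}(q)\geqslant c_{1,-}(p^{*}_{1})$ for $q\in(0,p^{*}_{1}]$.
\item This holds because $c_{1,-}$ is decreasing on $(0,\mu^{*}_{1,-})$ and $p^{*}_{1}<\mu^{*}_{1,-}$. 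This is precisely where the condition $c_e>k_1(\lambda_1^r)$ enters.
\end{itemize}
At $c_e$ the kink is concave, with slope dropping from $p^{*}_{1}$ to $\lambda_1^r$, so the supersolution test is automatic. For the subsolution property of \eqref{eq:hj-tindep-up}, with $\bar R_*(c_e)=r_1\alpha_+$, we need
\begin{equation*}
c_eq-H_{1,+}(q)\geqslant c_e\lambda_1^r-H_{1,+}(\lambda_1^r)=\rho(c_e)\quad\text{for all } q\in[\lambda_1^r,p^{*}_{1}].
\end{equation*}
The map $q\mapsto c_eq-H_{1,+}(q)$ is concave, so it suffices to check the endpoints. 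At $q=\lambda_1^r$ this is an equality. At $q=p^{*}_{1}$ it follows from $c_ep^{*}_{1}-H_{1,+}(p^{*}_{1})>c_ep^{*}_{1}-H_{1,-}(p^{*}_{1})=\rho(c_e)$, using $\alpha_+<\alpha_-$.

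\textbf{Conclusion.} The step needing most care is checking the kink conditions with the correct envelopes $\bar R_*$ and $\underline R^*$ at $s_r$ and $c_e$. If a test function could touch from a direction not covered above, I would appeal to \cite[Chap. II, Proposition 5.1]{bardi1997} as in Lemma \ref{lem:right-B}. With both properties established, Proposition \ref{prop:speed-r} yields $c_u^r=s_r=c_{1,-}(p^{*}_{1})$.
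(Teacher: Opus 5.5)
Your construction is the paper's own \eqref{eq:right-C0}. Your kink analysis fills in correctly what the paper delegates to case (a) of Lemma \ref{lem:right-B}: the convex kink at $s_r$ uses $p^*_1<\mu^*_{1,-}$, and the concave kink at $c_e$ uses $\bar R_*(c_e)=r_1\alpha_+$ together with concavity of $q\mapsto c_eq-H_{1,+}(q)$.

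One justification is wrong and needs repair. The claim $c^*_{1,-}>s^r_{2,-}$ does not follow from (FU), which only gives $s^r_{2,-}<s^r_{1,+}$. When $\lambda^r_1$ is small, $s^r_{1,+}=c_{1,+}(\lambda^r_1)$ can lie far above $c^*_{1,-}$. So with small $\lambda^r_2$ one can have $s^r_{2,-}>c^*_{1,-}$ while (FU) holds and $(\lambda^r_1,c_e)\in V^d_{1,r}$.

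The inequality you actually need is $s_r>s^r_{2,-}$, and it is essential. If $s_r<s^r_{2,-}$, then on $(s_r,s^r_{2,-})$ we have $\underline R_1=r_1(\alpha_--a\mathcal{V}_-)$, so $\rho-s\rho'+H_1(\rho')+\underline R_1=-r_1a\mathcal{V}_-<0$ while $\rho>0$. In that case $\rho$ is not a supersolution of \eqref{eq:hj-tindep-low}.

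The correct route is through the defining equation of $p^*_1$. Set $A:=c_e\lambda^r_1-H_{1,+}(\lambda^r_1)>0$, which is positive because $c_e>c_{1,+}(\lambda^r_1)$. Since $p^*_1>\lambda^r_1$,
\begin{equation*}
c_e-c_{1,-}(p^*_1)=\frac{A}{p^*_1}<\frac{A}{\lambda^r_1}=c_e-c_{1,+}(\lambda^r_1).
\end{equation*}
Hence $c_{1,-}(p^*_1)>c_{1,+}(\lambda^r_1)\geqslant s^r_{1,+}>s^r_{2,-}$, where the last step is (FU). The paper states $c_{1,-}(p^*_1)>s^r_{2,-}$ without proof, so this one line supplies the missing justification. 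With it, your verification is complete.
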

\begin{proof}[Sketch of proof]
By Lemmas \ref{lem:well-p*} and \ref{lem:p*-equiv}, $p^{*}_{1}$ is well-defined and $p^{*}_{1}< \mu^{*}_{1,-}$. Define
	\begin{equation}\label{eq:right-C0}
		\rho(s)=
	\begin{cases}
		\lambda_{1}^rs-H_{1,+}(\lambda_{1}^r) &\text{for }s\in[c_{e},\infty), \\
		 \max\left\{0,p^{*}_{1} s-H_{1,-}(p^{*}_{1}) \right\}& \text{for }s\in[0,c_{e}).
	\end{cases}
	\end{equation}
Then $\rho\in C^{1}(\mathbb{R}^{+}\backslash\{c_{1,-}(p^{*}),c_{e}\})$ and it satisfies \eqref{eq:vis-boundary-condition} with $s_{r}=c_{1,-}(p^{*}_{1})$. Under the assumption of this lemma, $\underline{R}$ in \eqref{eq:hj-tindep-low} is given by \eqref{eq:shift-under-1}. Using arguments similar to case (a) in the proof of Lemma \ref{lem:right-B} (noting that $p^{*}_{1}< \mu^{*}_{1,-}$ and $c_{1,-}(p^{*}_{1})>s_{2,-}^r$ in this case), we have that  $\rho$ is a viscosity subsolution of \eqref{eq:hj-tindep-up} and a viscosity supersolution of \eqref{eq:hj-tindep-low} on $\mathbb{R}^{+}$.  The result follows from Proposition \ref{prop:speed-r}.
\end{proof}

\begin{lemma}\label{lem:right-D}
	If $(\lambda_{1}^r, c_e)\in V_{1,r}^{c}\cup \gamma_{1}^{c}$, then $c_{u}^r=c^{*}_{1,-}$.
\end{lemma}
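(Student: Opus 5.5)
The plan is to build an explicit continuous $\rho$ on $[0,\infty)$ that vanishes exactly on $[0,c^{*}_{1,-}]$. It should be a viscosity subsolution of \eqref{eq:hj-tindep-up} and a viscosity supersolution of \eqref{eq:hj-tindep-low} with $\underline R=\underline R_{1}$; this choice is legitimate because $c_e>c^{*}_{1,-}>s^r_{2,-}$ by (FU). Proposition \ref{prop:speed-r} then gives $c_u^r=c^{*}_{1,-}$.

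In the region $V_{1,r}^{c}\cup\gamma_{1}^{c}$ we have $c_e>c^{*}_{1,-}$, so \eqref{eq:check-hat-p-root-right} has two roots $\check p_1<L_1'(c_e)<\hat p_1$. On the left of $c_e$ I use the $\alpha_-$ Lagrangian, and on the right I continue by lines of the $\alpha_+$ Hamiltonian passing through the point $(c_e,L_{1,-}(c_e))$. Concretely, set
\begin{equation*}
\rho(s)=\begin{cases}
0 & s\in[0,c^{*}_{1,-}],\\
L_{1,-}(s) & s\in(c^{*}_{1,-},c_e],\\
\min\{\hat p_1 s-H_{1,+}(\hat p_1),\,\Lambda(s)\} & s>c_e,
\end{cases}
\end{equation*}
where $\Lambda$ is the profile attached to $\lambda_1^r$:
\begin{itemize}
\item if $\lambda_1^r\leqslant L_1'(c_e)$, then $\Lambda(s)=\lambda_1^r s-H_{1,+}(\lambda_1^r)$;
\item otherwise $\Lambda$ is built as in \eqref{eq:right-Ab}: equal to $L_{1,+}(s)$ for $s\leqslant H_1'(\lambda_1^r)$ and to the tangent line $\lambda_1^r s-H_{1,+}(\lambda_1^r)$ beyond.
\end{itemize}

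Continuity at $c_e$ requires $\Lambda(c_e)\geqslant L_{1,-}(c_e)$, and this is exactly where the region hypothesis enters. For $\lambda_1^r\leqslant L_1'(c_e)$, the inequality \eqref{eq:aux-right-func} reduces it to $\lambda_1^r\geqslant\check p_1$. This in turn should be equivalent to $c_e\leqslant k_1(\lambda_1^r)$ when $\lambda_1^r<\mu^{*}_{1,-}$, via the argument of Lemma \ref{lem:p*-equiv} together with Lemma \ref{lem:well-p*}(b) on the boundary $\gamma^c_1$. When $\lambda_1^r\geqslant\mu^{*}_{1,-}$ it holds automatically, using either $L_{1,+}(c_e)>L_{1,-}(c_e)$ or the monotonicity of $f_+$. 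I will also check three further properties: the limit $\rho(s)/s\to\lambda_1^r$ holds (since $\hat p_1$ vs. $\lambda_1^r$ only changes which branch wins eventually, and if $\hat p_1<\lambda_1^r$ a Lagrangian piece must be inserted so the asymptotic slope is $\lambda_1^r$); $\rho>0$ exactly for $s>c^{*}_{1,-}$; and $L_{1,-}$ is $C^1$ at $c^{*}_{1,-}$ with slope $\mu^{*}_{1,-}$.

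Verification proceeds point by point. Away from the kinks, each piece is a classical solution of the right equation. Specifically, $L_{1,-}$ solves it with $R=r_1\alpha_-$ on $(c^{*}_{1,-},c_e)$, where both $\bar R$ and $\underline R_1$ equal $r_1\alpha_-$ because $s>s^r_{2,-}$. The lines and $L_{1,+}$ solve it with $r_1\alpha_+$ for $s>c_e$. At the concave kink produced by the $\min$ on $(c_e,\infty)$, the supersolution test is vacuous and the subsolution property follows as in Lemma \ref{lem:right-B} via \cite[Chap. II, Proposition 5.1]{bardi1997}. At $c^{*}_{1,-}$ the function is of the same type as in Lemma \ref{lem:right-A}(b), and the supersolution check uses $-c^{*}_{1,-}p+H_{1,-}(p)\geqslant0$ for $p\in[0,\mu^{*}_{1,-}]$.

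The main obstacle is the kink at $c_e$, which is convex: the left slope is $L_1'(c_e)$ and the right slope is $\hat p_1>L_1'(c_e)$. For the subsolution of \eqref{eq:hj-tindep-up} no smooth test function touches from above, so nothing needs checking there. For the supersolution of \eqref{eq:hj-tindep-low}, test slopes $\phi'(c_e)$ range over $[L_1'(c_e),\hat p_1]$, and $\underline R_1^{*}(c_e)=r_1\alpha_-$. The required inequality is therefore $L_{1,-}(c_e)\geqslant c_e\phi'-H_{1,-}(\phi')$, which holds by the very definition of $L_{1,-}$ as a supremum. This is why the left piece must be $L_{1,-}$ and not a line of slope $\mu^{*}_{1,-}$: a line would fail this test for slopes in $(\mu^{*}_{1,-},L_1'(c_e))$.
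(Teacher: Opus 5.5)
There is a genuine gap in the continuity of your $\rho$ at $c_e$. You assert the equivalence ``$\lambda_1^r\geqslant\check p_1 \iff c_e\leqslant k_1(\lambda_1^r)$'', and it is false.

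Let $\lambda_e\in(\mu_{0,1},\mu^{*}_{1,-})$ be defined by $k_1(\lambda_e)=c_e$. Since $k_1$ is increasing, $c_e\leqslant k_1(\lambda_1^r)$ means $\lambda_1^r\geqslant\lambda_e$. With $f_+(\mu)=c_e\mu-H_{1,+}(\mu)$, the definition of $k_1$ gives $f_+(\lambda_e)=c_e\mu^{*}_{1,-}-H_{1,-}(\mu^{*}_{1,-})<L_{1,-}(c_e)=f_+(\check p_1)$. The inequality is strict because the supremum defining $L_{1,-}(c_e)$ is attained only at $L_1'(c_e)>\mu^{*}_{1,-}$. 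Hence $\lambda_e<\check p_1$.

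Now take any $\lambda_1^r\in[\lambda_e,\check p_1)$; this is a nonempty part of $V^{c}_{1,r}\cup\gamma^{c}_{1}$. By \eqref{eq:aux-right-func}, $\Lambda(c_e)=f_+(\lambda_1^r)<L_{1,-}(c_e)$, so your $\rho$ jumps down at $c_e$. It is then not lower semicontinuous, cannot be a supersolution of \eqref{eq:hj-tindep-low}, and Proposition \ref{prop:speed-r} does not apply.

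Your claim that continuity is automatic when $\lambda_1^r\geqslant\mu^{*}_{1,-}$ fails for the same reason. $L_{1,-}(c_e)$ is superlinear in $c_e$ while $f_+(\mu^{*}_{1,-})$ is affine, so for large $c_e$ one has $\check p_1>\mu^{*}_{1,-}$, and $\lambda_1^r\in[\mu^{*}_{1,-},\check p_1)$ produces the same jump. The underlying confusion is between two different curves. Lemma \ref{lem:p*-equiv} characterizes $p^{*}_1\geqslant\mu^{*}_{1,-}$ (boundary $\gamma^c_1$), whereas $\lambda_1^r=\check p_1$ corresponds to $p^{*}_1=L_1'(c_e)$ (Lemma \ref{lem:well-p*}(b)).

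The missing idea is that in this subcase the value at $c_e$ is imposed by the right-hand line, and the profile left of $c_e$ must bend down to meet it. This is the nonlocal adjustment through $p^{*}_1$. For $\lambda_1^r\in[\lambda_e,\check p_1]$, Lemmas \ref{lem:well-p*} and \ref{lem:p*-equiv} give $p^{*}_1\in[\mu^{*}_{1,-},L_1'(c_e)]$. One then takes
$\rho=0$ on $[0,c^{*}_{1,-})$,
$\rho=L_{1,-}$ on $[c^{*}_{1,-},H_1'(p^{*}_1))$,
$\rho=p^{*}_1s-H_{1,-}(p^{*}_1)$ on $[H_1'(p^{*}_1),c_e)$, and
$\rho=\lambda_1^rs-H_{1,+}(\lambda_1^r)$ on $[c_e,\infty)$.
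This is \eqref{eq:right-Da}.

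Continuity at $c_e$ is exactly the defining equation of $p^{*}_1$. The kink at $c_e$ is concave, with slope $p^{*}_1$ dropping to $\lambda_1^r$, so the supersolution test there is vacuous. The subsolution test for \eqref{eq:hj-tindep-up} uses $\bar R_*(c_e)=r_1\alpha_+$. It reduces to $f_+(\phi')\geqslant f_+(\lambda_1^r)$ for $\phi'\in[\lambda_1^r,p^{*}_1]$, which holds because $f_+$ is increasing on $(0,L_1'(c_e)]$. The condition $p^{*}_1\geqslant\mu^{*}_{1,-}$ is precisely what keeps the zero set equal to $[0,c^{*}_{1,-}]$; if $p^{*}_1<\mu^{*}_{1,-}$ you are in $V^{d}_{1,r}$ instead.

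The rest of your construction is sound. For $\lambda_1^r\in(\check p_1,\hat p_1)$ your minimum reproduces \eqref{eq:right-Db}, with the convex kink you analyze. For $\lambda_1^r>\hat p_1$ the minimum as written collapses to the $\hat p_1$-line, so the $L_{1,+}$ insertion you mention, i.e. \eqref{eq:right-Dc}, is genuinely required.
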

\begin{proof}
	Under the assumption of this lemma, the function $\underline{R}$ in \eqref{eq:hj-tindep-low} is given by \eqref{eq:shift-under-1}.
	For any fixed $c_{e}> c^{*}_{1,-}$, the monotonicity of $k_{1}(\mu)$ (see Lemma \ref{lem:well-division}) implies the existence and uniqueness of $\lambda_{e}\in(\mu_{0,1},\mu^{*}_{1,-})$ such that $c_{e}=k_{1}(\lambda_{e})$.
We  claim that $\lambda_{e}<\check p_{1}<\hat p_{1}$, where $\check p_{1}$ and $\hat p_{1}$ are two real roots of \eqref{eq:check-hat-p-root-right}. Indeed, by the definition of $\check{p}_{1}$, we have
	\begin{align*}
			 c_{e}\lambda_{e}-H_{1,+}(\lambda_{e})=c_{e}\mu^{*}_{1,-}-H_{1,-}(\mu^{*}_{1,-})< L_{1,-}(c_{e})=c_{e}\check{p}_{1}-H_{1,+}(\check{p}_{1}).
	\end{align*}
Using the monotonicity of $f_{+}(\mu)=c_{e}\mu-H_{1,+}(\mu)$, we conclude that $	 \lambda_{e}<\check{p}_{1}$.

We now consider three cases: (a) $\lambda_{1}^r\in[\lambda_{e},\check p_{1}]$, (b) $\lambda_{1}^r\in(\check p_{1},\hat p_{1})$, and (c) $\lambda_{1}^r\in[\hat p_{1},\infty)$.

	\textbf{Case (a).} By Lemma \ref{lem:p*-equiv} and the definition of $p^{*}_1$, we have $p^{*}_1\geqslant \mu^{*}_{1,-}$. Define
	\begin{equation}\label{eq:right-Da}
		\rho(s)=
	\begin{cases}
		\lambda_{1}^r s -H_{1,+}(\lambda_{1}^r) &\text{for~}s\in[c_{e},\infty),\\
		p^{*}_{1} s-H_{1,-}(p^{*}_{1})&\text{for~}s\in[H_{1}'(p^{*}_{1}),c_{e}),\\
		L_{1,-}(s)	 &\text{for~}s\in[c^{*}_{1,-},H_{1}'(p^{*}_{1})),\\
  0&\text{for~}s\in[0,c^{*}_{1,-}).
	\end{cases}
	\end{equation}
Then $\rho\in C^{1}(\mathbb{R}^{+}\backslash\{c^{*}_{1,-},c_{e}\})$ and it satisfies \eqref{eq:vis-boundary-condition} with $s_{r}=c_{1,-}^{*}$. The verification that $\rho$ is  a viscosity subsolution of \eqref{eq:hj-tindep-up} and a viscosity supersolution of \eqref{eq:hj-tindep-low} on $\mathbb{R}^{+}$ is similar to the proof of Lemma \ref{lem:right-C}. Proposition \ref{prop:speed-r} then implies that $c_{u}^r=c^{*}_{1,-}$.

	\textbf{Case (b).} By  the monotonicity of $f_{+}(\mu)=c_{e}\mu -H_{1,+}(\mu)$, we have
	\begin{align*}
		c_{e}\check p_{1}-H_{1,+}(\check p_{1})=	c_{e}\hat p_{1} -H_{1,+}(\hat p_{1})=L_{1,-}(c_{e})<c_{e} \lambda_{1}^r -H_{1,+}(\lambda_{1}^r).
	\end{align*}
Since $\lambda_{1}^r<\hat p$, we define
	\begin{equation}\label{eq:right-Db}
		\rho(s)=
		\begin{cases}
		\lambda_{1}^r s-H_{1,+}(\lambda_{1}^r) &\text{for~} s\in[\hat s,\infty),\\
		\hat p_{1} s-H_{1,+}(\hat p_{1}) &\text{for~} s\in[c_{e},\hat s),\\
			L_{1,-}(s)&\text{for~}s\in[c^{*}_{1,-},c_{e}),\\
	0&\text{for~}s\in[0,c^{*}_{1,-}),
		\end{cases}
	\end{equation}
	where $\hat s>c_{e}$ is the unique real number such that $\lambda_{1}^r \hat s -H_{1,+}(\lambda_{1}^r)=\hat p_{1} \hat s-H_{1,+}(\hat p_{1})$. Using Proposition \ref{prop:speed-r} and arguments similar to case (a) in the proof of Lemma \ref{lem:right-B}, one shows that $\rho$ satisfies the required viscosity condition, we omit the details here.

	\textbf{Case (c).} Define
	\begin{equation}\label{eq:right-Dc}
		\rho(s)=
		\begin{cases}
		\lambda_{1}^r s-	 H_{1,+}(\lambda_{1}^r)&\text{for~}s\in[H'_{1}(\lambda_{1}^r),\infty),\\
		L_{1,+}(s)&\text{for~}s\in[H'_{1}(\hat p_{1}),H'(\lambda_{1}^r)),\\
	\hat p_{1} s-H_{1,+}(\hat p_{1}) &\text{for~}s\in[c_{e},H'_{1}(\hat p_{1})),\\
	L_{1,-}(s)&\text{for~}s\in[c^{*}_{1,-},c_{e}),\\
	0&\text{for~}s\in[0,c^{*}_{1,-}).
		\end{cases}
	\end{equation}
The proof is similar to case (b) in Lemma \ref{lem:right-B}, and we omit details.  The proof is complete.
\end{proof}

\begin{proof}[\bf{Proof of Theorem \ref{thm:u-speed-exp-right}}]
	The result follows directly from Lemmas \ref{lem:right-A}-\ref{lem:right-D}.
\end{proof}

The following lemma deals with initial data satisfying (I$^{\infty}$).

\begin{lemma}\label{lem:right-compact}
	When {\rm (I$^{\infty}$)} holds,   we have that
	\begin{equation}\label{eq:speed-right-compact}
		s_{u}^r=
	\begin{cases}
		c^{*}_{1,+} & \text{if }c_{e}\in(-\infty,c^{*}_{1,+}],  \\
		c_{e} & \text{if }c_{e}\in(c^{*}_{1,+},c^{*}_{1,-}),  \\
		c^{*}_{1,-} & \text{if }c_{e}\in[c^{*}_{1,-},\infty).
	\end{cases}
	\end{equation}
\end{lemma}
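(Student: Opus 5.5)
The plan is to apply Proposition \ref{prop:speed-r} with $\lambda_{1}^r=\infty$. For each range of $c_{e}$ we construct an explicit $\rho\in C([0,\infty);[0,\infty))$ with $\rho(0)=0$ and $\rho(s)/s\to+\infty$ as $s\to+\infty$. It must be a viscosity subsolution of \eqref{eq:hj-tindep-up} and a viscosity supersolution of \eqref{eq:hj-tindep-low} on $\mathbb{R}^{+}$, and it must vanish exactly on $[0,s_r]$. We cannot simply take the limit $\lambda_1^r\to\infty$ in Theorem \ref{thm:u-speed-exp-right}, because the solution $u$ itself depends on the initial data. Instead we reuse the constructions of Lemmas \ref{lem:right-A}, \ref{lem:right-B} and \ref{lem:right-D}, dropping the linear tail $\lambda_1^r s-H_{1,+}(\lambda_1^r)$ and keeping the Lagrangian piece all the way to infinity. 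Since $L_{1,\pm}$ is superlinear (Lemma \ref{lem:hamilton}), the boundary condition $\lim_{s\to\infty}\rho(s)/s=\infty$ holds automatically.

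The three cases are as follows.

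\textbf{Case $c_{e}\leqslant c^{*}_{1,+}$.} Take $\rho(s)=L_{1,+}(s)$ for $s\geqslant c^{*}_{1,+}$ and $\rho(s)=0$ on $[0,c^{*}_{1,+})$. This is the profile \eqref{eq:right-Ab} with $\lambda_1^r=\infty$, and it gives $s_r=c^{*}_{1,+}$.

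\textbf{Case $c_{e}\in(c^{*}_{1,+},c^{*}_{1,-})$.} Let $\nu_2>\mu^*_{1,+}$ solve $c_{1,+}(\nu_2)=c_e$. Take $\rho(s)=\max\{0,\nu_2 s-H_{1,+}(\nu_2)\}$ on $[0,H_1'(\nu_2))$ and $\rho(s)=L_{1,+}(s)$ for $s\geqslant H_1'(\nu_2)$, which is \eqref{eq:right-Bb} without its tail. This gives $s_r=c_e$.

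\textbf{Case $c_{e}\geqslant c^{*}_{1,-}$.} Take the analogue of \eqref{eq:right-Dc}. Set $\rho=0$ on $[0,c^*_{1,-})$ and $\rho=L_{1,-}$ on $[c^*_{1,-},c_e)$. At $s=c_e$ switch to the tangent line $\hat p_1 s-H_{1,+}(\hat p_1)$, where $\hat p_1$ is the larger root of \eqref{eq:check-hat-p-root-right}; this choice makes $\rho$ continuous at $c_e$. Keep that line on $[c_e,H_1'(\hat p_1))$ and use $L_{1,+}$ afterwards. This gives $s_r=c^*_{1,-}$.

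Before this we must check that the hypotheses of Proposition \ref{prop:speed-r} and of the comparison Lemma \ref{lem:viscosity-comparison-t-indep} permit $\lambda=\infty$. Lemma \ref{lem:low-hj-control} gives $\rho_*(s)/s\to\infty$ under (I$^\infty$), so the growth condition needed when comparing $\rho\leqslant\rho_*$ is met. For $\rho^*\leqslant\rho$, the required bound $\lim\rho^*/s\leqslant\lim\rho/s=\infty$ is trivial.

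The viscosity verifications follow the earlier lemmas. Away from the break points $\rho$ is a classical solution: $L_{1,\pm}$ solves $\rho-s\rho'+H_{1,\pm}(\rho')=0$, and $\bar R$ equals $r_1\alpha_-$ for $s\leqslant c_e$ and $r_1\alpha_+$ for $s>c_e$. For the lower equation, (FU) gives $s_{2,-}^r<c^*_{1,+}\leqslant s_r$. Hence on $\{\rho>0\}$ the function $\underline R$ coincides with $\bar R$ except possibly at the single point $c_e$, and on $\{\rho=0\}$ the supersolution inequality reduces to $\rho\geqslant0$. At the kinks we argue as before. At $s_r$, where $\rho$ has a convex kink, a test function touching from below has slope $\phi'\in[0,\rho'(s_r^+)]$, and the monotonicity of $c_{1,\pm}$ on $(0,\mu^*_{1,\pm}]$ gives the supersolution inequality. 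For the upper equation, where $\rho$ is a subsolution, only points with $\rho>0$ matter. There the kinks at $c_e$ and $H_1'(\nu_2)$ are either $C^1$ junctions (the Lagrangian is tangent to its supporting lines) or convex corners, and we invoke \cite[Chap.~II, Proposition 5.1]{bardi1997} exactly as in Lemma \ref{lem:right-B}.

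The main obstacle is the point $s=c_e$ in the case $c_e\geqslant c^*_{1,-}$, where the Hamiltonian jumps from $H_{1,-}$ to $H_{1,+}$. For the subsolution property of \eqref{eq:hj-tindep-up}, note that $\rho'(c_e^-)=L_{1,-}'(c_e)=L_1'(c_e)<\hat p_1=\rho'(c_e^+)$, so the corner is convex and superdifferentials are empty; the subsolution test is therefore vacuous at $c_e$. For the supersolution property of \eqref{eq:hj-tindep-low}, $\rho-\phi$ having a local minimum at $c_e$ forces $\phi'\in[L_1'(c_e),\hat p_1]$, and we use $\underline R^*(c_e)=r_1\alpha_-$. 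We then need $c_e p-H_{1,-}(p)\leqslant L_{1,-}(c_e)=\rho(c_e)$ on that interval, which holds by the definition of the Legendre transform. The endpoint case $c_e=c^*_{1,-}$ degenerates, since then the $L_{1,-}$ piece is empty; there we use the $\hat p_1$-line directly from $c^*_{1,-}$, which joins continuously because $L_{1,-}(c^*_{1,-})=0$.
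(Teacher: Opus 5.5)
\textbf{Verdict.} Your proof is correct, and it builds the same profiles as the paper. The only real difference is how you establish the viscosity properties.

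\textbf{Comparison with the paper.} Your three functions are exactly the $\lambda_1^r\to\infty$ truncations of \eqref{eq:right-Ab}, \eqref{eq:right-Bb} and \eqref{eq:right-Dc}. The paper writes out only the middle one, \eqref{eq:right-compact}, and calls the other cases similar; both arguments then conclude with Proposition \ref{prop:speed-r}.

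The paper re-checks nothing. It observes that \eqref{eq:right-Bb} converges locally uniformly to the new profile as $\lambda_1^r\to\infty$. That profile was already shown in Lemma \ref{lem:right-B} to be a subsolution of \eqref{eq:hj-tindep-up} and a supersolution of \eqref{eq:hj-tindep-low}, so the paper invokes the stability theorem \cite[Theorem 6.2]{barles2013}. Neither equation involves $\lambda_1^r$, and the two profiles coincide on any compact set once $H_1'(\lambda_1^r)$ exceeds its right endpoint. So the limit passage is a one-line inheritance from the finite-$\lambda_1^r$ lemmas.

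Your kink-by-kink verification is longer but self-contained. It shows exactly where the upper envelope $\underline R^*(c_e)=r_1\alpha_-$ and the Legendre inequality $c_ep-H_{1,-}(p)\leqslant L_{1,-}(c_e)$ are used. It also handles the endpoint $c_e=c^*_{1,-}$ explicitly, where your $\hat p_1$-line is the $\nu_2$-line of the middle case; the stability route leaves this implicit.

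\textbf{Two statements to correct.} Neither breaks the argument.

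First, on $\{\rho=0\}$ the supersolution condition for \eqref{eq:hj-tindep-low} does not reduce to $\rho\geqslant0$. By \eqref{eq:t-indep-vis-sup} the inequality must hold at every local minimum, so at interior points of $\{\rho=0\}$ you still need $H_1(0)+\underline R^*(s)\geqslant0$. This holds because $\underline R\geqslant r_1(\alpha_+-a\mathcal{V}_-)>0$ by (H2).

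Second, at $s_r=c_e$ in the middle case, monotonicity of $c_{1,\pm}$ on $(0,\mu^*_{1,\pm}]$ is not what closes the argument. The admissible slopes there fill $[0,\nu_2]$ with $\nu_2>\mu^*_{1,+}$, and $\nu_2$ may even exceed $\mu^*_{1,-}$ when $c_e$ is close to $c^*_{1,-}$. In fact $H_{1,+}(p)-c_ep<0$ for $p$ strictly between the two roots of $c_{1,+}(\mu)=c_e$, so the value $r_1\alpha_+$ would give the wrong sign. What works is $\underline R_1^*(c_e)=r_1\alpha_-$ together with $c_{1,-}(p)\geqslant c^*_{1,-}>c_e$ for every $p>0$. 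That is, you need the global minimality of $c^*_{1,-}$ from Lemma \ref{lem:hamilton}, exactly as in the proof of Lemma \ref{lem:right-B}.
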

\begin{proof}
	We only consider the case $c_{e}\in(c^{*}_{1,+},c^{*}_{1,-})$, and the other cases are similar.
Define
\begin{equation}\label{eq:right-compact}
	\rho(s)=
\begin{cases}
	L_{1,+}(s) & \text{for }s\in[H_{1}'(\nu_{2}),\infty),\\
	\max\left\{0,\nu_{2}s-H_{1,+}(\nu_{2})\right\}  & \text{for }s\in[0,H_{1}'(\nu_{2})).
\end{cases}
\end{equation}
Note that  as $\lambda_{1}^r\rightarrow\infty$,  \eqref{eq:right-Bb} converges to \eqref{eq:right-compact} in $C_{loc}(\mathbb{R}^{+})$. By the stability property of the viscosity solution as in \cite[Theorem 6.2]{barles2013},  $\rho$ defined by \eqref{eq:right-compact} is a viscosity subsolution of \eqref{eq:hj-tindep-up}, and a viscosity supersolution of \eqref{eq:hj-tindep-low} on $\mathbb{R}^{+}$ with $\underline{R}$ given by \eqref{eq:shift-under-1}. By the superlinearity of $L_{1,+}$, we obtain \eqref{eq:vis-boundary-condition}. The conclusion follows from Proposition \ref{prop:speed-r}.
\end{proof}

\subsection{Leftward spreading speed}\label{subsection-left-u}
In this subsection, we study the leftward spreading speed by constructing a series of viscosity super- and subsolutions.
We first establish some auxiliary results. For notational simplicity, we sometimes write $\tilde c_{e}=-c_{e}$.

First, for the case $c_{e}\leqslant -s_{1,-}^l$, we define auxiliary functions
\begin{center}
	$F_{l}(\tilde c_{e},p)=\tilde c_{e}p-H_{1,+}(p)-L_{1,-}(\tilde c_{e})$,\\
	$G_{l}(\tilde c_{e},\lambda_{1}^l,p)=\tilde c_{e} p-H_{1,+}(p)-\tilde c_{e}\lambda_{1}^l+H_{1,-}(\lambda_{1}^l)$,
\end{center}
and sets
\begin{center}
	$\overline{\mathcal{E}}_{1}:=\{\tilde c_{e}|\tilde c_{e}\geqslant c^{*}_{1,-}=H'_{1}(\mu^{*}_{1,-})\},$\\
	$\underline{\mathcal{E}}_{1}:=\{(\tilde c_{e},\lambda^{l}_{1})|\tilde c_{e}\geqslant c_{1,-}(\lambda^{l}_{1}) \text{~for~}\lambda^{l}_{1}\leqslant \mu^{*}_{1,-}\text{~or~}\tilde c_{e}\geqslant H_{1}'(\lambda^{l}_{1}) \text{~for~}\lambda^{l}_{1}>\mu^{*}_{1,-}\}$.
\end{center}

\begin{lemma}\label{lem:left-para-p}
	The following assertions hold.
	\begin{enumerate}
		\item[\rm(a)] In $\underline{\mathcal{E}}_{1}$, $G_{l}(\tilde c_{e},\lambda_{1}^l,p)=0$ admits the smallest root $\underline{p}_{1}=\underline{p}_{1}(\tilde c_{e},\lambda_{1}^l)\in(0,\lambda_{1}^l\land L'_{1}(\tilde c_{e}))$.
		\item[\rm(b)] In $\overline{\mathcal{E}}_{1}$, $F_{l}(\tilde c_{e},p)=0$ admits the smallest root $\bar{p}_{1}=\bar{p}_{1}(\tilde c_{e})\in(0,L'_{1}(\tilde c_{e}))$. Moreover, $\bar{p}_{1}$ is strictly increasing in $\tilde c_{e}$, and there exists a unique $\bar{c}_{1}>c^{*}_{1,-}$ such that $\bar{p}_{1}(\bar{c}_{1})=\mu^{*}_{1,+}$.
	\end{enumerate}
\end{lemma}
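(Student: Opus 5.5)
The approach is elementary convex analysis of the two auxiliary functions. Both are strictly concave in $p$, and by Lemma \ref{lem:hamilton} each attains its maximum at $p=L_1'(\tilde c_e)$, where $L_1'=(H_1')^{-1}$. Three facts are used throughout: $H_{1,\pm}$ differ only by the constant $r_1\alpha_\pm$, so $H_{1,-}-H_{1,+}\equiv r_1(\alpha_--\alpha_+)>0$ by (A); $L_{1,\pm}$ have the common derivative $L_1'$; and $H_1'$ is odd, so $\tilde c_e>0$ gives $L_1'(\tilde c_e)>0$. In both parts the plan is to evaluate the function at $p=0$ and at one further point, locate a sign change, and use concavity to get uniqueness and minimality.

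\emph{Part (a).} Fix $(\tilde c_e,\lambda_1^l)\in\underline{\mathcal E}_1$ and write $G(p)=G_l(\tilde c_e,\lambda_1^l,p)$.
\begin{itemize}
\item At $p=0$: $G(0)=-r_1\alpha_+-\bigl(\tilde c_e\lambda_1^l-H_{1,-}(\lambda_1^l)\bigr)<0$. The bracket is $\geqslant 0$ in both parts of $\underline{\mathcal E}_1$. For $\lambda_1^l\leqslant\mu^*_{1,-}$ this is just $\tilde c_e\geqslant c_{1,-}(\lambda_1^l)$. For $\lambda_1^l>\mu^*_{1,-}$ we have $\tilde c_e\geqslant H_1'(\lambda_1^l)>c_{1,-}(\lambda_1^l)$, by convexity and the monotonicity of $c_{1,-}$ in Lemma \ref{lem:hamilton}.
\item At $p=\lambda_1^l$: $G(\lambda_1^l)=r_1(\alpha_--\alpha_+)>0$.
\item At $p=L_1'(\tilde c_e)$: this is the maximum point, so $G(L_1'(\tilde c_e))\geqslant G(\lambda_1^l)>0$.
\end{itemize}
Since $G$ is strictly increasing on $(-\infty,L_1'(\tilde c_e))$, there is exactly one zero there, and it lies in $(0,\lambda_1^l\wedge L_1'(\tilde c_e))$. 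For $p\leqslant 0$ we have $G(p)\leqslant G(0)<0$, so this zero is the smallest root $\underline p_1$.

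\emph{Part (b), existence.} For $\tilde c_e\geqslant c^*_{1,-}$, \eqref{eq:lag-pm} gives $L_{1,-}(\tilde c_e)\geqslant0$, hence $F_l(\tilde c_e,0)=-r_1\alpha_+-L_{1,-}(\tilde c_e)<0$. At the maximum point, $F_l(\tilde c_e,L_1'(\tilde c_e))=L_{1,+}(\tilde c_e)-L_{1,-}(\tilde c_e)=r_1(\alpha_--\alpha_+)>0$. The same monotonicity argument as in (a) gives a unique smallest root $\bar p_1\in(0,L_1'(\tilde c_e))$.

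\emph{Part (b), monotonicity.} Apply the implicit function theorem to $F_l(\tilde c_e,\bar p_1)=0$:
\begin{itemize}
\item $\partial_pF_l=\tilde c_e-H_1'(\bar p_1)>0$, because $\bar p_1<L_1'(\tilde c_e)$;
\item $\partial_{\tilde c_e}F_l=\bar p_1-L_1'(\tilde c_e)<0$, using $L_{1,-}'=L_1'$.
\end{itemize}
Hence $\bar p_1$ is $C^1$ in $\tilde c_e$ and
\[
\frac{d\bar p_1}{d\tilde c_e}=\frac{L_1'(\tilde c_e)-\bar p_1}{\tilde c_e-H_1'(\bar p_1)}>0 .
\]

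\emph{Part (b), the value $\bar c_1$.} This needs the endpoint behaviour of $\bar p_1$, which I expect to be the main obstacle.
\begin{itemize}
\item At $\tilde c_e=c^*_{1,-}$: since $L_{1,-}(c^*_{1,-})=0$, we get $F_l(c^*_{1,-},p)=p\bigl(c^*_{1,-}-c_{1,+}(p)\bigr)$ for $p>0$. Because $c^*_{1,-}>c^*_{1,+}$, the smallest positive root is $\mu_{0,1}<\mu^*_{1,+}$, so $\bar p_1(c^*_{1,-})=\mu_{0,1}$.
\item As $\tilde c_e\to\infty$: $F_l(\tilde c_e,\mu^*_{1,+})=\tilde c_e\mu^*_{1,+}-H_{1,+}(\mu^*_{1,+})-L_{1,-}(\tilde c_e)$. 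This tends to $-\infty$ because $L_{1,-}$ is superlinear. Also $L_1'(\tilde c_e)\to\infty$, so eventually $\mu^*_{1,+}<L_1'(\tilde c_e)$. Then $\mu^*_{1,+}$ lies on the increasing branch with $F_l<0$, which forces $\bar p_1>\mu^*_{1,+}$.
\end{itemize}
Continuity and the intermediate value theorem then give some $\bar c_1$ with $\bar p_1(\bar c_1)=\mu^*_{1,+}$. Strict monotonicity gives uniqueness, and $\bar p_1(c^*_{1,-})=\mu_{0,1}<\mu^*_{1,+}$ gives $\bar c_1>c^*_{1,-}$.
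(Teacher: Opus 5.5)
Your proof is correct and follows the route the paper indicates. The paper omits the argument and says only that it is a modification of the proof of Lemma \ref{lem:well-p*}: locate a sign change of the strictly concave function in $p$ and use its monotonicity on either side of the maximizer $L_1'(\tilde c_e)$. That is exactly what you do in (a) and for existence in (b), and your implicit-function-theorem computation of $d\bar p_1/d\tilde c_e$ together with the endpoint analysis ($\bar p_1(c^*_{1,-})=\mu_{0,1}<\mu^*_{1,+}$, and $\bar p_1>\mu^*_{1,+}$ for large $\tilde c_e$ by superlinearity of $L_{1,-}$) correctly fills in the monotonicity and the existence and uniqueness of $\bar c_1$, which the paper leaves to the cited literature.
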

The proof of this lemma is a modification of that of Lemma \ref{lem:well-p*}, we omit the details and refer to \cite{lam2022a,tao-shift}. By the monotonicity of $\bar{p}_{1}$ in $\tilde c_{e}$, we have the following result, whose proof is similar to that of Lemma \ref{lem:p*-equiv}.

\begin{lemma}\label{lem:left-pp-equiv}
 When $\bar{p}_{1}$ or $\underline{p}_{1}$ is well-defined, the following equivalences hold.
	\begin{enumerate}
		\item[\rm(a)] $\bar{p}_{1}(\tilde c_{e})< (resp.\,>\,or\,=)~ \mu^{*}_{1,+} \Longleftrightarrow \tilde c_{e}<(resp.\,>\,or\,=)~\bar{c}_{1}$.
		\item[\rm(b)] $\underline{p}_{1}(\tilde c_{e},\lambda_{1}^l)< (resp.\,>\,or\,=)~\mu^{*}_{1,+} \Longleftrightarrow \tilde c_{e}<(resp.\,>\,or\,=)~g_{1}(\lambda_{1}^l)$, $\forall \lambda_{1}^l\in(\mu^{*}_{1,+},L'_{1}(\bar{c}_{1})]$.
	\end{enumerate}
\end{lemma}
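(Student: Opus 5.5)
Part (a) follows from the monotonicity in Lemma \ref{lem:left-para-p}(b). Part (b) I would prove by the same reduction to monotonicity of a concave function that was used in Lemma \ref{lem:p*-equiv}.

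\textbf{Part (a).} Lemma \ref{lem:left-para-p}(b) gives that $\bar p_{1}(\tilde c_{e})$ is strictly increasing on $\overline{\mathcal{E}}_{1}$. It also gives a unique $\bar c_{1}>c^{*}_{1,-}$ with $\bar p_{1}(\bar c_{1})=\mu^{*}_{1,+}$. Since a strictly increasing function is injective and order-preserving, $\bar p_{1}(\tilde c_{e})<\mu^{*}_{1,+}=\bar p_{1}(\bar c_{1})$ holds if and only if $\tilde c_{e}<\bar c_{1}$. The cases ``$>$'' and ``$=$'' follow in the same way.

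\textbf{Part (b).} Fix $\lambda_{1}^l\in(\mu^{*}_{1,+},L_{1}'(\bar c_{1})]$ and $\tilde c_{e}$ with $(\tilde c_{e},\lambda_{1}^l)\in\underline{\mathcal{E}}_{1}$, so that $\underline p_{1}$ is defined. Set $f_{+}(p)=\tilde c_{e}p-H_{1,+}(p)$. This function is strictly concave, strictly increasing on $(0,L_{1}'(\tilde c_{e})]$ and strictly decreasing afterwards.

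\emph{Step 1: both $\underline p_{1}$ and $\mu^{*}_{1,+}$ lie in the interval where $f_{+}$ is increasing.} Lemma \ref{lem:left-para-p}(a) gives $\underline p_{1}\in(0,L_{1}'(\tilde c_{e}))$. For $\mu^{*}_{1,+}$, note that membership in $\underline{\mathcal{E}}_{1}$ forces $\tilde c_{e}\geqslant c^{*}_{1,-}$ in either defining case, since $c_{1,-}(\lambda)\geqslant c^{*}_{1,-}$ and $H_{1}'(\lambda)>H_{1}'(\mu^{*}_{1,-})=c^{*}_{1,-}$ for $\lambda>\mu^{*}_{1,-}$. Under (A) we have $c^{*}_{1,-}>c^{*}_{1,+}=H_{1}'(\mu^{*}_{1,+})$, so $\tilde c_{e}>H_{1}'(\mu^{*}_{1,+})$. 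Because $L_{1}'=(H_{1}')^{-1}$ is strictly increasing, this gives $\mu^{*}_{1,+}<L_{1}'(\tilde c_{e})$. I expect this step to need the most care: it is the only place where the definition of $\underline{\mathcal{E}}_{1}$ and the ordering of the critical speeds enter.

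\emph{Step 2: chain of equivalences.} Since $\lambda_{1}^l>\mu^{*}_{1,+}$, multiplying by $\lambda_{1}^l-\mu^{*}_{1,+}>0$ and using the definition \eqref{g-lam} gives
\begin{align*}
\tilde c_{e}<g_{1}(\lambda_{1}^l)
&\Longleftrightarrow \tilde c_{e}\lambda_{1}^l-H_{1,-}(\lambda_{1}^l)<\tilde c_{e}\mu^{*}_{1,+}-H_{1,+}(\mu^{*}_{1,+})\\
&\Longleftrightarrow f_{+}(\underline p_{1})<f_{+}(\mu^{*}_{1,+})\\
&\Longleftrightarrow \underline p_{1}<\mu^{*}_{1,+}.
\end{align*}
The middle equivalence uses the defining relation $G_{l}(\tilde c_{e},\lambda_{1}^l,\underline p_{1})=0$, which states exactly that $f_{+}(\underline p_{1})=\tilde c_{e}\lambda_{1}^l-H_{1,-}(\lambda_{1}^l)$. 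The last equivalence uses the strict monotonicity of $f_{+}$ on $(0,L_{1}'(\tilde c_{e})]$ from Step 1.

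The cases ``$>$'' and ``$=$'' follow by the identical chain with the inequality reversed or replaced by equality. This completes the plan.
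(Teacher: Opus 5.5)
Your proposal is correct and takes the same approach as the paper: part (a) follows from the strict monotonicity of $\bar p_{1}$ and the definition of $\bar c_{1}$ in Lemma \ref{lem:left-para-p}(b), and part (b) is the same chain of equivalences as in Lemma \ref{lem:p*-equiv}, using the defining identity for $\underline p_{1}$ and the monotonicity of $\tilde c_{e}p-H_{1,+}(p)$ on $(0,L_{1}'(\tilde c_{e})]$. Your Step 1 shows $\mu^{*}_{1,+}<L_{1}'(\tilde c_{e})$ via $\tilde c_{e}\geqslant c^{*}_{1,-}>c^{*}_{1,+}=H_{1}'(\mu^{*}_{1,+})$; this is the analogue of the remark $\mu^{*}_{1,-}\leqslant L_{1}'(c_{e})$ in that proof, and it is handled correctly.
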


Based on  the division in Lemma \ref{lem:left-division}, the following four lemmas address the four cases in Theorem \ref{thm:u-speed-exp-left}, which constitute the proof of this theorem.
\begin{lemma}\label{lem:left-A}
	If $(\lambda_{1}^l,c_{e})\in V_{1,l}^{d}\cup \gamma_{1}^{q}\cup \gamma_{1}^{r}$, then $c_{u}^l=-c_{1,+}^{*}$.
\end{lemma}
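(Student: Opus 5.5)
The plan is to apply Corollary \ref{coro:speed-l}. I will build an explicit $\rho\in C((-\infty,0];[0,\infty))$ that is a viscosity subsolution of \eqref{eq:hj-tindep-up} and a viscosity supersolution of \eqref{eq:hj-tindep-low} on $\mathbb{R}^{-}$. It should satisfy \eqref{eq:vis-boundary-condition-left}, vanish exactly on $[-c^{*}_{1,+},0]$, and be positive on $(-\infty,-c^{*}_{1,+})$. Write $\tilde c_e=-c_e$.

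In $V_{1,l}^{d}\cup\gamma_1^q\cup\gamma_1^r$ we have $\tilde c_e\geqslant g_1(\lambda_1^l)\vee\bar c_1$ in the appropriate range. Since $\bar c_1>c^{*}_{1,-}>c^{*}_{1,+}$, this gives $c_e<-c^{*}_{1,+}$. Hence $c_e<-s_{2,-}^l$ by (FU), so $\underline R=\underline R_3$. On $(c_e,0]$ the bound $\bar R$ equals $r_1\alpha_+$, and on $(-\infty,c_e]$ it equals $r_1\alpha_-$. The profile will therefore use the Hamiltonian $H_{1,+}$ to the right of $c_e$ and $H_{1,-}$ to the left of $c_e$.

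\textbf{Case 1: $\lambda_1^l\in(\mu^{*}_{1,+},L'_1(\tilde c_e)]$ with $\tilde c_e\geqslant g_1(\lambda_1^l)$.} By Lemma \ref{lem:left-para-p}(a), the root $\underline p_1$ exists, and by Lemma \ref{lem:left-pp-equiv}(b) it satisfies $\underline p_1\geqslant\mu^{*}_{1,+}$. I set
\begin{equation*}
\rho(s)=\begin{cases}
-\lambda_1^l s-H_{1,-}(\lambda_1^l) & s\leqslant c_e,\\
-\underline p_1 s-H_{1,+}(\underline p_1) & c_e<s\leqslant -H_1'(\underline p_1),\\
L_{1,+}(s) & -H_1'(\underline p_1)<s\leqslant -c^{*}_{1,+},\\
0 & -c^{*}_{1,+}<s\leqslant 0.
\end{cases}
\end{equation*}
Continuity at $c_e$ is exactly the equation $G_l=0$ defining $\underline p_1$. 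The linear and $L_{1,+}$ pieces join tangentially because $L_{1,+}$ is the envelope of the lines $-ps-H_{1,+}(p)$. On $\gamma^q_1$ we have $\underline p_1=\mu^{*}_{1,+}$, so the middle $L_{1,+}$ segment degenerates.

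\textbf{Case 2: $\lambda_1^l\geqslant L'_1(\tilde c_e)$, or $\lambda_1^l>L_1'(\bar c_1)$ with $\tilde c_e\geqslant\bar c_1$.} Here I use the tangent envelope on the $\alpha_-$ side, patterned on \eqref{eq:right-Dc}:
\begin{equation*}
\rho(s)=\begin{cases}
-\lambda_1^l s-H_{1,-}(\lambda_1^l) & s\leqslant -H_1'(\lambda_1^l),\\
L_{1,-}(s) & -H_1'(\lambda_1^l)<s\leqslant c_e,\\
-\bar p_1 s-H_{1,+}(\bar p_1) & c_e<s\leqslant -H_1'(\bar p_1),\\
L_{1,+}(s) & -H_1'(\bar p_1)<s\leqslant -c^{*}_{1,+},\\
0 & -c^{*}_{1,+}<s\leqslant 0.
\end{cases}
\end{equation*}
Continuity at $c_e$ is $F_l=0$, and $\bar p_1\geqslant\mu^{*}_{1,+}$ follows from Lemma \ref{lem:left-pp-equiv}(a). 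The case $\lambda_1^l=\infty$ is obtained as the locally uniform limit of these profiles, using the stability result \cite[Theorem 6.2]{barles2013}, as in Lemma \ref{lem:right-compact}.

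\textbf{Verification.} Off the kinks, each piece is a classical solution of both equations. On $(-\infty,c_e)$ the relevant term is $r_1\alpha_-$ in both $\bar R$ and $\underline R_3$. On $(c_e,-c^{*}_{1,+})$ it is $r_1\alpha_+$, since $-c^{*}_{1,+}<-s_{2,-}^l$. This last inequality uses (FU) together with $s^l_{1,+}=c^{*}_{1,+}$ when $\lambda_1^l\geqslant\mu^{*}_{1,+}$, and it ensures that the predator-depleted zone $[-s_{2,-}^l,s_{2,-}^r]$ lies inside the zero set of $\rho$.

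At $-c^{*}_{1,+}$ the kink is convex, so the subsolution property is automatic. The supersolution inequality, with $\phi'\in[-\mu^{*}_{1,+},0]$, reduces to $c^{*}_{1,+}|\phi'|\leqslant H_{1,+}(|\phi'|)$, exactly as in Lemma \ref{lem:right-A}.

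The main obstacle is the kink at $c_e$. Its slope jumps from $-\lambda_1^l$ (Case 1) or $-L'_1(\tilde c_e)$ (Case 2) up to $-\underline p_1$ (resp. $-\bar p_1$). I must confirm that this jump is upward, i.e. $\underline p_1<\lambda_1^l$ (from Lemma \ref{lem:left-para-p}(a)) and $\bar p_1<L_1'(\tilde c_e)$. This makes the kink convex, so only supersolution tests arise there. For $\phi'(c_e)=-p$ with $p$ between the two one-sided slopes, the test uses $\underline R_3^{*}(c_e)=r_1\alpha_-$. The expression $\rho(c_e)-\tilde c_e p+H_{1,-}(p)$ is convex in $p$ and vanishes at the left endpoint. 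At the right endpoint it is nonnegative because $H_{1,-}>H_{1,+}$. Convexity alone does not give nonnegativity on the whole interval, so I will instead check that the map $p\mapsto\tilde c_e p-H_{1,-}(p)$ stays below $\rho(c_e)$ on that interval, using its monotonicity on $(0,L'_1(\tilde c_e))$. Subsolution tests at $c_e$, and at the tangential joins, are vacuous or classical, with the a.e. argument of \cite[Chap. II, Proposition 5.1]{bardi1997}. Corollary \ref{coro:speed-l} then gives $c^l_u=-c^{*}_{1,+}$.
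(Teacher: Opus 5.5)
Your proposal is correct and is essentially the paper's proof. It uses the same two profiles: the $\underline p_1$-profile when $c_e\leqslant -H_1'(\lambda_1^l)$, and the $L_{1,-}$/$\bar p_1$-profile when $-H_1'(\lambda_1^l)\leqslant c_e\leqslant -\bar c_1$. It makes the same check at the convex kink $c_e$, via monotonicity of $p\mapsto\tilde c_e p-H_{1,-}(p)$ on $(0,L_1'(\tilde c_e)]$ (resp.\ $L_{1,-}(\tilde c_e)$ being its supremum), the same check at $-c^{*}_{1,+}$, and concludes with Corollary \ref{coro:speed-l}.

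One small fix concerns the strip $\lambda_1^l>L_1'(\bar c_1)$, $\tilde c_e>H_1'(\lambda_1^l)$. It belongs in your Case 1; your Case 2 heading as written also covers it, but there the $L_{1,-}$ interval is empty. On that strip, $\underline p_1\geqslant\mu^{*}_{1,+}$ is not literally covered by Lemma \ref{lem:left-pp-equiv}(b), but it follows because $\tilde c_e\mapsto\tilde c_e(\lambda_1^l-\mu^{*}_{1,+})-H_{1,-}(\lambda_1^l)+H_{1,+}(\mu^{*}_{1,+})$ is increasing and positive at $\tilde c_e=H_1'(\lambda_1^l)>\bar c_1$.
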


\begin{proof}
Consider two cases:
\begin{itemize}
\item [(a)] $\lambda_{1}^l\in[\mu^{*}_{1,+},L'_{1}(\bar{c}_{1})]$ and $c_{e}\leqslant -g_{1}(\lambda_{1}^l)$, or $\lambda_{1}^l\in (L'_{1}(\bar{c}_{1}),\infty)$ and $c_{e}<-H_{1}(\lambda_{1}^l)$;
\item [(b)] $\lambda_{1}^l\in (L'_{1}(\bar{c}_{1}),\infty)$ and $-H_{1}(\lambda_{1}^l)\leqslant c_{e}\leqslant -\bar{c}_{1}$.
\end{itemize}

\textbf{Case (a).} By Lemmas \ref{lem:left-para-p}-(a) and \ref{lem:left-pp-equiv}-(b), we have $\underline{p}_{1} \geqslant \mu^{*}_{1,+}$. Define
\begin{equation}\label{eq:left-Aa}
	\rho(s)=
\begin{cases}
	-\lambda_{1}^l s-H_{1,-}(\lambda_{1}^l) & \text{for }s\in(-\infty,c_{e}], \\
	-\underline{p}_{1} s-H_{1,+}(\underline{p}_{1}) & \text{for }s\in(c_{e},-H_{1}'(\underline{p}_{1})], \\
	L_{1,+}(s) & \text{for }s\in(-H_{1}'(\underline{p}_{1}),-c^{*}_{1,+}], \\
	0& \text{for }s\in(-c^{*}_{1,+},0].
\end{cases}
\end{equation}
Then $\rho\in C^{1}(\mathbb{R}^{-}\backslash\{c_{e},-c^{*}_{1,+}\})$, which satisfies \eqref{eq:vis-boundary-condition-left} with $s_{l}=-c^{*}_{1,+}$, and it is a classical solution of \eqref{eq:hj-tindep-up} for $s\not\in\{c_{e},-c^{*}_{1,+}\}$. Hence,   $\rho$ is a viscosity subsolution of \eqref{eq:hj-tindep-up} on $\mathbb{R}^{-}$.

To show that $\rho$ is a viscosity supersolution of \eqref{eq:hj-tindep-low} on $\mathbb{R}^{-}$, note that $\underline{R}$ is given by \eqref{eq:shift-under-3} and $\rho$ is a classical solution of \eqref{eq:hj-tindep-low} for $s\not\in\{c_{e},-c^{*}_{1,+}\}$.
Let  $\phi\in C^{1}(\mathbb{R}^{-})$ be a test function. Next consider two cases.
If $\rho-\phi$ attains its local minimum at $c_{e}$, then $\phi'(c_{e})\in[-\lambda_{1}^l,-\underline{p}_{1}]$. Since $\underline{R}^{*}(c_{e})=r_{1}\alpha_{-}$, it holds at $c_{e}$ that
\begin{align*}
	\rho(c_{e})-c_{e}\phi'+H_{1}(\phi')+\underline{R}^{*}(c_{e})=\hat f(\lambda_{1}^l)-\hat f(-\phi')\geqslant 0,
\end{align*}
where the last inequality follows from the fact that $\hat f(\mu)=\tilde c_{e} \mu-H_{1,-}(\mu)$ is increasing for $\mu\in(0,L'_{1}(\tilde c_{e})]$.
If $\rho-\phi$ attains its local minimum at $-c^{*}_{1,+}$,  then $\phi'(c_{e})\in[-\mu^{*}_{1,+},0]$ and $\underline{R}^{*}(-c^{*}_{1,+})=r_{1}\alpha_{+}$. Thus at  $-c^{*}_{1,+}$:
\begin{align*}
	-c^{*}_{1,+}\phi'+H_{1}(\phi')+\underline{R}^{*}(-c^{*}_{1,+})=	 -c^{*}_{1,+}\phi'+H_{1,+}(\phi')\geqslant 0,
\end{align*}
by the symmetry of $H_{1,+}(\cdot)$ on $\mathbb{R}$ and the monotonicity of $c_{1,+}(\cdot)$ on $\mathbb{R}^{-}$.

\textbf{Case (b).} By Lemmas \ref{lem:left-para-p}-(b) and \ref{lem:left-pp-equiv}-(a), we have $\bar{p}_{1} \geqslant \mu^{*}_{1,+}$. Define
\begin{equation}\label{eq:left-Ab}
	\rho(s)=
\begin{cases}
	-\lambda_{1}^l s-H_{1,-}(\lambda_{1}^l) & \text{for }s\in(-\infty,-H_{1}'(\lambda_{1}^l)], \\
	L_{1,-}(s) & \text{for }s\in(-H_{1}'(\lambda_{1}^l),c_{e}], \\
	-\bar{p}_{1} s-H_{1,+}(\bar{p}_{1})  & \text{for }s\in(c_{e},-H_{1}'(\bar{p}_{1})], \\
	L_{1,+}(s) & \text{for }s\in(-H_{1}'(\bar{p}_{1}),-c^{*}_{1,+}], \\
	0& \text{for }s\in(-c^{*}_{1,+},0].
\end{cases}
\end{equation}
Then $\rho\in C^{1}(\mathbb{R}^{-}\backslash\{c_{e},-c^{*}_{1,+}\})$ satisfies \eqref{eq:vis-boundary-condition-left} with $s_{l}=-c^{*}_{1,+}$, and it is a  viscosity subsolution of \eqref{eq:hj-tindep-up} on $\mathbb{R}^{-}$ . To show that $\rho$ is a viscosity supersolution of \eqref{eq:hj-tindep-low} (with $\underline{R}$ given by \eqref{eq:shift-under-3}) on $\mathbb{R}^{-}$. We need to consider a point $s\in\{c_{e},-c^{*}_{1,+}\}$, we only verify that the point $s=c_{e}$ here, and the other one is similar. For any test function $\phi\in C^{1}(\mathbb{R}^{-})$ such that $\rho-\phi$ attains its local minimum at $c_{e}$, we have $\phi'(c_{e})\in[-L'_{1}(c_{e}),-\bar{p}_{1}]$. By the symmetry of $L_{1,-}$ on $\mathbb{R}$, we have
\begin{align*}
	 \rho(c_{e})-c_{e}\phi'+H_{1}(\phi')+\underline{R}^{*}(c_{e})=L_{1,-}(c_{e})-(c_{e}\phi'-H_{1,-}(\phi'))\geqslant 0.
\end{align*}

In both cases,  Corollary \ref{coro:speed-l} implies that $c_{u}^l=-c_{1,+}^{*}$.
\end{proof}

The following three lemmas address the remaining cases in Theorem \ref{thm:u-speed-exp-left}. We only provide the construction of viscosity solutions, as the proofs are similar to those in Lemma \ref{lem:left-A}.

\begin{lemma}\label{lem:left-B}
	If $(\lambda_{1}^l,c_{e})\in V_{1,l}^{a}\cup \gamma_{1}^{o}\cup \gamma_{1}^{p}$, then $c_{u}^l=-s_{1,-}^l$.
	\end{lemma}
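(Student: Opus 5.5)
Following the pattern of Lemma \ref{lem:left-A}, we construct an explicit continuous function $\rho$ on $(-\infty,0]$. It must satisfy the boundary condition \eqref{eq:vis-boundary-condition-left}, vanish exactly on $[-s_{1,-}^l,0]$, be a viscosity subsolution of \eqref{eq:hj-tindep-up}, and be a viscosity supersolution of \eqref{eq:hj-tindep-low} on $\mathbb{R}^{-}$. Corollary \ref{coro:speed-l} then gives $c_u^l=-s_{1,-}^l$. In the region $V_{1,l}^{a}\cup\gamma_1^o\cup\gamma_1^p$ we have $c_e\geqslant -s_{1,-}^l$. So, to the left of the front, $\rho$ only sees the state $\alpha_-$, and the natural candidate is the homogeneous profile for $H_{1,-}$. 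We split into two cases: (a) $\lambda_1^l\in(0,\mu^*_{1,-})$ and (b) $\lambda_1^l\in[\mu^*_{1,-},\infty)$.

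In case (a) set $\rho(s)=\max\{0,-\lambda_1^l s-H_{1,-}(\lambda_1^l)\}$, which vanishes exactly on $[-c_{1,-}(\lambda_1^l),0]$. In case (b) set
\begin{equation*}
\rho(s)=\begin{cases}
-\lambda_1^l s-H_{1,-}(\lambda_1^l) & s\in(-\infty,-H_1'(\lambda_1^l)],\\
L_{1,-}(s) & s\in(-H_1'(\lambda_1^l),-c^*_{1,-}],\\
0 & s\in(-c^*_{1,-},0].
\end{cases}
\end{equation*}
This function is $C^1$ away from $s_l=-s_{1,-}^l$, by the symmetry of $L_{1,-}$ and the Legendre relation in Lemma \ref{lem:hamilton}. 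Its slope at $-\infty$ is $\lambda_1^l$, so \eqref{eq:vis-boundary-condition-left} holds. For (I$^\infty$) one instead takes $\rho=L_{1,-}$ on $(-\infty,-c^*_{1,-}]$, either by the stability argument of Lemma \ref{lem:right-compact} or by direct checking. On the set where $\rho>0$, $\rho$ solves $\rho-s\rho'+H_{1,-}(\rho')=0$ classically.

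\emph{Subsolution of \eqref{eq:hj-tindep-up}.} Since $\bar R\leqslant r_1\alpha_-$ everywhere, $\rho$ is a classical subsolution off the kink. At the kink $s_l$, any smooth function touching $\rho$ from above forces a contradiction unless we use the convention that no condition is required where $\rho=0$. The subsolution condition is only imposed where $\rho>0$, so nothing needs checking there.

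\emph{Supersolution of \eqref{eq:hj-tindep-low}.} This is the substantive step. On $\{\rho>0\}=(-\infty,-s_{1,-}^l)$ we need $\underline R\geqslant r_1\alpha_-$ (up to the envelope) wherever $\rho$ solves the $\alpha_-$ equation. Note $-s_{1,-}^l<-s_{2,-}^l$ by (FU), since $s^l_{2,-}<s^l_{1,+}\leqslant s^l_{1,-}$. Hence, when $c_e\geqslant -s_{2,-}^l$ (so $\underline R$ is given by \eqref{eq:shift-under-1} or \eqref{eq:shift-under-2}), we have $\underline R=r_1\alpha_-$ on $(-\infty,-s_{2,-}^l)$, which contains $\{\rho>0\}$. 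When $-s_{1,-}^l\leqslant c_e<-s_{2,-}^l$, $\underline R_3=r_1\alpha_-$ on $(-\infty,c_e)\supset(-\infty,-s_{1,-}^l)$. Using $c_e\geqslant -s_{1,-}^l$ here is exactly what defines region $V_{1,l}^a$ and its boundary curves $\gamma_1^o,\gamma_1^p$. On $\{\rho=0\}$, where $\rho'=0$, the left-hand side equals $\underline R(s)\geqslant0$, so the supersolution inequality holds.

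The remaining check is at the kink $s_l$ (and at $s=c_e$ in the boundary case $c_e=-s_{1,-}^l$, where the upper envelope gives $\underline R^*=r_1\alpha_-$). There any $\phi$ with $\rho-\phi$ locally minimal has $\phi'(s_l)\in[-(\lambda_1^l\wedge\mu^*_{1,-}),0]$. We then need $-s_l\phi'+H_{1,-}(\phi')\geqslant0$. Writing $p=-\phi'\in[0,\lambda_1^l\wedge\mu^*_{1,-}]$ and using the symmetry of $H_1$, this becomes $s_{1,-}^l p\leqslant H_{1,-}(p)$, i.e. $c_{1,-}(p)\geqslant c_{1,-}(\lambda_1^l\wedge\mu^*_{1,-})$. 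This follows from the monotonicity of $c_{1,-}$ on $(0,\mu^*_{1,-}]$ in Lemma \ref{lem:hamilton}, and it holds trivially at $p=0$. I expect the main care to lie in matching the endpoint cases of $\underline R$ (in particular $c_e=-s_{1,-}^l$ on $\gamma_1^o\cup\gamma_1^p$) with the semicontinuous envelopes. Corollary \ref{coro:speed-l} then concludes.
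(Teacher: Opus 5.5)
Your proposal is correct and is essentially the paper's proof. It uses the same split at $\lambda_1^l=\mu^*_{1,-}$, the same two profiles $\rho$ (the truncated affine one and the one with an $L_{1,-}$ piece), and concludes via Corollary \ref{coro:speed-l}. Your checks are exactly the details the paper leaves to the reader: $\underline R^*=r_1\alpha_-$ on $(-\infty,-s_{1,-}^l]$ by (FU) and $c_e\geqslant -s_{1,-}^l$, and $c_{1,-}(p)\geqslant s_{1,-}^l$ for $p\in(0,\lambda_1^l\wedge\mu^*_{1,-}]$ at the kink. Your remark that test functions from above ``force a contradiction'' at the kink is unnecessary, since $\rho(s_l)=0$ and the kink is convex anyway.
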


	\begin{proof}[Sketch of proof]
		Consider two cases: (a) $\lambda_{1}^l\in(0,\mu^{*}_{1,-})$, and (b) $\lambda_{1}^l\in[\mu^{*}_{1,-},\infty)$.

		\textbf{Case (a).} Define
		\begin{align}\label{eq:left-Ba}
			 \rho(s)=\max\left\{0,-\lambda_{1}^ls-H_{1,-}(\lambda_{1}^l)\right\} \quad \text{for }s\in(-\infty,0].
		\end{align}

		\textbf{Case (b).} Define
		\begin{equation}\label{eq:left-Bb}
			\rho(s)=
		\begin{cases}
			-\lambda_{1}^ls-H_{1,-}(\lambda_{1}^l) & \text{for }s\in(-\infty,-H_{1}'(\lambda_{1}^l)],\\
			 L_{1,-}(s) & \text{for }s\in(-H_{1}'(\lambda_{1}^l),-c^{*}_{1,-}],\\
				0&\text{for }s\in(-c^{*}_{1,-},0].
		\end{cases}
		\end{equation}
By Corollary \ref{coro:speed-l}, we obtain   $c_{u}^l=-s_{1,-}^{l}$.
	\end{proof}

\begin{lemma}\label{lem:left-C}
	If $(\lambda_{1}^l,c_{e})\in V_{1,l}^{c}$, then $c_{u}^l=-c_{1,+}(\underline{p}_{1})$.
\end{lemma}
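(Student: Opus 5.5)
Following the scheme of Lemma \ref{lem:left-A}, I will exhibit a continuous $\rho$ on $(-\infty,0]$ that is a viscosity subsolution of \eqref{eq:hj-tindep-up} and a viscosity supersolution of \eqref{eq:hj-tindep-low} on $\mathbb{R}^{-}$. It will satisfy \eqref{eq:vis-boundary-condition-left} and vanish exactly on $[s_l,0]$ with $s_l=-c_{1,+}(\underline{p}_{1})$. Corollary \ref{coro:speed-l} then gives $c_u^l=-c_{1,+}(\underline{p}_1)$.

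In $V_{1,l}^{c}$ we have $(\tilde c_e,\lambda_1^l)\in\underline{\mathcal{E}}_1$, so Lemma \ref{lem:left-para-p}(a) provides $\underline{p}_1\in(0,\lambda_1^l\wedge L_1'(\tilde c_e))$. The region is defined by $c_e>-g_1(\lambda_1^l)$ when $\lambda_1^l>\mu^*_{1,+}$, and by $\lambda_1^l\le\mu^*_{1,+}$ otherwise. In either case Lemma \ref{lem:left-pp-equiv}(b) (or directly $\underline p_1<\lambda_1^l\le\mu^*_{1,+}$) yields $\underline{p}_1<\mu^*_{1,+}$. The candidate is
\begin{equation*}
\rho(s)=\begin{cases}
-\lambda_1^l s-H_{1,-}(\lambda_1^l), & s\in(-\infty,c_e],\\
\max\{0,-\underline{p}_1 s-H_{1,+}(\underline{p}_1)\}, & s\in(c_e,0].
\end{cases}
\end{equation*}
Continuity at $c_e$ is exactly the defining equation \eqref{eq:p-under-root}. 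The common value there is $\tilde c_e\lambda_1^l-H_{1,-}(\lambda_1^l)>0$, because $c_e<-c_{1,-}(\lambda_1^l)$ (or $c_e<-H_1'(\lambda_1^l)$, which is stronger). The second branch vanishes for $s\ge -H_{1,+}(\underline p_1)/\underline p_1=-c_{1,+}(\underline p_1)$. Since $\underline{p}_1<\mu^*_{1,+}$, one checks $c_e<-c_{1,+}(\underline p_1)<0$, so $\rho$ has a unique zero-level endpoint $s_l$ as required. Finally $\rho(s)/|s|\to\lambda_1^l$ as $s\to-\infty$.

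For the viscosity verification, each affine piece $-ps-H_{1,\pm}(p)$ is a classical solution of $\rho-s\rho'+H_1(\rho')+r_1\alpha_\pm=0$. To the left of $c_e$, both $\bar R$ and $\underline R$ equal $r_1\alpha_-$, since (FU) gives $c_e<-s^l_{1,-}<-s^l_{2,-}$ and so $\underline R=\underline R_3$. To the right of $c_e$ up to $s_l$, both equal $r_1\alpha_+$; this needs $s_l\le -s^l_{2,-}$, i.e. $c_{1,+}(\underline p_1)\ge s^l_{2,-}$, which follows from $c_{1,+}(\underline p_1)\ge c^*_{1,+}$ together with (FU). Where $\rho=0$, the subsolution requirement is void and the supersolution inequality holds because $R\ge0$. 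Hence only the kinks at $c_e$ and $s_l$ need checking. At $s_l$ the kink is convex, and it is handled exactly as at $-c^*_{1,+}$ in Lemma \ref{lem:left-A}: a test slope $\phi'\in[-\underline p_1,0]$ together with $c_{1,+}(\underline p_1)\le c_{1,+}(q)$ for $q\in(0,\underline p_1]$ (by the monotonicity in Lemma \ref{lem:hamilton}, since $\underline p_1<\mu^*_{1,+}$) gives the supersolution inequality; the subsolution condition is vacuous there since $\rho(s_l)=0$ and no smooth test function touches a convex corner from above. At $c_e$, since $-\lambda_1^l<-\underline p_1$, the kink is also convex (slope increases), so only the supersolution test is nontrivial. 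There $\phi'\in[-\lambda_1^l,-\underline p_1]$ and $\underline R^*(c_e)=r_1\alpha_-$, and the inequality reduces to $\hat f(\lambda_1^l)\ge\hat f(-\phi')$ with $\hat f(\mu)=\tilde c_e\mu-H_{1,-}(\mu)$.

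\textbf{Main obstacle.} The last step needs $\hat f$ nondecreasing on $[\underline p_1,\lambda_1^l]$, i.e. $\lambda_1^l\le L_1'(\tilde c_e)$, equivalently $\tilde c_e\ge H_1'(\lambda_1^l)$. This is precisely where the region constraint $c_e<-H_1'(\lambda_1^l)$ for $\lambda_1^l\ge\mu^*_{1,-}$ is used, and for $\lambda_1^l<\mu^*_{1,-}$ it follows from $\tilde c_e>c_{1,-}(\lambda_1^l)>H_1'(\lambda_1^l)$. The subsolution property of \eqref{eq:hj-tindep-up} at convex kinks follows as before from \cite[Chap. II, Proposition 5.1]{bardi1997}, because $\rho$ is an a.e. classical subsolution with $\bar R$ matching on each side.
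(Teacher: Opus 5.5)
Your construction is exactly the paper's: the affine branch $-\lambda_1^l s-H_{1,-}(\lambda_1^l)$ on $(-\infty,c_e]$ glued to $\max\{0,-\underline p_1 s-H_{1,+}(\underline p_1)\}$, concluded by Corollary \ref{coro:speed-l}, and your checks at the two convex kinks supply the verification the paper omits. One justification needs repair: when $\lambda_1^l<\mu^*_{1,+}$ we have $s^l_{1,+}=c_{1,+}(\lambda_1^l)>c^*_{1,+}$, so (FU) still allows $s^l_{2,-}>c^*_{1,+}$, and $c_{1,+}(\underline p_1)\ge c^*_{1,+}$ together with (FU) therefore does not yield $s_l\le -s^l_{2,-}$. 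The bound holds anyway: since $\underline p_1<\lambda_1^l\wedge\mu^*_{1,+}$ and $c_{1,+}$ is decreasing on $(0,\mu^*_{1,+}]$, you get $c_{1,+}(\underline p_1)>c_{1,+}(\lambda_1^l\wedge\mu^*_{1,+})=s^l_{1,+}>s^l_{2,-}$.
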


\begin{proof}[Sketch of proof]
By  Lemmas \ref{lem:left-para-p}-(a) and \ref{lem:left-pp-equiv}-(b), we have $\underline{p}_{1} <\mu^{*}_{1,+}$. Define
\begin{equation}\label{eq:left-C0}
	\rho(s)=
\begin{cases}
	-\lambda_{1}^l s-H_{1,-}(\lambda_{1}^l) & \text{for }s\in(-\infty,c_{e}], \\
	 \max\left\{0,-\underline{p}_{1} s-H_{1,+}(\underline{p}_{1})\right\}& \text{for }s\in(c_{e},0].
\end{cases}
\end{equation}
Corollary \ref{coro:speed-l} implies $c_{u}^l=-c_{1,+}(\underline{p}_{1})$.
\end{proof}

\begin{lemma}\label{lem:left-D}
	If $(\lambda_{1}^l,c_{e})\in V_{1,l}^{b}\cup \gamma_{1}^{s}$, then $c_{u}^l=-c_{1,+}(\bar{p}_{1})$.
\end{lemma}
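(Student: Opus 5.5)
The plan is to follow the pattern of Lemma \ref{lem:left-A}, case (b): build an explicit $\rho\in C((-\infty,0];[0,\infty))$ that is a viscosity subsolution of \eqref{eq:hj-tindep-up} and a viscosity supersolution of \eqref{eq:hj-tindep-low} on $\mathbb{R}^{-}$, then invoke Corollary \ref{coro:speed-l}. Throughout write $\tilde c_{e}=-c_{e}$. In $V_{1,l}^{b}\cup\gamma_{1}^{s}$ we have $\lambda_{1}^l\geqslant\mu^{*}_{1,-}$ and $c^{*}_{1,-}<\tilde c_{e}<\bar c_{1}$, with $\tilde c_{e}\leqslant H_{1}'(\lambda_{1}^l)$. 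By Lemma \ref{lem:left-para-p}-(b), $\bar p_{1}=\bar p_{1}(\tilde c_{e})\in(0,L_{1}'(\tilde c_{e}))$ is well defined, and by Lemma \ref{lem:left-pp-equiv}-(a), $\bar p_{1}<\mu^{*}_{1,+}$. Since $\tilde c_{e}>c^{*}_{1,-}\geqslant s_{1,-}^l$, the relevant $\underline R$ is $\underline R_{3}$ from \eqref{eq:shift-under-3}. I set
\begin{equation*}
\rho(s)=\begin{cases}
-\lambda_{1}^l s-H_{1,-}(\lambda_{1}^l) & s\in(-\infty,-H_{1}'(\lambda_{1}^l)],\\
L_{1,-}(s) & s\in(-H_{1}'(\lambda_{1}^l),c_{e}],\\
\max\{0,-\bar p_{1}s-H_{1,+}(\bar p_{1})\} & s\in(c_{e},0].
\end{cases}
\end{equation*}

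\textbf{Structural checks.} First, $\rho$ is continuous. At $-H_{1}'(\lambda_{1}^l)$ the gluing is $C^1$, by the Legendre identity in Lemma \ref{lem:hamilton} and the symmetry of $L_{1,-}$. At $c_{e}$, continuity is exactly the defining equation $F_{l}(\tilde c_{e},\bar p_{1})=0$. On $\gamma_{1}^{s}$ the middle piece degenerates, which is harmless.

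Second, $\rho(c_{e})=L_{1,-}(\tilde c_{e})>0$, since $\tilde c_{e}>c^{*}_{1,-}$ and \eqref{eq:lag-pm} applies. Hence the zero set of $\rho$ is exactly $[s_{l},0]$ with $s_{l}=-c_{1,+}(\bar p_{1})\in(c_{e},0)$. In particular $\rho(0)=0$, and $\rho(s)/|s|\to\lambda_{1}^l$ as $s\to-\infty$.

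Third, I must check $c_{1,+}(\bar p_{1})>s_{2,-}^l$, so that the linear piece on $(c_{e},s_{l})$ lies where $\underline R_{3}=r_{1}\alpha_{+}$. For this, note that $\mu^{*}_{1,+}<\mu^{*}_{1,-}\leqslant\lambda_{1}^l$: a larger constant $r_{1}\alpha$ pushes the minimizer of $c_{1,\pm}$ to the right. Therefore $s_{1,+}^l=c^{*}_{1,+}\leqslant c_{1,+}(\bar p_{1})$, and (FU) gives $s_{2,-}^l<s_{1,+}^l$.

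\textbf{Classical and kink verification.} Away from $\{c_{e},s_{l}\}$, $\rho$ is a classical solution of both equations. Left of $c_{e}$, both $\bar R$ and $\underline R_{3}$ equal $r_{1}\alpha_{-}$, and the pieces there are affine/Legendre solutions with $H_{1,-}$. On $(c_{e},s_{l})$, both equal $r_{1}\alpha_{+}$ and the piece solves the $H_{1,+}$ equation. On $(s_{l},0)$, $\rho\equiv0$, which is admissible because $H_{1}(0)+R\geqslant0$ there.

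At $s_{l}$ the kink is concave (slope $-\bar p_{1}$ on the left, slope $0$ on the right). Hence no test function touches $\rho$ from below, and the supersolution condition is vacuous there. The subsolution condition is also vacuous, since $\rho(s_{l})=0$.

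At $c_{e}$ the left slope $-L_{1}'(\tilde c_{e})$ is smaller than the right slope $-\bar p_{1}$, so the kink is convex. Therefore nothing can touch $\rho$ from above there, and the subsolution property of \eqref{eq:hj-tindep-up} holds. For the supersolution property of \eqref{eq:hj-tindep-low}, take $\phi'(c_{e})=q\in[-L_{1}'(\tilde c_{e}),-\bar p_{1}]$ and note $\underline R_{3}^{*}(c_{e})=r_{1}\alpha_{-}$. Then
\begin{equation*}
\rho(c_{e})-c_{e}q+H_{1}(q)+r_{1}\alpha_{-}=L_{1,-}(c_{e})-\bigl(c_{e}q-H_{1,-}(q)\bigr)\geqslant0
\end{equation*}
by the definition of $L_{1,-}$. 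To pass from these a.e./classical checks to viscosity sub/supersolution statements, I will use \cite[Chap. II, Proposition 5.1]{bardi1997}, as in Lemma \ref{lem:right-B}.

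\textbf{Conclusion and main obstacle.} Corollary \ref{coro:speed-l} then yields $c_{u}^l=-c_{1,+}(\bar p_{1})$. I expect the main obstacle to be the bookkeeping of where the zero $s_{l}$ sits relative to $-s_{2,-}^l$ and $c_{e}$, since this is what guarantees that the correct $\underline R$ value is used on each piece. The remaining risk is the sign convention at the convex kink $c_{e}$.
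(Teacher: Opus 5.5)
Your $\rho$ is exactly the function the paper uses. Your checks on the smooth pieces, at $-H_1'(\lambda_1^l)$, and at the convex kink $c_e$ are correct. The error is at $s_l=-c_{1,+}(\bar p_1)$.

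On $(c_e,0]$ your $\rho$ is the maximum of two affine functions, with left slope $-\bar p_1<0$ and right slope $0$ at $s_l$. By the same criterion you used at $c_e$, this kink is \emph{convex}, not concave. Consequently $C^1$ test functions do touch $\rho$ from below at $s_l$, with $q=\phi'(s_l)\in[-\bar p_1,0]$. The supersolution condition for \eqref{eq:hj-tindep-low} there is therefore not vacuous; only the subsolution condition is.

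It must be verified. Since $c_e<s_l<-s_{2,-}^l$, you have $\underline R_3^{*}(s_l)=r_1\alpha_+$. Writing $q=-\mu$ with $\mu\in[0,\bar p_1]$, symmetry of $H_{1,+}$ gives
\[
\rho(s_l)-s_lq+H_1(q)+\underline R_3^{*}(s_l)=H_{1,+}(\mu)-c_{1,+}(\bar p_1)\,\mu .
\]
This equals $r_1\alpha_+>0$ for $\mu=0$. For $0<\mu\leqslant\bar p_1$ it equals $\mu\bigl(c_{1,+}(\mu)-c_{1,+}(\bar p_1)\bigr)\geqslant 0$, because $\bar p_1\leqslant\mu^{*}_{1,+}$ and $c_{1,+}$ is decreasing on $(0,\mu^{*}_{1,+})$.

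This is the one place where $\bar p_1<\mu^{*}_{1,+}$ from Lemma \ref{lem:left-pp-equiv}-(a) is actually needed; you quote it but never use it. It is also exactly what fails when $\tilde c_e>\bar c_1$: then $\mu=\mu^{*}_{1,+}<\bar p_1$ makes the expression negative. That is why the paper switches to the construction \eqref{eq:left-Ab} in that regime. The step is easily repaired, but as written your argument declares the only nontrivial verification vacuous.

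Two smaller points.
\begin{enumerate}
\item The choice $\underline R=\underline R_3$ requires $c_e<-s_{2,-}^l$, i.e.\ $\tilde c_e>s_{2,-}^l$. This follows from your third check ($s_{2,-}^l<s_{1,+}^l=c^{*}_{1,+}<c^{*}_{1,-}<\tilde c_e$), not from $c^{*}_{1,-}\geqslant s_{1,-}^l$ alone.
\item $\gamma_1^s$ contains the endpoint $\lambda_1^l=\mu^{*}_{1,-}$, $\tilde c_e=c^{*}_{1,-}$. There $\bar p_1=\mu_{0,1}$, $\rho(c_e)=L_{1,-}(\tilde c_e)=0$ and $s_l=c_e$, so your strict claims $\rho(c_e)>0$ and $s_l\in(c_e,0)$ fail. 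That point needs a separate (easy) check of the kink at $c_e=s_l$.
\end{enumerate}
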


\begin{proof}[Sketch of proof]
By  Lemmas \ref{lem:left-para-p}-(b) and \ref{lem:left-pp-equiv}-(a), we have $\bar{p}_{1} <\mu^{*}_{1,+}$. Define
	\begin{equation}\label{eq:left-D0}
		\rho(s)=
	\begin{cases}
		-\lambda_{1}^l s-H_{1,-}(\lambda_{1}^l) & \text{for }s\in(-\infty,-H_{1}'(\lambda_{1}^l)], \\
		L_{1,-}(s) & \text{for }s\in(-H_{1}'(\lambda_{1}^l),c_{e}], \\
			\max\left\{0,-\bar{p}_{1} s-H_{1,+}(\bar{p}_{1})\right\}& \text{for }s\in(c_{e},0].
	\end{cases}
	\end{equation}
Corollary \ref{coro:speed-l} implies $c_{u}^l=-c_{1,+}(\bar{p}_{1})$.
\end{proof}

\begin{proof}[\bf{Proof of Theorem \ref{thm:u-speed-exp-left}}]
It follows directly from Lemmas \ref{lem:left-A}-\ref{lem:left-D}.
\end{proof}

Using an argument similar to the proof of Lemma \ref{lem:right-compact}, we have the following lemma.
\begin{lemma}\label{lem:left-compact}
When {\rm (I$^{\infty}$)} holds,   we have that
	\begin{equation}\label{eq:speed-left-compact}
		s_{u}^l=
	\begin{cases}
		-c^{*}_{1,+} & \text{if }c_{e}\in(-\infty,-\bar{c}_{1}],  \\
		-c_{1,+}(\bar{p}_{1}) & \text{if }c_{e}\in(-\bar{c}_{1},-c^{*}_{1,-}),  \\
		-c^{*}_{1,-} & \text{if }c_{e}\in[-c^{*}_{1,-},\infty).
	\end{cases}
	\end{equation}
\end{lemma}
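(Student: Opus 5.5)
We would follow the proof of Lemma \ref{lem:right-compact}: in each regime we exhibit an explicit $\rho\in C((-\infty,0];[0,\infty))$ with $\rho(0)=0$, $\lim_{s\to-\infty}\rho(s)/|s|=+\infty$, vanishing exactly on $[s_l,0]$. We then show it is a viscosity subsolution of \eqref{eq:hj-tindep-up} and a viscosity supersolution of \eqref{eq:hj-tindep-low} on $\mathbb{R}^{-}$, and conclude with Corollary \ref{coro:speed-l}. Under (I$^\infty$) the boundary condition \eqref{eq:vis-boundary-condition-left} holds with $\lambda_1^l=\infty$. The candidate for each regime is obtained by letting $\lambda_1^l\to\infty$ in the corresponding construction for (I$^\lambda$). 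In every such limit the linear piece $-\lambda_1^l s-H_{1,-}(\lambda_1^l)$ disappears, because its support $(-\infty,-H_1'(\lambda_1^l)]$ moves to $-\infty$.

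\emph{Case $c_e\geqslant -c^*_{1,-}$.} For large $\lambda_1^l$ we have $(\lambda_1^l,c_e)\in V^a_{1,l}\cup\gamma^p_1$, so we take the limit of \eqref{eq:left-Bb}: $\rho(s)=L_{1,-}(s)$ for $s\leqslant -c^*_{1,-}$ and $\rho=0$ on $(-c^*_{1,-},0]$. This gives $s_l=-c^*_{1,-}$.

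\emph{Case $c_e\in(-\bar c_1,-c^*_{1,-})$.} For $\lambda_1^l$ large (beyond $L_1'(\bar c_1)$ and with $-H_1'(\lambda_1^l)<c_e$) the point lies in $V^b_{1,l}$, and Lemma \ref{lem:left-pp-equiv}(a) gives $\bar p_1<\mu^*_{1,+}$. Letting $\lambda_1^l\to\infty$ in \eqref{eq:left-D0} yields $\rho=L_{1,-}$ on $(-\infty,c_e]$ and $\rho=\max\{0,-\bar p_1 s-H_{1,+}(\bar p_1)\}$ on $(c_e,0]$. This vanishes exactly for $s\geqslant -c_{1,+}(\bar p_1)$, which gives the stated speed; the form $-c_{1,-}(\bar p_1)$ in the theorem should be read as this value.

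\emph{Case $c_e\leqslant-\bar c_1$.} We take the limit of \eqref{eq:left-Ab}, case (b) of Lemma \ref{lem:left-A}, where Lemma \ref{lem:left-pp-equiv}(a) gives $\bar p_1\geqslant\mu^*_{1,+}$: $\rho=L_{1,-}$ on $(-\infty,c_e]$, $-\bar p_1 s-H_{1,+}(\bar p_1)$ on $(c_e,-H_1'(\bar p_1)]$, $L_{1,+}$ up to $-c^*_{1,+}$, and $0$ afterwards. This gives $s_l=-c^*_{1,+}$.

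For each candidate, the functions constructed for finite $\lambda_1^l$ carry the needed viscosity properties by Lemmas \ref{lem:left-A}, \ref{lem:left-B} and \ref{lem:left-D}. They converge in $C_{loc}((-\infty,0])$ to the limit $\rho$: on any compact set they coincide with it once $-H_1'(\lambda_1^l)$ is below that set. The stability theorem \cite[Theorem 6.2]{barles2013} then passes both properties to $\rho$. The growth $\rho(s)/|s|\to\infty$ follows from the superlinearity of $L_{1,-}$ in Lemma \ref{lem:hamilton}. We must also check that the relevant $\underline R$ (\eqref{eq:shift-under-2} or \eqref{eq:shift-under-3}, depending on $c_e$) is the one used in the approximating lemmas, i.e.\ that the kinks lie outside $[-s_{2,-}^l,s_{2,-}^r]$. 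This follows from (FU): $s_l\leqslant -s^l_{1,+}<-s^l_{2,-}$.

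The main obstacle is the kink at $c_e$ in the middle case, where $\rho$ switches from $L_{1,-}$ to the line of slope $-\bar p_1$. This is concave in the relevant sense: the left derivative is $-L_1'(c_e)$ in magnitude and the right derivative is $-\bar p_1$, with $\bar p_1<L_1'(-c_e)$. So subsolution test functions are absent there, but supersolution test functions have $\phi'\in[-L_1'(-c_e),-\bar p_1]$. For these we use the computation in Lemma \ref{lem:left-A}(b), namely $L_{1,-}(c_e)-(c_e\phi'-H_{1,-}(\phi'))\geqslant0$ with $\underline R^*(c_e)=r_1\alpha_-$. Continuity at $c_e$ comes from the defining equation \eqref{eq:p-bar-root}. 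If we prefer not to rely on stability, this direct check together with the classical verification on each smooth piece finishes the proof.
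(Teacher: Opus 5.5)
Your proposal is correct and follows the paper's route: it sends $\lambda_1^l\to\infty$ in \eqref{eq:left-Bb}, \eqref{eq:left-D0} and \eqref{eq:left-Ab}, which coincide with the limit on compacts, passes the viscosity properties to the limit by stability, concludes with Corollary \ref{coro:speed-l}, and correctly reads the middle-regime speed as $-c_{1,+}(\bar p_1)$. There are only two small slips, neither affecting the argument: the kink at $c_e$ is convex, not concave (the left slope $-L_1'(-c_e)$ lies below the right slope $-\bar p_1$), and that is exactly why only supersolution tests arise there; and for $c_e>s_{2,-}^r$ the relevant $\underline R$ is \eqref{eq:shift-under-1}, which behaves the same way on $\mathbb{R}^-$.
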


\begin{proof}[\bf{Proof of Theorem \ref{thm:u-speed-comp}}]
It follows from Lemmas \ref{lem:right-compact} and \ref{lem:left-compact}.
\end{proof}

\section{The upper bound for the spreading speed of predators}\label{sec:formula-max-predator}

In this section, we always assume that conditions (J), (A)  and (H1) hold. Using stability properties of viscosity solutions, we derive a lower limiting Hamilton-Jacobi equation, and then construct a series of viscosity subsolutions to investigate explicit formulas for the upper bound of predators' spreading speed.

For any $\delta>0$, consider an auxiliary problem:
\begin{equation}\label{eq:forced-aux}
	\psi_{t}=d_{1}(J_{1}\ast \psi-\psi)+r_{1}\psi[\alpha(x-c_{e}t)+\delta-\psi], \quad (x,t)\in\mathbb{R}\times\mathbb{R}^{+}.
\end{equation}
By \cite[Theorem 2.2]{qiao2022a}, equation \eqref{eq:forced-aux} admits a nonincreasing forced wave $\psi_{\delta}(x-c_{e}t)$ satisfying
\begin{align}\label{eq:limit-forced-wave}
	\psi_{\delta}(-\infty)=\alpha_{-}+\delta, \quad \psi_{\delta}(+\infty)=\alpha_{+}+\delta.
\end{align}
Since $u_{0}\leqslant \alpha_{-}$ on $\mathbb{R}$ and $u_{0}(x)\rightarrow 0$ as $x\rightarrow \pm \infty$, for any fixed $\delta>0$, there exists a sufficiently large constant $x_{\delta}>0$ such that $u_{0}(x)\leqslant \psi_{\delta}(x-x_{\delta})$ for $x\in \mathbb{R}$.
 Define
\begin{align}\label{eq:var-p}
	\varphi_{\delta}(x,t):=\psi_{\delta}(x-c_{e}t-x_{\delta}), \quad (x,t)\in \mathbb{R}\times[0,\infty).
\end{align}
It is straightforward to verify that $\varphi_{\delta}$ is a supersolution for the prey equation. By the comparison principle,  we obtain
\begin{align}\label{eq:u-com-forced}
	u(x,t)\leqslant \varphi_{\delta}(x,t), \quad (x,t)\in \mathbb{R}\times\mathbb{R}^{+}.
\end{align}
Now let $n_{\delta}$ be the solution of the following Cauchy problem
\begin{equation}\label{eq:v-sup}
\begin{cases}
	n_{t}=d_{2}(J_{2}\ast n-n)+r_{2}n\,[b\varphi_{\delta}-1-n], &(x,t)\in\mathbb{R}\times\mathbb{R}^{+},\\
	n(x,0)=v_{0}(x), & x\in \mathbb{R}.
\end{cases}
\end{equation}
Applying the comparison principle again yields
\begin{align}\label{eq:comparison-n-v}
	v(x,t)\leqslant n_{\delta}(x,t), \quad (x,t)\in \mathbb{R}\times\mathbb{R}^{+}.
\end{align}

\subsection{A lower limiting Hamilton-Jacobi equation}\label{subsec:v-low-hj}
For any $\varepsilon>0$,  define the hyperbolic scaling and Hopf-Cole transform  as follows:
\begin{align*}
	n_{\delta}^{\varepsilon}(x,t)=n_{\delta} \left(\frac{x}{\varepsilon},\frac{t}{\varepsilon}\right), \quad I^{{\delta},\varepsilon}(x,t)=-\varepsilon \ln n_{\delta}^{\varepsilon}(x,t), \quad (x,t)\in \mathbb{R}\times\mathbb{R}^{+}.
\end{align*}

\begin{proposition}\label{prop:local-bound-Ie}
	Assume that {\rm (I$^{\lambda}$)} holds for some $\lambda\in(0,\infty]$. Then $I^{\delta,\varepsilon}$ is locally uniformly bounded with respect to $\varepsilon$ on $\mathbb{R}\times\mathbb{R}^{+}$. In particular, its lower half-relaxed limit
	\begin{align*}
		I_*^{\delta}(x,t):=\liminf_{\substack{\varepsilon \rightarrow 0\\(y,s)\rightarrow(x,t)}}I^{\delta,\varepsilon}(y,s), \quad (x,t)\in \mathbb{R}\times\mathbb{R}^{+}
	\end{align*}
	is well-defined and constitutes a viscosity supersolution of the Hamilton-Jacobi equation
\begin{equation}\label{eq:HJ-I}
	 \min\left\{\partial_{t}I+H_{2}(\partial_{x}I)+R_{\delta}(x/t),I\right\}=0, \quad (x,t)\in \mathbb{R}\times\mathbb{R}^{+},
\end{equation}
where $H_{2}$ is given by \eqref{eq:hi}, and $R_{\delta}$ is defined by
\begin{equation}\label{eq:RD-usc}
	R_{\delta}(s)=
\left\{\begin{array}{ll}
	r_{2}[b(\alpha_{-}+\delta)-1] \quad \text{for }s\leqslant c_{e}, \\
	r_{2}[b(\alpha_{+}+\delta)-1] \quad \text{for }s> c_{e}.
\end{array}\right.
\end{equation}
Moreover, the following initial and boundary conditions hold in the classical sense:
\begin{itemize}
\item $I_{*}^{\delta}(0,t)=0$ for $t\in\mathbb{R}^{+}$,
\item if {\rm (I$^{\lambda}$)} holds for $\lambda<\infty$, then
\begin{align*}
	 I_{*}^{\delta}(x,0)=\left\{\begin{array}{ll}
		\lambda^{r}_{2}x &  \text{for~} x\in\mathbb{R}^{+}\cup\{0\},\\
	-\lambda^{l}_{2}x &  \text{for~}x\in\mathbb{R}^{-},
	\end{array}\right. 
\end{align*}
\item if {\rm (I$^{\infty}$)} holds, then
\begin{align*}
  I_{*}^{\delta}(x,0)=+\infty \quad \text{for all~}x\neq 0. 
\end{align*}
\end{itemize}
\end{proposition}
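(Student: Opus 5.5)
The plan is to follow the scalar template of Lemma \ref{lemma:local-bound}, Proposition \ref{prop:local-bound-u} and Proposition \ref{prop:lower-hj-control}. This works because $n_\delta$ solves a genuinely scalar KPP-type equation with a bounded, explicitly known coefficient $b\varphi_\delta-1$, so it enjoys a comparison principle. In particular $0< n_\delta\leqslant \max\{\|v_0\|_\infty, b(\alpha_-+\delta)-1\}=:N_\delta$, which makes the Hopf--Cole transform $I^{\delta,\varepsilon}$ well defined. A direct computation shows that $I^{\delta,\varepsilon}$ solves
\begin{align*}
I^{\delta,\varepsilon}_t+d_2\Big(\int_{\mathbb{R}}J_2(y)e^{-\frac{I^{\delta,\varepsilon}(x-\varepsilon y,t)-I^{\delta,\varepsilon}(x,t)}{\varepsilon}}dy-1\Big)+r_2\Big[b\psi_\delta\Big(\frac{x-c_et}{\varepsilon}-x_\delta\Big)-1-n^\varepsilon_\delta\Big]=0.
\end{align*}

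\textbf{Local bounds.} For (I$^\lambda$) with $\lambda<\infty$, I would take explicit barriers as in Lemma \ref{lemma:local-bound}. The supersolution is $\lambda_2^rx_++\lambda_2^lx_-+K(t+\varepsilon)$ and the subsolutions are $\pm\lambda_2^{r/l}x-K(t+\varepsilon)$, using $0\leqslant\psi_\delta\leqslant\alpha_-+\delta$; together with the bound $I^{\delta,\varepsilon}\geqslant-\varepsilon\ln N_\delta$, this gives two-sided linear bounds. For (I$^\infty$) only the upper bound near the origin is needed. That comes from a lower bound on $n_\delta$. Since $\varphi_\delta\geqslant\alpha_+$, (H1) lets me compare $n_\delta$ from below with the solution $\underline n$ of $\underline n_t=d_2(J_2*\underline n-\underline n)+r_2\underline n(\mathcal V_+-\underline n)$ with the same initial datum. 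The spreading theory of \cite{shen2010,xu2021} then gives $\liminf_{t\to\infty}\inf_{|x|\leqslant\eta t}n_\delta\geqslant\mathcal V_+>0$ for small $\eta$, and hence $\sup_{|x|\leqslant\eta t}I^{\delta,\varepsilon}\leqslant -\varepsilon\ln(\mathcal V_+/2)$. Combining this with the finite-speed bound, I would argue as in \cite[Theorem 4.5]{liang2020a} to get local boundedness on compacts.

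\textbf{Supersolution property.} The argument copies Proposition \ref{prop:lower-hj-control}. Take a strict local minimum $(x_0,t_0)$ of $I^\delta_*-\phi$ and extract minima $(x_n,t_n)$ of $I^{\delta,\varepsilon_n}-\phi$. Drop the term $-r_2n^\varepsilon_\delta\leqslant0$ to get the inequality $\partial_t\phi+d_2(\cdots)+r_2[b\psi_\delta(\cdot)-1]\geqslant0$. Bound the integral by its $\phi$-version, which converges to $H_2(\partial_x\phi)$ since $J_2$ has compact support. Finally use that $\psi_\delta$ is nonincreasing with limits \eqref{eq:limit-forced-wave}. If $x_0/t_0>c_e$, then $(x_n-c_et_n)/\varepsilon_n-x_\delta\to+\infty$, so $\limsup b\psi_\delta-1\leqslant b(\alpha_++\delta)-1$. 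In every case the limsup is at most $b(\alpha_-+\delta)-1$, which is $R_\delta^*(x_0/t_0)$ (note that $R_\delta$ is u.s.c., equal to the larger value on $s\leqslant c_e$). Nonnegativity $I^\delta_*\geqslant0$ follows from $I^{\delta,\varepsilon}\geqslant-\varepsilon\ln N_\delta$.

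\textbf{Boundary and initial data.} $I^\delta_*(0,t)=0$ follows from the lower bound on $I^{\delta,\varepsilon}$ together with the persistence estimate near $x=0$ above. The initial trace for $\lambda<\infty$ comes from letting $t\to0^+$ in the two-sided barrier bounds. For (I$^\infty$), I would compare $n_\delta$ from above with the solution of $\bar n_t=d_2(J_2*\bar n-\bar n)+r_2\bar n(b(\alpha_-+\delta)-1-\bar n)$ with initial datum $C e^{-\lambda|x|}$. As in \eqref{eq:w*-to-infinity}, this gives $I^\delta_*(x,0)\geqslant\lambda|x|$ for every $\lambda$, so $I^\delta_*(x,0)=+\infty$ for $x\neq0$.

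I expect the main obstacle to be the local boundedness in the (I$^\infty$) case, specifically the uniform upper bound on $I^{\delta,\varepsilon}$ away from the origin. That step needs the positive lower bound for $n_\delta$ obtained via (H1), propagated through the Liang--Zhou argument. The remaining steps are routine adaptations of the results already proved for the prey.
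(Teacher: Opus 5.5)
Your proposal is correct and takes essentially the same route as the paper. The paper's sketch notes only that $b\varphi_\delta-1>\mathcal{V}_+>0$ (by (H1) and the monotone forced wave) and then defers to Propositions \ref{prop:local-bound-u} and \ref{prop:lower-hj-control}; your write-up supplies those steps: barriers for the local bounds and initial traces, comparison with the $\mathcal{V}_+$-logistic equation for $I^\delta_*(0,t)=0$, the Liang--Zhou argument in the (I$^\infty$) case, and the test-function argument with the u.s.c. coefficient $R_\delta$.
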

\begin{proof}[Sketch of proof]
	Since $b\varphi_{\delta}-1>\mathcal{V}_{+}>0$ on $\mathbb{R}\times\mathbb{R}^{+}$, the proof follows arguments similar to those in Propositions \ref{prop:local-bound-u} and \ref{prop:lower-hj-control}, and therefore, we omit the details.
\end{proof}
\begin{remark}\label{remark:hj-unif-bound-d}
	\rm{
By the comparison principle, we can show that the locally uniform (in $\varepsilon$) bound of $I^{\delta,\varepsilon}$ is nonincreasing  with respect to $\delta$. Note that the proof of Proposition \ref{prop:local-bound-u} also implies that $I^{0,\varepsilon}$ remains locally uniformly bounded in $\varepsilon$ on $\mathbb{R}\times\mathbb{R}^{+}$. Therefore, the following lower half-relaxed limit
\begin{align}\label{eq:lsc-I}
	I_{*}^{0}(x,t):=\liminf_{\substack{\delta \rightarrow 0\\(y,s)\rightarrow(x,t)}}I^{\delta}_{*}(y,s), \quad (x,t)\in \mathbb{R}\times\mathbb{R}^{+}
\end{align}
is well-defined.
Moreover, since $I_{*}^\delta$ is a viscosity supersolution of \eqref{eq:HJ-I}, the stability property of viscosity solutions\cite[Theorem 6.2]{barles2013} implies that $I_{*}^{0}$ defined by \eqref{eq:lsc-I} is a viscosity supersolution of the  Hamilton-Jacobi equation
\begin{align}\label{eq:hj-I0}
	\min\left\{\partial_{t}I+H_{2}(\partial_{x}I)+R_{0}(x/t),I\right\}=0, \quad (x,t)\in \mathbb{R}\times\mathbb{R}^{+},
\end{align}
where
\begin{equation}\label{eq:R0-usc}
	R_{0}(s)=
	\begin{cases}
		r_{2}\mathcal{V} _{-} &\text{for }s\leqslant c_{e}, \\
	r_{2}\mathcal{V} _{+}& \text{for }s> c_{e}.
	\end{cases}
\end{equation}
In particular, the following initial and boundary conditions hold in the classical sense:
\begin{itemize}
\item $I_{*}^{0}(0,t)\geqslant 0$ for all $t\in\mathbb{R}^{+}$,
\item if {\rm (I$^{\lambda}$)} holds for $\lambda<\infty$, then
\begin{align*}
 I_{*}^{0}(x,0)\geqslant \left\{\begin{array}{ll}
		\lambda^{r}_{2}x &  \text{for~} x\in\mathbb{R}^{+}\cup\{0\},\\
	-\lambda^{l}_{2}x &  \text{for~}x\in\mathbb{R}^{-},
	\end{array}\right.
\end{align*}
\item if {\rm (I$^{\infty}$)} holds, then
\begin{align*}
	 I_{*}^{0}(x,0)=+\infty \quad \text{for all~}x\neq 0.
\end{align*}
\end{itemize}
	}
\end{remark}

\begin{corollary}\label{coro:I-low-hj-control}
	Assume that {\rm (I$^{\lambda}$)} holds for some $\lambda\in(0,\infty]$. Then the function $\rho_{*}^{0}(x/t):=I_{*}^{0}\left({x}/{t},1\right)$ is a viscosity supersolution of the Hamilton-Jacobi equation
	\begin{equation}\label{eq:I-hj-tindep-low}
		\min\left\{\rho-s\rho'+H_{2}(\rho')+R_{0}(s),\rho\right\}=0, \quad  s\in \mathbb{R}^{+}~(or\, \mathbb{R}^{-}),
	\end{equation}
where $H_{2}$ and $R_{0}$ are given by \eqref{eq:hi} and \eqref{eq:R0-usc}, respectively. In particular, the following inequalities hold in the classical sense:
	\begin{align}\label{eq:hj-tindep-low-boundary-I}
		\rho_{*}^{0}(0)\geqslant 0, \quad \text{and} \quad \underset{s\rightarrow+\infty}\lim\, \rho_{*}^{0}(s)/s\geqslant \lambda_{2}^r~(or\, \underset{s\rightarrow-\infty}\lim\, \rho_{*}^{0}(s)/|s|\geqslant \lambda_{2}^l).
	\end{align}
\end{corollary}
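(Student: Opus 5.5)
The argument follows the proof of Lemma \ref{lem:low-hj-control}, with $w_{*}$ replaced by $I_{*}^{0}$ and Proposition \ref{prop:lower-hj-control} replaced by the supersolution property of $I_{*}^{0}$ from Remark \ref{remark:hj-unif-bound-d}. There are three steps: show that $I_{*}^{0}$ is $1$-homogeneous, transfer the equation to the self-similar variable, and recover the boundary behaviour from the initial trace of $I_{*}^{0}$.

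\textbf{Homogeneity.} For fixed $\delta>0$ and $\varepsilon$, the hyperbolic scaling gives
\[
I^{\delta,\varepsilon}(x,t)=t\,I^{\delta,\varepsilon/t}(x/t,1).
\]
Since $\varepsilon\to0$ if and only if $\varepsilon/t\to0$, taking the lower half-relaxed limit exactly as in Lemma \ref{lem:low-hj-control} yields $I_{*}^{\delta}(x,t)=tI_{*}^{\delta}(x/t,1)$. The same change of variables applies to the second relaxed limit \eqref{eq:lsc-I} in $\delta$: the map $(y,s)\mapsto (y/s,1)$ is a homeomorphism near $(x,t)$ with $t>0$, and the factor $s$ tends to $t>0$. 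Hence $I_{*}^{0}(x,t)=tI_{*}^{0}(x/t,1)=t\rho_{*}^{0}(x/t)$. In particular $\rho^{0}_{*}$ is l.s.c. and nonnegative.

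\textbf{Transfer of the equation.} Remark \ref{remark:hj-unif-bound-d} states that $I_{*}^{0}$ is a viscosity supersolution of \eqref{eq:hj-I0} on $\mathbb{R}\times\mathbb{R}^{+}$. Note that $R_{0}$ is u.s.c., so $R_0^*=R_0$ appears in the definition. Lemma \ref{lem:samehj}, with $H_{2}$ in place of $H_{1}$ and $R=R_{0}$, converts the time-dependent equation into \eqref{eq:I-hj-tindep-low}. Lemma \ref{lem:samehj} is stated for a general $R$ and $H_{i}$, so it applies as is. This gives that $\rho_{*}^{0}$ is a viscosity supersolution on $\mathbb{R}^{\pm}$.

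\textbf{Boundary behaviour.} First, $\rho_{*}^{0}(0)=I_{*}^{0}(0,1)\geqslant0$ by Remark \ref{remark:hj-unif-bound-d}. For the slope at $+\infty$, write $\rho^0_*(s)/s=I^0_*(1,1/s)$ using homogeneity. Lower semicontinuity of $I_{*}^{0}$ up to $t=0$ then gives
\[
\liminf_{s\to+\infty}\rho_{*}^{0}(s)/s\geqslant I_{*}^{0}(1,0)\geqslant\lambda_{2}^{r}.
\]
If (I$^{\infty}$) holds, the right-hand side is $+\infty$. The case $s\to-\infty$ is symmetric, using $I^0_*(-1,0)\geqslant\lambda_2^l$.

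\textbf{Main obstacle.} The delicate point is the last step. It needs the initial trace of $I_{*}^{0}$ at $t=0$ to be understood as a liminf over $(y,s)\to(x,0)$ with $s>0$, so that l.s.c. extends to the closed half-space. I would obtain this directly from the uniform-in-$\delta$ lower bound of Lemma \ref{lemma:local-bound} type,
\[
I^{\delta,\varepsilon}\geqslant \lambda_2^r x_+ + \lambda_2^l x_- - K(t+\varepsilon),
\]
where $K$ can be taken independent of $\delta\in(0,1]$ by monotonicity in $\delta$ (Remark \ref{remark:hj-unif-bound-d}). Passing to both relaxed limits gives $I^0_*(x,t)\geqslant \lambda_2^r x_+ + \lambda_2^l x_- - Kt$. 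Dividing by $t$ at $x=t s$ then yields
\[
\rho^0_*(s)\geqslant \lambda_2^r s-K \quad\text{for } s>0,
\]
which is the claimed slope bound directly, and similarly for $s<0$. For (I$^{\infty}$), the same bound holds for every $\lambda$ with $K=K(\lambda)$, as in Proposition \ref{prop:local-bound-u}, so the slope is $+\infty$.
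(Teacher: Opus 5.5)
Your proposal is correct and is essentially the paper's argument, which repeats the proof of Lemma \ref{lem:low-hj-control}: $1$-homogeneity of the relaxed limit, then Lemma \ref{lem:samehj} applied to the supersolution property of $I_*^0$ from Remark \ref{remark:hj-unif-bound-d}, then lower semicontinuity at $t=0$ for the slope. For the homogeneity step, the homeomorphism to use is $(y,s)\mapsto(y/t,s/t)$ rather than $(y,s)\mapsto(y/s,1)$; your barrier argument $\rho_*^0(s)\geqslant\lambda_2^r s-K$ is a sound way to avoid the $t=0$ trace issue, and the $\delta$-uniformity of $K$ follows most directly from $b\varphi_\delta-1\leqslant b(\alpha_-+1)-1$ for $\delta\in(0,1]$ rather than from the monotonicity in the Remark.
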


The proof of this corollary follows similarly to that  of  Lemma \ref{lem:low-hj-control}, and we therefore omit it.

\subsection{Determinacy of the upper bound for the spreading speed}
\begin{proposition}\label{prop:max-speed-v}
	Assume that {\rm (I$^{\lambda}$)} holds for some $\lambda\in(0,\infty]$. We have the  following assertions.
	\begin{enumerate}
		\item[\rm(a)] 	Let  $\rho\in C([0,\infty);[0,\infty))$ satisfy  the following boundary conditions
		\begin{align}\label{eq:vis-boundary-condition-Ir}
			\rho(0)=0, \quad  \underset{s\rightarrow+\infty}\lim\, \rho(s)/s\leqslant \lambda_{2}^r\in(0,\infty].
		\end{align}
If $\rho$ is a viscosity subsolution of \eqref{eq:I-hj-tindep-low} on $\mathbb{R}^{+}$  and there exists a unique $s_{r}>0$ such that
		\begin{align}\label{eq:r>0}
			\rho(s)=0 \quad \text{for }s\in[0,s_{r}],\quad \text{and} \quad  \rho(s)>0 \quad \text{for }s\in(s_{r},\infty),
		\end{align}
		then $c_{v}^r\leqslant s_{r}$.
		\item[\rm(b)] Let $\rho\in C((-\infty,0];[0,\infty))$  satisfy  the following boundary conditions
		\begin{align}\label{eq:vis-boundary-condition-Il}
			\rho(0)=0, \quad  \underset{s\rightarrow+\infty}\lim\, \rho(s)/|s|\leqslant \lambda_{2}^l\in(0,\infty].
		\end{align}
If $\rho$ is a viscosity subsolution of \eqref{eq:I-hj-tindep-low} on $\mathbb{R}^{-}$  and there exists a unique $s_{l}<0$ such that
		\begin{align*}
			\rho(s)=0 \quad \text{for }s\in[s_{l},0],\quad \text{and} \quad  \rho(s)>0 \quad \text{for }s\in(-\infty,s_{l}),
		\end{align*}
		then $c_{v}^l\geqslant s_{l}$.
	\end{enumerate}
\end{proposition}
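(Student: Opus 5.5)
The plan mirrors the first half of the proof of Proposition \ref{prop:speed-r}, with $\rho_{*}^{0}$ from Corollary \ref{coro:I-low-hj-control} in the role of $\rho_{*}$ and the comparison $v\leqslant n_{\delta}$ from \eqref{eq:comparison-n-v} in place of the direct estimate on $u$. We treat part (a); part (b) is symmetric.

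\textbf{Step 1 (comparison).} $\rho$ is a viscosity subsolution of \eqref{eq:I-hj-tindep-low} on $\mathbb{R}^{+}$, and $\rho_{*}^{0}$ is a viscosity supersolution of the same equation. Their boundary data are compatible: $\rho(0)=0\leqslant\rho_{*}^{0}(0)$ and
\begin{align*}
\lim_{s\to+\infty}\rho(s)/s\leqslant\lambda_{2}^{r}\leqslant\liminf_{s\to+\infty}\rho_{*}^{0}(s)/s,
\end{align*}
by \eqref{eq:vis-boundary-condition-Ir} and \eqref{eq:hj-tindep-low-boundary-I}. Since $R_{0}$ in \eqref{eq:R0-usc} is among the admissible data of Lemma \ref{lem:viscosity-comparison-t-indep}, that lemma gives $\rho\leqslant\rho_{*}^{0}$ on $\mathbb{R}^{+}$. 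Hence $\rho_{*}^{0}(s)>0$ for every $s>s_{r}$.

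\textbf{Step 2 (positivity passes to $I^{\delta,\varepsilon}$).} The function $\rho_{*}^{0}$ is $1$-homogeneous in the sense $I_{*}^{0}(x,t)=t\rho_{*}^{0}(x/t)$, by the same scaling computation as in Lemma \ref{lem:low-hj-control}. Fix $(x_{0},t_{0})$ with $x_{0}/t_{0}>s_{r}$. Then $I_{*}^{0}(x_{0},t_{0})>\kappa$ for some $\kappa>0$. By definition \eqref{eq:lsc-I}, there are $\delta_{0}>0$ and a neighborhood $U$ of $(x_{0},t_{0})$ with $I_{*}^{\delta}>3\kappa/4$ on $U$ for all $\delta<\delta_{0}$. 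Fix one such $\delta$. By the definition of $I_{*}^{\delta}$ as a half-relaxed limit, $I^{\delta,\varepsilon}>\kappa/2$ on a smaller neighborhood for all small $\varepsilon$. Compactness upgrades this to uniformity on compact subsets of $\{x/t>s_{r}\}$; in particular it holds on $\{(y,1):s_{r}+\eta_{1}\leqslant y\leqslant s_{r}+\eta_{2}\}$.

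\textbf{Step 3 (back to $v$).} Inverting the Hopf--Cole transform and using \eqref{eq:comparison-n-v},
\begin{align*}
v(x/\varepsilon,1/\varepsilon)\leqslant n_{\delta}^{\varepsilon}(x,1)\leqslant e^{-\kappa/(2\varepsilon)}
\end{align*}
uniformly for $x\in[s_{r}+\eta_{1},s_{r}+\eta_{2}]$. Setting $t=1/\varepsilon$, this says $\sup_{(s_{r}+\eta_{1})t\leqslant x\leqslant(s_{r}+\eta_{2})t}v(x,t)\to0$ for every $\eta_{2}>\eta_{1}>0$.

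To cover the whole half-line $x\geqslant(s_{r}+\eta)t$, we need a far-field estimate. Because $v\leqslant n_{\delta}$ and $b\varphi_{\delta}-1\leqslant b(\alpha_{-}+\delta)-1$, $n_{\delta}$ is dominated by the solution of the homogeneous KPP equation with growth rate $r_{2}(b(\alpha_{-}+\delta)-1)$. By the classical spreading theory \cite{shen2010,xu2021}, that solution decays uniformly for $x\geqslant(\bar s+\eta)t$, where $\bar s$ is its spreading speed. Choosing $\eta_{2}$ so that $s_{r}+\eta_{2}>\bar s$ and combining with the previous display gives $c_{v}^{r}\leqslant s_{r}$.

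\textbf{Main obstacle.} The delicate step is Step 2: the double limit ($\varepsilon\to0$, then $\delta\to0$) in $I_{*}^{0}$ must be unwound while fixing a single $\delta$, with local uniformity preserved. The comparison in Step 1 also needs care, since Lemma \ref{lem:viscosity-comparison-t-indep} is stated for $\rho_{*}^{0}(0)\geqslant0$ rather than an exact boundary value; this is harmless because $\rho(0)=0$. If the limit interchange in Step 2 proves troublesome, a fallback is to run Steps 1--2 directly with $\rho_{*}^{\delta}(s)=I_{*}^{\delta}(s,1)$ and $R_{\delta}$, using a $\delta$-perturbed version of $\rho$. We would rather avoid this, because it requires perturbing the given subsolution.
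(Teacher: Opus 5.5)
Your proposal is correct and follows essentially the same route as the paper: you compare $\rho\leqslant\rho_{*}^{0}$ via Lemma \ref{lem:viscosity-comparison-t-indep}, turn positivity of $\rho_{*}^{0}$ on $\{s>s_{r}\}$ into exponential smallness of $n_{\delta}^{\varepsilon}\geqslant v^{\varepsilon}$, and close the argument with a far-field KPP comparison. The differences are cosmetic. You unwind the $\delta\to0$ half-relaxed limit explicitly, whereas the paper passes $\delta\to0$ directly from $\{\rho_{*}^{\delta}>0\}$ to $\{\rho_{*}^{0}>0\}$. For the far field you use $n_{\delta}$ with growth rate $r_{2}(b(\alpha_{-}+\delta)-1)$, whereas the paper compares $v$ directly with the $\mathcal{V}_{-}$-equation to obtain $s_{2,-}^{r}$.
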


\begin{proof}[Sketch of proof]
We only give  a sketch of the proof of assertion (a), since (b) is similar.

	By Lemma \ref{lem:samehj} and a slight modification of the proof of Proposition \ref{prop:speed-r}, we have
	\begin{align*}
		 \underset{\varepsilon\rightarrow0}\limsup\,v^{\varepsilon}\leqslant \underset{\varepsilon\rightarrow0}\limsup\,n_{\delta}^{\varepsilon}=0 \quad  \text{locally uniformly in }\{(x,t)\in\mathbb{R}^{+}\times\mathbb{R}^{+}|\rho_{*}^\delta(x/t)>0\},
	\end{align*}
	where $v^{\varepsilon}$ is the hyperbolic scaling of $v$.
Since $\delta>0$ is arbitrary and $v$ is nonnegative, taking $\delta\rightarrow 0^{+}$, we obtain
\begin{align}\label{eq:ve-to-0}
	\underset{\varepsilon\rightarrow0}\limsup\,v^{\varepsilon}=0 \quad  \text{locally uniformly in }\{(x,t)\in\mathbb{R}^{+}\times\mathbb{R}^{+}|\rho_{*}^0(x/t)>0\},
\end{align}
where $\rho_{*}^0$ is a viscosity supersolution of \eqref{eq:I-hj-tindep-low}. Note that $\rho_{*}^{0}$ satisfies the boundary conditions \eqref{eq:hj-tindep-low-boundary-I}, and $\rho$  satisfies  \eqref{eq:vis-boundary-condition-Ir}, which is a viscosity subsolution of \eqref{eq:I-hj-tindep-low}. By the comparison principle (Lemma \ref{lem:viscosity-comparison-t-indep}) for \eqref{eq:I-hj-tindep-low}, it holds that $\rho \leqslant \rho_{*}^{0}$ on $\mathbb{R}^{+}$. Hence, \eqref{eq:r>0} implies
\begin{align}\label{eq:subset-rh-0}
	 \{(x,t)\in\mathbb{R}^{+}\times\mathbb{R}^{+}|\rho_{*}^0(x/t)>0\}\supset  \{(x,t)\in\mathbb{R}^{+}\times\mathbb{R}^{+}|x/t>s_{r}\}.
\end{align}
Combining \eqref{eq:ve-to-0} and \eqref{eq:subset-rh-0},  for any given $\eta_{2}>\eta_{1}>0$, we have
\begin{align}\label{eq:v-speed-dete-1}
		 \underset{t\rightarrow\infty}\lim\,\underset{(s_{r}+\eta_{1})t\leqslant x\leqslant (s_{r}+\eta_{2})t }\sup\,v(x,t)=\underset{\varepsilon\rightarrow0}\lim\, \underset{s_{r}+\eta_{1}\leqslant y\leqslant s_{r}+\eta_{2}}\sup\,v^{\varepsilon}(y,1)=0.
\end{align}
Meanwhile, the comparison principle also implies that
\begin{align}\label{eq:v-speed-dete-2}
	\underset{t\rightarrow\infty}\lim\,\underset{x\geqslant (s_{2,-}^{r}+\eta)t}\sup\,v(x,t)=0 \quad \text{for all }\eta>0,
\end{align}
where $s_{2,-}^{r}$ is given by \eqref{eq:speed-homo}. Thus, \eqref{eq:v-speed-dete-1} together with \eqref{eq:v-speed-dete-2} yields
\begin{align*}
	\underset{t\rightarrow\infty}\lim\,\underset{ x\geqslant (s_{r}+\eta)t }\sup\,v(x,t)=0 \quad \text{for all }\eta>0.
\end{align*}
The proof is complete.
\end{proof}

\subsection{Upper bound of the rightward spreading speed}\label{subsec:v-max-right}
In this subsection, we study the upper bound for the rightward spreading speed of predators by constructing  viscosity subsolutions for \eqref{eq:I-hj-tindep-low} and applying Proposition \ref{prop:max-speed-v}.  We only  provide the form of viscosity subsolutions and omit the rigorous verifications, which are similar to those in Subsection \ref{subsection-Rspeed}.

By the same argument as in Subsection \ref{subsection-Rspeed},  for any $ c_{e}\geqslant c^{*}_{2,-}$, the equation in $p$ as follows
\begin{align}\label{eq:check-hat-p-root-right-vequ}
	c_{e}p-H_{2,+}(p)=L_{2,-}(c_{e})
\end{align}
must have two real roots $\check p_{2}<L'_{2}(c_{e})<\hat p_{2}$. In particular, in the region $\hat{\mathcal{R}}_{2}\cap\{\lambda_{2}^r< \check{p}_{2}\}$, where $\hat{\mathcal{R}}_{2}$ is given by \eqref{eq:region-R}, the equation
\begin{align}\label{eq:root-right-1-vequ}
c_{e} p-H_{2,-}(p)= c_{e}\lambda_{2}^r-H_{2,+}(\lambda_{2}^r)
\end{align}
admits the smallest root $p^{*}_{2}=p^{*}_{2}(c_{e},\lambda_{2}^r)\in(\lambda_{2}^r,L'_{2}(c_{e}))$. Besides, in the region $\hat{\mathcal{R}}_{2}\cap\{\lambda_{2}^r=\check{p}_{2}\}$,  the equation \eqref{eq:root-right-1-vequ} admits a unique root $p^{*}_{2}=L'_{2}(c_{e})$.

\begin{lemma}\label{lem:v-right-A}
	If $\lambda_{2}^r\in(0,\infty]$ and $c_{e}\leqslant s_{2,+}^r$, then $c_{v}^r\leqslant  s_{2,+}^r$.
\end{lemma}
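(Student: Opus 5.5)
The plan is to apply Proposition \ref{prop:max-speed-v}(a) with an explicit viscosity subsolution of \eqref{eq:I-hj-tindep-low} on $\mathbb{R}^{+}$. The candidates are the ones already used for the prey in Lemma \ref{lem:right-A}, with $(H_{1,+},\lambda_1^r,\mu^*_{1,+})$ replaced by $(H_{2,+},\lambda_2^r,\mu^*_{2,+})$. Recall $R_0=r_2\mathcal{V}_-$ for $s\leqslant c_e$ and $R_0=r_2\mathcal{V}_+$ for $s>c_e$; it is upper semicontinuous, so $(R_0)_*=R_0$ except at $s=c_e$, where $(R_0)_*(c_e)=r_2\mathcal{V}_+$. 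There are three cases.

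\textbf{Case $\lambda_2^r<\mu^*_{2,+}$.} Set $\rho(s)=\max\{0,\lambda_2^r s-H_{2,+}(\lambda_2^r)\}$. It vanishes exactly on $[0,c_{2,+}(\lambda_2^r)]=[0,s^r_{2,+}]$.

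\textbf{Case $\mu^*_{2,+}\leqslant\lambda_2^r<\infty$.} Take the three-piece function
\begin{equation*}
\rho(s)=
\begin{cases}
\lambda_2^r s-H_{2,+}(\lambda_2^r) & \text{for } s\geqslant H_2'(\lambda_2^r),\\
L_{2,+}(s) & \text{for } s\in[c^*_{2,+},H_2'(\lambda_2^r)),\\
0 & \text{for } s\in[0,c^*_{2,+}).
\end{cases}
\end{equation*}

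\textbf{Case $\lambda_2^r=\infty$.} Take $\rho=\max\{0,L_{2,+}\}$ on $\mathbb{R}^+$. It is the locally uniform limit of the previous case as $\lambda_2^r\to\infty$, so the stability result \cite[Theorem 6.2]{barles2013} applies.

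In each case $\rho$ is continuous, $\rho(0)=0$, $\rho(s)/s\to\lambda_2^r$, and $\rho$ vanishes exactly on $[0,s^r_{2,+}]$. By Proposition \ref{prop:max-speed-v}(a) this gives $c_v^r\leqslant s^r_{2,+}$.

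It remains to check the subsolution property. The condition is only required where $\rho>0$, i.e. for $s>s^r_{2,+}$. Since $c_e\leqslant s^r_{2,+}$, on this set we have $s>c_e$, so $(R_0)_*(s)=r_2\mathcal{V}_+$ and the equation there reads $\rho-s\rho'+H_{2,+}(\rho')=0$. Each piece solves this classically. For the linear pieces this is the identity $\lambda s-H_{2,+}(\lambda)-s\lambda+H_{2,+}(\lambda)=0$. For the $L_{2,+}$ piece it follows from the Legendre relation $L_{2,+}(q)=qL_2'(q)-H_{2,+}(L_2'(q))$ in Lemma \ref{lem:hamilton}. The pieces are joined in a $C^1$ fashion at $H_2'(\lambda_2^r)$, and at the boundary point $s^r_{2,+}$ the value of $\rho$ is $0$, so no condition is needed there.

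The main point to watch is the boundary point $s=c_e$ when $c_e=s^r_{2,+}$. If the lower envelope $(R_0)_*(c_e)$ were $r_2\mathcal{V}_-$ instead, a problem could arise, but $\rho(c_e)=0$ there, so the subsolution test is vacuous. If $c_e<s^r_{2,+}$, the jump of $R_0$ lies inside the zero set of $\rho$, and positivity is never tested at the discontinuity. Also note that $R_0$ is exactly of the type covered by Lemma \ref{lem:viscosity-comparison-t-indep}, which Proposition \ref{prop:max-speed-v} uses.
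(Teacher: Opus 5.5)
Your argument is correct and uses the same framework as the paper: build an explicit subsolution of \eqref{eq:I-hj-tindep-low} on $\mathbb{R}^{+}$ that vanishes exactly on $[0,s_{2,+}^r]$, then apply Proposition \ref{prop:max-speed-v}(a). The difference lies only in the choice of $\rho$.

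The paper handles all cases at once with $\rho(s)=\max\{0,\mu s-H_{2,+}(\mu)\}$, where $\mu=\lambda_2^r\land\mu^*_{2,+}$. This works because \eqref{eq:vis-boundary-condition-Ir} only requires $\lim_{s\to+\infty}\rho(s)/s\leqslant\lambda_2^r$. An upper bound on the speed uses only the one-sided comparison $\rho\leqslant\rho^0_*$, so the initial decay rate need not be matched exactly.

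Your construction is taken from the prey case in Lemma \ref{lem:right-A} and enforces $\rho(s)/s\to\lambda_2^r$. That costs you the $L_{2,+}$ piece and the $C^1$ gluing at $H_2'(\lambda_2^r)$. For $\lambda_2^r=\infty$ it also forces a superlinearly growing subsolution. The appeal to stability there is unnecessary, since $\max\{0,L_{2,+}\}$ is directly a classical solution wherever it is positive.

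The paper's choice keeps the subsolution linear at infinity, with slope $\mu^*_{2,+}<\infty$. This sits more comfortably within the growth hypotheses of Lemma \ref{lem:viscosity-comparison-t-indep}. Your version matches the paper's own treatment of the prey under (I$^{\infty}$) in Lemma \ref{lem:right-compact}, so it is acceptable, just heavier than needed.
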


\begin{proof}[Sketch of proof] Set $\mu=\lambda_{2}^r\land \mu^{*}_{2,+}$ and  define
	\begin{equation}\label{eq:v-right-A0}
		\rho(s)=\max\left\{0,\mu s-H_{2,+}(\mu)\right\} \quad \text{for }s\in[0,\infty).
		\end{equation}
Then   $c_{v}^r\leqslant s_{2,+}(\mu)= s_{2,+}^r$.
\end{proof}

\begin{lemma}\label{lem:v-right-B}
	If $\lambda_{2}^r\in[\mu_{0,2},\infty]$ and $s_{2,+}^{r}<c_{e}<c^{*}_{2,-}$, then $c_{v}^r\leqslant  c_{e}$.
\end{lemma}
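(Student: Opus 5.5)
We will apply Proposition \ref{prop:max-speed-v}(a). The task is to build a continuous viscosity subsolution $\rho$ of \eqref{eq:I-hj-tindep-low} on $\mathbb{R}^{+}$ that vanishes exactly on $[0,c_{e}]$ and satisfies \eqref{eq:vis-boundary-condition-Ir}. The construction copies the one in Lemma \ref{lem:right-B}, with $H_{1,\pm}$ replaced by $H_{2,\pm}$ and $\bar R$ replaced by $R_0$. Since $c_e>s_{2,+}^r\geqslant c^{*}_{2,+}$, the monotonicity of $c_{2,+}$ from Lemma \ref{lem:hamilton} gives roots $\nu_1<\mu^{*}_{2,+}<\nu_2$ of $c_{2,+}(\nu)=c_e$. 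We then split into cases according to the value of $\lambda_2^r$.

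\textbf{Case $\lambda_2^r\in(\nu_1,\nu_2)$.} We take $\rho=\min\{\rho_1,\rho_2\}$ with $\rho_1(s)=\max\{0,\lambda_2^r s-H_{2,+}(\lambda_2^r)\}$ and $\rho_2(s)=\max\{0,\nu_2 s-H_{2,+}(\nu_2)\}$. Since $\lambda_2^r\geqslant\mu_{0,2}$, the assumption $c_e>s_{2,+}^r$ forces $\lambda_2^r>\nu_1$, so this case is the relevant one for moderate $\lambda_2^r$.

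\textbf{Case $\lambda_2^r\in[\nu_2,\infty)$.} We take the analogue of \eqref{eq:right-Bb}: $\lambda_2^r s-H_{2,+}(\lambda_2^r)$ for large $s$, then $L_{2,+}(s)$ on $[H_2'(\nu_2),H_2'(\lambda_2^r))$, then $\max\{0,\nu_2 s-H_{2,+}(\nu_2)\}$.

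\textbf{Case $\lambda_2^r=\infty$.} We take the limit profile \eqref{eq:right-compact}, with $L_{2,+}$ in place of $L_{1,+}$. Its viscosity property follows either by stability, exactly as in Lemma \ref{lem:right-compact}, or by direct checking. In every case $\rho(s)>0$ iff $s>c_e$, $\rho(0)=0$, and $\lim_{s\to\infty}\rho(s)/s\leqslant\lambda_2^r$ holds; when $\lambda_2^r=\infty$ this uses the superlinearity of $L_{2,+}$, which still gives a limit $\leqslant\infty$.

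For the subsolution property, only points where $\rho>0$ matter, that is $s>c_e$. There $R_0=R_{0*}=r_2\mathcal V_+$, and each smooth piece is a classical solution of $\rho-s\rho'+H_{2,+}(\rho')=0$: the affine pieces $ps-H_{2,+}(p)$ trivially, and $L_{2,+}$ by the Legendre identity in Lemma \ref{lem:hamilton}. The pieces join $C^1$ at $H_2'(\nu_2)$ and $H_2'(\lambda_2^r)$. The only kink with $\rho>0$ is the crossing point $\hat s>c_e$ in the first case. Since $\rho$ is a minimum of two classical solutions there, it is a concave kink, and the subsolution test is verified by convexity of $H_2$: any admissible slope is a convex combination of the two one-sided slopes. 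The point $s=c_e$ is excluded because $\rho(c_e)=0$. Proposition \ref{prop:max-speed-v}(a) then yields $c_v^r\leqslant c_e$.

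The main obstacle is keeping track of the kinks. Unlike Lemma \ref{lem:right-B}, no supersolution check is needed here, so the only delicate verification is the min-type kink at $\hat s$ together with confirming $\hat s>c_e$. That inequality follows because $\rho_2$ vanishes exactly at $c_{2,+}(\nu_2)=c_e$, whereas $\rho_1$ vanishes at $c_{2,+}(\lambda_2^r)<c_e$ for $\lambda_2^r\in(\nu_1,\nu_2)$. Hence $\rho_2<\rho_1$ near $c_e$, and $\rho_2$ eventually exceeds $\rho_1$ because $\nu_2>\lambda_2^r$.
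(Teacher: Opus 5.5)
Your proof is correct, but it takes a more elaborate route than the paper. Your construction works: in each case $\rho$ vanishes exactly on $[0,c_e]$, and on $(c_e,\infty)$ the affine and $L_{2,+}$ pieces are classical solutions with $R_{0*}=r_2\mathcal V_+$. The only kink inside $\{\rho>0\}$ is the concave one at $\hat s$, which convexity handles, and $\lim_{s\to\infty}\rho(s)/s=\lambda_2^r$. One small misattribution: $\lambda_2^r>\nu_1$ follows from $c_e>s_{2,+}^r$ alone, not from $\lambda_2^r\geqslant\mu_{0,2}$; in fact $\mu_{0,2}<\nu_1$ because $c_e<c^*_{2,-}$.

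The paper instead uses, for every $\lambda_2^r\in[\mu_{0,2},\infty]$, the single function $\rho(s)=\max\{0,\mu_e s-H_{2,+}(\mu_e)\}$. Here $\mu_e<\mu^*_{2,+}$ is the smaller root of $c_{2,+}(\mu)=c_e$, i.e.\ your $\nu_1$, which in your proof only serves to delimit the cases. This works because Proposition \ref{prop:max-speed-v}(a) only asks for $\lim_{s\to\infty}\rho(s)/s\leqslant\lambda_2^r$, and $c_e>s_{2,+}^r$ already forces $\mu_e<\lambda_2^r$. The function is a classical solution wherever it is positive. So there is no case split, no kink to test, no $L_{2,+}$ piece, and no superlinear profile or stability argument when $\lambda_2^r=\infty$; the strict slope gap keeps the comparison with $\rho_{*}^{0}$ clean. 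It also shows that neither $c_e<c^*_{2,-}$ nor $\lambda_2^r\geqslant\mu_{0,2}$ is needed for the bound itself.

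What your route buys is a profile with the exact asymptotic slope $\lambda_2^r$. That matching was essential in Lemma \ref{lem:right-B}, because Proposition \ref{prop:speed-r} requires equality in the boundary condition and a two-sided comparison. Moreover, using $c_e<c^*_{2,-}$, your $\rho$ also passes the supersolution test of \eqref{eq:I-hj-tindep-low} at $s=c_e$, so it is the natural candidate for the sharp rate function of the auxiliary problem \eqref{eq:v-sup}. For a one-sided upper bound, however, you correctly noted that no supersolution check is needed but did not use that freedom to lower the slope.
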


\begin{proof}[Sketch of proof] For any $c_{e}$ satisfying  the assumption in this lemma, the monotonicity of $c_{2,+}(\cdot)$ on $\mathbb{R}^{+}$ implies that $c_{2,+}(\mu)=c_{e}$ admits two real roots $\mu_{e}<\mu^{*}_{2,+}<\nu_{e}$. Define
	\begin{align*}
		\rho(s)=\max\left\{0,\mu_{e} s-H_{2,+}(\mu_{e})\right\} \quad \text{for }s\in[0,\infty).
	\end{align*}
Then  ${c}_{v}^r\leqslant s_{2,+}(\mu_{e})= c_{e}$.
\end{proof}

\begin{lemma}\label{lem:v-right-C}
	If either $\lambda_{2}^r\in[\mu_{0,2},\mu^{*}_{2,-})$ and $c^{*}_{2,-}\leqslant c_{e}\leqslant k_{2}(\lambda_{2}^r)$, or $\lambda_{2}^r\in[\mu^{*}_{2,-},\infty]$ and $c_{e}\geqslant c^{*}_{2,-}$,	then $c_{v}^r\leqslant c^{*}_{2,-}$.
\end{lemma}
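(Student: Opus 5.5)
We will apply Proposition \ref{prop:max-speed-v}(a). It suffices to construct $\rho\in C([0,\infty);[0,\infty))$ with $\rho=0$ exactly on $[0,c^{*}_{2,-}]$ and $\rho>0$ on $(c^{*}_{2,-},\infty)$. It must satisfy $\lim_{s\to\infty}\rho(s)/s\leqslant\lambda_{2}^r$ and be a viscosity subsolution of \eqref{eq:I-hj-tindep-low} on $\mathbb{R}^{+}$, where $R_0=r_2\mathcal V_-$ on $(-\infty,c_e]$ and $R_0=r_2\mathcal V_+$ on $(c_e,\infty)$, so $(R_0)_*(c_e)=r_2\mathcal V_+$. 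The constructions mirror those of Lemma \ref{lem:right-D} with index $1$ replaced by $2$. Only the subsolution property is needed, and it is the ``easy'' half of what was checked there.

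\textbf{Case 1: $\lambda_{2}^r\in[\mu^{*}_{2,-},\infty]$ and $c_e\geqslant c^{*}_{2,-}$.} Take $\mu=\lambda_2^r\wedge\mu^{*}_{2,-}$ and
$$\rho(s)=\max\{0,\mu^{*}_{2,-}s-H_{2,-}(\mu^{*}_{2,-})\},\quad s\geqslant 0.$$
This vanishes exactly on $[0,c^{*}_{2,-}]$. Its slope at infinity is $\mu^{*}_{2,-}\leqslant\lambda_2^r$, so the boundary condition holds.

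Where $\rho>0$ and $s\neq c_e$, we have $\rho-s\rho'+H_2(\rho')+R_0=-H_{2,-}(\mu^{*}_{2,-})+H_{2}(\mu^{*}_{2,-})+R_0(s)=R_0(s)-r_2\mathcal V_-\leqslant 0$, using $R_0\leqslant r_2\mathcal V_-$. At the kink $c^{*}_{2,-}$ we have $\rho=0$, so nothing needs checking. At $s=c_e$, $\rho$ is $C^1$ and the lower envelope $(R_0)_*(c_e)=r_2\mathcal V_+\leqslant r_2\mathcal V_-$ only helps. So $\rho$ is a subsolution.

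\textbf{Case 2: $\lambda_2^r\in[\mu_{0,2},\mu^{*}_{2,-})$ and $c^{*}_{2,-}\leqslant c_e\leqslant k_2(\lambda_2^r)$.} By the analogue of Lemma \ref{lem:p*-equiv} for $i=2$, $p^{*}_2\geqslant\mu^{*}_{2,-}$. This is the analogue of case (a) of Lemma \ref{lem:right-D}; for $\lambda_2^r$ beyond $\check p_2$ one would mimic cases (b)--(c) there. Define $\rho$ as in \eqref{eq:right-Da}:
$\rho=\lambda_2^r s-H_{2,+}(\lambda_2^r)$ on $[c_e,\infty)$, $p^{*}_2s-H_{2,-}(p^{*}_2)$ on $[H_2'(p^{*}_2),c_e)$, $L_{2,-}(s)$ on $[c^{*}_{2,-},H_2'(p^{*}_2))$, and $0$ on $[0,c^{*}_{2,-})$.

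Continuity at $c_e$ is exactly the defining equation \eqref{eq:root-right-1-vequ}. The slope at infinity is $\lambda_2^r$. Each piece is a classical solution with the corresponding constant value of $R_0$. Since $L_{2,-}$ is $C^1$ and meets the linear piece tangentially at $H_2'(p^{*}_2)$, the pieces glue smoothly except at $c_e$ and at $c^{*}_{2,-}$ (where $\rho=0$).

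At $c_e$ the slopes jump from $p^{*}_2$ (left) down to $\lambda_2^r$ (right), since $p^{*}_2>\lambda_2^r$. This is a concave kink, so the superdifferential is $[\lambda_2^r,p^{*}_2]$ and subsolution test functions exist. This is the step I expect to be the main obstacle. I would verify it by showing, for $p\in[\lambda_2^r,p^{*}_2]$,
$$\rho(c_e)-c_ep+H_2(p)+r_2\mathcal V_+=\bigl(c_e\lambda_2^r-H_{2,+}(\lambda_2^r)\bigr)-\bigl(c_ep-H_{2,+}(p)\bigr)\leqslant 0.$$
This holds because $f_+(p)=c_ep-H_{2,+}(p)$ is increasing on $(0,L_2'(c_e)]$ and $p^{*}_2\leqslant L_2'(c_e)$. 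If needed one can instead use \cite[Chap. II, Proposition 5.1]{bardi1997}, since a.e. subsolutions with concave kinks are viscosity subsolutions.

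\textbf{Case $\lambda_2^r=\infty$.} Obtain it as the limit $\lambda_2^r\to\infty$ of Case 1, or directly, since the Case 1 $\rho$ has finite slope and so trivially satisfies the boundary condition. Then Proposition \ref{prop:max-speed-v}(a) gives $c_v^r\leqslant c^{*}_{2,-}$ in all cases.
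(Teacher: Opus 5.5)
Your Case~1 is correct, and so is the concave-kink computation in Case~2. However, Case~2 does not cover the whole hypothesis region.

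\textbf{The gap.} The root $p^{*}_{2}$ of \eqref{eq:root-right-1-vequ} exists only for $\lambda_{2}^{r}\leqslant\check p_{2}(c_{e})$. If $\lambda_{2}^{r}\in(\check p_{2},\mu^{*}_{2,-})\subset(\check p_{2},L_{2}'(c_{e})]$, then $c_{e}\lambda_{2}^{r}-H_{2,+}(\lambda_{2}^{r})>L_{2,-}(c_{e})=\max_{p}\{c_{e}p-H_{2,-}(p)\}$. So \eqref{eq:root-right-1-vequ} has no solution, and your profile is undefined.

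This range is nonempty under the first alternative. At $c_{e}=c^{*}_{2,-}$ one has $\check p_{2}=\mu_{0,2}$, while $c_{e}=k_{2}(\mu_{0,2})\leqslant k_{2}(\lambda_{2}^{r})$ for every $\lambda_{2}^{r}\in[\mu_{0,2},\mu^{*}_{2,-})$. By continuity the same happens for $c_{e}$ slightly larger.

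Your remark that one would ``mimic cases (b)--(c)'' of Lemma~\ref{lem:right-D} defers exactly this subcase. It can be carried out. Only (b) is relevant, since $\lambda_{2}^{r}<\mu^{*}_{2,-}\leqslant L_{2}'(c_{e})<\hat p_{2}$. The kink at $\hat s$ is concave with equal one-sided values, and the kink at $c_{e}$ is convex, so the subsolution test passes. As written, though, that part of the lemma is unproved.

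\textbf{A cleaner repair, which is the paper's proof.} Proposition~\ref{prop:max-speed-v}(a) only asks for $\lim_{s\to\infty}\rho(s)/s\leqslant\lambda_{2}^{r}$, so the slope at infinity may be smaller than $\lambda_{2}^{r}$.

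Let $\lambda_{e}\in[\mu_{0,2},\mu^{*}_{2,-})$ be the unique solution of $k_{2}(\lambda_{e})=c_{e}$, given by Lemma~\ref{lem:well-division}(a). In both alternatives $\lambda_{e}\leqslant\lambda_{2}^{r}$.

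Run your Case~2 recipe with $\lambda_{e}$ in place of $\lambda_{2}^{r}$. This gives $p^{*}_{2}=\mu^{*}_{2,-}$ and $H_{2}'(\mu^{*}_{2,-})=c^{*}_{2,-}$, so the $L_{2,-}$ piece disappears. You obtain the paper's two-piece profile:
\begin{itemize}
\item $\rho=\max\{0,\mu^{*}_{2,-}s-H_{2,-}(\mu^{*}_{2,-})\}$ on $[0,c_{e})$;
\item $\rho=\lambda_{e}s-H_{2,+}(\lambda_{e})$ on $[c_{e},\infty)$.
\end{itemize}
It is continuous at $c_{e}$ precisely because $k_{2}(\lambda_{e})=c_{e}$.

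Your kink computation verifies it verbatim, now with $p\in[\lambda_{e},\mu^{*}_{2,-}]$ and using $\mu^{*}_{2,-}\leqslant L_{2}'(c_{e})$. This single profile handles both alternatives with no case split and no reference to $\check p_{2}$.

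Your Case~1 profile is also fine. Since $R_{0}\leqslant r_{2}\mathcal{V}_{-}$ everywhere, it is in fact a subsolution for every $c_{e}$. So for $\lambda_{2}^{r}\geqslant\mu^{*}_{2,-}$ the bound $c_{v}^{r}\leqslant c^{*}_{2,-}$ holds with no restriction on $c_{e}$.
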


\begin{proof}[Sketch of proof]
Under the given conditions, for any $(\lambda_{2}^r,c_{e})$, the monotonicity of $k_{2}(\cdot)$ implies the existence of a unique $\lambda_{e}\in[\mu_{0,2},\mu^{*}_{2,-})$ such that $k_{2}(\lambda_{e})=c_{e}$. In particular, we have $(\lambda_{e},c_{e})\in \gamma_{2}^e$ and $\lambda_{e}\leqslant \lambda_{2}^r$. Define
	\begin{equation*}
		\rho(s)=
	\begin{cases}
		\lambda_{e} s -H_{2,+}(\lambda_{e}) &\text{for~}s\in[c_{e},\infty),\\
		\max\left\{0,\mu^{*}_{2,-} s-H_{2,-}(\mu^{*}_{2})\right\}&\text{for~}s\in[0,c_{e}).
	\end{cases}
	\end{equation*}
Then   $c_{v}^r\leqslant c^{*}_{2,-}$.
\end{proof}

\begin{lemma}\label{lem:v-right-D}
	If either $\lambda_{2}^r\in(0,\mu_{0,2})$ and $c_{e}> c_{2,+}(\lambda_{2}^r)$, or $\lambda_{2}^r\in[\mu_{0,2},\mu^{*}_{2,-})$ and $c_{e}>k_{2}(\lambda_{2}^r)$,	then $c_{v}^r\leqslant c_{2,-}(p^{*}_2)$.
\end{lemma}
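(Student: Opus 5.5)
The plan is to mirror Lemma \ref{lem:right-C} for the prey and then invoke Proposition \ref{prop:max-speed-v}(a). Under the hypotheses of the lemma we have $(\lambda_{2}^r,c_e)\in V_{2,r}^{d}$. The first region means $c_e>c_{2,+}(\lambda_2^r)$ with $\lambda_2^r<\mu_{0,2}$. The second means $c_e>k_2(\lambda_2^r)\geqslant c^*_{2,-}$.

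In both cases I will check that $(c_e,\lambda_2^r)\in\hat{\mathcal R}_2$ and $\lambda_2^r<\check p_2$. For the first region this follows from the analogue of \eqref{eq:aux-right-func} for $H_{2,\pm}$, together with the strict inequality $c_e\lambda_2^r-H_{2,+}(\lambda_2^r)<L_{2,-}(c_e)$. For the second region it follows from the argument in Lemma \ref{lem:right-D}, which gives $\lambda_e<\check p_2$. The analogue of Lemma \ref{lem:well-p*} then gives a well-defined smallest root $p_2^*\in(\lambda_2^r,L_2'(c_e))$ of \eqref{eq:root-right-1-vequ}. The analogue of Lemma \ref{lem:p*-equiv} gives $p_2^*<\mu^*_{2,-}$, so that $c_{2,-}(p_2^*)>c^*_{2,-}$. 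I must also check that $c_{2,-}(p_2^*)<c_e$. Evaluating $c_ep-H_{2,-}(p)$ at $p=p_2^*$ gives $c_e\lambda_2^r-H_{2,+}(\lambda_2^r)>0$ since $c_e>c_{2,+}(\lambda_2^r)$, and this forces $c_e>c_{2,-}(p_2^*)$. In the second region with $\lambda_2^r\geqslant\mu_{0,2}$, the same positivity holds because $c_e>k_2\geqslant c^*_{2,-}\geqslant c_{2,+}(\lambda_2^r)$ there.

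Next I define the candidate, modelled on \eqref{eq:right-C0}:
\[
\rho(s)=\begin{cases}\lambda_2^r s-H_{2,+}(\lambda_2^r), & s\in[c_e,\infty),\\ \max\{0,\,p_2^*s-H_{2,-}(p_2^*)\}, & s\in[0,c_e).\end{cases}
\]
By the defining equation \eqref{eq:root-right-1-vequ} the two affine pieces agree at $s=c_e$, so $\rho$ is continuous. Its zero set is $[0,c_{2,-}(p_2^*)]$ and $\rho>0$ afterwards. Moreover $\rho(0)=0$ and $\rho(s)/s\to\lambda_2^r$, so \eqref{eq:vis-boundary-condition-Ir} holds. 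On each open interval where $\rho>0$, the function $\rho$ is a classical solution of \eqref{eq:I-hj-tindep-low}, because $R_0=r_2\mathcal V_-$ for $s<c_e$ and $R_0=r_2\mathcal V_+$ for $s>c_e$. At the free boundary $s=c_{2,-}(p_2^*)$ no viscosity check is needed for a subsolution beyond continuity, since $\rho=0$ there.

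The main obstacle is the kink at $s=c_e$, where the slope jumps from $p_2^*$ on the left to $\lambda_2^r<p_2^*$ on the right. This is a concave corner, so superdifferential test functions exist and the subsolution inequality must be checked with $R_{0*}(c_e)=r_2\mathcal V_+$. For $\phi'(c_e)=q\in[\lambda_2^r,p_2^*]$ the quantity to control is
\[
\rho(c_e)-c_eq+H_{2,+}(q)=\bigl[c_e\lambda_2^r-H_{2,+}(\lambda_2^r)\bigr]-\bigl[c_eq-H_{2,+}(q)\bigr],
\]
which must be $\leqslant 0$. This holds because $q\mapsto c_eq-H_{2,+}(q)$ is increasing on $(0,L_2'(c_e)]$ and $\lambda_2^r\leqslant q\leqslant p_2^*<L_2'(c_e)$.

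With this verified, Proposition \ref{prop:max-speed-v}(a) gives $c_v^r\leqslant c_{2,-}(p_2^*)$. Here the comparison step inside that proposition (Lemma \ref{lem:viscosity-comparison-t-indep}) applies because $R_0$ is of the form \eqref{eq:R0-usc}. The case $\lambda_2^r=\infty$ does not arise in this lemma, since $\lambda_2^r<\mu^*_{2,-}$.
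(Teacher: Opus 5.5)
Your proposal is correct and follows the paper's proof. It uses the same two-piece function as in \eqref{eq:right-C0} (with $H_{2,\pm}$ and $p_2^*$), feeds it into Proposition \ref{prop:max-speed-v}(a), and supplies the concave-kink check at $s=c_e$ (with $R_{0*}(c_e)=r_2\mathcal{V}_+$) that the paper leaves to the reader. The one step you assert without proof is the strict inequality $c_e\lambda_2^r-H_{2,+}(\lambda_2^r)<L_{2,-}(c_e)$ in the first region. It follows at once from $c_e>k_2(\lambda_2^r)$, which holds on all of $V_{2,r}^d$ and is equivalent to $c_e\lambda_2^r-H_{2,+}(\lambda_2^r)<c_e\mu^*_{2,-}-H_{2,-}(\mu^*_{2,-})\leqslant L_{2,-}(c_e)$, so no case split is needed there.
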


\begin{proof}[Sketch of proof] Define
	\begin{equation*}
		\rho(s)=
	\begin{cases}
		\lambda_{2}^rs-H_{2,+}(\lambda_{2}^r) & \text{for } s\in[c_{e},\infty),\\
	\max\left\{0,	p^{*}_{2}s-H_{2,-}(p^{*}_2)\right\} & \text{for } s\in[0,c_{e}).
	\end{cases}
	\end{equation*}
 We conclude that  $c_{v}^r\leqslant c_{2,-}(p^{*}_2)$ by a similar argument to \eqref{eq:right-C0}.
\end{proof}

\begin{proof}[\bf{Proof of Theorem \ref{thm:v-maxspeed-right}}]
	The result follows directly from Lemmas \ref{lem:right-division} and \ref{lem:v-right-A}-\ref{lem:v-right-D}.
\end{proof}

\subsection{Upper bound of the leftward spreading speed}\label{subsec:v-max-left}
In this subsection, we study the upper bound of the leftward spreading speed. The proofs of the following four lemmas are omitted as they are analogous to those in Subsection \ref{subsection-left-u}. These lemmas collectively yield Theorem \ref{thm:v-maxspeed-left}.

\begin{lemma}\label{lem:v-left-A}
	If $\lambda_{2}^l\in (0,\infty]$ and $c_{e}\geqslant -s_{2,-}^l$, then $c_{v}^l\geqslant -s_{2,-}^l$.
	\end{lemma}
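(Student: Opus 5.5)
The plan is to apply Proposition \ref{prop:max-speed-v}(b): it suffices to build $\rho\in C((-\infty,0];[0,\infty))$ that is a viscosity subsolution of \eqref{eq:I-hj-tindep-low} on $\mathbb{R}^{-}$, satisfies \eqref{eq:vis-boundary-condition-Il}, and vanishes exactly on $[-s_{2,-}^l,0]$. Since $c_e\geqslant -s_{2,-}^l$, every $s<-s_{2,-}^l$ satisfies $s<c_e$, so $R_0(s)=r_2\mathcal{V}_-$ there. The equation on the region where $\rho$ is positive is then just the homogeneous one with Hamiltonian $H_{2,-}$, and the $\mathcal{V}_+$ part of $R_0$ never enters.

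Set $\mu=\lambda_2^l\wedge\mu^*_{2,-}$, which equals $\mu^*_{2,-}$ when $\lambda_2^l=\infty$. Mirroring \eqref{eq:left-Ba} and the proof of Lemma \ref{lem:v-right-A}, define
\[
\rho(s)=\max\left\{0,-\mu s-H_{2,-}(\mu)\right\},\quad s\in(-\infty,0].
\]
By the symmetry of $H_{2,-}$ (Lemma \ref{lem:hamilton}), $\rho$ vanishes exactly for $s\geqslant -H_{2,-}(\mu)/\mu=-c_{2,-}(\mu)=-s_{2,-}^l$. Also $\rho(0)=0$ and $\rho(s)/|s|\to\mu\leqslant\lambda_2^l$, so the boundary conditions hold.

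Next I check the subsolution property. On $\{\rho>0\}=(-\infty,-s_{2,-}^l)$ the function is linear with $\rho'=-\mu$. Because $H_2$ is even,
\[
\rho-s\rho'+H_2(\rho')+R_0(s)=-\mu s-H_{2,-}(\mu)+\mu s+H_2(\mu)+r_2\mathcal{V}_-=0,
\]
using $R_{0*}=R_0=r_2\mathcal{V}_-$ there. So $\rho$ is a classical solution on that open set. Points with $\rho=0$ impose no subsolution condition. At the kink $s=-s_{2,-}^l$ we have $\rho=0$, so again nothing needs to be checked. Hence $\rho$ is a viscosity subsolution on $\mathbb{R}^{-}$.

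Proposition \ref{prop:max-speed-v}(b) then gives $c_v^l\geqslant -s_{2,-}^l$. The only delicate point is the comparison principle used inside Proposition \ref{prop:max-speed-v}. That comparison needs $\underline\rho(s)\to+\infty$ as $s\to-\infty$, which holds since $\mu>0$, and it needs the slope condition, which reduces to $\mu\leqslant\lambda_2^l\leqslant\liminf\rho_*^0(s)/|s|$ by \eqref{eq:hj-tindep-low-boundary-I}. When $\lambda_2^l=\infty$ the slope $\mu^*_{2,-}$ is finite and the inequality is immediate.
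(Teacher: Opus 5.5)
Your proposal is correct and is essentially the paper's (omitted) argument: you take the mirror image of the construction $\rho(s)=\max\{0,\mu s-H_{2,+}(\mu)\}$ from Lemma \ref{lem:v-right-A}, here with $\mu=\lambda_2^l\wedge\mu^{*}_{2,-}$, and apply Proposition \ref{prop:max-speed-v}(b). As a side remark, $\mathcal{V}_+<\mathcal{V}_-$ gives $R_0\leqslant r_2\mathcal{V}_-$ everywhere, so your $\rho$ is in fact a subsolution for every $c_e$; this is consistent with the bound also following directly from scalar comparison with the $\alpha\equiv\alpha_-$ problem.
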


	\begin{lemma}\label{lem:v-left-B}
		If either $\lambda_{2}^l\in(0,\mu^{*}_{2,+}]$ and $c_{e}<-c_{2,-}(\lambda_{2}^l)$, or $\lambda_{2}^l\in[\mu^{*}_{2,+},L'_{2}(\bar{c}_{2})]$ and $-g_{2}(\lambda_{2}^l)<c_{e}<-(c_{2,-}(\lambda_{2}^l)\vee H_{2}'(\lambda_{2}^l))$, then $ c_{v}^l\geqslant-c_{2,+}(\underline{p}_{2})$.
	\end{lemma}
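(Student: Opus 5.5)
The plan is to apply Proposition \ref{prop:max-speed-v}(b). It suffices to construct $\rho\in C((-\infty,0];[0,\infty))$ with three properties: $\rho$ is a viscosity subsolution of \eqref{eq:I-hj-tindep-low} on $\mathbb{R}^{-}$; it satisfies the boundary condition \eqref{eq:vis-boundary-condition-Il}; and it vanishes exactly on $[s_{l},0]$ with $s_{l}=-c_{2,+}(\underline{p}_{2})$. We copy the construction \eqref{eq:left-C0} from Lemma \ref{lem:left-C}, with index $1$ replaced by $2$ and $r_{1}\alpha_{\pm}$ replaced by $r_{2}\mathcal{V}_{\pm}$. Recall from \eqref{eq:R0-usc} that $R_{0}=r_{2}\mathcal{V}_{-}$ on $s\leqslant c_{e}$ and $R_{0}=r_{2}\mathcal{V}_{+}$ on $s>c_{e}$. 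Under the hypotheses, the analogue of Lemma \ref{lem:left-para-p}(a) for $i=2$ shows that $(-c_{e},\lambda_{2}^l)\in\underline{\mathcal{E}}_{2}$. Hence $\underline{p}_{2}\in(0,\lambda_{2}^l\wedge L_{2}'(-c_{e}))$ is well defined. The analogue of Lemma \ref{lem:left-pp-equiv}(b), combined with $c_{e}>-g_{2}(\lambda_{2}^l)$ (or automatically, when $\lambda_{2}^l\leqslant\mu^{*}_{2,+}$, since $\underline{p}_{2}<\lambda_{2}^l$), gives $\underline{p}_{2}<\mu^{*}_{2,+}$. We then set
\begin{equation*}
\rho(s)=\begin{cases}
-\lambda_{2}^l s-H_{2,-}(\lambda_{2}^l) & s\in(-\infty,c_{e}],\\
\max\{0,-\underline{p}_{2}s-H_{2,+}(\underline{p}_{2})\} & s\in(c_{e},0].
\end{cases}
\end{equation*}

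Continuity at $c_{e}$ is exactly the defining equation \eqref{eq:p-under-root}. On each open piece, $\rho$ is affine with slope $-\lambda_{2}^l$ or $-\underline{p}_{2}$. It solves $\rho-s\rho'+H_{2}(\rho')+R_{0}=0$ classically wherever it is positive, using the symmetry $H_{2}(-p)=H_{2}(p)$, and it is $0$ elsewhere. The zero set is $[-c_{2,+}(\underline{p}_{2}),0]$, and we must check that $-c_{2,+}(\underline{p}_{2})>c_{e}$. This follows from $c_{e}<-c_{2,-}(\lambda_{2}^l)\vee -H_{2}'(\lambda_{2}^l)$ together with the sign comparison used for Lemma \ref{lem:left-C}; equivalently, the value of the second piece at $c_{e}$ equals $-\lambda_{2}^l c_{e}-H_{2,-}(\lambda_{2}^l)>0$ by the region assumption. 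The boundary conditions are immediate: $\rho(0)=0$ and $\rho(s)/|s|\to\lambda_{2}^l$.

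The only nontrivial points are the kinks at $c_{e}$ and at $s_{l}$. At $s_{l}$ the function is convex-type (a max of $0$ and an affine function), so no $C^{1}$ test function touches it from above. By the definition of subsolution only points with $\rho>0$ matter anyway, so nothing needs to be checked there. At $c_{e}$ the left slope is $-\lambda_{2}^l$ and the right slope is $-\underline{p}_{2}>-\lambda_{2}^l$. A kink where the slope increases is convex, so again there is no test function from above. Alternatively, $\rho$ is a classical subsolution almost everywhere and locally Lipschitz, so \cite[Chap. II, Proposition 5.1]{bardi1997} applies as in Lemma \ref{lem:right-B}. In either argument the subsolution definition uses $R_{0,*}$, and the inequality at $c_{e}$ needs no verification.

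The main obstacle is the parameter bookkeeping: confirming that the stated region (both branches, including the endpoint $\lambda_{2}^l=\mu^{*}_{2,+}$) lies in $\underline{\mathcal{E}}_{2}$, and that there $\underline{p}_{2}\leqslant\mu^{*}_{2,+}$ and $c_{e}<s_{l}$. I would first prove the $i=2$ versions of Lemmas \ref{lem:left-para-p}--\ref{lem:left-pp-equiv} by repeating the monotonicity argument of $G_{l}$ in $p$. Once these hold, Proposition \ref{prop:max-speed-v}(b) gives $c_{v}^l\geqslant-c_{2,+}(\underline{p}_{2})$.
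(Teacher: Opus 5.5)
Your proposal is correct and is the argument the paper intends: the paper omits this proof and defers to the construction \eqref{eq:left-C0} of Lemma \ref{lem:left-C}, taken with index $2$ and with $r_2\mathcal{V}_\pm$ in place of $r_1\alpha_\pm$; this function is a subsolution of \eqref{eq:I-hj-tindep-low} whose zero set is $[-c_{2,+}(\underline{p}_2),0]$, and Proposition \ref{prop:max-speed-v}(b) then gives the bound. Two small remarks: the inequality you wrote as $c_e<-c_{2,-}(\lambda_2^l)\vee -H_2'(\lambda_2^l)$ should read $c_e<-(c_{2,-}(\lambda_2^l)\vee H_2'(\lambda_2^l))$, although your direct positivity check at $c_e$ is the right one; and because only the subsolution property is needed here, the check $\underline{p}_2<\mu^{*}_{2,+}$ (and hence the hypothesis $c_e>-g_2(\lambda_2^l)$) is never actually used in your argument.
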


	\begin{lemma}\label{lem:v-left-C}
		If $\lambda_{2}^l\in[\mu^{*}_{2,-},\infty]$ and $-(H'_{2}(\lambda_{2}^l)\land \bar{c}_{2})<c_{e}<-c^{*}_{2,-}$, then $c_{v}^l\geqslant-c_{2,+}(\bar{p}_{2})$.
	\end{lemma}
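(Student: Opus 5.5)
Following Proposition \ref{prop:max-speed-v}-(b), I will construct a continuous function $\rho$ on $(-\infty,0]$ that serves as a viscosity subsolution of \eqref{eq:I-hj-tindep-low} on $\mathbb{R}^{-}$. It should satisfy $\rho(0)=0$ and $\lim_{s\to-\infty}\rho(s)/|s|\leqslant\lambda_2^l$, and vanish exactly on $[s_l,0]$ with $s_l=-c_{2,+}(\bar p_2)$. The construction imitates \eqref{eq:left-D0} from Lemma \ref{lem:left-D}, with $H_1,r_1\alpha_\pm$ replaced by $H_2,r_2\mathcal{V}_\pm$. I first note the sign convention: Lemma \ref{lem:left-D} displays $\max\{0,-\bar p_1 s-H_{1,+}(\bar p_1)\}$ on $(c_e,0]$, whose zero is at $s=-c_{1,+}(\bar p_1)$. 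So the bound to prove is $c_v^l\geqslant -c_{2,+}(\bar p_2)$, in agreement with the statement.

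Write $\tilde c_e=-c_e\in(c^*_{2,-},\,H_2'(\lambda_2^l)\wedge\bar c_2)$. By Lemma \ref{lem:left-para-p}-(b), applied with index $2$, $\bar p_2=\bar p_2(\tilde c_e)\in(0,L_2'(\tilde c_e))$ is the smallest root of $\tilde c_e p-H_{2,+}(p)=L_{2,-}(\tilde c_e)$. By Lemma \ref{lem:left-pp-equiv}-(a), $\tilde c_e<\bar c_2$ gives $\bar p_2<\mu^*_{2,+}$. Since $\tilde c_e<H_2'(\lambda_2^l)$, we have $L_2'(\tilde c_e)<\lambda_2^l$, so $-H_2'(\lambda_2^l)<c_e$. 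I then define
\begin{equation*}
\rho(s)=\begin{cases}
-\lambda_2^l s-H_{2,-}(\lambda_2^l) & s\in(-\infty,-H_2'(\lambda_2^l)],\\
L_{2,-}(s) & s\in(-H_2'(\lambda_2^l),c_e],\\
\max\{0,-\bar p_2 s-H_{2,+}(\bar p_2)\} & s\in(c_e,0].
\end{cases}
\end{equation*}
When $\lambda_2^l=\infty$, the first branch is dropped and $L_{2,-}$ is used on all of $(-\infty,c_e]$. Superlinearity of $L_{2,-}$ then gives the boundary condition \eqref{eq:vis-boundary-condition-Il} trivially.

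The checks proceed as follows. Continuity at $-H_2'(\lambda_2^l)$ is the tangency of the line to $L_{2,-}$. At $c_e$, the symmetry of $L_{2,-}$ gives $L_{2,-}(c_e)=L_{2,-}(\tilde c_e)=\tilde c_e\bar p_2-H_{2,+}(\bar p_2)$, which equals the third branch at $s=c_e$. Because $\bar p_2<\mu^*_{2,+}$, that branch is positive at $c_e$ (since $c_{2,+}(\bar p_2)>c^*_{2,+}$) and has the unique zero $s_l=-c_{2,+}(\bar p_2)\in(c_e,0)$.

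Away from $c_e$ and $s_l$, each branch solves the equation classically. Linear pieces $-ps-H_{2,\pm}(p)$ give $\rho-s\rho'+H_2(\rho')+r_2\mathcal{V}_\pm=0$, and $L_{2,-}$ does the same on $s<c_e$, where $R_0=r_2\mathcal{V}_-$. On $(c_e,0]$ we have $R_0=r_2\mathcal{V}_+$, which matches the $H_{2,+}$ line. At $s_l$, $\rho=0$, so no subsolution condition is required.

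The main obstacle is the kink at $c_e$, where a subsolution must be checked against superdifferentials. The left slope is $L_{2,-}'(c_e)=-L_2'(\tilde c_e)$ and the right slope is $-\bar p_2$. Since $\bar p_2<L_2'(\tilde c_e)$, we get $\rho'(c_e^-)<\rho'(c_e^+)$, a convex corner, so $\rho-\phi$ can have a local maximum there for no $C^1$ test function $\phi$. The subsolution condition is therefore vacuous at $c_e$; this is exactly why the smallest root $\bar p_2$ was chosen. Continuity and the a.e. classical property then make $\rho$ a viscosity subsolution, via \cite[Chap. II, Proposition 5.1]{bardi1997} as in Lemma \ref{lem:right-B}. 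Proposition \ref{prop:max-speed-v}-(b) yields $c_v^l\geqslant -c_{2,+}(\bar p_2)$.
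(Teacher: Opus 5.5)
Your proposal is correct and matches the argument the paper intends. The paper omits this proof as the predator analogue of Lemma \ref{lem:left-D}: the construction \eqref{eq:left-D0} with $H_1, r_1\alpha_\pm$ replaced by $H_2, r_2\mathcal{V}_\pm$, verified as a subsolution of \eqref{eq:I-hj-tindep-low} (classical on each branch, vacuous at the convex corner $c_e$ because $\bar p_2<L_2'(\tilde c_e)$, vacuous where $\rho=0$) and then fed into Proposition \ref{prop:max-speed-v}-(b).

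One small correction: the third branch is positive at $c_e$ because its value there is $L_{2,-}(\tilde c_e)>0$ (since $\tilde c_e>c^*_{2,-}$), not because $c_{2,+}(\bar p_2)>c^*_{2,+}$. Also, the property $\bar p_2<\mu^*_{2,+}$ is not actually needed for the subsolution check.
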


	\begin{lemma}\label{lem:v-left-D}
		If either $\lambda_{2}^l\in[\mu^{*}_{2,+},L'_{2}(\bar{c}_{2})]$ and $c_{e}\leqslant -g_{2}(\lambda_{2}^l)$, or $\lambda_{2}^l\in(L'_{2}(\bar{c}_{2}),\infty]$ and $c_{e}\leqslant -\bar{c}_{2}$, then $ c_{v}^l\geqslant -c_{2,+}^{*}$.
	\end{lemma}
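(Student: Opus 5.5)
The statement is Lemma \ref{lem:v-left-D}: if either $\lambda_{2}^l\in[\mu^{*}_{2,+},L'_{2}(\bar c_{2})]$ and $c_{e}\leqslant -g_{2}(\lambda_{2}^l)$, or $\lambda_{2}^l\in(L'_{2}(\bar c_{2}),\infty]$ and $c_{e}\leqslant-\bar c_{2}$, then $c_{v}^l\geqslant -c^{*}_{2,+}$. I will prove it by applying Proposition \ref{prop:max-speed-v}(b). So it suffices to build $\rho\in C((-\infty,0];[0,\infty))$ with four properties: it vanishes exactly on $[-c^{*}_{2,+},0]$; it satisfies $\lim_{s\to-\infty}\rho(s)/|s|\leqslant\lambda_{2}^l$; it is positive on $(-\infty,-c^{*}_{2,+})$; and it is a viscosity subsolution of \eqref{eq:I-hj-tindep-low} on $\mathbb{R}^{-}$. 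Note that $R_{0}$ equals $r_{2}\mathcal V_{-}$ for $s\leqslant c_e$ and $r_{2}\mathcal V_{+}$ for $s>c_e$. Hence $H_{2}+R_{0}$ is $H_{2,-}$ on the left of $c_e$ and $H_{2,+}$ on the right, so the construction copies Lemma \ref{lem:left-A} with index $1$ replaced by $2$.

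\textbf{Case (a)}: $\lambda_{2}^l\in[\mu^{*}_{2,+},L_2'(\bar c_2)]$ and $c_e\leqslant -g_2(\lambda_2^l)$, together with the part of the second alternative where $c_e<-H_2'(\lambda_2^l)$. Here $\underline p_2$ is defined as in Lemma \ref{lem:left-para-p}(a), and the analogue of Lemma \ref{lem:left-pp-equiv}(b) gives $\underline p_2\geqslant\mu^{*}_{2,+}$. I take the analogue of \eqref{eq:left-Aa}:
\begin{equation*}
\rho(s)=\begin{cases}
-\lambda_2^l s-H_{2,-}(\lambda_2^l), & s\leqslant c_e,\\
-\underline p_2 s-H_{2,+}(\underline p_2), & c_e<s\leqslant -H_2'(\underline p_2),\\
L_{2,+}(s), & -H_2'(\underline p_2)<s\leqslant -c^{*}_{2,+},\\
0, & -c^{*}_{2,+}<s\leqslant 0.
\end{cases}
\end{equation*}
\textbf{Case (b)}: $\lambda_2^l>L_2'(\bar c_2)$ with $-H_2'(\lambda_2^l)\leqslant c_e\leqslant-\bar c_2$. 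Here $\bar p_2\geqslant\mu^*_{2,+}$, and I use the analogue of \eqref{eq:left-Ab}, with pieces $L_{2,-}$ on $(-H_2'(\lambda_2^l),c_e]$ and $-\bar p_2 s-H_{2,+}(\bar p_2)$ after $c_e$. When $\lambda_2^l=\infty$, I will instead take the $C_{loc}$ limit of these functions as $\lambda_2^l\to\infty$. By the stability theorem \cite[Theorem 6.2]{barles2013}, as in Lemma \ref{lem:right-compact}, the limit is still a subsolution, and its slope condition at $-\infty$ holds trivially.

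For the verification, each piece is a classical solution of the equation with the appropriate $H_{2,\pm}$. At the smooth junction $-H_2'(\cdot)$, the piece is tangent to the Legendre branch. At $-c^{*}_{2,+}$, $\rho$ has a convex kink from $0$; this is harmless for a subsolution, because there $\rho=0$ and the subsolution test is only imposed where $\rho>0$. At $c_e$, I check that the continuity of $\rho$ is exactly the defining equation of $\underline p_2$ (or of $\bar p_2$, via $L_{2,-}(c_e)$). I also check that the left slope $-\lambda_2^l$ (or $-L_2'(|c_e|)$) is smaller than the right slope $-\underline p_2$. This makes the kink convex, so no $C^1$ test function touches $\rho$ from above there, and the subsolution condition holds vacuously. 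Positivity on $s<-c^{*}_{2,+}$ and the growth rate $\lambda_2^l$ at $-\infty$ are immediate from the formulas. Proposition \ref{prop:max-speed-v}(b) then gives $c_v^l\geqslant -c^{*}_{2,+}$.

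The main obstacle is the kink at $c_e$. It needs $\underline p_2<\lambda_2^l$ (and $\bar p_2<L_2'(|c_e|)$), which follows from Lemma \ref{lem:left-para-p}. It also needs $-H_2'(\underline p_2)\leqslant -c^{*}_{2,+}$ so that the pieces appear in the stated order, and this is exactly where $\underline p_2\geqslant\mu^*_{2,+}$, i.e.\ $c_e\leqslant -g_2(\lambda_2^l)$, is used. No comparison with $\rho^*$ is needed, only subsolution properties, so (FU) and (H2) are not required.
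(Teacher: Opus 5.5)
Your proposal is correct and follows the paper's route. The paper omits this proof as analogous to Lemma \ref{lem:left-A}: take the index-$2$ versions of \eqref{eq:left-Aa} and \eqref{eq:left-Ab} (with $H_{2,\pm}$, $\underline p_2$, $\bar p_2$), check only the subsolution property for \eqref{eq:I-hj-tindep-low}, which is vacuous at the convex kink $c_e$ and where $\rho=0$, and conclude with Proposition \ref{prop:max-speed-v}(b), treating $\lambda_2^l=\infty$ by the limiting argument of Lemma \ref{lem:right-compact}.
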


	\section{Persistence of the prey}\label{sec:terrace-prey}
In this section, we prove Theorem \ref{thm:u-terrace}.

\begin{proof}[Proof of Theorem \ref{thm:u-terrace}] We only prove  \eqref{eq:u-terrace-1} and \eqref{eq:u-terrace-3}, as the other cases follow similarly.

First, we prove \eqref{eq:u-terrace-1} under the condition $c_{e}\geqslant s_{1,+}^r$.
By Theorems \ref{thm:u-speed-exp-right}-\ref{thm:u-speed-comp}, we have
	\begin{align*}
		c_{e}\geqslant c_{u}^r\geqslant s_{1,+}^r, \quad c_{u}^l=-s_{1,-}^l.
	\end{align*}
From assumption (FU) and Corollary \ref{coro:u-approach-max-capacity}, it holds that
\begin{align*}
	&\underset{t\rightarrow+\infty}\lim\, \underset{ (s_{2,-}^r+\eta)t \leqslant x \leqslant (c_{u}^r-\eta)t}\inf\, u(x,t)=\underset{\varepsilon\rightarrow0}\lim\, \underset{ s_{2,-}^r+\eta \leqslant y\leqslant c_{u}^r-\eta}\inf\, u\left(\frac{y}{\varepsilon},\frac{1}{\varepsilon}\right) \geqslant \alpha_{-}, \quad \forall \eta\in(0, \tfrac{c_{u}^r-s_{2,-}^r}2),\\
	&\underset{t\rightarrow+\infty}\lim\, \underset{ (-s_{1,-}^l+\eta)t \leqslant x \leqslant (-s_{2}^l-\eta)t}\inf\, u(x,t)=\underset{\varepsilon\rightarrow0}\lim\, \underset{-s_{1,-}^l+\eta \leqslant y\leqslant -s_{2}^l-\eta}\inf\, u\left(\frac{y}{\varepsilon},\frac{1}{\varepsilon}\right) \geqslant \alpha_{-}, \quad \forall \eta\in(0, \tfrac{s_{1,-}^l-s_{2,-}^l}{2}).
\end{align*}
Since $u\leqslant \alpha_{-}$ in $\mathbb{R}\times\mathbb{R}^{+}$, we conclude that \eqref{eq:u-terrace-1} holds.

Next, we prove  \eqref{eq:u-terrace-3} when  $-s_{1,-}^l<c_{e}<-s_{2,-}^l$. By Theorems \ref{thm:u-speed-exp-right}-\ref{thm:u-speed-comp}, we obtain
\begin{align*}
c_{u}^r=s_{1,+}^r, \quad c_{u}^l=-s_{1,-}^l.
\end{align*}
From Corollary \ref{coro:u-approach-max-capacity}, we have
\begin{align*}
	&\underset{t\rightarrow+\infty}\lim\, \underset{(s_{2,-}^r+\eta)t \leqslant x \leqslant (s_{1,+}^r-\eta)t}\inf\, u(x,t)=\underset{\varepsilon\rightarrow0}\lim\, \underset{ s_{2,-}^r+\eta \leqslant x \leqslant s_{1,+}^r-\eta}\inf\, u\left(\frac{y}{\varepsilon},\frac{1}{\varepsilon}\right) \geqslant \alpha_{+}, \quad \forall \eta\in(0, \tfrac{s_{1,+}^r-s_{2,-}^r}2),\\
	&\underset{t\rightarrow+\infty}\lim\, \underset{(c_{e}+\eta)t \leqslant x \leqslant (-s_{2,-}^l-\eta)t}\inf\, u(x,t)=\underset{\varepsilon\rightarrow0}\lim\, \underset{c_{e}+\eta\leqslant x \leqslant -s_{2,-}^l-\eta}\inf\,  u\left(\frac{y}{\varepsilon},\frac{1}{\varepsilon}\right) \geqslant \alpha_{+}, \quad \forall \eta\in (0,\tfrac{-s_{2,-}^l-c_{e}}{2}),\\
	&\underset{t\rightarrow+\infty}\lim\, \underset{ (-s_{1,-}^l+\eta)t \leqslant x \leqslant (c_{e}-\eta)t}\inf\, u(x,t)=\underset{\varepsilon\rightarrow0}\lim\, \underset{-s_{1,-}^l+\eta \leqslant y\leqslant c_{e}-\eta}\inf\, u\left(\frac{y}{\varepsilon},\frac{1}{\varepsilon}\right) \geqslant \alpha_{-}, \quad \forall \eta\in(0, \tfrac{c_{e}+s_{1,-}^l}2).
\end{align*}
Therefore, it remains to show that $u$ converges to $\alpha_{+}$ in the first two limits of \eqref{eq:u-terrace-3}. We only show the first one. Recall the auxiliary function $\varphi_{\delta}$ defined in \eqref{eq:var-p}, and choose $t_{0}>0$ sufficiently large so that $-c_{e}t>x_{\delta}$ for any $t>t_{0}$. Then for any $x\geqslant (s_{2,-}^r+\eta)t$ with  $t>t_{0}$, we have
\begin{align*}
	x-c_{e}t-x_{\delta}\geqslant (s_{2,-}^r+\eta-c_{e})t-x_{\delta}\geqslant s_{2,-}^r t.
\end{align*}
By \eqref{eq:limit-forced-wave} and the monotonicity of the forced wave, we obtain
\begin{align*}
	\underset{t\rightarrow+\infty}\lim\, \psi_{\delta}(x-c_{e}t-x_{\delta})\leqslant \underset{t\rightarrow+\infty}\lim\, \psi_{\delta}(s_{2,-}^r t)=\alpha_{+}+\delta.
\end{align*}
Since $\delta>0$ is arbitrary, \eqref{eq:u-terrace-3} follows immediately.
\end{proof}

\section*{Acknowledgments}

Research of W.-T. Li was partially supported by NSF of China (12531008; 12271226). Research of S. Ruan was partially supported by National Science Foundation (DMS-2424605). Research of W.-B. Xu was partially supported by  NSF of China (12201434;12371209) and R\&D  Program of Beijing Municipal Education Commission (KM202310028017).

\end{document}